\documentclass[11pt,english,reqno]{smfart}
\usepackage[T1]{fontenc}
\usepackage[english,francais]{babel}
\usepackage{amssymb,url,xspace,smfthm}

\usepackage{enumerate}
\usepackage[numbers, sort&compress]{natbib}
\usepackage[shortlabels]{enumitem}
\usepackage{mathrsfs}
\usepackage[a4paper, hmargin=2.8cm, vmargin=2.5cm, headheight=12pt, headsep=6mm]{geometry}

\newtheorem{theorem}{Theorem}[section]
\newtheorem{lemma}[theorem]{Lemma}

\newtheorem{proposition}[theorem]{Proposition}
\newtheorem{definition}[theorem]{Definition}
 \theoremstyle{definition}
\newtheorem{remark}[theorem]{Remark}

\numberwithin{equation}{section}

\newcommand{\eps}{\varepsilon}
\newcommand{\norm}[1]{\Vert#1\Vert}
\newcommand{\abs}[1]{\left\vert#1\right\vert}
\newcommand{\set}[1]{\left\{\,#1\,\right\}}
\newcommand{\inner}[1]{\left(#1\right)}
\newcommand{\comi}[1]{\langle#1\rangle}

\newcommand{\normm}[1]{{ \vert\kern-0.25ex \vert\kern-0.25ex \vert #1
		\vert\kern-0.25ex \vert\kern-0.25ex \vert}}

\makeatletter
\def\@setaddresses{\par
  \nobreak \begingroup
\normalsize
  \def\author##1{\nobreak\addvspace\bigskipamount}%
  \def\\{\unskip, \ignorespaces}%
  \interlinepenalty\@M
  \def\address##1##2{\begingroup
    \par\addvspace\bigskipamount\noindent
    \@ifnotempty{##1}{(\ignorespaces##1\unskip) }%
    {\ignorespaces##2}\par\endgroup}%
  \def\curraddr##1##2{\begingroup
    \@ifnotempty{##2}{\nobreak\indent{\itshape Current address}%
      \@ifnotempty{##1}{, \ignorespaces##1\unskip}\/:\space
      ##2\par}\endgroup}%
  \def\email##1##2{\begingroup
    \@ifnotempty{##2}{\nobreak\noindent{\itshape E-mail address}%
      \@ifnotempty{##1}{, \ignorespaces##1\unskip}\/:
       ##2\par}\endgroup}%
   \def\urladdr##1##2{\begingroup
    \@ifnotempty{##2}{\nobreak\indent{\itshape URL}%
      \@ifnotempty{##1}{, \ignorespaces##1\unskip}\/:\space
      \ttfamily##2\par}\endgroup}%
  \addresses
  \endgroup
}
\makeatother


\newcommand{\BibTeX}{{\scshape Bib}\kern-.08em\TeX}
\newcommand{\T}{\S\kern .15em\relax }
\newcommand{\AMS}{$\mathcal{A}$\kern-.1667em\lower.5ex\hbox
        {$\mathcal{M}$}\kern-.125em$\mathcal{S}$}

\title[]{Local well-posedness for the Boltzmann equation
 with hard potentials}
\date {}
\author[]{Hao-Guang Li, Wei-Xi Li \and  Chao-Jiang Xu}

\address[H.-G. Li]{School of Mathematics and Statistics, South-Central Minzu University, Wuhan 430074, China}
\email{lihaoguang@scuec.edu.cn}

\address[W.-X. Li]{School of Mathematics and Statistics, \& Hubei Key Laboratory of Computational Science, Wuhan University, Wuhan 430072, China}
\email{wei-xi.li@whu.edu.cn}

\address[C.-J. Xu]{School of Mathematics and Key Laboratory of Mathematics MIIT,
Nanjing University of Aeronautics and Astronautics, Nanjing 210016, China}
\email{xuchaojiang@nuaa.edu.cn}

\keywords{Non-cutoff Boltzmann equation, hard potentials, local well-posedness, hypoelliptic estimates}

\begin{document}

\def\smfbyname{}

\begin{abstract}
We consider the spatially inhomogeneous non-cutoff Boltzmann equation with hard potentials in the non-perturbative setting. For  initial data with polynomial decay in the velocity variable, we establish the local-in-time existence and uniqueness of weak solutions, conditional to pointwise bounds on the hydrodynamic quantities (mass, energy, and entropy). Compared to the soft potential case, the key challenge for full-range hard potentials lies in the more severe loss of velocity moments. The proof combines a hypoelliptic estimate with interpolation inequalities to handle the moment-loss terms.
\end{abstract}


\maketitle

\tableofcontents

 \section{Introduction}
For general initial data, the existence of renormalized solutions was established in the seminal work of DiPerna-Lions \cite{MR1014927} (see Alexandre-Villani's work \cite{MR1857879} for the non-cutoff counterpart). However,  the regularity  and uniqueness  of such renormalized solutions remain   a challenging open problem.  
In recent years, the theory of conditional regularity has been well developed through a series of works by Imbert and Silvestre \cite{MR4433077, MR4229202,MR4195746,MR4049224} and their joint work with Mouhot \cite{MR4112729,MR4033752}.
This in turn motivates the investigation into the existence and uniqueness of weak solutions with minimal  regularity for large data. 
While such well-posedness has been established very recently for soft potentials by Henderson-Snelson-Tarfulea  \cite{MR4895262}, the hard potential case presents a major difficulty due to severe   loss of velocity moments (or velocity weights in the $L_{x,v}^2$ setting). To the best of our knowledge, only a few studies have addressed this problem, and only in specific contexts such as the Landau equation or the Boltzmann equation with moderately soft potentials.
  The purpose of this work is to establish  the local-in-time existence  and uniqueness for  full-range hard potentials conditional to point-wise bounds on the hydrodynamic quantities.

We study the  following   spatially inhomogeneous Boltzmann equation without angular cut-off:
\begin{equation}\label{eq-1}
\left\{
\begin{array}{ll}
   \partial_t f+v\cdot\partial_xf=Q(f,f),\\
  f|_{t=0}=f_{in},
\end{array}
\right.
\end{equation}
where $f(t,x,v)\geq 0$  represents the probability density function at time $t\geq0$, position $x\in\mathbb{T}^3$ with
velocity $v\in\mathbb{R}^3$.  The Boltzmann collision operator on the right-hand side of (\ref{eq-1}) is a bilinear operator defined as
\begin{align}\label{Q}
Q(g,f)(t,x,v)=\int_{\mathbb{R}^3\times\mathbb{S}^2}B(v-v_*,\sigma)(g'_*f'-g_*f)dv_*d\sigma
\end{align}
where and throughout the paper we use the standard shorthand $f'=f(t,x,v')$, $f=f(t,x,v)$, $g'_*=g(t,x,v'_*)$ and $g_*=g(t,x,v_*)$.  Here  $(v,v_*)$ and $(v',v'_*)$ denote the  post- and pre-collisional velocities, respectively, satisfying the conservation of momentum and energy:
$$v'+v'_*=v+v_*,\quad |v'|^2+|v'_*|^2=|v|^2+|v_*|^2.$$
These relations lead to the so-called  $\sigma$-representation with $\sigma\in \mathbb{S}^2$,
\begin{equation*}
\left\{
  \begin{aligned}
 & v'=\frac{v+v_*}{2}+\frac{|v-v_*|}{2}\sigma,\\
 & v'_*=\frac{v+v_*}{2}-\frac{|v-v_*|}{2}\sigma.
  \end{aligned}
\right.
\end{equation*}
The collision kernel $B(v-v_*,\sigma)$ in \eqref{Q} depends on the relative velocity  $|v-v_*|$ and the deviation angle $\theta$ defined by
\begin{equation}
	\label{angle}
	\cos\theta=\frac{v-v_*}{|v-v_*|}\cdot\sigma.
\end{equation}
Without loss of generality, we assume $B(v - v_*, \sigma)$ is supported on $0 \leq \theta \leq \pi/2$ (so that $\cos \theta \geq 0$) and takes the form
\begin{equation}\label{kern}
B(v - v_*, \sigma) = |v - v_*|^\gamma  b(\cos \theta) \textrm{ with }   0 \leq \sin \theta \, b(\cos \theta) \approx \theta^{-1 - 2s},
\end{equation}
where $|v - v_*|^\gamma$ is the kinetic part with $  \gamma >-3$, and $b(\cos \theta)$ is the angular part with singularity parameter $s\in (0,1)$.  Here and throughout the paper, $p \approx q$ (resp. $p \lesssim q$) means that $C^{-1} q \leq p \leq C q$ (resp. $p \leq C q$) for some generic constant $C \geq 1$. 
The angular part $b(\cos \theta)$ thus has a non-integrable singularity at $\theta = 0$:
\begin{equation*}
\int_0^{\frac{\pi}{2}} \sin \theta \, b(\cos \theta)  d\theta = +\infty.
\end{equation*}
The cases $-3 < \gamma < 0$, $\gamma = 0$, and $\gamma>0$ are referred to as soft potential, Maxwellian molecules, and hard potential, respectively.  In some literatures,  the range $0<\gamma+2s<2$  is often referred to as moderately soft potentials.  In this work, we consider the parameter regime that  $\gamma\geq 0 $ and $0<s<1,$  covering all hard potentials (including Maxwellian molecules) and mild-to-strong angular singularities.

The Landau equation arises as the grazing-collision limit of the Boltzmann equation and can be regarded as its diffusive model. It takes the form 
\begin{equation*}
	\partial_t f+v\cdot\partial_x f=Q_L(f,\ f),\quad 
f|_{t=0}=f_{in},
\end{equation*}
where the collision 
\begin{equation}\label{colli}
Q_L(g,\, f)=\sum_{1\leq i, j\leq 3}\partial_{v_i} \set{\int_{\mathbb R^3}a_{i,j}(v-v_*)\big[g(v_*)\partial_{v_j}
f(v)-f(v)(\partial_{v_j}g)(v_*)\big]dv_*},	
\end{equation}
with $(a_{i,j})_{1\leq i,j\leq 3}$  a symmetric nonnegative   matrix given by 
\begin{equation*} 
a_{i,j}(v)=\inner{\delta_{ij}|v|^2-{v_iv_j}}\abs v^{\gamma}.
\end{equation*}
Here   $\delta_{ij}$ denotes the Kronecker delta function.  
Note that  the Landau collision operator in \eqref{colli} takes a differential form, in contrast to the pseudo-differential nature of the Boltzmann operator. This structural difference generally simplifies the analysis of the Landau equation.

 \subsection{Notations and  Main Result}

 We introduce the following conventions and notations that will be used throughout the paper. The commutator of two operators $T$ and $S$ is denoted by 
 $[T,S]=TS-ST$.  The notation $\langle\, \cdot\, \rangle$ stands for $ (1+|\cdot|^2)^{\frac 12}$; in particluar,   $\langle v \rangle = (1+|v|^2)^{\frac 12}$ for $v\in\mathbb{R}^3$. For $r\in\mathbb{R},$ the operators $\langle D_x \rangle^r$ and $\langle D_v \rangle^r$  are the Fourier multipliers with symbols $\langle k \rangle^r$ and $\langle \eta \rangle^r$, respectively, that is,
\begin{equation}\label{fxfv}
\mathcal F_x\big( \langle D_x \rangle^r h\big)(k, v)=\langle k \rangle^r (\mathcal F_x h)(k, v), \    \mathcal F_v\big( \langle D_v \rangle^r h\big)(x, \eta)=\langle \eta \rangle^r (\mathcal F_v h)(x, \eta),
\end{equation}
where  $\mathcal F_x$ and $\mathcal F_v$ denote the partial Fourier transform  with respect to $x$ and $v$, with $(k,\eta)\in\mathbb Z^3\times\mathbb R^3$ being the dual variables of $(x,v)\in \mathbb T^3\times\mathbb R^3$. Similarly,  $\mathcal F_{x,v}$ stands for the full Fourier transform in $(x,v)$. 

The norm and inner product of $L^2(\mathbb T^3\times \mathbb R^3)$ are written as $\|\cdot\|_{L^2_{x, v}}$ and $(\cdot,\cdot)_{L^2_{x, v}}$. 
We  use the notation $\|\cdot\|_{L^2_x}$ (resp. $\|\cdot\|_{L^2_v}$)  when the variable $x$ (resp. $v$) is specified.  In addition, we use 
$$
H^p_x L^q_v=H_x^p(\mathbb T^{3}; L_v^q(\mathbb{{R}}^3))
$$
for the classical Sobolev space. The weighted Lebesgue space $L_r^1(\mathbb T^3\times\mathbb R^3)$
 is defined by 
 \begin{equation}\label{ltau}
 	L_r ^{1}(\mathbb T^3\times\mathbb R^3)=\Big\{h;\ \norm{h}_{L^1_r}:= \int_{\mathbb T^3\times\mathbb R^3} \comi v^r |h(x,v)| dxdv<+\infty\Big\}.
 \end{equation}
 
\begin{definition}\label{xel}
We define  the triple norm $\normm{\cdot} $ by 
\begin{equation}\label{trinorm}
\normm{h}^2=\norm{\comi{D_v}^{s}\comi v^{\frac{\gamma}{2}}h}_{L^2_v}^2+\norm{(-\triangle_{\mathbb S^2})^{\frac{s}{2}}\comi v^{\frac{\gamma}{2}} h}_{L_v^2}^2,
\end{equation}
with 
\[\Delta_{\mathbb{S}^2}=\sum_{\substack{ 1\le j,k\le3\\ j\neq k}}(v_j\partial_{v_k}-v_k\partial_{v_j})^2.\]
 Here $ (-\triangle_{\mathbb S^2})^{\frac{s}{2}}$ denotes  the fractional Laplacian on the sphere. Moreover, we set
\begin{equation}\label{nhx}
\begin{aligned}
\normm{h}^2_{L^2_x}=\int_{\mathbb{T}^3}\normm{h}^2dx=\norm{\comi{D_v}^{s}\comi v^{\frac{\gamma}{2}}h}_{L^2_{x,v}}^2+\norm{(-\triangle_{\mathbb S^2})^{\frac{s}{2}}\comi v^{\frac{\gamma}{2}} h}_{L_{x,v}^2}^2.
\end{aligned}
\end{equation}
Let 
$ \rho=  1+ \frac{14s+7\gamma}{6}$ with $s,\gamma$ the parameters in \eqref{kern}. 
For each $l\geq 3\rho,$   the Hilbert space  $\mathscr{X}_{l}$ is defined by 
\begin{equation*}
	\mathscr{X}_{l}=\big\{h\in H_x^3 L_v^2;\  \norm{h}_{\mathscr{X}_l}<+\infty \big\},
\end{equation*}
where
\begin{equation}\label{xn}
\|h\|^2_{\mathscr{X}_{l}}=\sum_{|\alpha|=3}\|\langle v\rangle^{l-\rho|\alpha|} \partial^{\alpha}_{x}f\|^2_{L^2_{x, v}}
+\|\langle v\rangle^{l} f\|^2_{L^2_{x, v}}.
\end{equation}
Furthermore, we define the  space $\mathcal{Y}_{l}$, equipped with the norm
\begin{equation}\label{yn}
\|h\|^2_{\mathcal{Y}_{l}}=\sum_{|\alpha|=3}\normm{\langle v\rangle^{l-\rho|\alpha|} \partial^{\alpha}_{x}f}^2_{L_x^2}+\normm{\langle v\rangle^{l} f}^2_{L_x^2}\,.
\end{equation}
\end{definition}

 \begin{remark}
The parameter $\rho$ is determined in Lemma \ref{inter-inequality}.  The key property of  space $\mathscr{X}_l$ is that  
 it controls terms involving higher-order spatial derivatives by means of a weaker weighted norm. This elegant idea, used in  Chaturvedi \cite{MR4646863} for the Landau equation with hard potentials, motivates our use of the hypoelliptic technique to overcome the loss of moments in velocity (see Subsection \ref{subsec:dm} for details). 
 \end{remark}

In this paper, we study the existence of solutions compatible with the conditional regularity established by Imbert and Silvestre \cite{MR4433077}.  Precisely, we seek solutions
 that are bounded away from vacuum and have finite mass, energy, and entropy densities.  Specifically, we assume the initial datum $f_{{in}}$ in \eqref{eq-1} satisfies $f_{{in}} \geq 0$ and for all $ x\in\mathbb T^3,$
 \begin{equation}\label{finite}
\left\{
\begin{aligned}
	&0<m_0\leq \int_{\mathbb R^3} f_{in}(x,v)dv\leq M_0, \\
& \int_{\mathbb R^3}  f_{in}(x,v) \abs{v}^2 dv\leq E_0,\\
&  \int_{\mathbb R^3}   f_{in}(x,v)\log f_{in}(x,v) dv\leq H_0,
\end{aligned}
	\right.
	\end{equation}
where, here and throughout the paper,  $m_0, M_0, E_0$, and $H_0$ are given positive constants.  We then prove that the solution $f$ to \eqref{eq-1} remains non-negative and moreover satisfies condition $\boldsymbol{(H)}$ defined as below:  
for all $(t, x)\in [0, T]\times\mathbb{T}^3$ with given $T$, it holds that
\begin{equation*}
\boldsymbol{(H)}\ \ \  \  \  \  \ \ \ \ \ \ \ \ \ \ \  \left\{
\begin{aligned}
	&\frac{m_0}{2}\leq \int_{\mathbb R^3} f(t,x,v)dv  \leq 2M_0, \\
& \int_{\mathbb R^3}  f(t,x,v) \abs{ v}^2 dv\leq 2E_0,\\
&  \int_{\mathbb R^3}   f(t,x,v)\log f(t,x,v) dv\leq 2H_0.
\end{aligned}
	\right.
\end{equation*}
 
\begin{theorem}\label{local-exsitence}
Suppose that the collision kernel $B$ takes the form \eqref{kern}  with parameters
  $0 < s < 1$ and $ \gamma\geq 0$.  Then there exist two constants $\ell_0>\ell$,  
  depending only on $s$ and $\gamma$, such that  the following existence and uniqueness results hold.
  \begin{enumerate}[label={(\roman*)}, leftmargin=*, widest=i]
  \item Suppose  the initial datum $f_{in}\ge 0$ in \eqref{eq-1} satisfies  assumption \eqref{finite} and that
\begin{equation}\label{initial-datum}
\|f_{in}\|_{L^1_{\ell_0}}+\|f_{in}\|_{\mathscr{X}_{\ell}}<+\infty,
\end{equation}
then   the Boltzmann equation \eqref{eq-1} admits a  local-in-time solution 
$$
f\in L^\infty\inner{[0,T], \, L^1_{\ell_0}\cap \mathscr{X}_{\ell}}\cap L^2\inner{[0,T], \, L^1_{\ell_0+\gamma}\cap \mathcal{Y}_{\ell}}$$
for some $T>0.$  Moreover  $f$ is non-negative and satisfies condition $\boldsymbol{(H)}$. Recall the function spaces here are given in \eqref{ltau} and Definition \ref{xel}.
\item Let $g\in L^\infty([0,T], \mathscr{X}_{\ell})$ be any non-negative solution to equation \eqref{eq-1}, satisfying condition $\boldsymbol{(H)}$. Then $f\equiv g$ on $[0,T].$
  \end{enumerate}
\end{theorem}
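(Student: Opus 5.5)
The proof will follow the standard scheme for non-perturbative local well-posedness: a priori estimates in $L^1_{\ell_0}\cap\mathscr X_\ell$, a linearized iteration, and a weighted $L^2$ stability estimate for uniqueness.

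\emph{Approximation scheme.} We construct the solution as the limit of a sequence. Set $f^0=f_{in}$ and let $f^{n+1}$ solve the linear problem
\[
\partial_t f^{n+1}+v\cdot\partial_x f^{n+1}=Q(f^n,f^{n+1}),\qquad f^{n+1}|_{t=0}=f_{in}.
\]
If needed, we first regularize with a small cutoff in the angle or an added $\epsilon\comi v^{\gamma+2s}(-\Delta_v)$ term and remove it at the end. Non-negativity of $f^{n+1}$ follows from the structure $Q(g,f)=\int B\,g'_*(f'-f)+f\,Q(g,1)$ and a maximum-principle or $L^2$ argument on the negative part $f_-$, since $f^n\ge0$. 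Condition $\boldsymbol{(H)}$ is propagated by continuity in time. If $f^n$ satisfies $\boldsymbol{(H)}$ on $[0,T]$ and is bounded in $L^\infty_T(L^1_{\ell_0}\cap\mathscr X_\ell)$, then $\partial_t\int f^{n+1}dv$, the corresponding energy and entropy time derivatives, and the transport term $v\cdot\partial_x$ are all bounded pointwise in $x$. This uses $H^3_x\subset L^\infty_x$ together with the weight $\comi v^{\ell-3\rho}$ to control velocity moments. Hence $\boldsymbol{(H)}$ survives for $T$ small.

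\emph{Uniform estimates.} The core step is to show that $\|f^{n+1}\|_{L^\infty_T\mathscr X_\ell}^2+c\int_0^T\|f^{n+1}\|^2_{\mathcal Y_\ell}$ is bounded uniformly in $n$ for small $T$.

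For the $L^1_{\ell_0}$ part, we use the Povzner-type inequality for $\int Q(g,f)\comi v^{\ell_0}$. This gives $\frac{d}{dt}\|f\|_{L^1_{\ell_0}}+c\|f\|_{L^1_{\ell_0+\gamma}}\lesssim$ lower-order moments. The coercive gain $\gamma$ comes from the lower bound on $\int f^n\,dv$ and the energy bound in $\boldsymbol{(H)}$.

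For $\mathscr X_\ell$, we apply $\partial_x^\alpha$ with $|\alpha|\le3$, multiply by $\comi v^{2(\ell-\rho|\alpha|)}\partial^\alpha_x f^{n+1}$, and integrate. The term $Q(f^n,\partial^\alpha f^{n+1})$ yields coercivity $-c\normm{\comi v^{l}\partial^\alpha f}^2$, with constants depending only on $m_0,M_0,E_0,H_0$ through $\boldsymbol{(H)}$. It also produces a commutator with the weight that loses $\comi v^{\gamma}$ in moments. For hard potentials, the $2s$ part of this loss is absorbed by $\normm{\cdot}$, but the $\gamma$ part is not.

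The terms $Q(\partial^{\beta}f^n,\partial^{\alpha-\beta}f^{n+1})$ with $\beta\ne0$ need an extra $\comi v^{\gamma+2s}$ on the factor carrying more $x$-derivatives. This is exactly why $\mathscr X_\ell$ weights higher $x$-derivatives with the smaller weight $\comi v^{\ell-\rho|\alpha|}$: lower-order derivatives carry extra weight $\comi v^{\rho}$ to spare.

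\emph{Main obstacle: the moment loss at top order.} At top order, the term $\comi v^{\ell-3\rho}\partial^\alpha_x f$ must be paired with a gain of roughly $\comi v^{\gamma/2+s}$ that $\normm{\cdot}$ does not supply. The plan is to recover it from hypoellipticity of the transport operator. Writing $v\cdot\partial_x$ together with $\comi{D_v}^{2s}$-type dissipation gives a gain of $\comi{D_x}^{s/(1+2s)}$ regularity, i.e.\ an estimate of $\|\comi{D_x}^{\frac{s}{1+2s}}\comi v^{\,\cdot}f\|$ by the $\mathcal Y$ dissipation plus source terms. We then interpolate between $x$-derivative levels and velocity weights through Lemma~\ref{inter-inequality}, in the form
\[
\|\comi v^{a}\partial_x^{\alpha}f\|\lesssim \|\comi v^{a+\theta\rho'}\partial_x^{\alpha-e}f\|^{\theta}\,\|\comi{D_x}^{\frac{s}{1+2s}}\comi v^{a'}\partial^\alpha_x f\|^{1-\theta},
\]
trading the missing velocity weight at order $|\alpha|$ for fractional $x$-regularity plus extra weight at order $|\alpha|-1$. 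The exponent $\rho=1+\frac{14s+7\gamma}{6}$ is precisely tuned so that these exponents close. With that lemma in hand, a Gronwall argument gives the uniform bound for small $T$.

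\emph{Convergence and uniqueness.} We estimate $w^{n}=f^{n+1}-f^n$ in a weaker norm $L^2_{x,v}(\comi v^{k})$ for some $k<\ell-3\rho$. Its equation has the source $Q(w^{n-1},f^n)$, which is controlled by the uniform $\mathscr X_\ell$ bound on $f^n$, while $Q(f^n,w^n)$ provides coercivity. This shows the sequence is Cauchy for small $T$. Weak compactness then passes the limit to a solution in the stated spaces, with $\boldsymbol{(H)}$ and non-negativity preserved.

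Uniqueness in (ii) uses the same difference estimate with $w=f-g$. Both $f$ and $g$ lie in $L^\infty_T\mathscr X_\ell$ and satisfy $\boldsymbol{(H)}$, so coercivity holds uniformly. The weaker weighted norm absorbs the moment loss in $Q(w,f)$ using the higher moments of $f$, and Gronwall gives $w\equiv0$.
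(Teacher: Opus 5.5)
There is a genuine gap, and it sits at order zero of your $\mathscr{X}_\ell$ estimate, not at top order. By \eqref{coer-v} and the test-function weight in \eqref{upbound-v}, the equation for $\comi v^{\ell}f$ gives
\[
\frac12\frac{d}{dt}\norm{\comi v^{\ell}f}_{L^2_{x,v}}^2+\frac{c_0}{2}\normm{\comi v^{\ell}f}_{L^2_x}^2\le C_0\norm{\comi v^{\ell+s+\frac{\gamma}{2}}f}_{L^2_{x,v}}^2+\cdots .
\]
The right-hand side carries an extra weight $\comi v^{s}$ beyond both the energy and the dissipation. So no short-time Gronwall argument closes it, and propagating a larger $L^2$ weight only reproduces the same loss. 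Your hypoelliptic trade through Lemma~\ref{inter-inequality} does not help here. With $\tau=\ell-3\rho$ it moves the top-order loss down to $\norm{\comi v^{\tau+3\rho}f}=\norm{\comi v^{\ell}f}$, i.e.\ onto the order-zero energy, and below order zero there is nothing left to trade to.

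The missing idea is the paper's key estimate \eqref{hypo-3A}, Lemma~\ref{embedding}. By H\"older against the $L^1_{x,v}$ moment and Sobolev embedding in both $x$ and $v$, the order-zero loss is bounded by three terms:
\begin{itemize}
\item $\eps$ times the hypoelliptic norm $\norm{\comi v^{\ell+\frac{\gamma}{2(1+2s)}}\comi{D_x}^{\frac{s}{1+2s}}f}^2$;
\item $\eps$ times the coercive norm;
\item $C_\eps\norm{\comi v^{\ell_0}f}_{L^1_{x,v}}^2$, with $\ell_0=\ell+3+7s+2\gamma$.
\end{itemize}
This is the only way the loss-free $L^1_{\ell_0}$ moment enters the $L^2$ estimate. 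It is also why hypoellipticity is needed already at order zero: the $x$-regularity is forced because the moment is only in $L^1_{x,v}$. In your proposal the $L^1_{\ell_0}$ bound is propagated but never coupled to $\mathscr{X}_\ell$, so the role of $\ell_0>\ell$ is unexplained.

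Separately, the moment inequality of Proposition~\ref{momentQ} is not ``lower order''. It contains $2\boldsymbol{\omega}_{\ell_0}\norm{f}_{L^\infty_xL^1_v}\norm{\comi v^{\ell_0+\gamma}g}_{L^1_{x,v}}$. Handling this term requires the time-integrated $L^1_{\ell_0+\gamma}$ bound on the previous iterate and smallness of $\boldsymbol{\omega}_{\ell_0}/\boldsymbol{\lambda}_{\ell_0}$.

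Two further steps fail as written.

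First, the data are large. The mixed terms $Q(\partial_x^\beta f^n,\partial_x^{\alpha-\beta}f^{n+1})$ with $|\beta|\ge1$ are bounded by $\norm{f^n}_{\mathscr{X}_\ell}\sim\boldsymbol{L}$ times coercive norms of lower $x$-derivatives of $f^{n+1}$. After interpolation you are left with $C_\eps(1+\boldsymbol{L})^2\int_0^T\norm{\comi{D_v}^s\comi v^{\boldsymbol{\ell_1}+\frac{\gamma}{2}}f^{n+1}}_{L^2_{x,v}}^2dt$. This is a time integral of a dissipation with a large coefficient, which neither small $T$ nor Gronwall controls. The paper imposes the smallness \eqref{smal} and must propagate it uniformly along the iteration: this is the choice of $N$ and $T$ in Proposition~\ref{lem:itera}, using the contraction \eqref{cse}.

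Second, for $\gamma>0$ your difference estimate fails. The step ``$Q(f^n,w^n)$ provides coercivity'' overlooks that coercivity here loses weight. By \eqref{lower+2} and \eqref{+coercivity-com} you are left with $C\norm{\comi v^{k+\frac{\gamma}{2}}w^n}^2$ with a non-small constant. This can only be handled by interpolating against the $\mathscr{X}_\ell$ bound, which yields a sublinear inequality $y'\lesssim y^{\theta}$ with $\theta<1$. That gives neither contraction nor uniqueness, and the same loss blocks a naive weighted $L^2$ argument for $f_-$.

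The paper's remedy is Lemma~\ref{lem:te}. At the weight $\boldsymbol{\ell_1}$ chosen in \eqref{loc-l} so that $\boldsymbol{\lambda}_{2\boldsymbol{\ell_1}}$ is large, an $L^2$ Povzner-type gain $-c\,m_0\boldsymbol{\lambda}_{2\boldsymbol{\ell_1}}\norm{\comi v^{\boldsymbol{\ell_1}+\frac{\gamma}{2}}h}^2$ cancels that $\comi v^{\frac{\gamma}{2}}$ loss, giving a loss-free coercive estimate. Combined with the smallness of $\boldsymbol{\omega}_{\boldsymbol{\ell_1}-2-\gamma}$ in the commutator (Lemma~\ref{lem:te2}), this is what closes non-negativity, the contraction of the iteration, and uniqueness.
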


\begin{remark}
	 The results of Theorem \ref{local-exsitence},  with the parameter $s$ replaced by $1$,  remain valid for the Landau equation.  Compared to the prior works \cite{MR4646863, MR4970362}, we do not require the initial data to have exponential or sub-exponential decay. 
     Moreover, the argument is applicable to the soft potential case ( $-3<\gamma<0$), thereby weakening the regularity requirement in  \cite{MR4433077,MR4112183,MR4416998}.
\end{remark}

\begin{remark}
	The choice of the  constants $\ell$ and $\ell_0$   will specified in Subsection \ref{subsec:ell}. 
\end{remark}

\begin{remark}
We do not address regularization effects in this paper.   
Here we work with initial data with polynomial decay, a setting compatible with the conditional regularity framework.   This enables us to apply the regularity results  \cite{MR4433077, MR4927804} to conclude the instantaneous $C^\infty$-smoothness of the solution constructed in Theorem \ref{local-exsitence}.  However, the  smoothing effect in a finer space, saying analytic or  Gevrey class, remains out of reach. The reason is that our argument relies crucially on the non-negativity of the solution, a property that persists for the solution itself but not for its higher derivatives. Consequently, the methods used in \cite{MR4930523, MR4612704} to establish analytic or Gevrey regularity  do not apply directly here.   
\end{remark}

\begin{remark}
Establishing uniqueness for the Cauchy problem \eqref{eq-1} with rough initial data is  a challenging problem.  For soft potentials, uniqueness has been proven  by Alexandre-Morimoto-Ukai-Xu-Yang(AMUXY) \cite{MR2861580} assuming $H_v^{2s}$-
regularity in velocity, and by Henderson-Snelson-Tarfulea \cite{MR4895262} for H\"older continuous initial data. In the moderately soft potential case, however, higher-order Sobolev regularity is required for uniqueness, as shown in  AMUXY \cite{MR2679369}.  For the Landau equation with hard potentials, uniqueness was obtained  by Chaturvedi \cite{MR4646863} for  initial data with high-order Sobolev regularity,  and by Snelson and Taylor \cite{MR4970362} for H\"older continuous data with no vacuum region  (a condition analogous to that in  \cite{MR4895262}).  We  also mention the recent work \cite{2025sun} of Alonso-Gualdani-Sun on  uniqueness  for certain related diffusive models.  
\end{remark}

\begin{remark}
To handle the highly  nonlinear collision operator,    \(L^\infty_xL^2_v\) appears  to be a natural  function space for the well-posedness theory,   or the larger space $H^{\frac 32 +}_x L_v^2$ if the energy method applies.   In this paper, we  restrict the spatial regularity to \(H^3_x\) rather than \(H^{\frac 32+}_x\) in order to derive a lower bound on the density function, following the argument as in \cite{preprint-JHL}. We believe that the  $H_x^3$-regularity requirement could potentially be relaxed to a weaker one.  
\end{remark}


\subsection{Related works}

There is extensive literature on the well-posedness of the Boltzmann equation; for the cutoff case, we refer to Villani's survey \cite{MR1942465}. 
For the non-cutoff and close-to-equilibrium settings, global well-posedness  and smoothing effect are well established. As this is not our main focus, we refer to the  recent works \cite{MR2863853,MR2784329,MR4930523,MR4230064} and the references therein.

 Here, we briefly review related works on the spatially inhomogeneous  non-cutoff Boltzmann equation and the Landau equation for the non-perturbative setting. For general initial data, the global existence of renormalized solutions was established by Alexandre-Villani \cite{MR1857879} (cf. DiPerna and Lions \cite{MR1014927} for the cutoff Boltzmann equation). However, the global regularity and uniqueness of such renormalized solutions remain challenging open problems.
Our work is strongly motivated by the theory of regularity conditional to the macroscopic bounds as in condition $\boldsymbol{(H)}$. This line of research, often called the conditional regularity, was initiated by Silvestre \cite{MR3582250,MR3551261} and further developed in collaboration with Imbert and Mouhot \cite{MR4433077, MR4229202,MR4195746,MR4049224,MR4112729,MR4033752}.  The theory is now well understood owing  to Imbert and Silvestre \cite{MR4433077}, who proved that any solution satisfying the macroscopic bounds as in condition $\boldsymbol{(H)}$ becomes instantaneously $C^\infty$-smooth in all variables. See also the work \cite{MR4927804} of Fern\'{a}ndez-Real,  Ros-Oton and Weidner   for the recent improvement. 
Since the aforementioned works mainly concern regularity,  a natural follow-up   question is the existence of solutions that are compatible with the assumptions of the conditional regularity theory. While it is straightforward to verify that the results of Imbert-Silvestre \cite{MR4433077} apply to close-to-quilibrium solutions, the existence of solutions for rough initial data in a non-perturbative setting remains far from settled.  The   
 local  existence and the $C^\infty$-regularization effect were first established  by   AMUXY   \cite{MR2679369, MR3177640}  for initial data with high-order Sobolev regularity  and  Gaussian decay in velocity,  under the conditions    $s<\frac12$ and $\gamma+2s<1.$   For the soft potentials,   see also \cite{preprint-JHL} for the local existence and uniqueness  of solutions that instantly belongs to analytic or a sharp Gevrey space, even for  initial data with low regular and  Gaussian decay. We refer to \cite{MR3906224,MR4168919} for the related works on the Landau equation with soft potentials. 
 However, Gaussian decay is not fully compatible with the conditional regularity framework, because the uniform estimates in \cite{MR4433077} do not guarantee the persistence of  the Gaussian decay.  From the point view of the conditional regularity program, it is therefore more natural to seek solutions with merely  polynomial decay.  For soft potentials, solutions with polynomial decay were first established by Morimoto and Yang \cite{MR3376931} for high-order Sobolev  regular initial data under the restrictions
$0<s<\frac12$ and  $\gamma +
2s>- \frac{3}{2}$. This was later extended by Henderson-Snelson-Tarfulea \cite{MR4112183} to  $0<s<1 $ (with the same bound on   $\gamma +
2s$),  and finally by Henderson-Wang \cite{MR4416998} for the full range $-3<\gamma<0$ and $0<s<1.$  A significant improvement in this direction is due to Henderson-Snelson-Tarfulea  \cite{MR4895262}. They established local existence  for $-3<\gamma<0$ and $0<s<1$ under minimal hypotheses: the initial data need only belong to a weighted $L^\infty$ space and be strictly positive on a small ball in phase space, thereby dispensing with all regularity requirements.  For uniqueness, however, they required stronger conditions on the initial data: H\"older continuity and the absence of vacuum regions.     
In contrast to the soft (or moderately soft) potential cases, the treatment of hard potentials is more delicate due to the severe moment loss issue. The literature on this subject remains sparse. Currently, only two results are available for the Landau equation with hard potentials. Chaturvedi \cite{MR4646863} first proved local well-posedness for initial data with sufficient Sobolev regularity  and exponential decay.  Very recently, Snelson-Taylor \cite{MR4970362} extended the prior result to the case of irregular initial data with sub-exponential decay.   Our results of Theorem \ref{local-exsitence} provide a step toward understanding  the Boltzmann equation with full-range hard potentials. Finally we mention that  conditional regularity  is closely related to the convergence-to-equilibrium problem investigated  by Desvillettes and Villani \cite{MR2116276, MR1787105}.

 In the literature cited above,   one can
identify at least two methods for handling regularity,  one
referred to  the De Giorgi-Nash-Moser theory and another one to H\"ormander's hypoelliptic technique.
The classical De Giorgi-Nash-Moser theory provides an effective method for establishing H\"older regularity for elliptic or parabolic equations with irregular coefficients in divergence form. On the other hand, H\"ormander's hypoelliptic theory offers a powerful framework for dealing with degenerate equations, even though its bracket conditions typically require the coefficients to possess a certain regularity. However, the original De Giorgi-Nash-Moser  theory does not apply directly to the degenerate Boltzmann equation.  Significant progress has been made by extending this framework to hypoelliptic kinetic equations with rough coefficients,  since the work  of Golse-Imbert-Mouhot-Vasseur  \cite{MR3923847}  and that  of Cameron-Silvestre-Snelson \cite{MR3778645}.  For a comprehensive overview and further references, we refer to Mouhot's survey \cite{MR3966858} and the very recent lectures by Brigati-Mouhot \cite{BC-25} and by Imbert \cite{2026imbert}.

Finally, the aforementioned works mainly concern local-in-time results. The global stability near vacuum was first studied by Luk \cite{MR3948345} for the Landau equation with moderately soft potentials. This result was subsequently extended by Chaturvedi \cite{MR4270852,MR4552228} to both the Boltzmann equation with moderately soft potentials (i.e., $0<s<1$ and $0<\gamma+2s<2$) and  the Landau equation with hard potentials.

\subsection{Difficulty and methodology for non-perturbative setting}\label{subsec:dm}

The lower and upper bounds of the collision operator play a crucial role in  the well-posedness of the Boltzmann equation. In the close-to-equilibrium setting, the linearized operator around the global Maxwellian dominates  and   essentially behaves as  (see  \cite{MR3950012} for instance) 
\begin{equation}\label{bl}
\comi v^{ \gamma }\comi{D_v}^{2s}+\comi v^{ \gamma}(-\triangle_{\mathbb S^2})^{s}+\comi v^{2s+\gamma}.
\end{equation}
This give a sharp coercivity which allows one to control the non-linear term under a smallness assumption. Consequently, global well-posedness can be established via the micro-macro decomposition method, see \cite{MR2863853,MR2793203,MR2784329} for details.  

However, in the non-perturbative setting, the coercivity estimates become scarce, as the weighted estimate in \eqref{bl}
is no longer available. If $g=g(t,x,v)$ is a non-negative function  satisfying condition $\boldsymbol{(H)}$, then  there exist two constants $c_0,   C_0>0$, depending only on the quantities $m_0, E_0$ and $ H_0$ in condition $\boldsymbol{(H)}$, such that (see \cite{MR3942041,MR2959943} for instance) 
\begin{equation}\label{lower+B}
(Q(g,f),f)_{L^2_v}\le - c_0\normm{f}^2+C_0\|\langle v\rangle^{s+\frac{\gamma}{2}}f\|^2_{L^2_{v}},
\end{equation}
and   
\begin{equation}\label{uppbound+B}
\big|\big( Q(g,f), h\big)_{L^2_{v}}\big|\lesssim \big(\|\langle v\rangle^{\gamma+2s}g\|_{L^1_v}+\|g\|_{L^2_v}\big) \normm{f} \big(\normm{h}+\|\langle v\rangle^{s+\frac{\gamma}{2}}h\|_{L^2_v}\big),
\end{equation}
where the trip norm $\normm{\, \cdot\, }$ is defined in \eqref{trinorm}. Thus the main difficulty lies in controlling terms involving weights like $\langle v \rangle^{s+\frac{\gamma}{2}}$ that appear in \eqref{lower+B} and \eqref{uppbound+B}.  One approach is  to work with initial data with Gaussian decay. This allows us to carry out $L^2$-energy estimates using a time-dependent Gaussian weight    
$e^{-(c-at)\comi v^2}$ with $a,c>0$. Differentiating this weight produces a damping term $a\comi v^2$, which can compensate for the weight loss when  $2s+ \gamma  \leq 2$. Consequently, well‑posedness can be established for soft potentials $(\gamma<0)$ and moderately soft potentials $ \gamma\le 2-2s)$; see \cite{MR2679369} for details. However, the weight-loss issue becomes essential when $2s+\gamma> 2.$ 
   
The novelty of our approach lies in overcoming the difficulty addressed above by combining an intrinsic {\em hypoelliptic} structure with delicate interpolation inequalities.  The key observation is the following estimate: for any $\eps>0,$
 \begin{equation}\label{hypo-3A}
 	\|\langle v\rangle^{s+\frac{\gamma}{2}}h\|_{L^2_{x, v}}  \leq  \eps  \|\langle v\rangle^{\frac{\gamma}{2(1+2s)}}\comi{D_x}^{\frac{s}{1+2s}}h\|_{L^2_{x, v}} +\eps\norm{\comi{D_v}^{s}\langle v\rangle^{\frac{\gamma}{2}}h}_{L^2_{x, v}}  +C_{\eps}\|\langle v\rangle^{3+7s+2\gamma}h\|_{L^1_{x, v}}.
 \end{equation}
 Here the first term on the right-hand side is provided by the hypoelliptic estimate, the second by the coercivity estimate, the third by the moment estimate. In this way, we can bound the weight-loss terms by the {\em spatial hypoelliptic estimate, the velocity coercivity estimate, and the large-moment estimate}. This explains why we require the initial data belong to  $L^1_{\ell_0}\cap \mathscr{X}_{\ell}$ with well-chosen $\ell_0>\ell.$   
The interplay between these two spaces compensates for the weight loss that occurs in the energy estimates.
Remark that hypoelliptic estimates have been extensively used to explore the instantaneous regularity and spectral property  of  the Boltzmann equation and related diffusive models. Here, we apply this technique for the first time to study the existence of solutions.

\subsection{Proof Scheme for Theorem \ref{local-exsitence}}

We prove the existence of weak solutions using an energy method combined with Picard iteration. Starting from  $f_0= c\,\comi v^{-2\ell_0},$ we define the iterative sequence by
\begin{equation*}
\big(\partial_t + v\cdot \partial_x\big) f_{n+1}
= Q(f_n, f_{n+1}),  \quad f_{n+1}|_{t=0}=f_{\text{in}}.
\end{equation*}
The proof is divided into the following four main steps, designed to close the energy estimates in  weighted function spaces.

\begin{enumerate}[leftmargin=*, widest=i]
\item 
Assume $f_n\ge 0$ satisfy condition $\boldsymbol{(H)}$  and that the   commutator 
$$
\Big | (\langle v\rangle^\ell  Q(f_n, f_{n+1}), h)_{L^2_{x,v}}-( Q(f_n,\langle v\rangle^\ell  f_{n+1}), h)_{L^2_{x,v}}\Big|
$$
is under control. Then, using the coercivity \eqref{lower+B} and the upper bound \eqref{uppbound+B},  we obtain that,    for $0<t\le T$ sufficiently small,
$$
\norm{f_{n+1}(t)}_{\mathscr{X}_{\ell}}^2  +    \int_0^T \norm{f_{n+1}}_{\mathcal{Y}_{\ell}}^2dt \lesssim \norm{f_{in}}_{\mathscr{X}_{\ell}}^2+ \int_0^T 
  \norm{\comi v^{\ell+s+\frac{\gamma}{2}}f_{n+1}(t)}_{L_{x,v}^2}^2dt+\cdots.
$$
The critical novelty lies in controlling the term 
$ \norm{\comi v^{\ell+s+\frac{\gamma}{2}}f_{n+1}}_{L_{x,v}^2}^2 $ when $s+\frac{\gamma}{2}>1$. This is the major difficulty for the hard potential case. To overcome it, we use the interpolation inequality \eqref{hypo-3A}, in which the second term can be absorbed by the coercivity. It therefore remains to estimate the first and the last terms in that inequality. 
\item To estimate the first term on the right-hand side of \eqref{hypo-3A}, we establish the following hypoelliptic estimate for the Boltzmann operator $Q(f_n, f_{n+1}).$ Under the hypothesis that   $f_n\ge 0$ and satisfies condition $\boldsymbol{(H)}$,
\begin{multline*}
    \int_0^T \norm{\comi v^{\frac{\gamma}{2(1+2s)}+\ell} \comi {D_x}^{\frac{s}{1+2s}} f_{n+1}}_{L^2_{x, v}}^2 dt
    \lesssim  \Big(\sup_{t\leq T}\norm{f_{n+1}}_{\mathscr{X}_\ell}\Big)^2 \\
      + \int_0^T  \norm{f_{n+1}}_{\mathcal{Y}_\ell}^2dt+  \int_0^T\big| \big(  \langle v\rangle ^\ell Q(f_n, f_{n+1}),\  P\langle v\rangle ^\ell  f_{n+1}\big)_{L^2_{x, v}}\big|dt
\end{multline*}
where $P: L_{x,v}^2\mapsto L_{x,v}^2$ is a well-chosen bounded operator.  
 
\item  To handle the last term in \eqref{hypo-3A}, we first prove that the non-negativity of   
 $f_n$	yields the non-negativity of  
$f_{n+1}$. This then enables us to  establish the desired moment estimate, based on    the following inequality
\begin{align*}
&\int Q(f_n, f_{n+1}) \langle v\rangle^{ \ell_0 } dx dv \lesssim - c_1 \|\langle v\rangle^{ \ell_0 +\gamma}f_{n+1}\|_{L^1_{x, v}}+ c_2\|  f_{n+1}\|_{L^{\infty}_xL^1_v}\|\langle v\rangle^{ \ell_0 +\gamma}f_n\|_{L^1_{x, v}}\\
 &\qquad\qquad + \|\comi v^{4+\gamma}f_n\|_{L_x^\infty L^1_v}\|\langle v\rangle^{ \ell_0}f_{n+1}\|_{L^1_{x, v}}+ \|\langle v\rangle^{4+\gamma}f_{n+1}\|_{L_x^\infty L^1_v} \norm{\comi v^{ \ell_0 }f_n}_{L^1_{x, v}},
\end{align*}
with $0<c_2\ll c_1$ and  $\ell_0=\ell+3+7s+2\gamma.$

\item  By combining the arguments from the previous steps, we obtain an  {\em \`a priori} estimate for the linear Boltzmann equation, based on a careful choice of the parameters $\ell$ and $\ell_0$. The existence of solutions to the original nonlinear equation \eqref{eq-1} then follows from a standard regularization argument, since the analysis provides an $\epsilon$-uniform estimate for the following regularized Cauchy problem (with $0<\epsilon\ll 1$):    
\begin{equation*}
	\big(\partial_t  +v\cdot\partial_x\big)f_{n+1}^\epsilon+\epsilon \comi v^{s+2\gamma} f_{n+1}^\epsilon=Q(f_n^\epsilon, f_{n+1}^\epsilon),\quad f_{n+1}^\epsilon|_{t=0}=f_{in}.
\end{equation*}
More details can be found at Sections   \ref{app:sub}-\ref{s:main}.
\end{enumerate}
While the proof of the main result involves delicate microlocal analysis techniques,  the main idea is displayed in the four steps outlined above.

\subsection{Choice of parameters in weight functions}\label{subsec:ell}
Throughout the paper,  we let  $\ell, \ell_0$ and $\boldsymbol{\ell_1}$ be three fixed  exponents  for weight functions, which is to be specified below.  First we define several 
constants related to the angular integration. 
For every $l\geq 2$ we introduce the following associated constants:
\begin{equation}\label{lamome}
\left\{
\begin{aligned}
   \boldsymbol{\lambda}_ l &:=2^{ \gamma}\int_{\mathbb S^2} b(\cos\theta) \Big(1- \cos^ l \frac{\theta}{2} \Big) \,d\sigma= 2^{\gamma}(2\pi) \int_0^{\frac{\pi}{2}}\sin\theta b(\cos\theta) \Big(1- \cos^ l \frac{\theta}{2} \Big)\,d\theta,\\
     \boldsymbol{\omega}_ l &:=2^\gamma\int_{\mathbb S^2} b(\cos\theta) \sin^ l \frac{\theta}{2}  \,d\sigma=2^\gamma (2\pi) \int_0^{\frac{\pi}{2}} \sin\theta b(\cos\theta) \sin^ l \frac{\theta}{2} \,d\theta.
\end{aligned}
\right.
\end{equation}
Note that
$\boldsymbol{\lambda}_l$ increases in $l$,  while $ \boldsymbol{\omega}_ l$ decreases in $l.$ Moreover,  under condition \eqref{kern}, one has (cf. \cite[(A.17) and (A.18)]{MR3125270})
 \begin{equation}\label{approximate}
 \boldsymbol{\lambda}_l\approx l^s, 
\quad  \boldsymbol{\omega}_l\approx \int^{\frac{\pi}{4}}_0 (\sin\theta)^{l-1-2s}d\theta\approx  \int^{\frac{\sqrt2}{2}}_0 t^{l-1-2s}dt\approx  l^{-1} 2^{-\frac{l}{2}}.
 \end{equation}
Denoting 
\begin{equation}\label{agams}
\boldsymbol{A}_{\gamma,s}= \int^{\frac{\pi}{2}}_0\sin\theta b(\cos\theta)  \Big(\frac{1}{\cos^{3+\gamma}\frac{\theta}{2}}-1\Big) d\theta \geq 0,
\end{equation}
which  depends only on $s$ and $\gamma,$ the cancellation lemma (cf. \cite[Lemma 1]{MR1765272}) reads
\begin{equation}
    \label{cance}
    \begin{split}
         \int_{\mathbb{R}^3\times\mathbb{S}^2}  \abs{v-v_*}^\gamma b(\cos\theta) (f'-f) \,d\sigma\,dv  =\boldsymbol{A}_{\gamma,s} \int_{\mathbb{R}^3}f(v)|v-v_*|^{\gamma}dv.
    \end{split}
\end{equation}
Recall $m_0,M_0$ are the quantities given in \eqref{finite} and let $\tilde c_0>0$ be  a given   constant (determined in   Lemma \ref{lem:te}).   
By \eqref{approximate}, there exists a constant ${\boldsymbol{\ell_1}}$  such that 
\begin{equation}\label{loc-l}
 {\boldsymbol{\ell_1}}>\frac{13}{2}+\gamma,\quad \boldsymbol{\lambda}_{2{\boldsymbol{\ell_1}}}\geq \frac{2^{4+3\gamma}M_0}{m_0}\boldsymbol{A}_{\gamma,s},\quad    \boldsymbol{\omega}_{\boldsymbol{\ell_1}-2-\gamma}     \leq  \frac{\tilde c_0}{32M_0},\quad \frac{\boldsymbol{\omega}_{\boldsymbol{\ell_1}}}{\boldsymbol{\lambda}_{\boldsymbol{\ell_1}}} \leq \frac{m_0}{4^{4+\gamma}M_0}.
\end{equation}
Letting $\boldsymbol{\ell_1}$ be given above,   we require the parameters  $\ell, \ell_0$ to satisfy that (see   Lemmas \ref{lem:small} and \ref{lem:te})
\begin{equation}\label{rhod}
\ell= 2\boldsymbol{\ell_1} +2 +\gamma+3\rho,\quad \ell_0=\ell+3+7s+2\gamma.
\end{equation}
Throughout the paper we will fix  $\ell, \ell_0 $ and $\boldsymbol{\ell_1}$ as specified above and denote
 \begin{equation}\label{A-0}
\boldsymbol{L}=\max\Big\{1, \ 4\|f_{in}\|_{\mathscr{X}_{\ell}}
+4\|f_{in}\|_{L^1_{\ell_0}}\Big\},
\end{equation}
which is a constant depending only on the initial datum $f_{in}$.

\subsection{Organization of the paper and conventions for constants}

The paper is organized as follows. Section \ref{S1} lists several useful estimates that will be frequently used later. Section \ref{app:sub} and Section \ref{s:priori} are devoted to establishing a hypoelliptic estimate and an {\it  \`a priori} estimate, respectively. The proof of the main results is presented in Section \ref{s:main}.  In  Appendix \ref{Appendix}, we present two types of change of variables.

To simplify notation, we adopt the convention that the capital letter 
 $C\ge 1$ denotes a generic positive constant which may change from line to line. This constant depends only on the parameter 
 $\ell,\ell_0,\boldsymbol{\ell_1}$ in \eqref{rhod}  and on the quantities 
 $m_0,M_0, E_0,H_0$. Moreover, we denote by $C_\tau$  a generic constant that additionally depends on the parameter $\tau$.

 \section{Preliminary Estimates}\label{S1}

This section is devoted to presenting several  estimates on the collision operator and weighted interpolation inequalities, which will be used throughout the subsequent analysis.  In this part  we suppose $f,g,h$ are  sufficiently regular functions so that all norms and integrals appearing subsequently are well-defined.


\subsection{Trilinear bound and coercivity estimate}\label{subsec:tri}

We recall the well-known lower and upper bounds for the Boltzmann collision operator $Q$.  For the proof of these estimates, interested readers may refer to  \cite{MR3942041, MR2847536, MR2959943} for instance. 

The first estimate is the trilinear upper bound (see, e.g., \cite[Theorem 1.4]{MR3942041}):  
	\begin{equation}\label{uppbound}
	\big|\big( Q(g,f), h\big)_{L^2_{v}}\big|\leq C_1 \big(\|\langle v\rangle^{\gamma+2s}g\|_{L^1_v}+\|g\|_{L^2_v}\big) \normm{f} \big(\normm{h}+\|\langle v\rangle^{s+\frac{\gamma}{2}}h\|_{L^2_v}\big),
	\end{equation}
where the triple norm $\normm{\cdot}$ is defined as in \eqref{trinorm},  and  $C_1$ is  a   constant depending only on the parameters $s$ and $\gamma$ in  \eqref{kern}. The above upper bound is used to establish the existence of weak solutions. To obtain uniqueness we will apply the following more flexible upper bounds (see, e.g. \cite[Theorem 1.1]{MR3942041} or \cite[Proposition 2.9]{MR2847536}): 
\begin{equation}\label{uppbound2}
	\big|\big( Q(g,f), h\big)_{L^2_{v}}\big|\leq C_1 \big(\|\langle v\rangle^{\gamma+2s}g\|_{L^1_v}+\|g\|_{L^2_v}\big)\|\comi{D_v}^s\comi v^{a_1} f \|_{L_v^2}\|\comi{D_v}^s\comi v^{a_2}h\|_{L_v^2},
	\end{equation}
where $a_1, a_2\geq 0$ are arbitrarily given constants satisfying $a_1+a_2=\gamma+2s.$ 
    
Next, we recall the coercivity estimate for the Boltzmann collision operator.  Assume that  $g=g(t,x,v)$ is a non-negative function  satisfying condition $\boldsymbol{(H)}$. Then  there exist two constants $c_0,   C_0>0$, depending only on  the quantities $m_0, M_0, E_0$ and $ H_0$ in condition $\boldsymbol{(H)}$, such that the following lower bound holds (see \cite[Theorem 1.2]{MR3942041} or \cite{MR2959943} for instance):
\begin{equation}\label{lower+}
(-Q(g,f),f)_{L^2_v}\geq c_0\normm{f}^2-C_0\|\langle v\rangle^{s+\frac{\gamma}{2}}f\|^2_{L^2_{v}}.
\end{equation}
A variant of this estimate, which does not involve the last term in \eqref{trinorm}, takes the following form (cf. \cite[Proposition 2.1]{MR2959943}):
\begin{equation}\label{lower+2}
(-Q(g,f),f)_{L^2_v}\geq c_0\|\comi{D_v}^s\langle v\rangle^{\frac{\gamma}{2}}f\|_{L^2_v}^2-C_0\|\langle v\rangle^{\frac{\gamma}{2}}f\|^2_{L^2_{v}}.
\end{equation}
 Note that  the constants $c_0$ and $C_0$ above are independent of $x$. Integrating \eqref{lower+} and \eqref{lower+2} with respect to  $x\in \mathbb T^3$ yields
  \begin{equation}
  	\label{lower}
  		(-Q(g,f),f)_{L^2_{x, v}}\geq c_0\normm{f}_{L^2_x}^2-  C_0 \|\comi v^{s+\frac{\gamma}{2}}f\|^2_{L^2_{x, v}},
  \end{equation}   
  and
  \begin{equation*}
      (-Q(g,f),f)_{L^2_{x, v}}\geq c_0\|\comi{D_v}^s\langle v\rangle^{\frac{\gamma}{2}}f\|_{L^2_{x, v}}^2
      -C_0\|\langle v\rangle^{\frac{\gamma}{2}}f\|^2_{L^2_{x, v}}.
  \end{equation*}
The following norm equivalence, as established in \cite[Lemma 2.2]{MR2425608}, 
    \begin{equation}\label{equal-norm}
        \|\langle v\rangle^{a}\langle D_v\rangle^{b}h\|^2_{L^2_v} \approx \|\langle D_v\rangle^{b}\langle v\rangle^{a}h\|^2_{L^2_v} 
     \end{equation}
will be used in the following discussion. 
By this and the fact that 
\begin{equation*}
    \norm{(-\Delta_{\mathbb S^2})^{\frac{s}{2}} h}_{L_v^2}\leq C_s\norm{\comi{D_v}^s\langle v\rangle^{s} h}_{L_v^2}
\end{equation*}
for some constant $C_s$ depending only on $s$,   we have 
\begin{equation}
    \label{upbd}
    \normm{h}\leq C_{\gamma,s}\|\comi{D_v}^s\langle v\rangle^{s+\frac{\gamma}{2}}h\|_{L^2_v},
\end{equation}
where $C_{\gamma,s}$ is a constant depending only on $s,\gamma,$ and $\normm{h}$ is defined as in \eqref{trinorm}.

\subsection{Interpolation  inequalities}\label{interpolation}

The upper bound   \eqref{uppbound} and the last term in the coercivity estimate \eqref{lower}  account for the loss of weights.  The following lemma  is crucial for overcoming this problem, as it
  provides control of the weight-loss term  in terms of  the   intrinsic diffusion and  
 the moment.

\begin{lemma}\label{embedding}
For any $\varepsilon>0$ it holds that
\begin{equation*}
\begin{aligned}
&\|\langle v\rangle^{s+\frac{\gamma}{2}}h\|_{L^2_{x, v}}^2 \leq \varepsilon\big(\|\langle v\rangle^{\frac{\gamma}{2(1+2s)}}\comi{D_x}^{\frac{s}{1+2s}}h\|_{L^2_{x, v}}^2+\normm{h}_{L_x^2}^2\big)+C \varepsilon^{-\frac{3+6s}{s}} \|\langle v\rangle^{3+7s+2\gamma}h\|_{L^1_{x, v}}^2\, ,
\end{aligned}
\end{equation*}
recalling   $\comi{D_x}^r$ is the Fourier multiplier defined as in \eqref{fxfv}  and the norm $\normm{h}_{L_x^2}$ is defined in \eqref{nhx}.
\end{lemma}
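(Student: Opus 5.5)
The plan is to prove Lemma \ref{embedding} by a dyadic decomposition in velocity, followed on each piece by a Fourier splitting in the variables $(k,\eta)$. The frequency thresholds will depend on the dyadic scale, so that the weight mismatch is absorbed by the regularity norms on the right-hand side. Low frequencies are then controlled by $L^1$ through Hausdorff--Young.

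\textbf{Step 1: dyadic localization.} Fix a smooth dyadic partition of unity $1=\sum_{j\ge0}\chi_j^2(v)$, with $\chi_j$ supported in $\{\langle v\rangle\approx 2^j\}$, and set $h_j=\chi_j h$. Then
\[
\|\langle v\rangle^{s+\frac{\gamma}{2}}h\|_{L^2_{x,v}}^2\approx\sum_j 2^{j(2s+\gamma)}\|h_j\|_{L^2_{x,v}}^2 .
\]
Multiplication by $\chi_j$ commutes with $\langle D_x\rangle$. For $\langle D_v\rangle^s$, the commutator $[\langle D_v\rangle^s,\chi_j]$ is of order $s-1$ with coefficients of size $2^{-j}$. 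So, by the norm equivalence \eqref{equal-norm},
\[
\sum_j 2^{-j\gamma}\|\langle D_v\rangle^s h_j\|^2\lesssim \|\langle D_v\rangle^s\langle v\rangle^{\frac{\gamma}{2}}h\|^2+\|\langle v\rangle^{\frac{\gamma}{2}}h\|^2 .
\]
The second term is bounded by $\normm{h}_{L^2_x}^2$, so the localized regularity norms are summable and controlled by the right-hand side. Similarly,
\[
\sum_j 2^{j\gamma/(1+2s)}\|\langle D_x\rangle^{\frac{s}{1+2s}}h_j\|^2\lesssim\|\langle v\rangle^{\frac{\gamma}{2(1+2s)}}\langle D_x\rangle^{\frac{s}{1+2s}}h\|^2 .
\]

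\textbf{Step 2: splitting each piece.} For each $j$, fix thresholds $K_j$ for $|k|$ and $\Lambda_j$ for $|\eta|$. Write $h_j=h_j^{\rm low}+h_j^{\rm high}$, where the high part is the Fourier restriction to $\{\langle k\rangle>K_j\}\cup\{\langle\eta\rangle>\Lambda_j\}$. By Plancherel,
\[
2^{j(2s+\gamma)}\|h_j^{\rm high}\|^2\le 2^{j(2s+\gamma)}\Big(K_j^{-\frac{2s}{1+2s}}\|\langle D_x\rangle^{\frac{s}{1+2s}}h_j\|^2+\Lambda_j^{-2s}\|\langle D_v\rangle^s h_j\|^2\Big).
\]
To make this at most $\varepsilon\big(2^{j\gamma/(1+2s)}\|\langle D_x\rangle^{\frac{s}{1+2s}}h_j\|^2+2^{-j\gamma}\|\langle D_v\rangle^sh_j\|^2\big)$, we choose
\[
\Lambda_j=\varepsilon^{-\frac1{2s}}2^{j}\cdot 2^{j\gamma/s},\qquad K_j=\varepsilon^{-\frac{1+2s}{2s}}2^{j(1+2s+\gamma)} .
\]
For $K_j$ this uses the identity $2s+\gamma-\frac{\gamma}{1+2s}=\frac{2s(1+2s+\gamma)}{1+2s}$. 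For the low part, Hausdorff--Young gives $|\mathcal F_{x,v}h_j|\le\|h_j\|_{L^1}$. Hence
\[
2^{j(2s+\gamma)}\|h_j^{\rm low}\|^2\lesssim 2^{j(2s+\gamma)}K_j^3\Lambda_j^3\|h_j\|_{L^1}^2 .
\]
This equals $\varepsilon^{-N}2^{2jm}\|h_j\|_{L^1}^2$ for an explicit exponent $N$ of the form $\frac{a+bs}{s}$ and an explicit weight $m$. Bounding $2^{jm}\|h_j\|_{L^1}\lesssim\|\langle v\rangle^{m}h\|_{L^1}$ restricted to the shell, and using $\sum_j a_j^2\le(\sum_j a_j)^2$, we get the $L^1$ term.

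\textbf{Step 3: bookkeeping and the main obstacle.} It remains to check that $m\le 3+7s+2\gamma$ and $N\le\frac{3+6s}{s}$, the latter up to adjusting how $\varepsilon$ is split between the two high-frequency pieces. If the $\eta$-threshold exponent comes out too large, I would first try rebalancing: use $\varepsilon'$ for the $x$-part and $\varepsilon''$ for the $v$-part, then optimize. Equivalently, one can use the anisotropic Gagliardo--Nirenberg inequality with the scaling $|k|\sim|\eta|^{1+2s}$ and weights $\langle v\rangle$. Any excess weight can be absorbed, since $\langle v\rangle\ge1$ and we only need an upper bound.

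I expect the main obstacle to be Step 1. The commutator between the dyadic cutoffs (and the weights) and $\langle D_v\rangle^s$ must be shown to be lower order, uniformly in $j$. One must also ensure that the resulting $\|\langle v\rangle^{\gamma/2}h\|^2$ error is dominated by $\normm{h}_{L^2_x}^2$, and not by a weighted $L^2$ norm that reintroduces the loss. I would handle this using \eqref{equal-norm} together with a standard symbolic-calculus bound $\|[\langle D_v\rangle^s,\chi_j]\|_{L^2\to L^2}\lesssim 2^{-j}$.
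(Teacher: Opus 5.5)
Your architecture (dyadic shells in $v$, anisotropic cut-offs adapted to $|k|\sim|\eta|^{1+2s}$, and $|\mathcal F_{x,v}h_j|\le\|h_j\|_{L^1}$ on the low part) is viable. As written, however, Step 3 does not close, and the remedies you propose cannot close it.

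The error is in Step 1. You localize the velocity-regularity term with the weight $2^{-j\gamma}$. In fact $\|\langle D_v\rangle^s\langle v\rangle^{\gamma/2}h\|_{L^2_{x,v}}^2$ controls the much stronger quantity $\sum_j 2^{+j\gamma}\|\langle D_v\rangle^s h_j\|_{L^2_{x,v}}^2$. With your weight, absorbing the $\eta$-high part forces $\Lambda_j=\varepsilon^{-1/(2s)}2^{j(1+\gamma/s)}$. The low part then costs $2^{j(2s+\gamma)}K_j^3\Lambda_j^3=\varepsilon^{-\frac{3+3s}{s}}2^{2jm}$ with $m=3+4s+2\gamma+\frac{3\gamma}{2s}$. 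This exceeds $3+7s+2\gamma$ exactly when $\gamma>2s^2$, i.e.\ for generic hard potentials.

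Neither remedy helps. Splitting $\varepsilon$ into $\varepsilon',\varepsilon''$ moves only powers of $\varepsilon$, not powers of $2^j$. A weight on the $L^1$ term that is too large cannot be absorbed using $\langle v\rangle\ge1$; that only lets you raise a weight that is already small enough.

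The localization you flag as the main obstacle is routine. Write $h_j=\phi_j\langle v\rangle^{\gamma/2}h$ with $\phi_j=\chi_j\langle v\rangle^{-\gamma/2}$. Then $2^{j\gamma/2}\phi_j$ is bounded in $C_b^\infty$ uniformly in $j$, with bounded overlap. The bound $\sum_j\|2^{j\gamma/2}\phi_j u\|_{H^s_v}^2\lesssim\|u\|_{H^s_v}^2$ follows from the Gagliardo seminorm for $0<s<1$.

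With the correct weight, take $\Lambda_j=\varepsilon^{-1/(2s)}2^{j}$ and keep your $K_j$. The low part then costs $\varepsilon^{-\frac{3+3s}{s}}2^{2j(3+4s+2\gamma)}\|h_j\|_{L^1}^2$. This gives the lemma for $0<\varepsilon\le1$, even with the smaller weight $3+4s+2\gamma$.

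For $\varepsilon>1$ your criterion $N\le\frac{3+6s}{s}$ points the wrong way. Instead use that the low set is empty unless $\Lambda_j\ge1$, i.e.\ unless $2^{2sj}\ge\varepsilon$. On exactly those shells, $\varepsilon^{-\frac{3+3s}{s}}2^{2j(3+4s+2\gamma)}\le\varepsilon^{-\frac{3+6s}{s}}2^{2j(3+7s+2\gamma)}$, since the ratio is $(\varepsilon\, 2^{-2sj})^3$.

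Repaired this way, your argument is a genuinely different and sharper route than the paper's. The paper uses no localization. It applies H\"older in $v$ with $p=\frac{3+7s}{3+5s}$, then Sobolev embeddings of order $\frac{s}{2(1+2s)}$ separately in $v$ and in $x$. It then interpolates $\|\langle D_x\rangle^{b}\langle D_v\rangle^{b}\,\cdot\,\|^2\lesssim\|\langle D_x\rangle^{2b}\,\cdot\,\|\,\|\langle D_v\rangle^{2b}\,\cdot\,\|$ with $2b=\frac{s}{1+2s}$ (weights suitably distributed), and closes with Young's inequality. That route is shorter and valid for all $\varepsilon>0$ by homogeneity. But it uses only $\frac{s}{1+2s}$ of the $s$ available velocity derivatives, which is why its weight $3+7s+2\gamma$ is worse than the one your corrected scheme yields.
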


\begin{proof}
In this proof, we denote 
\begin{equation}\label{pq}
	p=\frac{3+7s}{3+5s},\quad q=\frac{3+7s}{2s}.
\end{equation}
Note that $p^{-1}+q^{-1}=1$. Then
it follows from H\"older's inequality that
\begin{equation}\label{sgh}
\begin{aligned}
 \|\langle v\rangle^{s+\frac{\gamma}{2}}h\|^2_{L^2_{v}}&=\int_{\mathbb{R}^3} \langle v\rangle^{ 2s+\gamma} h^2  dv=\int_{\mathbb{R}^3} \langle v\rangle^{ \frac{3+7s+2\gamma}{p}-(5s+\gamma+3)} \comi v^{\frac{3+7s+2\gamma}{q}} |h|^{1+\frac{1}{p}} |h|^{\frac{1}{q}}  dv\\
&\leq \left(\int_{\mathbb{R}^3} \langle v\rangle^{3+7s+2\gamma-p(5s+\gamma+3)} |h|^{1+p}dv\right)^{\frac{1}{p}}\left(\int_{\mathbb{R}^3}\langle v\rangle^{3+7s+2\gamma}|h|dv\right)^{\frac{1}{q}}\\
&\leq \Big(\|\langle v\rangle^{\frac{\gamma(1+s)}{2(1+2s)}}h\|_{L^{1+p}_{v}}\Big)^{\frac{1+p}{p}}\Big(\|\langle v\rangle^{3+7s+2\gamma}h\|_{L^1_v}\Big)^{\frac{1}{q}},
\end{aligned}
\end{equation}
where
the last inequality holds since it follows from \eqref{pq} that  \begin{equation*}
	\frac{3+7s+2\gamma-p(5s+\gamma+3)}{1+p}=\frac{\gamma(1+s)}{2(1+2s)}.
\end{equation*}
In view of \eqref{pq},  we  apply  the following  Sobolev inequality (see, e.g.,  Theorem 1.1 in \S 1.1 of   \cite[Chapter V]{MR290095}):
\begin{equation}\label{GNS}
   \| g\|_{L^a(\Omega)}\leq C \|\inner{1-\Delta}^{\frac{b}{2}}g\|_{L^2(\Omega)}, \quad \Omega=\mathbb{R}^3  \textrm{ or } \mathbb T^3\ \textrm{ and}\    \frac{1}{a}=\frac{1}{2}-\frac{b}{3}, 
\end{equation}
with the choice $a=1+p$ and  $b=\frac{s}{2(1+2s)}$, to obtain that 
\begin{align*}
 \|\langle v\rangle^{\frac{\gamma(1+s)}{2(1+2s)}}h\|_{L^{1+p}_{v}}\leq C \|\comi{D_v}^{\frac{s}{2(1+2s)}} \langle v\rangle^{\frac{\gamma(1+s)}{2(1+2s)}}  h\|_{L^2_{v}}.
\end{align*}
 Consequently, combining the above inequality with \eqref{sgh} yields 
\begin{equation*}
\|\langle v\rangle^{s+\frac{\gamma}{2}}h\|^2_{L^2_{v}}\leq C \Big( \| \comi{D_v}^{\frac{s}{2(1+2s)}} \langle v\rangle^{\frac{\gamma(1+s)}{2(1+2s)}}  h\|_{L^2_{v}}\Big)^{\frac{1+p}{p}}\Big(\|\langle v\rangle^{3+7s+2\gamma}h\|_{L^1_v}\Big)^{\frac{1}{q}}.
\end{equation*}
Hence
\begin{equation}\label{rs}
\begin{aligned}
&\|\langle v\rangle^{s+\frac{\gamma}{2}}h\|^2_{L^2_{x, v}}
 \leq C\int_{\mathbb{T}^3}\Big( \|\comi{D_v}^{\frac{s}{2(1+2s)}}  \langle v\rangle^{\frac{\gamma(1+s)}{2(1+2s)}} h\|_{L^2_{v}}\Big)^{\frac{1+p}{p}}\Big(\|\langle v\rangle^{3+7s+2\gamma}h\|_{L^1_v}\Big)^{\frac{1}{q}}dx\\
&\leq C\left(\int_{\mathbb{T}^3}\|\langle D_v\rangle^{\frac{s}{2(1+2s)}}\langle v\rangle^{\frac{\gamma(1+s)}{2(1+2s)}}h\|^{1+p}_{L^2_{v}}dx\right)^{\frac{1}{p}}\Big(\int_{\mathbb T^3}\|\langle v\rangle^{3+7s+2\gamma}h\|_{L_v^1}dx\Big)^{\frac{1}{q}}\\
& \leq C\left(\int_{\mathbb{R}^3}\|\langle D_v\rangle^{\frac{s}{2(1+2s)}}\langle v\rangle^{\frac{\gamma(1+s)}{2(1+2s)}}h\|^{2}_{L^{1+p}_{x}}dv\right)^{\frac{1+p}{2p}}\Big(\|\langle v\rangle^{3+7s+2\gamma}h\|_{L^1_{x, v}}\Big)^{\frac{1}{q}},
\end{aligned}
\end{equation}
where the last inequality follows from Minkowski's inequality. 
Using again the Sobolev inequality \eqref{GNS} with $a=1+p$ and $b=\frac{s}{2(1+2s)},$ we conclude
$$\|g\|_{L^{1+p}_{x}}\leq C \|\langle D_x\rangle^{\frac{s}{2(1+2s)}}g\|_{L^2_{x}},$$
and thus, applying the above estimate for $g=\langle D_v\rangle^{\frac{s}{2(1+2s)}}\langle v\rangle^{\frac{\gamma(1+s)}{2(1+2s)}}h,$
\begin{multline*}
	 \left(\int_{\mathbb{R}^3}\|\langle D_v\rangle^{\frac{s}{2(1+2s)}}\langle v\rangle^{\frac{\gamma(1+s)}{2(1+2s)}}h\|^{2}_{L^{1+p}_{x}}dv\right)^{\frac{1+p}{2p}}\\
 	\leq C\left(\int_{\mathbb{R}^3}\|\langle D_x\rangle^{\frac{s}{2(1+2s)}} \langle D_v\rangle^{\frac{s}{2(1+2s)}} \langle v\rangle^{\frac{\gamma(1+s)}{2(1+2s)}}h\|^{2}_{L^{2}_{x}}dv\right)^{\frac{1+p}{2p}}\\
 \leq C \Big(\|\langle D_x\rangle^{\frac{s}{2(1+2s)}} \langle D_v\rangle^{\frac{s}{2(1+2s)}} \langle v\rangle^{\frac{\gamma(1+s)}{2(1+2s)}}h\|_{L^2_{x, v}}^2\Big)^{\frac{1+p}{2p}}.
\end{multline*}
For the last term above, we compute 
\begin{align*}
	&\|\langle D_x\rangle^{\frac{s}{2(1+2s)}} \langle D_v\rangle^{\frac{s}{2(1+2s)}} \langle v\rangle^{\frac{\gamma(1+s)}{2(1+2s)}}h\|_{L^2_{x, v}}^2 =\Big(\langle D_x\rangle^{\frac{s}{(1+2s)}}h,\   \langle v\rangle^{\frac{\gamma(1+s)}{2( 1+2s) }}\langle D_v\rangle^{\frac{s}{ 1+2s}} \langle v\rangle^{\frac{\gamma(1+s)}{2( 1+2s) }}h\Big)_{L^2_{x, v}}\\
	&\leq  \|\langle v\rangle^{\frac{\gamma}{2(1+2s)}} \langle D_x\rangle^{\frac{s}{1+2s}}h\|_{L^2_{x, v}}  \|\langle v\rangle^{-\frac{\gamma}{2(1+2s)}}  \langle v\rangle^{\frac{\gamma(1+s)}{2( 1+2s) }}\langle D_v\rangle^{\frac{s}{ 1+2s}} \langle v\rangle^{\frac{\gamma(1+s)}{2( 1+2s) }}h\|_{L^2_{x, v}}\\
	&\leq C  \|\langle v\rangle^{\frac{\gamma}{2(1+2s)}} \langle D_x\rangle^{\frac{s}{1+2s}}h\|_{L^2_{x, v}}  \| \langle D_v\rangle^{\frac{s}{ 1+2s }} \langle v\rangle^{ \frac{\gamma}{2}} h\|_{L^2_{x, v}},
\end{align*}
where the last inequality follows from \eqref{equal-norm}. Now we combine these estimates to get that
\begin{align*}
   	&\left(\int_{\mathbb{R}^3}\|\langle D_v\rangle^{\frac{s}{2(1+2s)}}\langle v\rangle^{\frac{\gamma(1+s)}{2(1+2s)}}h\|^{2}_{L^{1+p}_{x}}dv\right)^{\frac{1+p}{2p}}\\
 &\leq C \Big( \|\langle v\rangle^{\frac{\gamma}{2(1+2s)}} \langle D_x\rangle^{\frac{s}{1+2s}}h\|_{L_{x,v}^2}  \|\langle D_v\rangle^{ s} \langle v\rangle^{\frac{\gamma }{2}}h\|_{L^2_{x, v}}\Big)^{\frac{1+p}{2p}}.
   \end{align*}
 Substituting the above estimate into \eqref{rs} yields 
\begin{align*}
&\|\langle v\rangle^{s+\frac{\gamma}{2}}h\|^2_{L_{x,v}^2}
\leq C \Big( \|\langle v\rangle^{\frac{\gamma}{2(1+2s)}} \langle D_x\rangle^{\frac{s}{1+2s}}h\|_{L^2_{x, v}}  \|\langle D_v\rangle^{ s}\langle v\rangle^{\frac{\gamma }{2}} h\|_{L^2_{x, v}}\Big)^{\frac{1+p}{2p}}\Big(\|\langle v\rangle^{3+7s+2\gamma}h\|_{L^1_{x, v}}\Big)^{\frac{1}{q}}\\
&  \leq \varepsilon \|\langle v\rangle^{\frac{\gamma}{2(1+2s)}} \langle D_x\rangle^{\frac{s}{1+2s}}h\|_{L^2_{x, v}}\|\langle D_v\rangle^{ s}\langle v\rangle^{\frac{\gamma }{2}} h\|_{L^2_{x, v}} + C\varepsilon^{-\frac{q(1+p)}{p}}\|\langle v\rangle^{3+7s+2\gamma}h\|_{L^1_{x, v}}^2\\
&  \leq \varepsilon \|\langle v\rangle^{\frac{\gamma}{2(1+2s)}} \langle D_x\rangle^{\frac{s}{1+2s}}h\|_{L^2_{x, v}}^2+\varepsilon\|\langle D_v\rangle^{ s}\langle v\rangle^{\frac{\gamma }{2}} h\|_{L^2_{x, v}}^2 + C\varepsilon^{-\frac{q(1+p)}{p}}\|\langle v\rangle^{3+7s+2\gamma}h\|_{L^1_{x, v}}^2.
\end{align*}
Note $\frac{q(1+p)}{p}=\frac{3+6s}{s}$ and $\|\langle D_v\rangle^{ s}\langle v\rangle^{\frac{\gamma }{2}} h\|_{L^2_{x, v}}\leq \normm{h}_{L_x^2}$. Then
 the proof of Lemma \ref{embedding} is completed.
\end{proof}

\begin{lemma}\label{inter-inequality}
Let   $\tau \in \mathbb{R}$. For any $\eps>0$ it holds that  
	\begin{equation*}
    \begin{split}
        \sum_{|\alpha|=3}\norm{\comi v^{\tau+ s+\frac{\gamma}{2}}\partial_x^{\alpha} h}^2_{L^2_{x, v}}  &\leq \varepsilon \sum_{|\alpha|=3}\norm{\langle v\rangle^{\tau+\frac{\gamma}{2(1+2s)}} \langle D_x\rangle^{\frac{s}{1+2s}}\partial_x^{\alpha} h}^2_{L^2_{x, v}} +C\varepsilon^{-\frac {3+6s}{s}} \norm{\comi v^{\tau+3\rho}h}^2_{L^2_{x, v}}\, ,
    \end{split}
	\end{equation*}
	 recalling $\rho= 1+\frac{14s+7\gamma}{6}$.
\end{lemma}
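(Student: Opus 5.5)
The plan is a pointwise interpolation on the Fourier side in $x$ only. The weights $\comi v^{\cdot}$ act by multiplication in $v$, so they commute with $\mathcal F_x$ and with $\partial_x^\alpha$, $\comi{D_x}^r$. By Plancherel on $\mathbb T^3$, for each $|\alpha|=3$,
\[
\norm{\comi v^{\tau+s+\frac{\gamma}{2}}\partial_x^\alpha h}_{L^2_{x,v}}^2=\sum_{k\in\mathbb Z^3}\int_{\mathbb R^3}|k^\alpha|^2\comi v^{2\tau+2s+\gamma}|\mathcal F_xh(k,v)|^2dv .
\]
It therefore suffices to prove the scalar inequality
\[
|k^\alpha|^2\comi v^{2s+\gamma}\le \eps\,|k^\alpha|^2\comi k^{\theta}\comi v^{\frac{\gamma}{1+2s}}+C\eps^{-\frac{3+6s}{s}}\comi v^{6\rho},\qquad \theta=\frac{2s}{1+2s},
\]
uniformly in $(k,v)$. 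We then multiply by $\comi v^{2\tau}|\mathcal F_xh|^2$, integrate, sum over $k$, and sum over $|\alpha|=3$. The number of multi-indices is a harmless constant, which is absorbed into $C$ after rescaling $\eps$.

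To get the scalar inequality, split the left side as a product $X^\lambda Y^{1-\lambda}$ with
\[
X=|k^\alpha|^2\comi k^{\theta}\comi v^{\frac{\gamma}{1+2s}},\qquad Y=|k^\alpha|^2\comi k^{-\frac{\theta\lambda}{1-\lambda}}\comi v^{6\rho},
\]
and choose $\lambda\in(0,1)$ so that the powers of $\comi v$ match. The natural choice is $\lambda=\frac{6}{6+\theta}=\frac{6(1+2s)}{6+14s}$, which gives $1-\lambda=\frac{2s}{6+14s}$ and $\frac{\theta\lambda}{1-\lambda}=6$. For this $\lambda$ the $v$-exponents match:
\[
\lambda\frac{\gamma}{1+2s}+(1-\lambda)6\rho=\frac{6\gamma+2s(6+14s+7\gamma)}{6+14s}=2s+\gamma,
\]
and this identity is exactly where the value $\rho=1+\frac{14s+7\gamma}{6}$ is forced. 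The $k$-exponents combine to $|k^\alpha|^2\comi k^{\theta\lambda-6(1-\lambda)}=|k^\alpha|^2$, so indeed $|k^\alpha|^2\comi v^{2s+\gamma}=X^\lambda Y^{1-\lambda}$. Moreover, since $|\alpha|=3$ we have $|k^\alpha|^2\le \comi k^{6}$, hence $Y\le \comi v^{6\rho}$.

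Young's inequality with exponents $1/\lambda$ and $1/(1-\lambda)$ gives $X^\lambda Y^{1-\lambda}\le \eps X+C\eps^{-\lambda/(1-\lambda)}Y$. Here $\frac{\lambda}{1-\lambda}=\frac{3(1+2s)}{s}=\frac{3+6s}{s}$, which matches the power of $\eps$ in the statement.

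Finally, $\comi k^{\theta}|k^\alpha|^2$ is, by Plancherel, the symbol squared of $\comi{D_x}^{\frac{s}{1+2s}}\partial_x^\alpha$, so the first term reproduces $\varepsilon\norm{\langle v\rangle^{\tau+\frac{\gamma}{2(1+2s)}}\langle D_x\rangle^{\frac{s}{1+2s}}\partial_x^{\alpha}h}^2_{L^2_{x,v}}$. The second term gives $C\eps^{-\frac{3+6s}{s}}\norm{\comi v^{\tau+3\rho}h}^2_{L^2_{x,v}}$.

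The only delicate point is the exponent bookkeeping: choosing the interpolation parameter $\lambda$ so that all the $x$-derivatives beyond order zero are absorbed by $\comi k^{6}\ge|k^\alpha|^2$, while the $v$-weight matches exactly. Once $\lambda$ is fixed as above, everything else is routine, with no commutator issues because the weight depends only on $v$.
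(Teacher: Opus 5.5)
Your proof is correct and follows the paper's route: Parseval in $x$, then an interpolation with exactly the paper's exponents (your $\lambda=\frac{6(1+2s)}{6+14s}=\frac{3+6s}{3+7s}$ is the paper's $1/p$), then Young's inequality producing the factor $\eps^{-\frac{3+6s}{s}}$. The only difference is cosmetic. You apply Young pointwise to the symbol at each $(k,v)$ and keep $|k^\alpha|^2$ for each multi-index. The paper instead applies H\"older in $v$ at each frequency after bounding by $|k|^6$, and so must convert $|k|^6$ back into $\sum_{|\beta|=3}|k^\beta|^2$ at the end. In your version no rescaling of $\eps$ is actually needed, since summing over $\alpha$ only multiplies the second term by a constant.
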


\begin{proof}
 Recall that  $\mathcal{F}_xh$ is the Fourier transform of $h$ with respect to the space variable $x\in \mathbb{T}^3,$  with $k\in\mathbb Z^3$ the Fourier dual of $x$.
 It follows from Parseval's identity   that
\begin{equation}\label{par}
    \sum_{|\alpha|=3}\norm{\langle v\rangle^{\tau+ s+\frac{\gamma}{2}}\partial^{\alpha}_xh}^2_{L^2_{x, v}}
   = \sum_{|\alpha|=3}\,\sum_{k\in\mathbb Z^3} \norm{\langle v\rangle^{\tau+ s+\frac{\gamma}{2}} \mathcal F_x\inner{\partial_x^\alpha h}(k,\cdot)}^2_{L^2_{v}}.
\end{equation}
 Using  H\"{o}lder's inequality with the choice of $p$ and $q$ that 
\begin{equation}\label{pq+}
	\frac{1}{p}=\frac {3+6s}{3+7s}, \quad  \frac{1}{q}=\frac{s}{3+7s},
\end{equation}
 we get 
\begin{align*}
&\sum_{|\alpha|=3}\norm{\langle v\rangle^{\tau+ s+\frac{\gamma}{2}} \mathcal F_x\inner{\partial_x^\alpha h}(k,\cdot)}^2_{L^2_{v}}\leq \int_{\mathbb{R}^3} \langle v\rangle^{ 2 \tau+\gamma +2s}|k|^{6}|\mathcal F_x{h}(k,v)|^2 dv\\
&\leq \int_{\mathbb{R}^3}  \langle v\rangle^{ \frac{2 \tau}{p}+ \frac{\gamma}{ p(1+2s)}}\langle v\rangle^{\frac{2 \tau}{q}+ \gamma +2s- \frac{\gamma}{ p(1+2s)}}|k|^{6}|\mathcal F_x{h}(k,v)|^{\frac{2}{p}}|\mathcal F_x{h}(k,v)|^{\frac{2}{q}} dv\\
&\leq\bigg(\int_{\mathbb{R}^3}  \langle v\rangle^{  2 \tau + \frac{\gamma}{ (1+2s)}} |k|^{6p}|\mathcal F_x{h}(k,v)|^2 dv\bigg)^{\frac{1}{p}}  
\bigg(\int_{\mathbb{R}^3}  \langle v\rangle^{ 2 \tau +( \gamma +2s)q- \frac{ q\gamma}{ p(1+2s)}} |\mathcal F_x{h}(k,v)|^2 dv\bigg)^{\frac{1}{q}}\\
&\leq \norm{\langle v\rangle^{\tau+\frac{\gamma}{2(1+2s)}}|k|^{3+\frac{s}{1+2s}}\mathcal F_x{h}(k,\cdot)}_{L_v^2}^{\frac{2}{p}}
\norm{\langle v\rangle^{\tau+3+7s+ \frac{7\gamma}{2}}\mathcal F_x{h}(k,\cdot)}^{\frac{2}{q}}_{L_v^2},
\end{align*}
the last inequality holds since it follows from \eqref{pq+} that
\begin{equation*}
	6p=6+\frac{2s}{1+2s}, \quad ( \gamma +2s)q- \frac{ q\gamma}{ p(1+2s)}=6+14s+7\gamma.
\end{equation*}
Taking summation over $k\in\mathbb Z^3$  in the above estimate yields that, for any $\eps>0,$
\begin{equation*}
	\begin{aligned}
	&	\sum_{|\alpha|=3}\,\sum_{k\in\mathbb Z^3} \norm{\langle v\rangle^{\tau+ s+\frac{\gamma}{2}} \mathcal F_x\inner{\partial_x^\alpha h}(k,\cdot)}^2_{L^2_{v}}\\
	&\leq \sum_{k\in\mathbb Z^3} \Big(\varepsilon  \norm{\langle v\rangle^{\tau+\frac{\gamma}{2(1+2s)}}|k|^{3+\frac{s}{1+2s}}\mathcal F_x{h}(k,\cdot)}_{L_v^2}^{2}
+C \varepsilon^{-\frac{q}{p}} \norm{\langle v\rangle^{\tau+3+7s+ \frac{7\gamma}{2}}\mathcal F_x{h}(k,\cdot)}_{L_v^2}^2\Big)\\
&\leq \varepsilon \sum_{\abs\beta=3} \norm{\langle v\rangle^{\tau+\frac{\gamma}{2(1+2s)}}\comi{D_x}^{ \frac{s}{1+2s}}\partial_x^\beta h}_{L^2_{x, v}}^{2}
+C \varepsilon^{-\frac {3+6s}{s}} \norm{\langle v\rangle^{\tau+3+7s+ \frac{7\gamma}{2}}h}_{L^2_{x, v}}^2,
	\end{aligned}
\end{equation*}
where the last inequality holds since it follows from \eqref{pq+} that  $\frac qp= \frac{(3+6s)}s.$  This, together with \eqref{par} as well as the definition of $\rho$, completes the proof of Lemma \ref{inter-inequality}.
\end{proof}

\begin{lemma}\label{lem:small}
Recall $\ell,\boldsymbol{\ell_1}$ are determined as in \eqref{rhod}.  Then for any $\eps>0,$ it holds that 
 \begin{align*}
      \|\comi {D_v}^s   \comi v^{\boldsymbol{\ell_1}+2s+\frac{\gamma}{2}}  h \|_{H_x^2 L_v^2}^2\leq 
     \eps  \norm{ h}_{\mathcal{Y}_\ell}^2+C \eps^{-5}  \| \comi {D_v}^s\comi v^{\boldsymbol{\ell_1}+ \frac{\gamma}{2}}  h \|_{L^2_{x, v}}^2,
 \end{align*}
 where $\norm{\cdot}_{\mathcal{Y}_\ell}$ is defined in \eqref{yn}.
\end{lemma}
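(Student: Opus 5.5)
We prove Lemma \ref{lem:small} by interpolating, on the Fourier side in $x$, between the top-order part of $\norm{h}_{\mathcal Y_\ell}$ (third spatial derivatives, weight $\comi v^{\ell-3\rho+\gamma/2}$) and the zeroth-order term $\|\comi{D_v}^s\comi v^{\boldsymbol{\ell_1}+\gamma/2}h\|_{L^2_{x,v}}$. The interpolation is done in both the $x$-frequency and the velocity weight at once. Set $g=\comi{D_v}^s\comi v^{\boldsymbol{\ell_1}+\gamma/2}h$, and write $\hat g(k,\cdot)=\mathcal F_x g(k,\cdot)$. By \eqref{equal-norm} we may commute $\comi{D_v}^s$ with the polynomial weights at the cost of a constant. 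Hence the left-hand side is bounded by
\[
C\sum_{k\in\mathbb Z^3}\comi k^{4}\,\|\comi v^{2s}\hat g(k,\cdot)\|_{L^2_v}^2 ,
\]
modulo the harmless commutation. Likewise, the top-order part of $\norm{h}^2_{\mathcal Y_\ell}$ is bounded below, since $\normm{\cdot}\ge \|\comi{D_v}^s\comi v^{\gamma/2}\cdot\|_{L^2_v}$, by
\[
c\sum_k |k|^6\,\|\comi v^{\ell-3\rho-\boldsymbol{\ell_1}}\hat g(k,\cdot)\|_{L^2_v}^2 .
\]
Adding the zeroth-order part $\normm{\comi v^{\ell}h}^2_{L^2_x}$ upgrades $|k|^6$ to $\comi k^6$.

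By \eqref{rhod}, $\ell-3\rho-\boldsymbol{\ell_1}=\boldsymbol{\ell_1}+2+\gamma$. Since $\boldsymbol{\ell_1}>\frac{13}{2}+\gamma$, this exponent exceeds $6s$. The key pointwise step is Hölder's inequality in $v$ for each fixed $k$ with exponents $\theta=2/3$ and $1-\theta=1/3$:
\[
\comi k^{4}\comi v^{4s}|\hat g|^2=\big(\comi k^{6}\comi v^{6s}|\hat g|^2\big)^{2/3}\big(|\hat g|^2\big)^{1/3},
\]
so that
\[
\comi k^4\|\comi v^{2s}\hat g(k)\|_{L_v^2}^2\le \big(\comi k^6\|\comi v^{3s}\hat g(k)\|_{L^2_v}^2\big)^{2/3}\big(\|\hat g(k)\|^2_{L^2_v}\big)^{1/3}.
\]
Young's inequality $a^{2/3}b^{1/3}\le \eps a+C\eps^{-2}b$ then gives, after summing in $k$,
\[
\text{LHS}\le \eps\sum_k\comi k^6\|\comi v^{3s}\hat g\|^2+C\eps^{-2}\|g\|^2_{L^2_{x,v}}.
\]
Because $3s\le \boldsymbol{\ell_1}+2+\gamma$, the first sum is controlled by $\norm{h}^2_{\mathcal Y_\ell}$. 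This yields the claim with the better power $\eps^{-2}$. Since $\eps^{-2}\le\eps^{-5}$ for $\eps\le1$, and for $\eps>1$ the estimate follows trivially from the case $\eps=1$ (the bound is monotone in $\eps$ on the first term), the stated form with $\eps^{-5}$ follows.

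The main obstacle is bookkeeping rather than analysis. We must handle the passage of $\comi{D_v}^s$ through the weights $\comi v^{a}$ via \eqref{equal-norm}, including noncommutation with $\comi v^{2s}$, which \eqref{equal-norm} absorbs into constants. We must also verify that the weight exponents match the definition of $\mathcal Y_\ell$ with $|\alpha|=3$, namely $\ell-3\rho$. The definition of $\mathcal Y_\ell$ carries $\partial_x^\alpha$ with $|\alpha|=3$ only, plus the zeroth-order term. So we use that $\sum_{|\alpha|=3}|k^\alpha|^2\approx|k|^6$, and that the $k=0$ mode and the low frequencies are covered by the zeroth-order term $\normm{\comi v^\ell h}$, which carries a larger weight. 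If the authors' $\eps^{-5}$ comes from a different split (e.g., interpolating between $H^3_x$ and $L^2_x$ with the intermediate $H^{5/2}$), it makes no difference for our purposes.
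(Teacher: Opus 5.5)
Your proof is correct. It uses the same mechanism as the paper: Parseval in $x$, then a pointwise H\"older/Young interpolation carried out jointly in the frequency $k$ and the velocity weight, exploiting the large margin $\ell-3\rho=2\boldsymbol{\ell_1}+2+\gamma$ from \eqref{rhod}. The difference is that your choice of endpoints is better.

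\begin{itemize}
\item \emph{Your route.} Working with $g=\comi{D_v}^s\comi v^{\boldsymbol{\ell_1}+\frac{\gamma}{2}}h$, you interpolate in a single step. One endpoint is the third-order part of $\norm{h}_{\mathcal Y_\ell}$ at effective weight $\boldsymbol{\ell_1}+3s+\frac{\gamma}{2}$; the other is the target term at weight $\boldsymbol{\ell_1}+\frac{\gamma}{2}$. This gives $\eps^{-2}$ and only needs $3s\le \boldsymbol{\ell_1}+2+\gamma$ and $\ell-\boldsymbol{\ell_1}\ge 3s$.
\item \emph{The paper's route.} It pairs $|k|^3$ with the lower weight $\comi v^{\boldsymbol{\ell_1}-s}$. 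This pushes the complementary zeroth-order endpoint up to $\comi v^{\boldsymbol{\ell_1}+8s+\frac{3\gamma}{2}}$. A second interpolation of that weight is then needed, between $\comi v^{\boldsymbol{\ell_1}+16s+\frac{5\gamma}{2}}$ and $\comi v^{\boldsymbol{\ell_1}+\frac{\gamma}{2}}$. The first of these is absorbed by $\normm{\comi v^{\ell}h}_{L^2_x}$, using $\ell\ge\boldsymbol{\ell_1}+16s+2\gamma$. This second step is the source of $\eps^{-5}$.
\end{itemize}

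Your route is shorter and sharper in the only regime used later, namely $\eps$ small, as in Lemma \ref{lem:te2} and the iteration. Your use of \eqref{equal-norm} is also exactly right: applying it to $u=\comi v^{\boldsymbol{\ell_1}+\frac{\gamma}{2}}\hat h(k)$ turns both the left-hand side and the $\mathcal Y_\ell$ lower bound into weighted $L^2_v$ norms of $\hat g(k)$ with two-sided constants.

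One small correction concerns $\eps>1$. Your justification (``monotone in $\eps$ on the first term'') does not work, because the second term $C\eps^{-5}$ decreases in $\eps$. The $\eps=1$ bound $\norm{h}^2_{\mathcal Y_\ell}+C\norm{g}^2_{L^2_{x,v}}$ is therefore not automatically below the right-hand side for larger $\eps$. The fix comes from your own computation:
\begin{itemize}
\item Since $\comi k^4\comi v^{4s}\le \comi k^6\comi v^{6s}$, the left-hand side is at most $C_1\norm{h}^2_{\mathcal Y_\ell}$ outright, which settles $\eps\ge C_1$.
\item For $1<\eps<C_1$, the $\eps=1$ bound suffices after enlarging $C$ by a factor $C_1^5$.
\end{itemize}
The paper's proof also tacitly assumes $\eps\le 1$, when it absorbs the $k$-independent term into the $\eps^{-2}$ term.
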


\begin{remark}
    This lemma will be used to overcome the lack of a smallness assumption on   initial data. 
\end{remark}

\begin{proof}[Proof of  Lemma \ref{lem:small}] Using \eqref{equal-norm}
 and Parseval's identity gives
    \begin{align*}
       & \|\comi {D_v}^s   \comi v^{\boldsymbol{\ell_1}+2s+\frac{\gamma}{2}}  h \|_{H_x^2 L_v^2}^2  \leq C \|   \comi v^{\boldsymbol{\ell_1}+2s+\frac{\gamma}{2}} \comi {D_v}^s h \|_{H_x^2 L_v^2}^2\\
        &\leq C \sum_{k\in \mathbb Z^3} \|\abs{k}^2   \comi v^{\boldsymbol{\ell_1}+2s+\frac{\gamma}{2}} \comi {D_v}^s F_x h(k,\cdot) \|_{L_v^2}^2 + C \sum_{k\in \mathbb Z^3} \|   \comi v^{\boldsymbol{\ell_1}+2s+\frac{\gamma}{2}} \comi {D_v}^s \mathcal F_x h(k,\cdot) \|_{L_v^2}^2.
    \end{align*}
    On the other hand, for any $\eps>0,$ we have 
    \begin{align*}
       & \|\abs{k}^2   \comi v^{\boldsymbol{\ell_1}+2s+\frac{\gamma}{2}} \comi {D_v}^s F_x h(k,\cdot) \|_{L_v^2}^2\\
       & \leq \eps \|\abs{k}^3   \comi v^{\boldsymbol{\ell_1}-s} \comi {D_v}^s F_x h(k,\cdot) \|_{L_v^2}^2+C\eps^{-2} \|    \comi v^{\boldsymbol{\ell_1}+8s+\frac{3\gamma}{2}} \comi {D_v}^s F_x h(k,\cdot) \|_{L_v^2}^2.
    \end{align*}
 Combining the above estimates yields, for any $\eps>0,$
 \begin{align*}
    & \|\comi {D_v}^s   \comi v^{\boldsymbol{\ell_1}+2s+\frac{\gamma}{2}}  h \|_{H_x^2 L_v^2}^2  \\
    & \leq  \eps \sum_{k\in\mathbb Z^3} \|\abs{k}^3   \comi v^{\boldsymbol{\ell_1}-s} \comi {D_v}^s F_x h(k,\cdot) \|_{L_v^2}^2+C\sum_{k\in\mathbb Z^3}\eps^{-2} \|    \comi v^{\boldsymbol{\ell_1}+8s+\frac{3\gamma}{2}} \comi {D_v}^s F_x h(k,\cdot) \|_{L_v^2}^2\\
    & \leq  \eps  \sum_{\abs\alpha=3}\|   \comi v^{\boldsymbol{\ell_1}-s} \comi {D_v}^s \partial_x^\alpha h  \|_{L^2_{x, v}}^2+C \eps^{-2} \|    \comi v^{\boldsymbol{\ell_1}+8s+\frac{3\gamma}{2}} \comi {D_v}^s h \|_{L^2_{x, v}}^2\\
    & \leq  \eps  \sum_{\abs\alpha=3}\|   \comi v^{\boldsymbol{\ell_1}-s} \comi {D_v}^s \partial_x^\alpha h  \|_{L^2_{x, v}}^2+ \eps \|    \comi v^{\boldsymbol{\ell_1}+16s+\frac{5\gamma}{2}} \comi {D_v}^s h \|_{L^2_{x, v}}^2 +C \eps^{-5} \|    \comi v^{\boldsymbol{\ell_1}+ \frac{\gamma}{2}} \comi {D_v}^s h \|_{L^2_{x, v}}^2\\
    & \leq  \eps  \sum_{\abs\alpha=3}\|   \comi v^{\ell-\rho\abs\alpha+\frac{\gamma}{2}} \comi {D_v}^s \partial_x^\alpha h  \|_{L^2_{x, v}}^2+\eps \|    \comi v^{\ell+\frac{\gamma}{2}} \comi {D_v}^s h \|_{L^2_{x, v}}^2+C \eps^{-5}  \|    \comi v^{\boldsymbol{\ell_1}+ \frac{\gamma}{2}} \comi {D_v}^s h \|_{L^2_{x, v}}^2,
 \end{align*}
 where the last inequality follows from \eqref{rhod}. This with \eqref{equal-norm} and the definition of $\norm{\cdot}_{\mathcal{Y}_\ell}$ in \eqref{yn} yields the assertion in Lemma \ref{lem:small}. The proof is completed. 
\end{proof}


\subsection{Moment estimate for the collision operator}

This subsection is devoted to establishing moment estimates for the collision operator. The main result of this part is stated as below.  

\begin{proposition}\label{momentQ}
Suppose the collision kernel  $B$ takes the form \eqref{kern}  with $0<s<1$ and $\gamma\geq 0$. For any $l\geq 5$, there exists a constant $C_l$ depending on $l,$ such that,      for any sufficiently regular functions $f,g\geq 0$ with condition $\boldsymbol{(H)}$ fulfilled by $g$,  
\begin{align*}
 \int_{\mathbb T^3\times\mathbb R^3} Q(g, f) \langle v\rangle^{ l } \,dxdv \leq& -\frac{m_0\boldsymbol{\lambda}_ l }{4^{\gamma+1 }}   \|\langle v\rangle^{ l +\gamma}f\|_{L^1_{x, v}}+2  \boldsymbol{\omega}_{ l }\|  f\|_{L^{\infty}_xL^1_v}\|\langle v\rangle^{ l +\gamma}g\|_{L^1_{x, v}}\\
 &+C_{l}\|\comi v^{4+\gamma}g\|_{L_x^\infty L^1_v}\|\langle v\rangle^{ l}f\|_{L^1_{x, v}}+C_{ l}\|\langle v\rangle^{4+\gamma}f\|_{L_x^\infty L^1_v} \norm{\comi v^{ l }g}_{L^1_{x, v}}\, ,
\end{align*}
where $\boldsymbol{\lambda}_ l $ and $\boldsymbol{\omega}_ l $ are the positive constants defined in \eqref{lamome}, and  $m_0$ is the   constant in \eqref{finite}.
\end{proposition}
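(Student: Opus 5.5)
We follow the classical Povzner-type strategy, working pointwise in $x$ and integrating over $\mathbb T^3$ only at the end. By the pre-post collisional change of variables,
\[
\int_{\mathbb R^3} Q(g,f)\langle v\rangle^{l}\,dv=\int_{\mathbb R^6\times\mathbb S^2}|v-v_*|^\gamma b(\cos\theta)\, g_*f\,\big(\langle v'\rangle^{l}-\langle v\rangle^{l}\big)\,d\sigma\,dv_*\,dv .
\]
We then decompose the weight difference. With $v'=\frac{v+v_*}{2}+\frac{|v-v_*|}{2}\sigma$, one has the standard identity
\[
\langle v'\rangle^2=\langle v\rangle^2\cos^2\tfrac\theta2+\langle v_*\rangle^2\sin^2\tfrac\theta2+(\text{a term linear in }\sigma\text{ orthogonal to } v-v_*),
\]
the last term having zero mean against $b(\cos\theta)\,d\sigma$ once we symmetrize around the axis.

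We expand $\langle v'\rangle^{l}=\big(\langle v\rangle^2\cos^2\frac\theta2+\langle v_*\rangle^2\sin^2\frac\theta2+\mathcal R\big)^{l/2}$ with a Taylor formula of order two in $\mathcal R$. The first-order term in $\mathcal R$ integrates to zero in $\sigma$. The second-order remainder carries the factor $\sin^2\theta$, which kills the angular singularity, and it is bounded by $C_l\,|v-v_*|^2\langle v\rangle^{l-2}\langle v_*\rangle^{\,\cdot}$ plus symmetric terms. We then use the elementary inequality
\[
(a\cos^2\tfrac\theta2+b\sin^2\tfrac\theta2)^{l/2}\le a^{l/2}\cos^{l}\tfrac\theta2+b^{l/2}\sin^{l}\tfrac\theta2+C_l\big(a^{\frac{l}{2}-1}b+ab^{\frac l2-1}\big)\sin^2\tfrac\theta2 .
\]
Combining these, the integrand splits into three pieces:
\begin{itemize}
\item $-\langle v\rangle^{l}(1-\cos^{l}\frac\theta2)$, which gives a gain term of size $-\boldsymbol\lambda_l$;
\item $\langle v_*\rangle^{l}\sin^{l}\frac\theta2$, which gives a term of size $\boldsymbol\omega_l$;
\item cross terms $C_l(\langle v\rangle^{l-2}\langle v_*\rangle^2+\langle v\rangle^2\langle v_*\rangle^{l-2})\sin^2\frac\theta2$, whose $\sigma$-integral is a finite constant.
\end{itemize}

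\textbf{Gain term.} This is the main point. We must bound
\[
-\int g_*f\,|v-v_*|^\gamma\langle v\rangle^{l}\,dv_*\,dv\cdot\frac{\boldsymbol\lambda_l}{2^\gamma}
\]
from above by $-c\,\boldsymbol\lambda_l m_0\|\langle v\rangle^{l+\gamma}f\|_{L^1}$ plus lower-order terms. We use the lower bound
\[
\int g_*|v-v_*|^\gamma\,dv_*\ \ge\ \frac{m_0}{2}\,2^{-\gamma}\langle v\rangle^\gamma-C\|\langle v\rangle^{\gamma}g\|_{L^1_v}\mathbf 1_{|v|\le R}
\]
(or an analogous form). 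For $\gamma\ge0$ and $|v_*|\le|v|/2$ we have $|v-v_*|^\gamma\ge 2^{-\gamma}|v|^\gamma$. The mass of $g$ in $\{|v_*|>|v|/2\}$ is controlled by $\langle v\rangle^{-2}\cdot 2E_0$; by condition $\boldsymbol{(H)}$ this is at most $m_0/4$ for $|v|$ large, so at least mass $m_0/4$ survives. The region of bounded $|v|$, and the tail, produce errors absorbed into $C_l\|\langle v\rangle^{4+\gamma}g\|_{L^\infty_xL^1_v}\|\langle v\rangle^{l}f\|_{L^1}$. Bookkeeping the factors $2^{\gamma}$ and $4^{-\gamma}$ should yield the stated constant $m_0\boldsymbol\lambda_l/4^{\gamma+1}$. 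This step is the main obstacle: it has no room for loss, since the explicit constant matters for \eqref{loc-l}.

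\textbf{$\boldsymbol\omega_l$ term.} We bound $|v-v_*|^\gamma\le 2^\gamma(\langle v\rangle^\gamma+\langle v_*\rangle^\gamma)$ up to the normalisation in \eqref{lamome}. The piece with $\langle v_*\rangle^{l+\gamma}$ gives $\boldsymbol\omega_l\|f\|_{L^1_v}\|\langle v\rangle^{l+\gamma}g\|_{L^1_v}$. The piece with $\langle v\rangle^\gamma\langle v_*\rangle^{l}$ is lower order and is bounded by $\langle v\rangle^{\gamma}f$ times $\langle v_*\rangle^{l}g$, which lands in the last term of the statement. After integrating in $x$ and taking $L^\infty_x$ on $f$, this gives the factor $2\boldsymbol\omega_l$.

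\textbf{Cross terms.} These carry total weight at most $\langle v\rangle^{l-2+\gamma}\langle v_*\rangle^{2+\gamma}$ (and symmetric). We use Young's inequality to put them in the form $\langle v\rangle^{l}\langle v_*\rangle^{4+\gamma}+\langle v_*\rangle^{l}\langle v\rangle^{4+\gamma}$, using $l\ge5$ so that the exponents are admissible. Integrating, with the $L^\infty_x$ norm placed on the low moment, gives the two $C_l$ terms of the statement.
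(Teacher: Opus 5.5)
There is a genuine gap in your cross-term step, and it comes from a misjudgement of where the slack lies.

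\textbf{The cross terms cannot be placed in the $C_l$ terms when $\gamma>2$.} Take $v_*=0$. The term $|v-v_*|^\gamma\langle v\rangle^{l-2}\langle v_*\rangle^{2}\sin^2\frac\theta2$ then has size $\langle v\rangle^{l-2+\gamma}\sin^2\frac\theta2$. If $g$ concentrates near $0$, all its moments are of order $m_0$, so the admissible error terms only give $\langle v\rangle^{l}$ or $\langle v\rangle^{4+\gamma}$ on $f$. But $l-2+\gamma>\max(l,4+\gamma)$ as soon as $\gamma>2$ and $l>6$, which is exactly the regime needed later ($l=\ell_0$, $l=2\boldsymbol{\ell_1}$). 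So no Young inequality does what you claim. Symmetrically, $\langle v\rangle^{2}\langle v_*\rangle^{l-2+\gamma}$ is not controlled by $\langle v_*\rangle^{l}$ times a low moment of $f$.

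These terms genuinely occur with a positive sign. At $v_*=0$ one has $\langle v'\rangle^2=\langle v\rangle^2\cos^2\frac\theta2+\sin^2\frac\theta2$, whose $l/2$-th power exceeds $\langle v\rangle^l\cos^l\frac\theta2$ by at least $\frac l2\langle v\rangle^{l-2}\cos^{l-2}\frac\theta2\sin^2\frac\theta2$. So they must be absorbed into the two leading terms, using
$\langle v\rangle^{l-2+\gamma}\langle v_*\rangle^{2}\le\tilde\varepsilon\langle v\rangle^{l+\gamma}+C_{\tilde\varepsilon}\langle v\rangle^{l}\langle v_*\rangle^{\gamma}$ and
$\langle v\rangle^{2}\langle v_*\rangle^{l-2+\gamma}\le\varepsilon\langle v_*\rangle^{l+\gamma}+C_{\varepsilon}\langle v\rangle^{\gamma}\langle v_*\rangle^{l}$,
with $\tilde\varepsilon=\boldsymbol{\lambda}_l/4^{1+\gamma}$ and $\varepsilon=\boldsymbol{\omega}_l$. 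The second choice is where the factor $2$ in $2\boldsymbol{\omega}_l$ comes from; your $\sin^l\frac\theta2$ term alone produces only one $\boldsymbol{\omega}_l$. (For $\gamma\le 2$ your bookkeeping does go through.)

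\textbf{The gain term has slack, but your argument does not find it.} Your mass-splitting argument (mass at least $m_0/4$ in $\{|v_*|\le|v|/2\}$ for large $|v|$) yields at best the borderline constant $m_0\boldsymbol{\lambda}_l/4^{1+\gamma}$. Even that holds only up to a $|v|^\gamma$ versus $\langle v\rangle^\gamma$ correction of size $\langle v\rangle^{l+\gamma-2}$, which is again not lower order when $\gamma>2$.

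The fix is the lower bound you actually displayed, but without the indicator and without any use of the energy. Integrate the elementary inequality $|v-v_*|^\gamma\ge2^{-\gamma}\langle v\rangle^\gamma-\langle v_*\rangle^\gamma$ against $g_*$. This bounds your gain term by $-\frac{\boldsymbol{\lambda}_l}{4^\gamma}\norm{g}_{L^1_v}\norm{\langle v\rangle^{l+\gamma}f}_{L^1_v}+C_l\norm{\langle v\rangle^{\gamma}g}_{L^1_v}\norm{\langle v\rangle^{l}f}_{L^1_v}$. With $\norm{g}_{L^1_v}\ge m_0/2$, the leading part is twice the target constant. After spending $\boldsymbol{\lambda}_l/4^{1+\gamma}$ on the absorption, $\frac{3\boldsymbol{\lambda}_l}{4^{1+\gamma}}\ge\frac{\boldsymbol{\lambda}_l}{2^{1+2\gamma}}$ remains. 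That is how the paper reaches $m_0\boldsymbol{\lambda}_l/4^{1+\gamma}$ with $C_l$ independent of $E_0$.

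\textbf{A smaller point on the Taylor remainder.} In the second-order remainder you must use $v\cdot\kappa^\perp=v_*\cdot\kappa^\perp$ to get $\mathcal R^2\le|v-v_*|^2\sin^2\theta\,\min(|v|,|v_*|)^2$. A bound like $C_l|v-v_*|^2\langle v\rangle^{l-2}\sin^2\theta$, as you wrote it, is of order $\langle v\rangle^{l}\sin^2\theta$ at $v_*=0$ and would swamp the gain. With that refinement, your expansion of $(A+\mathcal R)^{l/2}$ is a valid substitute for the expansion \eqref{diffweight}--\eqref{error} that the paper imports. The rest of your structure (vanishing first-order term, the $\boldsymbol{\omega}_l$ term, integrating in $x$ at the end) matches the paper.
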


We first list some useful inequalities to be used frequently in the subsequent argument.  Note that 
\begin{equation}\label{elementaryinequality}
2^{-\gamma}\langle v\rangle^{\gamma}-\langle v_*\rangle^{\gamma}\leq |v-v_*|^{\gamma}\leq  2^{ \gamma} \langle v\rangle^{\gamma} \langle v_*\rangle^{\gamma},\quad |v-v_*|^{\gamma}\leq  2^{ \gamma } \Big(\langle v\rangle^{\gamma}+ \langle v_*\rangle^{\gamma}\Big).
\end{equation} 
Thus, for $f,g\geq 0,$ 
 \begin{equation}\label{tlow}
\begin{aligned}
     \int_{\mathbb{R}^6} |v-v_*|^{\gamma} g_*f\langle v\rangle^{ l }  dv_*\,dv &\geq 2^{-\gamma} \int_{\mathbb{R}^6}    g_*f\langle v\rangle^{ l+\gamma }  dv_*\,dv-\int_{\mathbb{R}^6}  \comi{v_*}^{\gamma} g_*f\langle v\rangle^{ l }  dv_*\,dv\\
     &\geq 2^{-\gamma} \norm{g}_{L_v^1}\norm{\langle v\rangle^{ l+\gamma } f}_{L_v^1}-\norm{\comi v^{\gamma}g}_{L_v^1}\norm{\langle v\rangle^{ l} f}_{L_v^1}.
\end{aligned}
\end{equation}
Consequently, the following inequality 
\begin{equation*}
     \int_{\mathbb{R}^6} |v-v_*|^{\gamma} g_*f\langle v\rangle^{ l }  dv_*\,dv \geq \frac{m_0}{2^{1+\gamma}}\norm{\langle v\rangle^{ l+\gamma } f}_{L_v^1}- \norm{\comi v^{\gamma}g}_{L_v^1}\norm{\langle v\rangle^{ l} f}_{L_v^1}  
\end{equation*}
holds for $f,g\geq 0$ with $g$ satisfying condition $\boldsymbol{(H)}$ .  
Using the standard change of variables between pre- and post-collisional  velocities
\begin{equation}
	\label{prepost}
	(v,v_*,\sigma) \rightarrow(v',v_*', (v-v_*)/|v-v_*|),
\end{equation}
 whose Jacobian equals one (cf. \cite[Subsection 4.5]{MR1942465} for instance), we rewrite  
 \begin{equation}\label{moest}
  \begin{aligned}
 \int_{\mathbb{R}^3}  Q(g, f)  \langle v\rangle^{ l } dv  
 &=\int_{\mathbb{R}^6\times\mathbb{S}^2} |v-v_*|^{\gamma}b(\cos\theta) (g_*'f'-g_*f)\langle v\rangle^{ l }\,d\sigma\, dv_*\, dv\\
 &=\int_{\mathbb{R}^6\times\mathbb{S}^2} |v-v_*|^{\gamma}b(\cos\theta)g_*f\big(\langle v'\rangle^{ l } -\langle v\rangle^{ l }\big)\,d\sigma\, dv_*\, dv.
 \end{aligned}
 \end{equation}
Therefore, the proof of Proposition \ref{momentQ} reduces to establishing the following lemma.

\begin{lemma}
    \label{lem:cru}
    Suppose $f,g\geq 0$. Then  for any $l\geq5$ there exists a constant $C_{l},$ depending  on $l$, such that  
    \begin{align*}
       & \int_{\mathbb{R}^6\times\mathbb{S}^2} |v-v_*|^{\gamma}b(\cos\theta)g_*f\big(\langle v'\rangle^{ l } -\langle v\rangle^{ l }\big)\,d\sigma\, dv_*\, dv\\
       &\qquad\leq  - \frac{\boldsymbol{\lambda}_l}{2^{1+2\gamma}}  \norm{g}_{L_v^1}\norm{\langle v\rangle^{ l+\gamma } f}_{L_v^1}+2\boldsymbol{\omega}_ l  \norm{f}_{L_v^1}\norm{\comi v^{ l +\gamma}g}_{L_v^1}  \\
     &\qquad\qquad +C_{l}  \norm{\comi v^{4+\gamma}f}_{L_v^1}\norm{\comi v^{l}g}_{L_v^1}+C_{l}\norm{\comi v^{4+\gamma}g}_{L_v^1}\norm{\comi v^{l}f}_{L_v^1},
    \end{align*}
  with       $ \boldsymbol{\lambda}_ l, \boldsymbol{\omega}_{ l } $ defined as in \eqref{lamome}.
\end{lemma}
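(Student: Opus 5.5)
We follow the classical Bobylev/Povzner-type decomposition of $\langle v'\rangle^l$, as in Alexandre--Morimoto--Ukai--Xu--Yang and Gamba et al. Fix $(v,v_*)$ and put $k=\frac{v-v_*}{|v-v_*|}$. Choose an orthonormal frame so that
$$
v' = \frac{v+v_*}{2}+\frac{|v-v_*|}{2}\sigma ,
$$
and write $\langle v'\rangle^2 = \langle v\rangle^2\cos^2\frac\theta2+\langle v_*\rangle^2\sin^2\frac\theta2 + \mathcal R$. Here $\mathcal R$ is a cross term linear in $\sin\theta$ times $(v\cdot \omega)$-type quantities, with $|\mathcal R|\lesssim \sin\theta\,|v||v_*|$. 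Its leading part is odd in the azimuthal variable of $\sigma$ around $k$, so it vanishes after the $\sigma$ integration.

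\textbf{Step 1: expand $\langle v'\rangle^l$.} Raising to the power $l/2$ and Taylor expanding around $\big(\langle v\rangle^2\cos^2\frac\theta2+\langle v_*\rangle^2\sin^2\frac\theta2\big)^{l/2}$ gives two pieces. The first-order term in $\mathcal R$ integrates to zero against $b(\cos\theta)\,d\sigma$ by symmetry. The second-order remainder is bounded by $C_l\sin^2\theta\,|v|^2|v_*|^2\big(\langle v\rangle^{l-4}+\langle v_*\rangle^{l-4}\big)$, or a similar expression with total weight $l$. Since $\sin^2\theta\, b$ is integrable, this remainder, multiplied by $|v-v_*|^\gamma\le 2^\gamma\langle v\rangle^\gamma\langle v_*\rangle^\gamma$, contributes terms of the form $\langle v\rangle^{l-2+\gamma}\langle v_*\rangle^{2+\gamma}$ and their symmetric versions. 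After interpolation these are bounded by $C_l\big(\|\langle v\rangle^{4+\gamma}f\|\,\|\langle v\rangle^l g\|+\|\langle v\rangle^{4+\gamma}g\|\,\|\langle v\rangle^l f\|\big)$. This is where $l\ge5$ is needed, so that the intermediate weights sit below $l$ in the right variable.

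\textbf{Step 2: the main term.} Use the elementary inequality
$$
(a\cos^2\tfrac\theta2+b\sin^2\tfrac\theta2)^{l/2}\le a^{l/2}\cos^l\tfrac\theta2 + b^{l/2}\sin^l\tfrac\theta2 + C_l\big(a^{l/2-1}b+ab^{l/2-1}\big)\sin^2\tfrac\theta2 ,
$$
which holds for $l\ge4$ by splitting the binomial sum and keeping the extreme terms. This gives
$$
\langle v'\rangle^l-\langle v\rangle^l\le -\big(1-\cos^l\tfrac\theta2\big)\langle v\rangle^l+\sin^l\tfrac\theta2\,\langle v_*\rangle^l + \text{(cross terms)} .
$$
The cross terms are controlled exactly as in Step 1, with bounds $\langle v\rangle^{l-2}\langle v_*\rangle^2$ and $\langle v\rangle^2\langle v_*\rangle^{l-2}$ times $|v-v_*|^\gamma$.

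Integrating the first main term against $b$ yields the factor $2^{-\gamma}\boldsymbol{\lambda}_l$ times $|v-v_*|^\gamma\langle v\rangle^l$. Now apply \eqref{tlow}: it gives $-\frac{\boldsymbol\lambda_l}{2^{1+2\gamma}}\|g\|_{L^1}\|\langle v\rangle^{l+\gamma}f\|_{L^1}$, plus an error $\boldsymbol\lambda_l\|\langle v\rangle^\gamma g\|\,\|\langle v\rangle^l f\|$ that is absorbed into the $C_l$ term. The constant is halved slightly to leave room for this absorption.

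The second main term contributes $2^{-\gamma}\boldsymbol\omega_l\int|v-v_*|^\gamma\langle v_*\rangle^l g_*f$. Bounding $|v-v_*|^\gamma\le 2^\gamma(\langle v\rangle^\gamma+\langle v_*\rangle^\gamma)$ produces $2\boldsymbol\omega_l\|f\|_{L^1}\|\langle v\rangle^{l+\gamma}g\|_{L^1}$ together with a lower-order term absorbed into $C_l$.

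\textbf{Main obstacle.} The delicate point is keeping the constants $\boldsymbol\lambda_l$ and $\boldsymbol\omega_l$ exact, because of the non-cutoff singularity. The $\sigma$-integral may only be split into the main pieces $(1-\cos^l)$ and $\sin^l$, which are $O(\theta^2)$ and integrable, while every remainder must carry a factor $\sin^2\frac\theta2$ to remain integrable. I would therefore carry out the expansion with $\mathcal R$ included inside the base, rather than separately. Concretely, I would compute the $\sigma$-average over the azimuth first, using the representation $v'=\frac{v+v_*}2+\frac{|v-v_*|}{2}\sigma$. Then I would apply convexity in the form of Jensen's inequality, so that odd terms cancel before any power is estimated.
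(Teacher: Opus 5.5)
Your decomposition itself is correct, and it is a self-contained substitute for the expansion \eqref{diffweight}--\eqref{error} that the paper imports. With $\kappa^\perp$ as in \eqref{deperp}, one has exactly $\langle v'\rangle^2=\langle v\rangle^2\cos^2\frac\theta2+\langle v_*\rangle^2\sin^2\frac\theta2+\sin\theta\,|v-v_*|\,(v\cdot\kappa^\perp)$, and $|v-v_*|\,|v\cdot\kappa^\perp|\le|v||v_*|$. The first-order Taylor term vanishes after the azimuthal integration, just as $I_3$ does in the paper. Your binomial inequality then leaves remainders $C_l\sin^2\frac\theta2\big(\langle v\rangle^{l-2}\langle v_*\rangle^{2}+\langle v\rangle^{2}\langle v_*\rangle^{l-2}\big)$, which are integrable in $\theta$ and suffice for this lemma. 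Discard the closing Jensen remark. Convexity of $x\mapsto x^{l/2}$ bounds the azimuthal average from below, which is the wrong direction; the Taylor argument of Step 1 is the right tool.

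The gap is in how you dispose of these remainders. Multiply by $|v-v_*|^\gamma$ and use the sum bound in \eqref{elementaryinequality}. This produces the weights $\langle v\rangle^{l-2+\gamma}\langle v_*\rangle^{2}$ and $\langle v\rangle^{2}\langle v_*\rangle^{l-2+\gamma}$. For $\gamma>2$ and $l>6$ each of these puts more than $\max\{l,4+\gamma\}$ moments on one function, so neither is bounded by the two $C_l$-terms. To see this, fix $g$ and concentrate $f$ near $|v|=R\to\infty$. The product bound $2^\gamma\langle v\rangle^\gamma\langle v_*\rangle^\gamma$ that you invoke in Step 1 is worse: for large $\gamma$ the weight $\langle v\rangle^{l-2+\gamma}\langle v_*\rangle^{2+\gamma}$ is not dominated by any combination of the four terms on the right-hand side. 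Since the lemma must hold for all $\gamma\ge0$, Step 1 fails as written.

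The missing idea is the absorption step in the paper's bound for $I_4$. By Young's inequality,
\[
\langle v\rangle^{l-2+\gamma}\langle v_*\rangle^{2}\le\tilde\varepsilon\langle v\rangle^{l+\gamma}+C_{\tilde\varepsilon}\langle v\rangle^{l}\langle v_*\rangle^{2+\gamma},\qquad \langle v\rangle^{2}\langle v_*\rangle^{l-2+\gamma}\le\varepsilon\langle v_*\rangle^{l+\gamma}+C_{\varepsilon}\langle v\rangle^{2+\gamma}\langle v_*\rangle^{l-2},
\]
and the small parts are absorbed into the $\boldsymbol\lambda_l$-term and the $\boldsymbol\omega_l$-term respectively. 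This is exactly what the constants in the statement pay for. The paper takes $\tilde\varepsilon=\boldsymbol\lambda_l/4^{1+\gamma}$ and $\varepsilon=\boldsymbol\omega_l$. That degrades $-\boldsymbol\lambda_l/4^{\gamma}$ (coming from \eqref{tlow}) to $-\boldsymbol\lambda_l/2^{1+2\gamma}$, and degrades $\boldsymbol\omega_l$ to $2\boldsymbol\omega_l$.

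Your explanations of these two constants are therefore not right. The error term produced by \eqref{tlow} is already of $C_l$-type and needs no room in the negative term. The sum bound applied to the $\sin^l\frac\theta2$ term yields only $\boldsymbol\omega_l$, not $2\boldsymbol\omega_l$. The argument is repaired if you use the sum bound throughout and add this absorption, with $\varepsilon,\tilde\varepsilon$ rescaled by your prefactor $C_l2^\gamma$.
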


\begin{proof} To simplify notations, here and throughout paper we denote by $C_l$  a generic constant depending  on $l.$ 
We first recall the following result established in \cite[Lemma 2.5]{MR4201411}:    
  \begin{equation}\label{diffweight}
  \langle v'\rangle^{ l }=\langle v\rangle^{ l }\cos^{ l }{\frac{\theta}{2}}+\langle v_*\rangle^{ l }\sin^{ l }{\frac{\theta}{2}} + l \langle v\rangle^{ l -2}|v-v_*|(v\cdot \kappa^\perp)\cos^{ l -1}{\frac{\theta}{2}}\sin{\frac{\theta}{2}}+R_ l ,
\end{equation}
where  $\theta$ is defined by \eqref{angle}, and 
\begin{equation}
    \label{deperp}
  \kappa^\perp:=\frac{\sigma-(\sigma\cdot\kappa)\kappa}{\abs{\sigma-(\sigma\cdot\kappa)\kappa}} \  \textrm{ with } \
\kappa=\frac{v-v_*}{|v-v_*|},
\end{equation}
and the error term $R_ l $   in \eqref{diffweight} satisfies that, for  $l\geq 5$, 
\begin{equation}\label{error}
\begin{aligned}
|R_ l | & \leq C_l\langle v\rangle\langle v_*\rangle^{ l -1}  \sin^{ l -3}{\frac{\theta}{2}}+C_l\langle v\rangle^{ l -2} \langle v_*\rangle^2\sin^{2}{\frac{\theta}{2}}+C_l\langle v\rangle^{ l -4} \langle v_*\rangle^4\sin^{2}{\frac{\theta}{2}}.
\end{aligned}
\end{equation}
Interested readers may refer to \cite[Lemma 2.5]{MR4201411} for the proof of \eqref{diffweight} and \eqref{error}.    
Consequently,  using  \eqref{diffweight} and \eqref{error} as well as the fact that 
\begin{equation*}
\forall\ j\geq 2, \quad 	\int_{\mathbb S^2}b(\cos\theta) \sin^j{\frac{\theta}{2}} d\sigma=2\pi\int_{0}^{\frac{\pi}{2}} \sin\theta b(\cos\theta)  \sin^j{\frac{\theta}{2}} d\theta \leq C,
\end{equation*}
we obtain, for  $f,g\geq0$ and $l\geq 5,$
\begin{equation}
\label{i14}
\begin{aligned}
 &\int_{\mathbb{R}^6\times\mathbb{S}^2} |v-v_*|^{\gamma}b(\cos\theta)g_*f\big(\langle v'\rangle^{ l } -\langle v\rangle^{ l }\big)\,d\sigma\, dv_*\, dv \\
&\leq \int_{\mathbb{R}^6\times\mathbb{S}^2}  |v-v_*|^{\gamma}b(\cos\theta)g_*f\langle v\rangle^{ l }\big(\cos^{ l }{\frac{\theta}{2}}-1\big)d\sigma dv_* dv\\
&\quad+\int_{\mathbb{R}^6\times\mathbb{S}^2} |v-v_*|^{\gamma}b(\cos\theta)g_*f\langle v_*\rangle^{ l }\sin^{ l }{\frac{\theta}{2}}\,d\sigma\, dv_*\, dv\\
&\quad+ l \int_{\mathbb{R}^6\times\mathbb{S}^2} |v-v_*|^{\gamma+1}b(\cos\theta) g_* f   \langle v\rangle^{ l -2}(v\cdot \kappa^\perp)\cos^{ l -1}{\frac{\theta}{2}}\sin{\frac{\theta}{2}}\,d\sigma\, dv_*\, dv\\
&\quad+C_l\int_{\mathbb{R}^6\times\mathbb{S}^2} |v-v_*|^{\gamma} g_*f  \left(\langle v\rangle \langle v_*\rangle^{ l -1}+\langle v\rangle^{ l -2}\langle v_*\rangle^{2} +\langle v\rangle^{ l -4}\langle v_*\rangle^{4}\right)dv_*\,dv\\
&:=I_1+I_2+I_3+I_4.
\end{aligned}
\end{equation}
For the last term $I_4$ above, using  \eqref{elementaryinequality} and the estimate 
\begin{equation*}
  \forall\ \tilde\eps>0,\quad \langle v\rangle \langle v_*\rangle^{ l -1+\gamma}\leq \tilde\eps \langle v_*\rangle^{ l+\gamma}+\tilde\eps^{-\gamma} \langle v\rangle^{1+\gamma} \langle v_*\rangle^{ l -1}, 
\end{equation*} 
we compute, for any $\eps>0,$ 
\begin{equation}\label{sl}
\begin{aligned}
    &C_l\int_{\mathbb{R}^6} |v-v_*|^{\gamma} g_*f   \langle v\rangle \langle v_*\rangle^{ l -1} dv_*\,dv\\
    &\leq 2^\gamma C_l\int_{\mathbb{R}^6}  g_*f   \langle v\rangle \langle v_*\rangle^{ l -1+\gamma} dv_*\,dv+2^\gamma C_l\int_{\mathbb{R}^6}   g_*f   \langle v\rangle^{1+\gamma} \langle v_*\rangle^{ l -1} dv_*\,dv\\
    &\leq \eps \int_{\mathbb{R}^6}  g_*f   \langle v_*\rangle^{ l+  \gamma} dv_*\,dv+\Big[\big(2^\gamma C_l\big)^{1+\gamma} \eps^{-\gamma} +2^\gamma C_l\Big] \int_{\mathbb{R}^6}  g_*f   \langle v\rangle^{1+\gamma} \langle v_*\rangle^{ l -1} dv_*\,dv\\
    &\leq \eps \norm{f}_{L_v^1}\norm{\comi v^{l+\gamma}g}_{L_v^1}+C_{\eps,l}  \norm{\comi v^{1+\gamma}f}_{L_v^1}\norm{\comi v^{l}g}_{L_v^1}.
\end{aligned}
\end{equation}  
Similarly, for any  $\tilde\eps>0,$
\begin{multline*}
    C_l\int_{\mathbb{R}^6} |v-v_*|^{\gamma} g_*f  \left( \langle v\rangle^{ l -2}\langle v_*\rangle^{2} +\langle v\rangle^{ l -4}\langle v_*\rangle^{4}\right)dv_*\,dv\\
     \leq \tilde\eps \norm{g}_{L_v^1}\norm{\comi v^{l+\gamma}f}_{L_v^1}+C_{\tilde\eps,l}\norm{\comi v^{4+\gamma}g}_{L_v^1}\norm{\comi v^{l}f}_{L_v^1}.
\end{multline*}
Consequently, combining the two estimates above yields the upper bound of $I_4$ in \eqref{i14}. Namely,  for any $\eps,\tilde\eps>0,$ 
\begin{multline*}
     I_4\leq   \eps \norm{f}_{L_v^1}\norm{\comi v^{l+\gamma}g}_{L_v^1}+\tilde\eps \norm{g}_{L_v^1} \norm{\comi v^{l+\gamma}f}_{L_v^1}\\
     +C_{\eps,l}  \norm{\comi v^{4+\gamma}f}_{L_v^1}\norm{\comi v^{l}g}_{L_v^1}+C_{\tilde\eps,l}\norm{\comi v^{4+\gamma}g}_{L_v^1}\norm{\comi v^{l}f}_{L_v^1}.
\end{multline*} 
In particular, letting $\eps=\boldsymbol{\omega}_l$ and   $\tilde\eps=\frac{ \boldsymbol{\lambda}_ l }{4^{1+ \gamma}}$ with $\boldsymbol{\omega}_l, \boldsymbol{\lambda}_ l $  defined by  \eqref{lamome}, we obtain 
\begin{equation}\label{i4}
\begin{aligned}
    I_4  \leq &  \boldsymbol{\omega}_l \norm{f}_{L_v^1}\norm{\comi v^{l+\gamma}g}_{L_v^1}+ \frac{ \boldsymbol{\lambda}_ l }{4^{1+ \gamma}}\norm{g}_{L_v^1} \norm{\comi v^{l+\gamma}f}_{L_v^1}\\
     &+C_{l}  \norm{\comi v^{4+\gamma}f}_{L_v^1}\norm{\comi v^{l}g}_{L_v^1}+C_{l}\norm{\comi v^{4+\gamma}g}_{L_v^1}\norm{\comi v^{l}f}_{L_v^1}.
\end{aligned}
\end{equation}
To estimate $I_1$ in \eqref{i14}, using the definition of $\boldsymbol{\lambda}_ l $ in \eqref{lamome},  we compute  
\begin{equation}\label{i1+}
    \begin{aligned}
    I_1 &=\int_{\mathbb{R}^6}  |v-v_*|^{\gamma} g_*f\langle v\rangle^{ l }\Big(2\pi\int_0^{\frac{\pi}{2}} \sin\theta b(\cos\theta) \big(\cos^{ l }{\frac{\theta}{2}}-1\big)d\theta \Big) dv_*\,dv\\
     &=-\frac{ \boldsymbol{\lambda}_ l }{2^{ \gamma}} \int_{\mathbb{R}^6}  |v-v_*|^{\gamma} g_*f\langle v\rangle^{ l }  dv_*\,dv\\
     &\leq -\frac{ \boldsymbol{\lambda}_ l }{4^{ \gamma}}\norm{g}_{L_v^1}\norm{\langle v\rangle^{ l+\gamma } f}_{L_v^1}+C_l\norm{\comi v^{\gamma}g}_{L_v^1}\norm{\langle v\rangle^{ l} f}_{L_v^1},
    \end{aligned}
\end{equation}
the last inequality using \eqref{tlow}. 
Recalling  $\boldsymbol{\omega}_ l $ is defined by
\eqref{lamome}, we use \eqref{elementaryinequality} to estimate the term $I_2$ in \eqref{i14} as follows. 
\begin{equation}\label{i2}
\begin{split}
      I_2&=  \int_{\mathbb{R}^6} |v-v_*|^{\gamma}g_*f\langle v_*\rangle^{ l }\Big(2\pi\int_0^{\frac{\pi}{2}} \sin\theta b(\cos\theta)\sin^{ l }{\frac{\theta}{2}}d\theta\Big) dv_*\,dv\\
      &= 2^{-\gamma}\boldsymbol{\omega}_ l  \int_{\mathbb{R}^6} |v-v_*|^{\gamma}g_*f\langle v_*\rangle^{ l }  dv_*\,dv\\
      &\leq   \boldsymbol{\omega}_ l  \norm{f}_{L_v^1}\norm{\comi v^{ l +\gamma}g}_{L_v^1}+  \boldsymbol{\omega}_ l  \norm{\comi v^\gamma f}_{L_v^1}\norm{\comi v^{ l }g}_{L_v^1}.
\end{split}
\end{equation}
It remains to handle $I_3$ in \eqref{i14}. We claim that, for any function $H(\cos\theta,\sin\theta)$ of $\cos\theta$ and $\sin\theta,$    
\begin{equation}\label{sdf}
	\int_{\mathbb S^2} H(\cos\theta,\sin\theta)  (v\cdot \kappa^\perp)\,d\sigma=0,
\end{equation} 
recalling $\cos\theta=\kappa\cdot\sigma$ with $\kappa=\frac{v-v_*}{\abs{v-v_*}}$ and $\kappa^\perp$ is defined in \eqref{deperp}.  
To verify this,  we use \eqref{deperp} to write 
\begin{equation}\label{orth}
\begin{aligned}
  \sigma&=(\sigma\cdot\kappa)\kappa+\kappa^\perp \abs{\sigma-(\sigma\cdot\kappa)\kappa}\\
  &= \kappa\cos\theta+\kappa^\perp\sin\theta=\kappa\cos\theta+\big(\underbrace{\kappa_1 \cos\phi+\kappa_2 \sin\phi}_{=\kappa^\perp}\big)\sin\theta,   
\end{aligned}
\end{equation}
where $\phi\in [0,2\pi]$ and $\kappa_1,\kappa_2\in\mathbb S^2$ such that $\{\kappa,\kappa_1,\kappa_2\}$  forms an orthonormal basis of $\mathbb R^3$. In the polar coordinates given by \eqref{orth}, the integral in \eqref{sdf} can be expressed as 
\begin{align*}
	&\int_{\mathbb S^2} H(\cos\theta,\sin\theta)  (v\cdot \kappa^\perp)\,d\sigma\\
	&= 	\int_0^{\pi}\sin\theta H(\cos\theta, \sin\theta)   \Big(\int_0^{2\pi} \big(v\cdot \kappa_1 \cos\phi+v\cdot\kappa_2\sin\phi\big)d\phi \Big)d\theta=0,
\end{align*}
where the last identity holds because $\kappa_1,\kappa_2$ are independent of $\phi$ and $\int_0^{2\pi}   \cos\phi \,d\phi=\int_0^{2\pi}\sin\phi\, d\phi=0.$ 
This gives \eqref{sdf}, and thus 
\begin{equation*}
    I_3= l \int_{\mathbb{R}^6\times\mathbb{S}^2} |v-v_*|^{\gamma+1}b(\cos\theta) g_* f   \langle v\rangle^{ l -2} (v\cdot \kappa^\perp)\cos^{ l -1}{\frac{\theta}{2}}\sin{\frac{\theta}{2}}\,d\sigma\, dv_*\, dv=0.
\end{equation*}
Substituting this and   the estimates \eqref{i4}, \eqref{i1+} and  \eqref{i2}   into \eqref{i14}, we  get the desired estimate in Lemma \ref{lem:cru}, completing the proof. 
\end{proof}

\begin{proof}[Completing the proof of Proposition \ref{momentQ}]
In the proof, we always assume the functions $f$ and $g$ are non-negative. 
 Using \eqref{moest} and Lemma \ref{lem:cru}, we conclude that
 \begin{equation*}
 \begin{aligned}
  \int_{\mathbb{R}^3}  Q(g, f)  \langle v\rangle^{ l } dv    \leq &  -\frac{\boldsymbol{\lambda}_l}{2^{1+2\gamma}} \norm{g}_{L_v^1}\norm{\langle v\rangle^{ l+\gamma } f}_{L_v^1} +2\boldsymbol{\omega}_ l  \norm{f}_{L_v^1}\norm{\comi v^{ l +\gamma}g}_{L_v^1}  \\
     &  +C_{l}  \norm{\comi v^{4+\gamma}f}_{L_v^1}\norm{\comi v^{l}g}_{L_v^1}+C_{l}\norm{\comi v^{4+\gamma}g}_{L_v^1}\norm{\comi v^{l}f}_{L_v^1}. 
        \end{aligned}
 \end{equation*}
Moreover,  since $g\geq 0$ satisfies condition $\boldsymbol{(H)}$, then 
  \begin{equation*}
  \frac{m_0}{2}\leq  \norm{g}_{L_v^1}.
  \end{equation*}
 Combining the above estimates  yields   
 \begin{multline*}
 \int_{\mathbb{R}^3}  Q(g, f)  \langle v\rangle^{ l } dv  
 \leq -\frac{m_0\boldsymbol{\lambda}_ l }{4^{\gamma  +1}}\norm{\comi v^{ l +\gamma}f}_{L_v^1}
        +2  \boldsymbol{\omega}_ l  \norm{  f}_{L_v^1}\norm{\comi v^{ l +\gamma}g}_{L_v^1}\\ +C_{l}\|\comi v^{4+\gamma}g\|_{L^1_v}\|\langle v\rangle^{ l}f\|_{L_v^1}+C_{ l}\|\langle v\rangle^{4+\gamma}f\|_{L^1_v} \norm{\comi v^{ l }g}_{L_v^1}. 
\end{multline*}
 Integrating the above inequality over $x\in\mathbb T^3$, we obtain the desired estimate in Proposition \ref{momentQ}. The proof is completed.
\end{proof}


\subsection{$L^2$-estimates for the collision operator}

Lower and upper bounds for the collision operator in the $L_v^2$-setting are given in Subsection \ref{subsec:tri}. The present part aims to derive  their counterparts in the weighted $L_v^2$-space, and the main result is as follows. 

\begin{proposition}\label{upbounded}
For any $ l \geq \frac{13}{2}+\gamma$, there exists a constant  $C_{ l }>0$ depending on $ l $, such that  
\begin{equation}
	\label{upbound-v}
\begin{aligned} 
 \big|\big ( \langle v\rangle^{ l }Q(g,f), h\big)_{L^2_{v}}\big| \leq & C_l \|\langle v\rangle^{6+\gamma}g\|_{L^2_v}\normm{\langle v\rangle^{l}f}\left(\normm{h}+\|\langle v\rangle^{ s+\frac{\gamma}{2}}h\|_{L^2_v}\right)\\
& +C_{l}\|\langle v\rangle^{6+\gamma}f\|_{L^2_v}\|\langle v\rangle^{l+\frac{\gamma}{2}}g\|_{L^2_v}\|\langle v\rangle^{\frac{\gamma}{2}}h\|_{L^2_v}.
\end{aligned}
\end{equation}
Moreover, if $g\geq 0$, then 
\begin{equation}\label{coer-v}
\begin{split}
  \big(\langle v\rangle^{ l } Q(g,f), \langle v\rangle^{ l } f\big)_{L^2_{v}} &\leq   -\frac{c_0}{2}\normm{ \langle v\rangle^{ l }f}^2
+ C_0\|\langle v\rangle^{ l + s+\frac{\gamma}{2}}f\|^2_{L^2_v}\\
&\   +C_{ l } \|\langle v\rangle^{6+\gamma}f\|_{L^2_v}^2\|\langle v\rangle^{ l +\frac{\gamma}{2}}g\|_{L^2_v}^2+C_{ l } \|\langle v\rangle^{6+\gamma} g\|^2_{L^2_v}\| \langle v\rangle^{ l +\frac{\gamma}{2}}f\|^2_{L^2_v},
\end{split}
\end{equation}
and 
\begin{equation}\label{+coercivity-com}
	\begin{aligned}
	&\big(\langle v\rangle^{ l } Q(g,f),\  \langle v\rangle^{ l } f\big)_{L^2_{v}} \\
	& \leq  -\frac{c_0}{2}\norm{\comi{D_v}^s\comi v^{l+\frac{\gamma}{2}}  f}_{L_v^2}^2+ 2C_0 \norm{\comi v^{l+\frac{\gamma}{2}}f}_{L_v^2}^2+2^{1+\gamma} \big(\boldsymbol{\lambda}_l+\boldsymbol{A}_{\gamma,s}\big) \| g\|_{L^1_v}   \|\langle v\rangle^{l+\frac{\gamma}{2}}f\|_{L^2_v}^2\\
		&\quad  +C_l \|\langle v\rangle^{6+\gamma}f\|_{L^2_v}^2\|\langle v\rangle^{l+\frac{\gamma}{2}}g\|_{L^2_v}^2+C_{ l }\Big(1+\|\langle v\rangle^{6+\gamma}g\|_{L^2_v}^2\Big)   \|\langle v\rangle^{l}f\|_{L^2_v}^2,	
	\end{aligned}
\end{equation}
where $c_0$ is the constant given  in \eqref{lower+2}. 
\end{proposition}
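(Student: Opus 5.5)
The plan is to reduce everything to the unweighted bounds \eqref{uppbound}, \eqref{lower+}, \eqref{lower+2} via a commutator decomposition. We write
\[
\langle v\rangle^{l}Q(g,f)=Q(g,\langle v\rangle^{l}f)+\Gamma_l(g,f),\qquad
\Gamma_l(g,f)=\int_{\mathbb R^3\times\mathbb S^2}B\,g'_*f'\big(\langle v\rangle^{l}-\langle v'\rangle^{l}\big)\,dv_*d\sigma .
\]
Applying \eqref{uppbound} to the first piece gives the first line of \eqref{upbound-v}, since $\|\langle v\rangle^{\gamma+2s}g\|_{L^1_v}+\|g\|_{L^2_v}\lesssim\|\langle v\rangle^{6+\gamma}g\|_{L^2_v}$ by Cauchy--Schwarz. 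Applying \eqref{lower+} (respectively \eqref{lower+2}) with $f$ replaced by $\langle v\rangle^l f$ gives the coercive term $-c_0\normm{\langle v\rangle^l f}^2+C_0\|\langle v\rangle^{l+s+\frac\gamma2}f\|^2$ in \eqref{coer-v}. In \eqref{+coercivity-com} it gives instead the $\comi{D_v}^s$-coercive term plus $C_0\|\langle v\rangle^{l+\frac\gamma2}f\|^2$.

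The main work is bounding $(\Gamma_l(g,f),h)_{L^2_v}$. After the pre/post change of variables \eqref{prepost}, we expand $\langle v'\rangle^l-\langle v\rangle^l$ as in \eqref{diffweight}, with the remainder $R_l$ bounded by \eqref{error}. This splits $(\Gamma_l(g,f),h)$ into four pieces, handled one by one.

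\begin{itemize}
\item The term $\langle v\rangle^l(\cos^l\frac\theta2-1)$, in which $h$ sits at $v'$ and $f$ at $v$. We write $h'=h+(h'-h)$. The $h$ part produces exactly $-2^{-\gamma}\boldsymbol\lambda_l\int|v-v_*|^\gamma g_*\langle v\rangle^lf h$. The difference part is controlled using $|1-\cos^l\frac\theta2|\lesssim l\theta^2$; there the singularity is mild, so only a small fraction of $\normm{\langle v\rangle^lf}$ appears.
\item The term $\langle v_*\rangle^l\sin^l\frac\theta2$. It is integrable in $\theta$ and puts the high weight on $g$. By Cauchy--Schwarz and the change of variables of the Appendix it is bounded by $\|\langle v\rangle^{l+\frac\gamma2}g\|_{L^2}\|\langle v\rangle^{6+\gamma}f\|_{L^2}\|\langle v\rangle^{\frac\gamma2}h\|_{L^2}$.
\item The first-order term, with factor $\cos^{l-1}\frac\theta2\sin\frac\theta2$. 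It is not absolutely integrable when $s\ge\frac12$. We use the symmetry \eqref{sdf} after replacing $h(v')$ by $h(v)$; the difference $h'-h$ gains an extra $\theta$, leaving the integrable factor $\theta^{2-1-2s}$. That difference is bounded through the regular change of variables with the $\normm{h}$ norm.
\item The remainder $R_l$. It carries $\sin^2\frac\theta2$ and is integrable. It is bounded by the $L^2$ products, using $l\ge\frac{13}2+\gamma$ so that $\langle v_*\rangle^{4}$ and $\langle v\rangle$ factors fit into $\|\langle v\rangle^{6+\gamma}\cdot\|_{L^2_v}$ (the $6$ covers $4+\gamma/2$ plus the $L^1\subset L^2_{3/2+}$ loss).
\end{itemize}

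For \eqref{coer-v} we take $h=\langle v\rangle^lf$. The $\boldsymbol\lambda_l$ term has a good sign, since $g\ge0$ and it appears with a minus sign, and can be dropped. The cross terms are absorbed by Young's inequality into $\frac{c_0}{2}\normm{\langle v\rangle^lf}^2$, producing the products $C_l\|\langle v\rangle^{6+\gamma}\cdot\|^2\|\langle v\rangle^{l+\frac\gamma2}\cdot\|^2$.

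For \eqref{+coercivity-com} we cannot use the spherical part of $\normm{\cdot}$ for absorption. Instead we use the cancellation lemma \eqref{cance} on the $h'-h$ contributions with $h=\langle v\rangle^l f$, which yields the explicit $2^{1+\gamma}(\boldsymbol\lambda_l+\boldsymbol A_{\gamma,s})\|g\|_{L^1}\|\langle v\rangle^{l+\frac\gamma2}f\|^2$, using $|v-v_*|^\gamma\le2^\gamma\langle v\rangle^\gamma\langle v_*\rangle^\gamma$. The remaining lower-order pieces go into $C_l(1+\|\langle v\rangle^{6+\gamma}g\|^2)\|\langle v\rangle^lf\|^2$.

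The step I expect to be the main obstacle is the first-order term when $s\ge\frac12$. Its Taylor structure must be exploited through the symmetry \eqref{sdf}, and the resulting commutator must be bounded by only $\normm{\langle v\rangle^lf}$ and $\normm{h}+\|\langle v\rangle^{s+\frac\gamma2}h\|$, without losing extra weight beyond $\langle v\rangle^{6+\gamma}$ on $g$.
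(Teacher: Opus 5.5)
Your outline is sound and, except for one term, it is the paper's proof. The shared parts are the splitting \eqref{gfh}, the bounds \eqref{uppbound}, \eqref{lower+}, \eqref{lower+2} on $Q(g,\comi v^l f)$, and the expansion \eqref{diffweight}--\eqref{error} of the commutator. For \eqref{+coercivity-com}, your treatment of the $H'-H$ contributions with $H=\comi v^l f$, $g\ge0$ is Lemma \ref{lem:coer}. It works because, by \eqref{idt}, that Dirichlet form equals $-2(Q(g,H),H)$ plus a cancellation term, and part of it is absorbed into the main term before \eqref{lower+2} is applied.

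The real difference is the first-order term in \eqref{upbound-v}. You kill $\int\cdots g_*fh$ by \eqref{sdf} and put the difference on $h$. Cauchy--Schwarz then gives $C\|\comi v^{6+\gamma}g\|_{L^2_v}^{1/2}\|\comi v^{l+\frac\gamma2}f\|_{L^2_v}\,D^{1/2}$ with $D=\int B|g_*|(h'-h)^2$. The bound $D\lesssim\|\comi v^{6+\gamma}g\|_{L^2_v}\big(\normm{h}+\|\comi v^{s+\frac\gamma2}h\|_{L^2_v}\big)^2$ comes from \eqref{idt}, \eqref{uppbound} and \eqref{cance}, not from a change of variables. With that, your route is valid for \eqref{upbound-v} and shorter.

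The paper instead uses $v\cdot\kappa^\perp=v_*\cdot\kappa^\perp$ and moves $\comi v^{l-2}f$ to $v'$. In $J_{3,1}$, the identity $\kappa^\perp=\kappa'\sin\frac\theta2+\kappa'^\perp\cos\frac\theta2$ and \eqref{sdf} leave a factor $\sin^2\frac\theta2$. In $J_{3,2}$ the difference sits on $F=\comi v^{l-2}f$, as a $|v-v_*|^{2+\gamma}$-weighted Dirichlet form handled by \eqref{idt}, \eqref{cance}, \eqref{uppbound2}; the drop from $l$ to $l-2$ pays for the $\comi v^{s}$ loss.

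What the paper's route buys is Lemma \ref{commutator}, in which $h$ enters only through $\|\comi v^{\frac\gamma2}h\|_{L^2_v}$. That gives \eqref{coer-v} by a single Young inequality. It is also needed in Lemma \ref{lem:te2}, where the test function $\comi v^{\boldsymbol{\ell_1}}\zeta_n$ is controlled only by $\norm{\comi{D_v}^s\comi v^{\boldsymbol{\ell_1}+\frac\gamma2}\zeta_n}_{L^2_{x,v}}$, not by $\normm{\cdot}$.

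Two steps of your sketch do not give the inequalities exactly as stated.

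\emph{The constant in \eqref{coer-v}.} Your first-order bound contains $\|\comi v^{s+\frac\gamma2}h\|_{L^2_v}=\|\comi v^{l+s+\frac\gamma2}f\|_{L^2_v}$, which cannot be absorbed into $\frac{c_0}{2}\normm{\comi v^l f}^2$. Young only yields $C_0+\eta$ in front of the weight-loss term. This is harmless downstream, but to get $C_0$ exactly, use (since $g\ge0$) the same absorption into $(Q(g,\comi v^l f),\comi v^l f)$ that you plan for \eqref{+coercivity-com}. Alternatively, put the difference on $f$ as the paper does.

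\emph{The norm of $g$ in \eqref{+coercivity-com}.} The product bound $|v-v_*|^\gamma\le 2^\gamma\comi v^\gamma\comi{v_*}^\gamma$ produces $\|\comi v^\gamma g\|_{L^1_v}$, not $\|g\|_{L^1_v}$, in front of $\|\comi v^{l+\frac\gamma2}f\|_{L^2_v}^2$. You need the additive bound $|v-v_*|^\gamma\le2^\gamma(\comi v^\gamma+\comi{v_*}^\gamma)$, sending the $\comi{v_*}^\gamma$ part into $C_l(1+\|\comi v^{6+\gamma}g\|_{L^2_v}^2)\|\comi v^l f\|_{L^2_v}^2$. This matters for Lemma \ref{lem:te}, which bounds that coefficient by $2M_0$ using $\boldsymbol{(H)}$. $\|\comi v^\gamma g\|_{L^1_v}$ is not controlled by $\boldsymbol{(H)}$ when $\gamma>2$.

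Finally, the piece $\comi v\comi{v_*}^{l-1}\sin^{l-3}\frac\theta2$ of $R_l$ puts the large weight on $g$. It must go through \eqref{inch} like the $\sin^l$ term. That is where $l\ge\frac{13}{2}+\gamma$ is used: the power of $\sin\frac\theta2$ must beat the Jacobian $\sin^{-3-\gamma}\frac\theta2$.
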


 The proof of Proposition \ref{upbounded} relies crucially on commutator estimates between weight functions and the collision operator. Although there has been extensive literature  on such estimates, they are not  applicable in our setting. To this end, we adapt the  argument in  \cite{MR4201411} to establish the required estimates.

\begin{lemma}[Commutator estimate]\label{commutator}
For any $ l \geq \frac{13}{2}+\gamma$,   there exists  a constant $C_{ l }>0$ depending  on $ l $, such that
 \begin{align*}
 &\Big | (\langle v\rangle^ l  Q(g,f), h)_{L^2_v}-( Q(g,\langle v\rangle^ l  f), h)_{L^2_v}\Big|\\
 	&\leq  	 C_l \|\langle v\rangle^{6+\gamma}g\|_{L^2_v}\|\langle v\rangle^{\frac{\gamma}{2}}h\|_{L^2_v}\|\comi{D_v}^s \langle v\rangle^{l+\frac{\gamma}{2}}f\|_{L^2_v}+	2 \boldsymbol{\omega}_{l-2-\gamma} \|f\|_{L^1_v} \|\langle v\rangle^{l+\frac{\gamma}{2}}g\|_{L^2_v} \|\langle v\rangle^{ \frac{\gamma}{2}}h\|_{L^2_v}\\
 		&\quad +C_l  \|\langle v\rangle^{ 6+\gamma}f\|_{L^2_v}\|\langle v\rangle^{l+\frac{\gamma}{2}}g\|_{L^2_v}  \| h\|_{L^2_v}+C_l  \|\langle v\rangle^{ 6+\gamma}f\|_{L^2_v}\|\langle v\rangle^{l}g\|_{L^2_v}  \| \comi v^{\frac{\gamma}{2}}h\|_{L^2_v},
 \end{align*}
 where $\boldsymbol{\lambda}_{l},\boldsymbol{  \omega}_{l}$ with $l\geq 2$  are the constants defined in \eqref{lamome}. As a result, 
  \begin{equation}\label{sa}
    \begin{aligned}
        &|(\langle v\rangle^ l  Q(g,f), h)_{L^2_v}-( Q(g,\langle v\rangle^ l  f), h)_{L^2_v}|\\
        &\leq  	C_l \|\langle v\rangle^{6+\gamma}f\|_{L^2_v} \|\langle v\rangle^{l+\frac{\gamma}{2}}g\|_{L^2_v}    \|\langle v\rangle^{ \frac{\gamma}{2}}h\|_{L^2_v} +C_{ l } \norm{\comi v^{6+\gamma} g}_{L^2_v}  \norm{\comi v^{ \frac{\gamma}{2}}h}_{L_v^2}\normm{\comi v^l f}.
    \end{aligned}
\end{equation}
\end{lemma}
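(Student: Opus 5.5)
Write the commutator explicitly. By definition of $Q$ and the change of variables \eqref{prepost},
\begin{align*}
 &(\langle v\rangle^ l  Q(g,f), h)_{L^2_v}-( Q(g,\langle v\rangle^ l  f), h)_{L^2_v}\\
 &\quad=\int_{\mathbb{R}^6\times\mathbb{S}^2} |v-v_*|^{\gamma}b(\cos\theta)\, g'_*f' h\,\big(\langle v\rangle^{l}-\langle v'\rangle^{l}\big)\,d\sigma\, dv_*\, dv .
\end{align*}
Here the $g_*f$ terms cancel identically. I then expand $\langle v\rangle^l-\langle v'\rangle^l$ using the decomposition \eqref{diffweight}, with the roles of $(v,v_*)$ and $(v',v'_*)$ exchanged: $|v'-v'_*|=|v-v_*|$, and the angle between $v'-v'_*$ and $v-v_*$ is again $\theta$. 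This gives
\[
\langle v\rangle^l-\langle v'\rangle^l=\langle v'\rangle^l\big(\cos^l\tfrac\theta2-1\big)+\langle v'_*\rangle^l\sin^l\tfrac\theta2+l\langle v'\rangle^{l-2}|v-v_*|(v'\cdot\tilde\kappa^\perp)\cos^{l-1}\tfrac\theta2\sin\tfrac\theta2+\tilde R_l ,
\]
where $\tilde R_l$ obeys \eqref{error} in the primed variables. This splits the commutator into four pieces $J_1,\dots,J_4$, which I bound in turn.

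\textbf{$J_2$ and $J_4$.} For $J_2$ the factor $\sin^l\frac\theta2$ removes the angular singularity. Changing variables back $(v',v'_*)\to(v,v_*)$ with the regular change of variables of Appendix \ref{Appendix} (for fixed $\sigma$, $v\mapsto v'$ has Jacobian bounded on $\theta\le\pi/2$), the velocity weight $\langle v'_*\rangle^{l}\sin^{l}\frac\theta2$ lands on $g$. I keep the sharp constant by placing the weight $\langle v\rangle^{l+\gamma/2}$ on $g$ through $|v-v_*|^\gamma\lesssim\langle v\rangle^{\gamma}+\langle v_*\rangle^{\gamma}$ and a Cauchy--Schwarz in $(v,v_*)$ with $f$ only in $L^1_v$. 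The loss of $2+\gamma$ powers of $\sin\frac\theta2$ needed to absorb the Jacobian factor $\cos^{-3-\gamma}$-type terms and to balance the weights is what produces $2\boldsymbol{\omega}_{l-2-\gamma}\|f\|_{L^1_v}\|\langle v\rangle^{l+\gamma/2}g\|_{L^2_v}\|\langle v\rangle^{\gamma/2}h\|_{L^2_v}$. The lower-order parts of this split give the $C_l\|\langle v\rangle^{6+\gamma}f\|_{L^2_v}\|\langle v\rangle^{l}g\|_{L^2_v}\|\langle v\rangle^{\gamma/2}h\|_{L^2_v}$ term. Here $\|f\|_{L^1}$, $\|\langle v\rangle^{4+\gamma} f\|_{L^1}\lesssim\|\langle v\rangle^{6+\gamma}f\|_{L^2}$, since $\langle v\rangle^{-2}\in L^2(\mathbb R^3)$. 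For $J_4$, each term of \eqref{error} carries $\sin^2\frac\theta2$ or better, so it is angularly integrable. After the same change of variables I distribute weights as in \eqref{sl}: when the large weight falls on $g_*$, I get $\|\langle v\rangle^{6+\gamma}f\|\|\langle v\rangle^{l+\gamma/2}g\|\|h\|$; when it falls on $f$, I get $\|\langle v\rangle^{6+\gamma}g\|\|\langle v\rangle^{l+\gamma/2}f\|\|\langle v\rangle^{\gamma/2}h\|$, and the latter is dominated by the first term on the right. The requirement $l\ge\frac{13}2+\gamma$ makes the intermediate weights $\langle v_*\rangle^{4}$, $\langle v_*\rangle^{l-1}$ fit inside $6+\gamma$ and $l$.

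\textbf{$J_1$ and $J_3$.} Both contain only the singular factor $\sin\frac\theta2$ or $1-\cos^l\frac\theta2\sim\theta^2$, so they are not absolutely convergent enough by themselves. For $J_1$ I write $\langle v'\rangle^l f'=F'$ with $F=\langle v\rangle^lf$. The term is then $\int b\,(\cos^l\frac\theta2-1)|v-v_*|^\gamma g'_*F'h$; the factor is $O(\theta^2)$, hence integrable. The regular change of variables moves everything to unprimed $F$, $g$ and $h(\cdot)$ evaluated at a shifted point. This gives $C_l\|\langle v\rangle^{6+\gamma}g\|\,\|\langle v\rangle^{\gamma/2}F\|\,\|\langle v\rangle^{\gamma/2}h\|$, bounded by the first term.

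$J_3$ is the main obstacle: the first-order term $\sin\frac\theta2\cdot|v-v_*|$ is not cancelled, because $h$ is evaluated at $v$ rather than at $v'$. I would first replace $h(v)$ by $h(v')$. The symmetric part vanishes by the analogue of \eqref{sdf} after the change of variables $(v,v_*)\to(v',v'_*)$, which is angular averaging over $\tilde\kappa^\perp$. What remains is $\int b\sin\frac\theta2\,|v-v_*|^{\gamma+1}\langle v'\rangle^{l-1}g'_*f'(h-h')$. I would control it by a Cauchy--Schwarz splitting $\theta^{1/2}$ on each side. The difference $h-h'$, measured against $b\,\theta$, costs $s$ derivatives. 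I would place these derivatives on $f$ via symmetry and the standard $H^s$ difference estimate, yielding the factor $\|\langle D_v\rangle^s\langle v\rangle^{l+\gamma/2}f\|_{L^2}$ multiplied by $\|\langle v\rangle^{6+\gamma}g\|\|\langle v\rangle^{\gamma/2}h\|$. Finally, \eqref{sa} follows by collecting terms, using $\|\langle D_v\rangle^s\langle v\rangle^{l+\gamma/2}f\|\le\normm{\langle v\rangle^lf}$ (equivalence \eqref{equal-norm}), $\boldsymbol{\omega}_{l-2-\gamma}\le C$, $\|f\|_{L^1}\lesssim\|\langle v\rangle^{6+\gamma}f\|_{L^2}$, and $\|h\|\le\|\langle v\rangle^{\gamma/2}h\|$.
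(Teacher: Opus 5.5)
Your skeleton matches the paper's: the representation $\int B\,g'_*f'h\,(\langle v\rangle^l-\langle v'\rangle^l)$, the expansion \eqref{diffweight}, and separate treatment of the $\cos^l\frac{\theta}{2}-1$, $\sin^l\frac{\theta}{2}$, first-order and remainder pieces. The gap is in $J_3$, and it appears for $s\ge\frac12$, which lies inside the range of the lemma.

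First, the Cauchy--Schwarz with $\theta^{1/2}$ on each side fails there. The factor carrying $|f'|^2$ without a difference contains $\int_{\mathbb S^2}b(\cos\theta)\,\theta\,d\sigma\approx\int_0^{\pi/2}\theta^{-2s}\,d\theta=+\infty$. Second, and more fundamentally, once you bound $|v'\cdot\tilde\kappa^\perp|$ you reduce to $\int B\sin\frac{\theta}{2}\,|v-v_*|\,\langle v'\rangle^{l-1}|g'_*||f'||h-h'|$, and the difference now sits on $h$. For $s\ge\frac12$ this cannot be controlled without fractional regularity of $h$, which the lemma does not allow. Inside an integral of absolute values, no symmetry moves that difference onto $f$. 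The transfer must be done on the signed integral, and it is the real content of the proof.

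To see what is needed, work in the variables where $f,g$ are unprimed and $h$ is primed. Set $F=\langle v\rangle^{l-2}f$ and $\Phi=l\,|v-v_*|^{1+\gamma}b(\cos\theta)\cos^{l-1}\frac{\theta}{2}\sin\frac{\theta}{2}\,(v_*\cdot\kappa^\perp)$, using $v\cdot\kappa^\perp=v_*\cdot\kappa^\perp$. Your remainder is then $\int\Phi g_*F(h'-h)=\int\Phi g_*(F-F')h'+\int\Phi g_*F'h'-\int\Phi g_*Fh$, and the last integral vanishes by \eqref{sdf}. So you land exactly on the paper's splitting $J_{3,2}+J_{3,1}$, and two ingredients are missing from your plan.

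(i) $J_{3,1}=\int\Phi g_*F'h'$ is not zero. You must show it is of order $\sin^2\frac{\theta}{2}$. Write $\kappa^\perp=\kappa'\sin\frac{\theta}{2}+\kappa'^\perp\cos\frac{\theta}{2}$ with $\kappa'=(v'-v_*)/|v'-v_*|$, use $|v'-v_*|=|v-v_*|\cos\frac{\theta}{2}$ and the regular change of variables \eqref{rech}, and then remove the $\kappa'^\perp$ component by \eqref{sdf}.

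(ii) In $J_{3,2}$ the whole factor $\sin\frac{\theta}{2}$ must go onto the $h'$ side, where only $\int b\sin^2\frac{\theta}{2}\,d\sigma<\infty$ and \eqref{rech} are used. $F$ is then measured by the full Dirichlet form $\int|v-v_*|^{2+\gamma}b\,|g_*|(F-F')^2$. This is bounded by $C_l\|\langle v\rangle^{6+\gamma}g\|_{L^2_v}\|\langle D_v\rangle^s\langle v\rangle^{l+\gamma/2}f\|_{L^2_v}^2$ through three steps:
the identity $(F-F')^2=2F(F-F')+(F'^2-F^2)$;
the cancellation lemma \eqref{cance};
the expansion $|v-v_*|^2=|v|^2+|v_*|^2-2v\cdot v_*$, which turns the first piece into $Q$-pairings controlled by \eqref{uppbound2}.

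A smaller inaccuracy concerns $J_2$. Keeping $f$ in $L^1_v$ and $h$ in $L^2_v$ requires the singular change $v_*\mapsto v'$ of \eqref{inch}, not the regular one. Its Jacobian $\sin^{-3-\gamma}\frac{\theta}{2}$ is what forces the loss of $\frac{3+\gamma}{2}\le 2+\gamma$ powers of $\sin\frac{\theta}{2}$ behind $\boldsymbol{\omega}_{l-2-\gamma}$. The bounded factor $\cos^{-3-\gamma}\frac{\theta}{2}$ of \eqref{rech} plays no role there. The same singular change is needed for the $\langle v_*\rangle^{l-1}\sin^{l-3}\frac{\theta}{2}$ remainder.
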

\begin{proof} 
It suffices to prove the first assertion,   as the second   follows immediately from it by the inequality that   
\begin{equation*}
 \|\comi{D_v}^s\langle v\rangle^{l+\frac{\gamma}{2}} f\|_{L^2_v}\leq  \normm{\comi v^l f}. 
\end{equation*} 
Similar to \eqref{moest}, we use 
  the pre- and post-collisional  change of variables \eqref{prepost}   to write 
\begin{align*}
&(\langle v\rangle^ l  Q(g,f), h)_{L^2_v}-( Q(g,\langle v\rangle^ l  f), h)_{L^2_v}\\
 &=\int_{\mathbb{R}^6\times\mathbb{S}^2}|v-v_*|^{\gamma}b(\cos\theta) (g_*'f'-g_*f)\langle v\rangle^{l} h \,d\sigma\, dv_*\, dv\\
 &\quad-\int_{\mathbb{R}^6\times\mathbb{S}^2}  |v-v_*|^{\gamma}b(\cos\theta) \big(g_*'f'\langle v'\rangle^ l-g_*f\langle v\rangle^{l}\big )  h \,d\sigma\, dv_*\, dv \\
 &=\int_{\mathbb{R}^6\times\mathbb{S}^2}  |v-v_*|^{\gamma}b(\cos\theta) g_*'f' \big(\langle v\rangle^{l} - \langle v'\rangle^ l\big)h \,d\sigma\, dv_*\, dv \\
&=\int_{\mathbb{R}^6\times\mathbb{S}^2}  |v-v_*|^{\gamma}b(\cos\theta)g_*f\big(\langle v'\rangle^ l -\langle v\rangle^ l \big) h'\,d\sigma\, dv_*\, dv.
\end{align*}
Moreover,  substituting  \eqref{diffweight}  into the above integral and then using   \eqref{error},  one has     
\begin{equation}\label{j}
\begin{aligned}
&(\langle v\rangle^ l  Q(g,f), h)_{L^2_v}-( Q(g,\langle v\rangle^ l  f), h)_{L^2_v}\\
&\leq
\int_{\mathbb{R}^6\times\mathbb{S}^2} |v-v_*|^{\gamma}b(\cos\theta)\Big(\cos^{ l }\frac{\theta}{2}-1\Big) \langle v\rangle^{ l } g_* f h' \,d\sigma\, dv_*\, dv\\
&\quad  +\int_{\mathbb{R}^6\times\mathbb{S}^2} |v-v_*|^{\gamma}b(\cos\theta)\sin^{ l }\frac{\theta}{2} \langle v_*\rangle^{ l } g_* f h' \,d\sigma\, dv_*\, dv\\
&\quad  + l \int_{\mathbb{R}^6\times\mathbb{S}^2} |v-v_*|^{\gamma+1}b(\cos\theta)\langle v\rangle^{ l -2}(v\cdot \kappa^\perp) \sin\frac{\theta}{2}\cos^{l -1}\frac{\theta}{2}\,g_*f h' \,d \sigma\, dv_*\, dv\\
&\quad  +C_l\int_{\mathbb{R}^6\times\mathbb{S}^2}  |v-v_*|^{\gamma} b(\cos\theta) \langle v\rangle \langle v_*\rangle^{ l -1}\sin^{l-3}{\frac{\theta}{2}}  \,  |g_*  f h'| \, d\sigma\, dv_*\, dv\\
&\quad  +C_l\int_{\mathbb{R}^6\times\mathbb{S}^2}  |v-v_*|^{\gamma} b(\cos\theta)\Big( \langle v\rangle^{l -2}\langle v_*\rangle^{2}+\langle v\rangle^{l -4}\langle v_*\rangle^{4} \Big) \sin^2{\frac{\theta}{2}} \, |g_*  f h'| \, d \sigma \,dv_*\, dv\\
&:=J_{1}+J_{2}+J_{3}+J_{4}+J_5.
\end{aligned}
\end{equation}
We will proceed to estimate the right-hand terms. 

{\it Step 1 (Estimate on $J_1$).} It follows from  Cauchy-Schwarz inequality that
\begin{multline}\label{j11}
	|J_1| \leq  \Big(\int_{\mathbb{R}^6\times\mathbb{S}^2} |v-v_*|^{\gamma}b(\cos\theta)\Big(1-\cos^{ l }\frac{\theta}{2}\Big) \langle v\rangle^{ 2l }       |f|^2 |g_*| \,d\sigma\, dv_*\, dv \Big)^{\frac12} \\
	\times\Big(\int_{\mathbb{R}^6\times\mathbb{S}^2} |v-v_*|^{\gamma}b(\cos\theta)\Big(1-\cos^{ l }\frac{\theta}{2}\Big)        |h'|^2 |g_*|  \,d\sigma\, dv_*\, dv\Big)^{\frac12}.
\end{multline}
For the first integral on the right-hand side, using the second estimate  in \eqref{elementaryinequality} as well as the definition of $\boldsymbol{\lambda}_l$  in  \eqref{lamome}  gives
\begin{align*}
	&  \int_{\mathbb{R}^6\times\mathbb{S}^2} |v-v_*|^{\gamma}b(\cos\theta)\Big(1-\cos^{ l }\frac{\theta}{2}\Big) \langle v\rangle^{ 2l }     |f|^2|g_*|  \,d\sigma\, dv_*\, dv\\
    &=2^{-\gamma}\boldsymbol{\lambda}_l \int_{\mathbb{R}^6} |v-v_*|^{\gamma}  \langle v\rangle^{ 2l }    |f|^2|g_*|  \, dv_*\, dv \leq   \boldsymbol{\lambda}_l  \| g\|_{L^1_v} \|\langle v\rangle^{l+\frac{\gamma}{2}}f\|_{L^2_v}^2+  \boldsymbol{\lambda}_l  \|\langle v\rangle^{\gamma}g\|_{L^1_v} \|\langle v\rangle^{l}f\|_{L^2_v}^2.
\end{align*}
For the second integral  in \eqref{j11}, we use   formula  \eqref{rech} in Appendix \ref{Appendix}  to  obtain 
  \begin{align*}
  	&  \int_{\mathbb{R}^6\times\mathbb{S}^2} |v-v_*|^{\gamma}b(\cos\theta)\Big(1-\cos^{ l }\frac{\theta}{2} \Big)      |h'|^2 |g_*|\,d\sigma\, dv_*\, dv  \\
  	&=  \int_{\mathbb{R}^6\times\mathbb{S}^2} |v-v_*|^{\gamma}b(\cos\theta)\Big(1-\cos^{ l }\frac{\theta}{2} \Big)\frac{1}{\cos^{3+\gamma}\frac{\theta}{2}}       |h|^2 |g_*| \,d\sigma\, dv_*\, dv\\
  	&  \leq  2^{ \frac{3+\gamma}{2}} \int_{\mathbb{R}^6\times\mathbb{S}^2} |v-v_*|^{\gamma}b(\cos\theta)\Big(1-\cos^{ l }\frac{\theta}{2} \Big)      |h|^2|g_*| \,d\sigma\, dv_*\, dv\\
  	&   \leq 2^{ \frac{3+\gamma}{2}}\boldsymbol{\lambda}_l\| g\|_{L^1_v}  \|\langle v\rangle^{ \frac{\gamma}{2}}h\|_{L^2_v}^{2}+2^{ \frac{3+\gamma}{2}}\boldsymbol{\lambda}_l\|\langle v\rangle^{\gamma}g\|_{L^1_v}  \| h\|_{L^2_v}^{2},
  \end{align*}
  where the first inequality holds because $ \cos{\frac{\theta}{2}} \geq 2^{-\frac12}$  on the support of the collision kernel (i.e.  $\theta\in[0,\frac{\pi}{2}]$),    the last line follows from   the definition of $\boldsymbol{\lambda}_l$ in  \eqref{lamome}  together with \eqref{elementaryinequality}.    Substituting these estimates into \eqref{j11} yields
  \begin{multline*}
 	|J_1| \leq  2^{ \frac{3+\gamma}{4}} \boldsymbol{\lambda}_l \Big(  \| g\|_{L^1_v} \|\langle v\rangle^{l+\frac{\gamma}{2}}f\|_{L^2_v}^2+ \|\langle v\rangle^{\gamma}g\|_{L^1_v} \|\langle v\rangle^{l}f\|_{L^2_v}^2\Big)^{\frac12}\\ 
 	\times  \Big(\| g\|_{L^1_v}  \|\langle v\rangle^{ \frac{\gamma}{2}}h\|_{L^2_v}^{2}+\|\langle v\rangle^{\gamma}g\|_{L^1_v}  \| h\|_{L^2_v}^{2}\Big)^{\frac12}.
 \end{multline*}
 This implies 
 \begin{equation}\label{crj1}
 	\begin{aligned}
 		|J_1| 	\leq &2^{1+\gamma} \boldsymbol{\lambda}_l \| g\|_{L^1_v} \|\langle v\rangle^{l+\frac{\gamma}{2}}f\|_{L^2_v}  \norm{\comi v^{\frac{\gamma}{2}}h}_{L_v^2}\\
 		&+C_l \|\langle v\rangle^{\gamma}g\|_{L^1_v}\|\langle v\rangle^{l+\frac{\gamma}{2}}f\|_{L^2_v}  \norm{ h}_{L_v^2}+C_l \|\langle v\rangle^{\gamma}g\|_{L^1_v}\|\langle v\rangle^{l}f\|_{L^2_v}  \norm{\comi v^{\frac{\gamma}{2}} h}_{L_v^2},
 	\end{aligned}
 \end{equation}  
 and thus
 \begin{equation}
 	\label{j1}
 	\begin{aligned}
 	|J_1| & \leq 	C_l \|\langle v\rangle^{\gamma}g\|_{L^1_v}\|\langle v\rangle^{l+\frac{\gamma}{2}}f\|_{L^2_v}  \norm{\comi v^{\frac{\gamma}{2}}  h}_{L_v^2}\\
 	&\leq 	C_l \|\langle v\rangle^{6+\gamma}g\|_{L^2_v}\|\comi{D_v}^s\langle v\rangle^{l+\frac{\gamma}{2}}f\|_{L^2_v}  \norm{\comi v^{\frac{\gamma}{2}}  h}_{L_v^2}. 
   \end{aligned}
 \end{equation}
 	
 {\it Step 2 (Estimate on $J_2$).}
 For  $J_2$ in \eqref{j},  we use Cauchy-Schwarz inequality to obtain   \begin{equation}\label{j2+++}
 \begin{aligned}
 	|J_2| =&\Big|\int_{\mathbb{R}^6\times\mathbb{S}^2} |v-v_*|^{\gamma}b(\cos\theta)\sin^{ l }\frac{\theta}{2} \langle v_*\rangle^{ l } g_* f h' \,d\sigma\, dv_*\, dv\Big|\\
 	\leq &  \Big(\int_{\mathbb{R}^6\times\mathbb{S}^2} |v-v_*|^{\gamma}b(\cos\theta)\sin^{ l-\frac{3+\gamma}{2} }\frac{\theta}{2}\,\langle v_*\rangle^{ 2l } g_*^2 |f|  \,d\sigma\, dv_*\, dv \Big)^{\frac12}\\
 	&\qquad \qquad\times\Big(\int_{\mathbb{R}^6\times\mathbb{S}^2} |v-v_*|^{\gamma}b(\cos\theta)\sin^{ l +\frac{3+\gamma}{2}}\frac{\theta}{2} \, |f| |h'|^2 \,d\sigma\, dv_*\, dv\Big)^{\frac12}.
 	\end{aligned}
 \end{equation}
 To estimate the last factor in \eqref{j2+++}, we use 
  \eqref{inch} in Appendix \ref{Appendix}  to write 
  that  
  \begin{align*}
 &\int_{\mathbb{R}^3\times\mathbb{S}^2} |v-v_*|^{\gamma}b(\cos\theta)\sin^{ l+\frac{3+\gamma}{2} }\frac{\theta}{2}    |h'|^2 d\sigma   dv_* \\ 
 & = \int_{\mathbb{R}^3\times\mathbb{S}^2} |v-v_*|^{\gamma}b(\cos\theta)\frac{\sin^{ l+\frac{3+\gamma}{2} }\frac{\theta}{2}}{\sin^{ 3+\gamma }\frac{\theta}{2}}\, |h_*|^2 d\sigma  dv_* \\
 &=\int_{\mathbb{R}^3\times\mathbb{S}^2} |v-v_*|^{\gamma}b(\cos\theta) \sin^{ l-\frac{3+\gamma}{2}}\frac{\theta}{2}\, |h_*|^2 d\sigma  dv_*,
 \end{align*}
 and thus,
 recalling  $\boldsymbol{\omega}_l$ defined as in \eqref{lamome} is decreasing in $l$ and  using  \eqref{elementaryinequality},  
 \begin{align*}
 	&\int_{\mathbb{R}^6\times\mathbb{S}^2} |v-v_*|^{\gamma}b(\cos\theta)\sin^{ l +\frac{3+\gamma}{2}}\frac{\theta}{2} \, |f| |h'|^2 \,d\sigma\, dv_*\, dv\\
 	&= \int_{\mathbb{R}^6\times\mathbb{S}^2} |v-v_*|^{\gamma}b(\cos\theta)\sin^{ l-\frac{3+\gamma}{2} }\frac{\theta}{2}\,   |h_*|^2\, |f|  \,d\sigma dv_*dv\\ 
 	& \leq \frac{\boldsymbol{\omega}_{l-2-\gamma} }{2^\gamma} \int_{\mathbb{R}^6} |v-v_*|^{\gamma}   |h_*|^2\, |f|  \, dv_* dv  \leq   \boldsymbol{\omega}_{l-2-\gamma} \Big(\|f\|_{L^1_v} \|\langle v\rangle^{ \frac{\gamma}{2}}h\|_{L^2_v}^2  +  \|\langle v\rangle^{ \gamma}f\|_{L^1_v} \| h\|_{L^2_v}^2\Big).
 \end{align*}
 Similarly, 
 \begin{multline*}
 	\int_{\mathbb{R}^6\times\mathbb{S}^2} |v-v_*|^{\gamma}b(\cos\theta)\sin^{ l-\frac{3+\gamma}{2} }\frac{\theta}{2}\,\langle v_*\rangle^{ 2l } g_*^2 |f|  \,d\sigma\, dv_*\, dv \\
 	\leq     \boldsymbol{\omega}_{l-2-\gamma} \Big(\|f\|_{L^1_v} \|\langle v\rangle^{l+\frac{\gamma}{2}}g\|_{L^2_v}^2 + \|\langle v\rangle^{ \gamma}f\|_{L^1_v}\|\langle v\rangle^{l}g\|_{L^2_v}^2\Big) 
 \end{multline*}
 Substituting  the two estimates above  into  \eqref{j2+++} yields 
 \begin{equation}\label{j2}
 \begin{aligned}
 	|J_2| \leq &   \boldsymbol{\omega}_{l-2-\gamma}   \Big(\|f\|_{L^1_v} \|\langle v\rangle^{l+\frac{\gamma}{2}}g\|_{L^2_v}^2 + \|\langle v\rangle^{ \gamma}f\|_{L^1_v}\|\langle v\rangle^{l}g\|_{L^2_v}^2\Big)^{\frac12}\\
 	&\qquad\qquad\qquad\times \Big(\|f\|_{L^1_v} \|\langle v\rangle^{ \frac{\gamma}{2}}h\|_{L^2_v}^2  +  \|\langle v\rangle^{ \gamma}f\|_{L^1_v} \| h\|_{L^2_v}^2\Big)^{\frac12}\\
 	\leq &  \boldsymbol{\omega}_{l-2-\gamma} \|f\|_{L^1_v} \|\langle v\rangle^{l+\frac{\gamma}{2}}g\|_{L^2_v} \|\langle v\rangle^{ \frac{\gamma}{2}}h\|_{L^2_v}\\
 	&+C_l  \|\langle v\rangle^{ \gamma}f\|_{L^1_v}\|\langle v\rangle^{l+\frac{\gamma}{2}}g\|_{L^2_v}  \| h\|_{L^2_v}+C_l  \|\langle v\rangle^{ \gamma}f\|_{L^1_v}\|\langle v\rangle^{l}g\|_{L^2_v}  \| \comi v^{\frac{\gamma}{2}}h\|_{L^2_v}.
 	\end{aligned}
 \end{equation}
 
 {\it Step 3 (Estimate on $J_4$).}
 For the term $J_4$ in \eqref{j}, we follow the argument in \eqref{sl} to compute, for any $\eps>0,$
 \begin{equation}\label{j4142}
 	\begin{aligned}
 	J_{4} &=C_l\int_{\mathbb{R}^6\times\mathbb{S}^2}  |v-v_*|^{\gamma} b(\cos\theta) \sin^{l-3}{\frac{\theta}{2}}   \langle v\rangle \langle v_*\rangle^{ l -1}   |g_*  f h'| \,d\sigma\, dv_*\, dv\\
 	&\leq 2^\gamma C_l\int_{\mathbb{R}^6\times\mathbb{S}^2}  b(\cos\theta) \sin^{l-3}{\frac{\theta}{2}}   \langle v\rangle \langle v_*\rangle^{ l -1 +\gamma}   |g_*  f h'| \,d\sigma\, dv_*\, dv\\
 	&\quad + 2^\gamma C_l\int_{\mathbb{R}^6\times\mathbb{S}^2}  b(\cos\theta) \sin^{l-3}{\frac{\theta}{2}}   \langle v\rangle^{1+\gamma} \langle v_*\rangle^{ l -1 }   |g_*  f h'| \,d\sigma\, dv_*\, dv\\
 	&\leq \eps \int_{\mathbb{R}^6\times\mathbb{S}^2}  b(\cos\theta) \sin^{l-3}{\frac{\theta}{2}}   \langle v_*\rangle^{ l   +\gamma}   |g_*  f h'| \,d\sigma\, dv_*\, dv\\
 	&\quad +C_{\eps,l} \int_{\mathbb{R}^6\times\mathbb{S}^2}  b(\cos\theta) \sin^{l-3}{\frac{\theta}{2}}   \langle v\rangle^{1+\gamma} \langle v_*\rangle^{ l -1 }   |g_*  f h'| \,d\sigma\, dv_*\, dv\\	
 &:=J_{4,1}+J_{4,2}.
 	\end{aligned}
 \end{equation}
 Following a similar argument as in the proof of \eqref{j2} we have,  for $l\geq  \frac{13}{2}+\gamma,$
\begin{align*}
  J_{4,1}  \leq & \eps \left(\int_{\mathbb{R}^6\times\mathbb{S}^2}  \frac{b(\cos\theta)}{\sin^{\frac{3}{2}}\frac{\theta}{2}}\sin^{l-3}\frac{\theta}{2}\, |g _* |^2 \langle v_*\rangle^{2l+2\gamma}   |f|\,d\sigma\, dv_*\, dv\right)^{\frac{1}{2}}\\
&\qquad\times \left(\int_{\mathbb{R}^6\times\mathbb{S}^2}   b(\cos\theta) \sin^{\frac{3}{2}}\frac{\theta}{2}\,\sin^{l-3}\frac{\theta}{2} \,  |f|\,|h'|^2\,d\sigma\, dv_*\, dv\right)^{\frac{1}{2}}\\
 \leq & \eps C_l \| f\|_{L^1_v}\|\langle v\rangle^{l+\frac{\gamma}{2}}g\|_{L^2_v}\| h\|_{L^2_v},
\end{align*} 
where the last line follows from \eqref{inch} in Appendix \ref{Appendix}. 
Similarly, 
\begin{align*}
 J_{4,2} &\leq C_{\eps, l} \left(\int_{\mathbb{R}^6\times\mathbb{S}^2}  \frac{b(\cos\theta)}{\sin^{\frac{3}{2}}\frac{\theta}{2}}\sin^{l-3}\frac{\theta}{2}\, |g _* |^2 \langle v_*\rangle^{2l-2}  \comi v^{1+\gamma}  |f|\,d\sigma\, dv_*\, dv\right)^{\frac{1}{2}}\\
&\qquad\times \left(\int_{\mathbb{R}^6\times\mathbb{S}^2}   b(\cos\theta) \sin^{\frac{3}{2}}\frac{\theta}{2}\,\sin^{l-3}\frac{\theta}{2} \, \comi v^{1+\gamma} |f|\,|h'|^2\,d\sigma\, dv_*\, dv\right)^{\frac{1}{2}}\\
&\leq C_{\eps,l} \|\langle v\rangle^{1+\gamma}f\|_{L^1_v}\|\langle v\rangle^{l}g\|_{L^2_v}\| h\|_{L^2_v}.
\end{align*}
Combining these estimates yields, for any $\eps>0,$ 
\begin{equation*}
	J_{4}\leq J_{4,1}+J_{4,2} \leq \eps  \| f\|_{L^1_v}\|\langle v\rangle^{l+\frac{\gamma}{2}}g\|_{L^2_v}\| h\|_{L^2_v}+C_{\eps,l} \|\langle v\rangle^{1+\gamma}f\|_{L^1_v}\|\langle v\rangle^{l}g\|_{L^2_v}\| h\|_{L^2_v},
\end{equation*}
and thus, letting $\eps=\boldsymbol{\omega}_{l-2-\gamma}$ in the above inequality, 
\begin{equation}\label{j4}
	J_4\leq   \boldsymbol{\omega}_{l-2-\gamma} \|f\|_{L^1_v} \|\langle v\rangle^{l+\frac{\gamma}{2}}g\|_{L^2_v} \|\langle v\rangle^{ \frac{\gamma}{2}}h\|_{L^2_v}+C_l\|\langle v\rangle^{1+\gamma}f\|_{L^1_v}\|\langle v\rangle^{l}g\|_{L^2_v}\| h\|_{L^2_v}.
\end{equation}

{\it Step 4 (Estimate on $J_5$).} For the term $J_5$ in \eqref{j}, we split it as 
\begin{equation*}
\begin{aligned}
    J_{5}=&C_l\int_{\mathbb{R}^6\times\mathbb{S}^2}  |v-v_*|^{\gamma} b(\cos\theta) \langle v\rangle^{l -2} \langle v_*\rangle^{2}  \sin^2{\frac{\theta}{2}} \, |g_*  f h'| \, d \sigma \,dv_*\, dv \\
    &+C_l\int_{\mathbb{R}^6\times\mathbb{S}^2}  |v-v_*|^{\gamma} b(\cos\theta) \langle v\rangle^{l -4} \langle v_*\rangle^{4}  \sin^2{\frac{\theta}{2}} \, |g_*  f h'| \, d \sigma \,dv_*\, dv \\
    :=&J_{5,1}+J_{5,2}.
\end{aligned}
\end{equation*}
Following the argument in \eqref{j4142} we have, for any $\eps>0,$
\begin{equation*}
    \begin{aligned}
       J_{5,1}\leq& \eps \int_{\mathbb{R}^6\times\mathbb{S}^2}    b(\cos\theta) \langle v\rangle^{l +\gamma}   \sin^2{\frac{\theta}{2}} \, |g_*  f h'| \, d \sigma \,dv_*\, dv\\
       &+C_{\eps,l}\int_{\mathbb{R}^6\times\mathbb{S}^2}    b(\cos\theta) \langle v\rangle^{l -2} \langle v_*\rangle^{2+\gamma}  \sin^2{\frac{\theta}{2}} \, |g_*  f h'| \, d \sigma \,dv_*\, dv.
    \end{aligned}
\end{equation*}
Moreover, using \eqref{rech} in Appendix \ref{Appendix}, we compute 
\begin{align*}
 & \int_{\mathbb{R}^6\times\mathbb{S}^2}    b(\cos\theta) \langle v\rangle^{l +\gamma}   \sin^2{\frac{\theta}{2}} \, |g_*  f h'| \, d \sigma \,dv_*\, dv\\
 &\leq    \left(\int_{\mathbb{R}^6\times\mathbb{S}^2}    b(\cos\theta)\sin^2\frac{\theta}{2} \, |g_*|   \langle v\rangle^{2l+2\gamma}|f|^2\,d\sigma  dv_*  dv\right)^{\frac{1}{2}}  \\
 &\ \ \ \ \ \ \ \ \ \ \  \times \left(\int_{\mathbb{R}^6\times\mathbb{S}^2}  b(\cos\theta)\sin^2\frac{\theta}{2}\, | g_*|   |h'|^2 \,d \sigma  dv_*  dv \right)^{\frac{1}{2}}\\
&\leq    C_{l}  \| g\|_{L^1_v}\|\langle v\rangle^{l+\frac{\gamma}{2}}f\|_{L^2_v}\| h\|_{L^2_v}.
\end{align*}
Similarly, 
\begin{align*}
 \int_{\mathbb{R}^6\times\mathbb{S}^2}    b(\cos\theta) \langle v\rangle^{l -2} \langle v_*\rangle^{2+\gamma}  \sin^2{\frac{\theta}{2}} \, |g_*  f h'| \, d \sigma \,dv_*\, dv  
 \leq   C_{l}  \|\langle v\rangle^{2+\gamma}g\|_{L^1_v}\|\langle v\rangle^{l }f\|_{L^2_v}\|\langle v\rangle^{\frac{\gamma}{2}}h\|_{L^2_v}.
\end{align*}
As a result, combining these estimates yields, for any $\eps>0,$ 
\begin{equation}\label{j51}
    J_{5,1}\leq  \eps  \| g\|_{L^1_v}\|\langle v\rangle^{l+\frac{\gamma}{2}}f\|_{L^2_v}\| h\|_{L^2_v}+C_{\eps, l}  \|\langle v\rangle^{4+\gamma}g\|_{L^1_v}\|\langle v\rangle^{l }f\|_{L^2_v}\|\langle v\rangle^{\frac{\gamma}{2}}h\|_{L^2_v}.
\end{equation}
The same argument shows that  $J_{5,2}$ admits the same upper bound as $J_{5,1}.$ 
This, together  with 
  the estimate 
 \begin{equation}\label{fre}
	\|\langle v\rangle^{4+\gamma}g\|_{L^1_v} \leq C\|\langle v\rangle^{6+\gamma}g\|_{L^2_v},
\end{equation}
yields 
\begin{equation}
\label{j5+}
|J_5|= J_{5,1}+J_{5,2}\leq C_{l}  \| g\|_{L^1_v}\|\langle v\rangle^{l+\frac{\gamma}{2}}f\|_{L^2_v}\| h\|_{L^2_v}+C_{l}  \|\langle v\rangle^{6+\gamma}g\|_{L^2_v}\|\langle v\rangle^{l }f\|_{L^2_v}\|\langle v\rangle^{\frac{\gamma}{2}}h\|_{L^2_v},	
\end{equation}
and thus
\begin{equation}
\label{j5}
|J_5| \leq C_{l}\|\langle v\rangle^{6+\gamma}g\|_{L^2_v}\|\langle v\rangle^{l+\frac{\gamma}{2}}f\|_{L^2_v} \|\langle v\rangle^{\frac{\gamma}{2}}h\|_{L^2_v}.	
\end{equation}

{\it Step 5 (Estimate on $J_3$).}
It remains to estimate $J_3$ in \eqref{j}. From the definition of $\kappa^{\perp}$ in \eqref{deperp}, it follows that 
\begin{equation}\label{vvstar}
	v\cdot \kappa^\perp=v_*\cdot\kappa^\perp,
\end{equation}
and thus $J_3$ in \eqref{j} may be   rewrite   as
\begin{equation}\label{j3j31}
\begin{aligned}
J_{3}&=l \int_{\mathbb{R}^6\times\mathbb{S}^2}  |v -v_*|^{\gamma+1} b(\cos\theta)\cos^{l-1}\frac{\theta}{2}\sin\frac{\theta}{2}\big(v_*\cdot \kappa^\perp \big)g_*\langle v'\rangle^{l-2}f' h'\,d\sigma\, dv_*\, dv\\
&\quad+l\int_{\mathbb{R}^6\times\mathbb{S}^2} |v-v_*|^{\gamma+1}b(\cos\theta)\cos^{l-1}\frac{\theta}{2}\sin\frac{\theta}{2}(v_*\cdot \kappa^\perp)  g_*\Big(\langle v\rangle^{l-2}f-\langle v'\rangle^{l-2}f'\Big) h'd\sigma  dv_*  dv\\
&:=J_{3,1}+J_{3,2}.
\end{aligned}
\end{equation}
To estimate $J_{3,1},$ we set, analogously to   $\kappa$ and $\kappa^{\perp}$  in  \eqref{deperp},
\begin{equation*}
	 \kappa'=\frac{v'-v_*}{|v'-v_*|} , \quad \kappa'^{\perp} =\frac{\sigma-(\sigma\cdot\kappa')\kappa'}{\abs{\sigma-(\sigma\cdot\kappa')\kappa'}}.
\end{equation*} 
Then  $\kappa'^{\perp}$ is orthogonal to $\kappa'$, and  moreover we can verify directly that 
\begin{equation*}
	\kappa^{\bot}=\kappa' \sin\frac{\theta}{2}+\kappa'^\bot \cos\frac{\theta}{2},\quad |v'-v_*|=|v-v_*|\cos\frac{\theta}{2}.
\end{equation*}
Consequently, using the above identities and formula \eqref{rech}  in Appendix \ref{Appendix},
 we estimate the term $J_{3,1}$ in \eqref{j3j31} as follows.
\begin{align*}
&J_{3,1} =l\int_{\mathbb{R}^6\times\mathbb{S}^2} |v -v_*|^{\gamma+1} b(\cos\theta)\cos^{l-1}\frac{\theta}{2}\sin\frac{\theta}{2}\\
&\qquad\qquad\qquad \times \bigg (v_*\cdot \Big(\kappa' \sin\frac{\theta}{2}+\kappa'^\perp \cos\frac{\theta}{2}\Big)\bigg)g_*\langle v'\rangle^{l-2}f'h'\,d\sigma\, dv_*\, dv\\
&=l \int  |v-v_*|^{\gamma+1} b(\cos\theta)  \cos^{l- 5-\gamma}\frac{\theta}{2}\sin\frac{\theta}{2} \bigg (v_*\cdot \Big(\kappa  \sin\frac{\theta}{2}+\kappa^\perp \cos\frac{\theta}{2}\Big)\bigg)g_*\langle v\rangle^{l-2}f h\,d\sigma  dv_*  dv,
\end{align*}
where in the second  identity we used \eqref{rech}.  Moreover, using \eqref{vvstar} and \eqref{sdf} gives  
\begin{align*}
 \int_{\mathbb{R}^6\times\mathbb{S}^2} |v-v_*|^{\gamma+1} b(\cos\theta)\cos^{l- 5-\gamma}\frac{\theta}{2} \sin\frac{\theta}{2}  
  \Big  (v_*\cdot \kappa^\perp \cos\frac{\theta}{2} \Big) g_*\langle v\rangle^{\ell-2}f h \,d\sigma\, dv_*\, dv=0.
\end{align*}
Hence
\begin{equation}\label{dj31}
\begin{aligned}
    |J_{3,1}|&= \Big| l \int_{\mathbb{R}^6\times\mathbb{S}^2} |v-v_*|^{\gamma+1} b(\cos\theta)  \cos^{l- 5-\gamma}\frac{\theta}{2}  \sin^2\frac{\theta}{2}  (v_*\cdot  \kappa ) g_*\langle v\rangle^{l-2}f h \,d\sigma\, dv_*\, dv\Big|\\
    &\leq C_l \int_{\mathbb{R}^6} \langle v_*\rangle^{2+\gamma}|g_*|\langle v\rangle^{l-2}|fh|dv dv_*+C_l \int_{\mathbb{R}^6} \langle v_*\rangle |g_*|\langle v\rangle^{l-1+\gamma}|fh|dv dv_*\\
    & \leq C_l\|\langle v\rangle^{2+\gamma}g\|_{L^1_v}\|\langle v\rangle^{l+\frac{\gamma}{2}}f\|_{L^2_v}\|\comi v^{\frac{\gamma}{2}}h\|_{L^2_v}\\
    &\leq C_{l}  \|\langle v\rangle^{6+\gamma}g\|_{L^2_v}\|\comi{D_v}^s\langle v\rangle^{l+\frac{\gamma}{2}}f\|_{L^2_v}\|\langle v\rangle^{\frac{\gamma}{2}}h\|_{L^2_v}.
\end{aligned}
\end{equation}
 For the term $J_{3,2}$  in \eqref{j3j31},  one has
 \begin{equation}\label{j321}
\begin{aligned}
 |J_{3,2}| \leq&    C_l\Big(\int_{\mathbb{R}^6\times\mathbb{S}^2} |v-v_*|^{\gamma}b(\cos\theta) \cos^{2 l -2}\frac{\theta}{2}\sin^2\frac{\theta}{2}\langle v_*\rangle^{2}|h'|^2|g_*|\,d\sigma\, dv_*\, dv\Big)^{\frac{1}{2}}\\
&\quad \times\Big(\int_{\mathbb{R}^6\times\mathbb{S}^2} |v-v_*|^{2+\gamma}b(\cos\theta)|g_*|\big(\langle v\rangle^{ l -2}f-\langle v'\rangle^{ l -2}f'\big)^2\,d\sigma\, dv_*\, dv\Big)^{\frac{1}{2}}\\
 \leq & C_l\|\langle v\rangle^{2+\gamma}g\|_{L^1_v}^{\frac12}\|\langle v\rangle^{\frac{\gamma}{2}}h\|_{L^2_v} \\ 
&\quad  \times \Big(\int_{\mathbb{R}^6\times\mathbb{S}^2} |v-v_*|^{2+\gamma}b(\cos\theta)|g_*|\big(\langle v\rangle^{ l -2}f-\langle v'\rangle^{ l -2}f'\big)^2\,d\sigma\, dv_*\, dv\Big)^{\frac{1}{2}},
\end{aligned}
\end{equation}
where the last inequality holds since it follows from     \eqref{rech}  that 
\begin{align*}
	&\int_{\mathbb{R}^6\times\mathbb{S}^2} |v-v_*|^{\gamma}b(\cos\theta) \cos^{2 l -2}\frac{\theta}{2}\sin^2\frac{\theta}{2}\langle v_*\rangle^{2} |h'|^2|g_*|\,d\sigma\, dv_*\, dv\\
	&=\int_{\mathbb{R}^6\times\mathbb{S}^2}  |v-v_*|^{\gamma}b(\cos\theta) \cos^{2 l -5-\gamma}\frac{\theta}{2}\sin^2\frac{\theta}{2}\langle v_*\rangle^{2} |h|^2|g_*|\,d\sigma\, dv_*\, dv \\
    &\leq C_l\|\langle v\rangle^{2+\gamma}g\|_{L^1_v}\|\langle v\rangle^{\frac{\gamma}{2}}h\|^2_{L^2_v}.
\end{align*}
Now we estimate the last factor in \eqref{j321}.  Using the  identity
\begin{equation}\label{idt}
    (F-F')^2=2F(F-F')+(F'^2-F^2),
\end{equation}
 we write 
 \begin{equation}
 	\label{eq2gamma}
 	\begin{aligned}
&\int_{\mathbb{R}^6\times\mathbb{S}^2} |v-v_*|^{2+\gamma}b(\cos\theta)|g_*|\big(\langle v\rangle^{ l -2}f-\langle v'\rangle^{ l -2}f'\big)^2\,d\sigma\, dv_*\, dv \\
&=2\int_{\mathbb{R}^6\times\mathbb{S}^2}   |v-v_*|^{2+\gamma } b(\cos\theta)   |g_*|\langle v\rangle^{ l -2}f \Big(\langle v\rangle^{ l -2}f -\langle v'\rangle^{ l -2}f'\Big)\,d\sigma\, dv_*\, dv\\
&\quad+\int_{\mathbb{R}^6\times\mathbb{S}^2}  |v-v_*|^{2+\gamma}b(\cos\theta) |g_*|\Big(\langle v'\rangle^{2 l -4}|f'|^2-\langle v\rangle^{2 l -4}|f|^2\Big)\,d\sigma\, dv_*\, dv\\
&\leq 2\int_{\mathbb{R}^6\times\mathbb{S}^2}   |v-v_*|^{\gamma} b(\cos\theta)  |v-v_*|^2 |g_*|\langle v\rangle^{ l -2}f \Big(\langle v\rangle^{ l -2}f -\langle v'\rangle^{ l -2}f'\Big)\,d\sigma\, dv_*\, dv\\
&\quad+C_l \|\langle v\rangle^{2+\gamma}g\|_{L^1_v} \|\langle v\rangle^{ l +\frac{\gamma}{2}}f\|^2_{L^2_v},
\end{aligned}
 \end{equation}
the last inequality using the cancellation lemma  \eqref{cance}.  Moreover, using the change of variables in \eqref{prepost} gives
\begin{align*}
	& \int_{\mathbb{R}^6\times\mathbb{S}^2}   |v-v_*|^{\gamma } b(\cos\theta) |v-v_*|^{2} |g_*|\langle v\rangle^{ l -2}f \Big(\langle v\rangle^{ l -2}f -\langle v'\rangle^{ l -2}f'\Big)\,d\sigma\, dv_*\, dv\\
	&=\int_{\mathbb{R}^6\times\mathbb{S}^2}   |v-v_*|^{\gamma } b(\cos\theta) \big(|v_*|^{2}+ |v|^2-2v\cdot v_*\big) |g_*|\langle v\rangle^{ l -2}f \Big(\langle v\rangle^{ l -2}f -\langle v'\rangle^{ l -2}f'\Big)\,d\sigma\, dv_*\, dv\\
	&= -\big (Q(|v|^2|g|,\langle v\rangle^{ l -2}f),\, \langle v\rangle^{ l -2}f\big)_{L^2_v}-\big(Q(|g|,|v|^2\langle v\rangle^{ l -2}f),\, \langle v\rangle^{ l -2}f\big)_{L^2_v}\\
&\quad +2\sum_{i=1}^3 \big (Q(v_i|g|, v_i\langle v\rangle^{ l -2}f),\, \langle v\rangle^{ l -2}f \big )_{L^2_v}\\
&\leq C_l \big(\|\langle v\rangle^{2+\gamma+2s}g\|_{L^1_v}+\|\langle v\rangle^{2}g\|_{L^2_v}\big) \|\comi{D_v}^s\langle v\rangle^{ l +\frac{\gamma}{2}} f\|^2_{L^2_v} \leq C_l\|\langle v\rangle^{6+\gamma}g\|_{L^2_v}  \|\comi{D_v}^s\langle v\rangle^{ l +\frac{\gamma}{2}} f\|^2_{L^2_v},
\end{align*}
where the last inequality follows from \eqref{uppbound2} as well as \eqref{fre}. Now substituting the above estimate into \eqref{eq2gamma}, we obtain
\begin{multline*}
    \int_{\mathbb{R}^6\times\mathbb{S}^2} |v-v_*|^{2+\gamma}b(\cos\theta)|g_*|\big(\langle v\rangle^{ l -2}f-\langle v'\rangle^{ l -2}f'\big)^2\, d\sigma  \,dv_* \, dv \\  
     \leq C_l \|\langle v\rangle^{6+\gamma}g\|_{L^2_v} \|\comi{D_v}^s\langle v\rangle^{l+\frac{\gamma}{2}} f\|^2_{L^2_v},
\end{multline*}
which with \eqref{j321} yields 
\begin{align*}
|J_{3,2}|\leq C_l \|\langle v\rangle^{6+\gamma}g\|_{L^2_v}\|\comi{D_v}^s\langle v\rangle^{l+\frac{\gamma}{2}} f\|_{L^2_v}\|\langle v\rangle^{\frac{\gamma}{2}}h\|_{L^2_v}.
\end{align*}
Combining  this and   estimate \eqref{dj31} with \eqref{j3j31}, we conclude 
\begin{equation*}
	|J_3|\leq C_l\|\langle v\rangle^{6+\gamma}g\|_{L^2_v}\|\comi{D_v}^s\langle v\rangle^{l+\frac{\gamma}{2}} f\|_{L^2_v}\|\langle v\rangle^{\frac{\gamma}{2}}h\|_{L^2_v}.
\end{equation*}
 This,  with   the upper bounds of $J_1, J_2,J_4$ and $J_5$    in \eqref{j1}, \eqref{j2}, \eqref{j4} and \eqref{j5},  yields
 \begin{align*}
&\Big | (\langle v\rangle^ l  Q(g,f), h)_{L^2_v}-( Q(g,\langle v\rangle^ l  f), h)_{L^2_v}\Big|\leq \sum_{m=1}^5 |J_m|\\
&\leq C_l \|\langle v\rangle^{6+\gamma}g\|_{L^2_v}\|\langle v\rangle^{\frac{\gamma}{2}}h\|_{L^2_v}\|\comi{D_v}^s \langle v\rangle^{l+\frac{\gamma}{2}}f\|_{L^2_v}+	2 \boldsymbol{\omega}_{l-2-\gamma} \|f\|_{L^1_v} \|\langle v\rangle^{l+\frac{\gamma}{2}}g\|_{L^2_v} \|\langle v\rangle^{ \frac{\gamma}{2}}h\|_{L^2_v}\\
 		&\quad +C_l  \|\langle v\rangle^{ 1+\gamma}f\|_{L^1_v}\|\langle v\rangle^{l+\frac{\gamma}{2}}g\|_{L^2_v}  \| h\|_{L^2_v}+C_l  \|\langle v\rangle^{ 1+\gamma}f\|_{L^1_v}\|\langle v\rangle^{l}g\|_{L^2_v}  \| \comi v^{\frac{\gamma}{2}}h\|_{L^2_v}.
 \end{align*} 
Thus the first estimate in Lemma \ref{commutator} follows.  This completes the  proof of  Lemma \ref{commutator}.   
\end{proof}

\begin{lemma}[Commutator estimate]\label{lem:coer}
Let $g\geq 0.$   Then for any $ l \geq \frac{13}{2}+\gamma$,   there exists  a constant $C_{ l }>0$ depending  on $ l $, such that
 \begin{equation*}
 	\begin{aligned}
 		&  \big(\langle v\rangle^ l  Q(g,f), \comi v^l f)_{L^2_v}-( Q(g,\langle v\rangle^ l  f), \comi v^l f\big)_{L^2_v} \\
    &\leq -\frac{1}{2}\Big( Q(g,\langle v\rangle^{ l }f), \langle v\rangle^{ l } f\Big)_{L^2_{v}}+ 2^{1+\gamma}\big(\boldsymbol{\lambda}_l+\boldsymbol{A}_{\gamma,s}\big) \| g\|_{L^1_v}   \|\langle v\rangle^{l+\frac{\gamma}{2}}f\|_{L^2_v}^2+ \|\langle v\rangle^{l+ \frac{\gamma}{2}}f\|_{L^2_v}^2\\
        &\quad+C_l \|\langle v\rangle^{6+\gamma}f\|_{L^2_v}^2\|\langle v\rangle^{l+\frac{\gamma}{2}}g\|_{L^2_v}^2+C_{ l }\Big(1+\|\langle v\rangle^{6+\gamma}g\|_{L^2_v}^2\Big)  \|\langle v\rangle^{l}f\|_{L^2_v}^2,
 	\end{aligned}
 \end{equation*}
 recalling $\boldsymbol{\lambda}_l $ and $\boldsymbol{A}_{\gamma,s}$ are the constants defined in \eqref{lamome} and \eqref{agams}, respectively.
\end{lemma}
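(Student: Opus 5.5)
We reuse the decomposition from the proof of Lemma \ref{commutator} with $h=\comi v^l f$, so that $h'=\comi{v'}^l f'$:
\[
\big(\comi v^l Q(g,f),\comi v^l f\big)_{L^2_v}-\big(Q(g,\comi v^l f),\comi v^l f\big)_{L^2_v}\le J_1+\cdots+J_5 .
\]
The terms $J_2,J_4,J_5$ and $J_3$ are bounded exactly as in Steps 2--5 of that proof, now with $h=\comi v^lf$. This gives contributions of the form
\[
C_l\|\comi v^{6+\gamma}f\|_{L^2_v}\|\comi v^{l+\frac\gamma2}g\|_{L^2_v}\|\comi v^{l+\frac\gamma2}f\|_{L^2_v}.
\]
We split these with Young's inequality into $\frac18\|\comi v^{l+\frac\gamma2}f\|^2_{L^2_v}$ plus $C_l\|\comi v^{6+\gamma}f\|^2_{L^2_v}\|\comi v^{l+\frac\gamma2}g\|^2_{L^2_v}$. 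The exception is the $\comi{D_v}^s$-terms produced by $J_3$ and $J_1$. Those cannot be absorbed into the weighted $L^2$ norm and must be treated differently.

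The new point is to avoid the $\comi{D_v}^s$ loss. For this we exploit the sign of $g$ and the specific choice $h=\comi v^lf$, rather than the crude bound \eqref{j1}. Write $F=\comi v^lf$. The main piece is
\[
J_1=\int|v-v_*|^\gamma b\,\big(\cos^l\tfrac\theta2-1\big)g_*\comi v^l f\,h'.
\]
We rewrite it as $\int|v-v_*|^\gamma b(\cos^l\frac\theta2-1)g_*F(F'-F)\cdot(\text{weight ratio})$ plus a diagonal term $\int|v-v_*|^\gamma b(\cos^l\frac\theta2-1)g_*F^2$. Since $\cos^l-1\le0$ and $g\ge0$, the diagonal term is nonpositive and can be dropped. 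For the cross term we use the identity \eqref{idt}, $F(F'-F)=-\frac12(F-F')^2+\frac12(F'^2-F^2)$. The part containing $-\frac12(F-F')^2$ is favorable. The part containing $\frac12(F'^2-F^2)$ is computed via the cancellation lemma \eqref{cance} and the change of variables \eqref{rech}. It yields at most
\[
2^{1+\gamma}(\boldsymbol\lambda_l+\boldsymbol A_{\gamma,s})\|g\|_{L^1_v}\|\comi v^{l+\frac\gamma2}f\|^2_{L^2_v},
\]
plus lower-order pieces bounded by $C_l\|\comi v^\gamma g\|_{L^1_v}\|\comi v^lf\|^2_{L^2_v}$ through \eqref{elementaryinequality}. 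The remaining square term $\int|v-v_*|^\gamma b\,g_*(F'-F)^2$ (suitably weighted by $1-\cos^l$) is exactly the dissipative part of $-(Q(g,F),F)_{L^2_v}$. Indeed, again by \eqref{idt} and \eqref{cance},
\[
-(Q(g,F),F)_{L^2_v}=\tfrac12\int|v-v_*|^\gamma b\,g_*(F'-F)^2-\tfrac12\boldsymbol A_{\gamma,s}\int|v-v_*|^\gamma g_*F^2 .
\]
Consequently, any positive multiple of the square term that appears is bounded by $-\frac12(Q(g,F),F)_{L^2_v}$ plus an $\boldsymbol A_{\gamma,s}\|g\|_{L^1_v}\|\comi v^{l+\frac\gamma2}f\|^2_{L^2_v}$ correction.

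The same mechanism handles the $\comi{D_v}^s$ term in $J_{3,2}$. There the factor $\int|v-v_*|^{2+\gamma}b\,g_*(\comi v^{l-2}f-\comi{v'}^{l-2}f')^2$ is, up to the $\comi v^{-2}$ reduction, controlled by the same square functional of $F$ plus weighted $L^2$ terms. Cauchy--Schwarz with a small parameter then converts $J_{3,2}$ into $\frac14(-Q(g,F),F)_{L^2_v}$ plus $C_l(1+\|\comi v^{6+\gamma}g\|^2_{L^2_v})\|\comi v^lf\|^2_{L^2_v}$. The main obstacle is precisely this step, matching the square terms to $-(Q(g,F),F)$ with total coefficient at most $\frac12$. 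The difficulty is that $J_{3,2}$ carries an extra $|v-v_*|^2$ weight and the degree shift from $l-2$ to $l$. I would first try to expand $|v-v_*|^2=|v|^2+|v_*|^2-2v\cdot v_*$, as in the existing proof, and absorb the $|v_*|^2$ and $v\cdot v_*$ parts into $g$-moments ($\|\comi v^{6+\gamma}g\|$). Finally, collecting all bounds and using $\|\comi v^{l+\frac\gamma2}f\|^2$ for the Young remainders gives the stated inequality.
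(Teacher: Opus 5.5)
Your proposal has a genuine gap. It sits in $J_3$, not in $J_1$: you keep the Step~5 splitting $J_3=J_{3,1}+J_{3,2}$, and that splitting cannot be closed into the stated right-hand side.

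\textbf{Why $J_{3,2}$ does not close.} Write $F=\comi v^l f$ and $D=\int|v-v_*|^\gamma b\,g_*(F'-F)^2$. With $h=F$, \eqref{j321} gives
\[
|J_{3,2}|\le C_l\|\comi v^{2+\gamma}g\|_{L^1_v}^{1/2}\,\|\comi v^{l+\frac\gamma2}f\|_{L^2_v}\,\mathcal D_2^{1/2},\qquad
\mathcal D_2=\int|v-v_*|^{2+\gamma}b\,g_*\big(\comi v^{l-2}f-\comi{v'}^{l-2}f'\big)^2 .
\]
Since $-(Q(g,F),F)_{L^2_v}=\frac12D-\frac12\boldsymbol A_{\gamma,s}\int|v-v_*|^\gamma g_*F^2$, the right-hand side of the lemma supplies exactly $\frac14 D$ of dissipation.

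Your claim that ``any positive multiple'' of the square term is controlled is therefore false. $D$ carries $H^s_v$-information that none of the remaining $L^2$-terms can absorb. Moreover, $\|\comi v^{l+\frac\gamma2}f\|^2_{L^2_v}$ may appear only with coefficient $2^{1+\gamma}(\boldsymbol\lambda_l+\boldsymbol A_{\gamma,s})\|g\|_{L^1_v}+1$.

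In any Young splitting of the bound above, the higher moment $\|\comi v^{2+\gamma}g\|_{L^1_v}$ lands in one of two places:
\begin{itemize}
\item In front of $\|\comi v^{l+\frac\gamma2}f\|^2_{L^2_v}$. This is inadmissible.
\item In front of $\mathcal D_2$. Then $C\|\comi v^{2+\gamma}g\|_{L^1_v}\mathcal D_2$ is quadratic in $g$, whereas $D$ is linear in $g$. Also $\mathcal D_2$ is not small relative to $D$ for $f$ oscillating at high frequency on a bounded velocity set.
\end{itemize}
Your claimed remainder $C_l(1+\|\comi v^{6+\gamma}g\|^2_{L^2_v})\|\comi v^lf\|^2_{L^2_v}$ at weight $l$ is not what this Cauchy--Schwarz produces. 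Expanding $|v-v_*|^2$ as in Step~5 just reproduces \eqref{uppbound2} and the $\|\comi{D_v}^s\comi v^{l+\frac\gamma2}f\|_{L^2_v}$ loss.

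A similar problem affects the other final bounds. \eqref{j1}, \eqref{j5} and \eqref{dj31} at $h=F$ all contain $C_l\cdot(\text{moment of }g)\cdot\|\comi v^{l+\frac\gamma2}f\|^2_{L^2_v}$, which is inadmissible. You must use the intermediate forms \eqref{crj1} and \eqref{j5+}; for $J_3$, see the treatment below.

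\textbf{$J_1$ needs no new idea, and your rewrite of it has the sign reversed.} \eqref{crj1} at $h=F$ already gives $2^{1+\gamma}\boldsymbol\lambda_l\|g\|_{L^1_v}\|\comi v^{l+\frac\gamma2}f\|^2_{L^2_v}+C_l\|\comi v^{\gamma}g\|_{L^1_v}\|\comi v^{l+\frac\gamma2}f\|_{L^2_v}\|\comi v^{l}f\|_{L^2_v}$, with no $\comi{D_v}^s$. In your rewrite, since $\cos^l\frac\theta2-1\le0$, the $-\frac12(F-F')^2$ piece contributes $+\frac12\int|v-v_*|^\gamma b\,(1-\cos^l\frac\theta2)\,g_*(F-F')^2\ge0$. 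It is the $\frac12(F'^2-F^2)$ piece that is nonpositive, by \eqref{rech}. So the detour only spends dissipation that is needed elsewhere.

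\textbf{The missing idea.} Use the sign of $g$ and the choice $h=F$ on $J_3$ as a whole. By \eqref{sdf}, the integral obtained from $J_3$ by replacing $F'$ with $F$ vanishes, so
\[
J_3=l\int|v-v_*|^{\gamma+1}b\,\comi v^{l-2}(v\cdot\kappa^\perp)\sin\tfrac\theta2\cos^{l-1}\tfrac\theta2\;g_*f\,(F'-F)\,d\sigma\,dv_*\,dv .
\]
A pointwise Cauchy--Schwarz, legitimate because $g\ge0$, bounds this by two pieces:
\begin{itemize}
\item $\frac14 D$. Via \eqref{idt} and \eqref{cance}, this equals $-\frac12(Q(g,F),F)_{L^2_v}$ plus an $\boldsymbol A_{\gamma,s}$-multiple of $\int|v-v_*|^\gamma g_*F^2$.
\item $l^2\int|v-v_*|^{\gamma+2}b\,\comi v^{2l-4}|v_*\cdot\kappa^\perp|^2\sin^2\frac\theta2\,g_*f^2$. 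By $v\cdot\kappa^\perp=v_*\cdot\kappa^\perp$ and the factor $\comi v^{l-2}$, this is two powers of $\comi v$ below top order. Hence it is at most $\eps\|g\|_{L^1_v}\|\comi v^{l+\frac\gamma2}f\|^2_{L^2_v}+C_\eps\|\comi v^{6+\gamma}g\|_{L^2_v}\|\comi v^lf\|^2_{L^2_v}$, with $\eps=2^\gamma\boldsymbol A_{\gamma,s}$.
\end{itemize}
The entire $\frac14D$ budget thus goes to $J_3$. The terms $J_1,J_2,J_4,J_5$ come from \eqref{crj1}, \eqref{j2}, \eqref{j4} and \eqref{j5+} plus Young's inequality.
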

\begin{proof}
Letting $h=\comi v^l f$ in
  \eqref{j} and then using \eqref{crj1}, \eqref{j2}, \eqref{j4} and \eqref{j5+} for $h= \comi v^l f$, we obtain 
\begin{equation}\label{j+}
\begin{aligned}
& (\langle v\rangle^ l  Q(g,f), \comi v^l f)_{L^2_v}-( Q(g,\langle v\rangle^ l  f), \comi v^lf)_{L^2_v} \\
&\leq 2^{1+\gamma} \boldsymbol{\lambda}_l \| g\|_{L^1_v} \|\langle v\rangle^{l+\frac{\gamma}{2}}f\|_{L^2_v}^2+C_l \|\langle v\rangle^{6+\gamma}g\|_{L^2_v}\|\langle v\rangle^{l+\frac{\gamma}{2}}f\|_{L^2_v}  \|\langle v\rangle^{l}f\|_{L^2_v} \\
&\quad+C_l\|\comi v^{6+\gamma} f\|_{L^2_v} \|\langle v\rangle^{l+\frac{\gamma}{2}}g\|_{L^2_v} \|\langle v\rangle^{ l+\frac{\gamma}{2}}f\|_{L^2_v}\\
       &\quad+ l \int_{\mathbb{R}^6\times\mathbb{S}^2} |v-v_*|^{\gamma+1}b(\cos\theta)\langle v\rangle^{ l -2}(v\cdot \kappa^\perp) \sin\frac{\theta}{2}\cos^{l -1}\frac{\theta}{2}\,g_*f  \comi {v'}^l f' \,d \sigma\, dv_*\, dv.
\end{aligned}
\end{equation}
To estimate the last term, observing that  \eqref{sdf} implies   
\begin{equation*}
    \int_{\mathbb{R}^6\times\mathbb{S}^2} |v-v_*|^{\gamma+1}b(\cos\theta)\langle v\rangle^{ l -2}(v\cdot \kappa^\perp) \sin\frac{\theta}{2}\cos^{l -1}\frac{\theta}{2}\,g_*f  \comi {v}^l f \,d \sigma\, dv\, dv_*=0,
\end{equation*}
we have, for $g\geq 0,$ 
\begin{equation}\label{tj3}
\begin{aligned}
 &l \int_{\mathbb{R}^6\times\mathbb{S}^2} |v-v_*|^{\gamma+1}b(\cos\theta)\langle v\rangle^{ l -2}(v\cdot \kappa^\perp) \sin\frac{\theta}{2}\cos^{l -1}\frac{\theta}{2}\,g_*f  \comi {v'}^l f' \,d \sigma\, dv_*\, dv\\
 &=l \int_{\mathbb{R}^6\times\mathbb{S}^2} |v-v_*|^{\gamma+1}b(\cos\theta)\langle v\rangle^{ l -2}(v\cdot \kappa^\perp) \sin\frac{\theta}{2}\cos^{l -1}\frac{\theta}{2}g_*f \left (\langle v'\rangle^ l  f'-\langle v\rangle^ l  f \right)\,d\sigma  dv_* dv\\
&\leq \frac{1}{4}\int_{\mathbb{R}^6\times\mathbb{S}^2} |v-v_*|^{\gamma}b(\cos\theta)  g_*\big(\langle v'\rangle^ l  f'-\langle v\rangle^ l  f \big)^2 \,d\sigma\, d v_*\, d v  \\
&\quad +C_{ l }\int_{\mathbb{R}^6\times\mathbb{S}^2} |v-v_*|^{\gamma+2}b(\cos\theta) \langle v\rangle^{2 l -4} |v_*\cdot \kappa^\perp|^2\sin^2\frac{\theta}{2}\cos^{2(l -1)}\frac{\theta}{2} \,g_* f^2  \,d\sigma\, d v_*\, d v,
\end{aligned}
\end{equation}
where the first inequality uses the fact that $v\cdot\kappa^{\perp}=v_*\cdot\kappa^{\perp}.$
For the first term on the right-hand side of \eqref{tj3}, 
using the identity  \eqref{idt} again, one has
\begin{equation}
    \label{d+}
\begin{aligned}
&\frac14\int_{\mathbb{R}^6\times\mathbb{S}^2} |v-v_*|^{\gamma}b(\cos\theta)g_*\big(\langle v\rangle^{ l}f-\langle v'\rangle^{ l }f'\big)^2\,d\sigma\, dv_*\, dv \\
&=\frac{1}{2}\int_{\mathbb{R}^6\times\mathbb{S}^2}   |v-v_*|^{\gamma } b(\cos\theta)   g_*\langle v\rangle^{ l }f \Big(\langle v\rangle^{ l }f -\langle v'\rangle^{ l }f'\Big)\,d\sigma\, dv_*\, dv\\
&\quad+\int_{\mathbb{R}^6\times\mathbb{S}^2}  |v-v_*|^{\gamma}b(\cos\theta) g_*\Big(\langle v'\rangle^{2 l }|f'|^2-\langle v\rangle^{2 l }f^2\Big)\,d\sigma\, dv_*\, dv\\
&\leq -\frac12\big(Q(g,\, \comi v^{l}f), \, \comi v^{l}f)\big) +2^\gamma  \boldsymbol{A}_{\gamma,s} \| g\|_{L^1_v} \|\langle v\rangle^{ l+\frac{\gamma}{2}}f\|^2_{L^2_v}+C_l \| \comi v^\gamma g\|_{L^1_v} \|\langle v\rangle^{ l}f\|^2_{L^2_v},
\end{aligned}
\end{equation}
where, in the last line, we used the pre-postcollisional change of variables \eqref{prepost} together with  \eqref{elementaryinequality} and  the cancellation lemma  \eqref{cance}. 
Here $\boldsymbol{A}_{\gamma,s}$ is the constant defined in \eqref{agams}.   Moreover,
   following the argument in the proof of \eqref{j51},  we have the upper bound of the last term in \eqref{tj3} as below: for any $\eps>0,$
\begin{equation*}
  \begin{aligned}
   &C_l \int_{\mathbb{R}^6\times\mathbb{S}^2} |v-v_*|^{\gamma+2}b(\cos\theta) \langle v\rangle^{2 l -4} |v_*\cdot \kappa^\perp|^2\sin^2\frac{\theta}{2}\cos^{2(l -1)}\frac{\theta}{2} \,g_* f^2  \,d\sigma\, d v_*\, d v\\
  &   \leq  \eps  \| g\|_{L^1_v}\| \langle v\rangle^{ l +\frac{\gamma}{2} }f\|^2_{L^2_v} + C_{\eps,l} \|\langle v\rangle^{6+\gamma} g\|_{L^2_v}\| \langle v\rangle^{ l}f\|^2_{L^2_v}.
    \end{aligned}
\end{equation*}
Letting $\eps=2^\gamma\boldsymbol{A}_{\gamma,s}$ in 
the above estimate and then 
   substituting it and \eqref{d+} into \eqref{tj3}, we conclude
\begin{align*}
       &l \int_{\mathbb{R}^6\times\mathbb{S}^2} |v-v_*|^{\gamma+1}b(\cos\theta)\langle v\rangle^{ l -2}(v\cdot \kappa^\perp) \sin\frac{\theta}{2}\cos^{l -1}\frac{\theta}{2}\,g_*f  \comi {v'}^l f' \,d \sigma\, dv\, dv_*\\
& \leq -\frac{1}{2}\Big( Q(g,\langle v\rangle^{ l }f), \langle v\rangle^{ l } f\Big)_{L^2_{v}}+2^{1+\gamma}\boldsymbol{A}_{\gamma,s}\| g \|_{L^1_v}\|\langle v\rangle^{ l +\frac{\gamma}{2}}f \|^2_{L^2_v}  +C_{ l }\|\langle v\rangle^{6+\gamma } g\|_{L^2_v} \|\langle v\rangle^{ l}f\|^2_{L^2_v},
\end{align*}
 which with \eqref{j+}  yields
 \begin{equation*}
 	\begin{aligned}
& (\langle v\rangle^ l  Q(g,f), \comi v^l f)_{L^2_v}-( Q(g,\langle v\rangle^ l  f), \comi v^lf)_{L^2_v} \\
&\leq  -\frac{1}{2}\Big( Q(g,\langle v\rangle^{ l }f), \langle v\rangle^{ l } f\Big)_{L^2_{v}}+2^{1+\gamma}\big( \boldsymbol{\lambda}_l+\boldsymbol{A}_{\gamma,s}\big) \| g\|_{L^1_v} \|\langle v\rangle^{l+\frac{\gamma}{2}}f\|_{L^2_v}^2\\
&\quad+ C_{ l }\|\langle v\rangle^{6+\gamma } g\|_{L^2_v}\| \langle v\rangle^{ l}f\|^2_{L^2_v}+C_l \|\langle v\rangle^{6+\gamma}g\|_{L^2_v}\|\langle v\rangle^{l+\frac{\gamma}{2}}f\|_{L^2_v}  \|\langle v\rangle^{l}f\|_{L^2_v} \\
&\quad+C_l\|\comi v^{6+\gamma} f\|_{L^2_v} \|\langle v\rangle^{l+\frac{\gamma}{2}}g\|_{L^2_v} \|\langle v\rangle^{ l+\frac{\gamma}{2}}f\|_{L^2_v}.
\end{aligned}
 \end{equation*}
 This yields the assertion   in 
 Lemma \ref{lem:coer}. The proof is completed.
 \end{proof}
 
Based on the commutator estimates established above, we now proceed to   prove the main result of this part.     
  
\begin{proof}
	[Proof of Proposition \ref{upbounded}]   Noting  the identity 
	\begin{equation}\label{gfh}
		\big(\langle v\rangle^{ l } Q(g,f),\  h\big)_{L^2_{v}}  
	=\big( Q(g,\langle v\rangle^{ l }f),\  h\big)_{L^2_{v}}+ \big (\langle v\rangle^ l  Q(g,f)- Q\big(g,\langle v\rangle^ l  f),\ h\big)_{L^2_v} ,
	\end{equation}
	we use  \eqref{uppbound} and \eqref{sa} to conclude 
	\begin{equation*}
		\begin{aligned}
		&	|\big(\langle v\rangle^{ l } Q(g,f), h\big)_{L^2_{v}}|\leq  C \big(\|\langle v\rangle^{2s+\gamma}g\|_{L^1_v}+\|g\|_{L^2_v}\big)\normm{\comi v^l f}\left(\normm{h}+\|\langle v\rangle^{ s+\frac{\gamma}{2}}h\|_{L^2_v}\right)\\
		&\qquad\quad+ C_l \|\langle v\rangle^{6+\gamma}f\|_{L^2_v}   \|\langle v\rangle^{l+\frac{\gamma}{2}}g\|_{L^2_v}  \|\langle v\rangle^{ \frac{\gamma}{2}}h\|_{L^2_v}  +C_{ l } \|\langle v\rangle^{6+\gamma}g\|_{L^2_v}  \norm{\comi v^{ \frac{\gamma}{2}}h}_{L_v^2} \normm{\langle v\rangle^{l}f} .
		\end{aligned}
	\end{equation*} 
	This with \eqref{fre} gives the desired estimate  \eqref{upbound-v}.

Letting  $h=\comi v^l f$ in  identity \eqref{gfh}    and then using \eqref{lower+} and   \eqref{sa}, we obtain 
		\begin{equation*}
		\begin{aligned}
		&	 \big(\langle v\rangle^{ l } Q(g,f), \comi v^l f \big)_{L^2_{v}} \leq  -c_0\normm{\comi v^lf}^2+C_0\norm{\comi v^{l+s+\frac{\gamma}{2}}f}_{L_v^2}^2 \\
		&\quad  +   C_l \|\langle v\rangle^{6+\gamma}f\|_{L^2_v}   \|\langle v\rangle^{l+\frac{\gamma}{2}}g\|_{L^2_v}  \|\langle v\rangle^{l+ \frac{\gamma}{2}} f\|_{L^2_v}  +C_{ l } \|\langle v\rangle^{6+\gamma}g\|_{L^2_v}  \norm{\comi v^{ l+\frac{\gamma}{2}} f}_{L_v^2} \normm{\langle v\rangle^{l}f},
		\end{aligned}
	\end{equation*} 
	which with the fact that $\|\langle v\rangle^{l+ \frac{\gamma}{2}} f\|_{L^2_v} \leq \normm {\comi v^lf}$ implies   estimate \eqref{coer-v} as desired.  
	
	Finally, letting  $h=\comi v^l f$ in  identity \eqref{gfh}    and then using   Lemma \ref{lem:coer}, we get 		\begin{multline*}
		  \big(\langle v\rangle^{ l } Q(g,f), \comi v^l f \big)_{L^2_{v}} 
		 \leq  \frac{1}{2}\Big( Q(g,\langle v\rangle^{ l }f), \langle v\rangle^{ l } f\Big)_{L^2_{v}}+2^{1+\gamma} \big(\boldsymbol{\lambda}_l+\boldsymbol{A}_{\gamma,s}\big) \| g\|_{L^1_v}   \|\langle v\rangle^{l+\frac{\gamma}{2}}f\|_{L^2_v}^2\\
		 + \|\langle v\rangle^{l+ \frac{\gamma}{2}}f\|_{L^2_v}^2
         +C_l \|\langle v\rangle^{6+\gamma}f\|_{L^2_v}^2\|\langle v\rangle^{l+\frac{\gamma}{2}}g\|_{L^2_v}^2+C_{ l }\Big(1+\|\langle v\rangle^{6+\gamma}g\|_{L^2_v}^2\Big)   \|\langle v\rangle^{l}f\|_{L^2_v}^2.
		\end{multline*}
This,  together with \eqref{lower+2} as well as $C_0\geq 1,$  yields estimate \eqref{+coercivity-com}. 	
	The proof of Proposition \ref{upbounded} is thus completed.  
 \end{proof}


\section{Hypoelliptic estimates}\label{app:sub}

This and the next sections are devoted to deriving an {\it \`a priori} estimate for  the following linear Boltzmann equation 
\begin{equation}\label{eq-lin}
	\partial_tf + v\cdot \partial_x f
		= Q(g,f),  \quad f|_{t=0}=f_{{in}},
\end{equation}
with the same initial data $f_{{in}}\ge0$ as in the nonlinear Cauchy problem  \eqref{eq-1}.  In this section, we establish a hypoellipitc estimate for \eqref{eq-lin}, which is essential for controlling the loss-of-weight terms. In the next section we will  deduce the  {\it \`a priori} estimate based on the hypoelliptic estimate.

 Before stating the main result of this part, we introduce a  new function space  which  corresponds to the hypoelliptic estimate 
 in the spatial variable  $x\in\mathbb{T}^3$.
 
\begin{definition}
	\label{defxln}
	The function space $\mathcal{Z}_\ell$ is defined by
	\begin{equation*}
		\mathcal{Z}_\ell=\big\{h ;\ \norm{h}_{\mathcal{Z}_\ell}<+\infty\big\},
	\end{equation*}
	where
	 \begin{equation}\label{zn}
		\|h\|^2_{{\mathcal{Z}}_{\ell}} =  \sum_{|\alpha|=3} \norm{\langle v\rangle^{\frac{\gamma}{2(1+2s)}+\ell-\rho|\alpha|}\comi{D_x}^{\frac{s}{1+2s}}  \partial^{\alpha}_{x}h}^2_{L^2_{x, v}} 
  	    +  \norm{\langle v\rangle^{\frac{\gamma}{2(1+2s)}+\ell}\comi{D_x}^{\frac{s}{1+2s}}   h}^2_{L^2_{x, v}}\, ,
	\end{equation}
 recalling  $\comi{D_x}^{\frac{s}{1+2s}}$ is the Fourier multiplier defined by \eqref{fxfv}.  
  \end{definition} 
  
\begin{theorem}\label{hypoellip}
Let $0<c_0\leq 1$ be the constant  in the coercivity estimate \eqref{lower+}, and let $\mathscr{X}_\ell, \mathcal{Y}_\ell$ and $\mathcal{Z}_\ell$ 
be the spaces given in Definitions \ref{xel} and \ref{defxln}.  Then there exists a constant $a_0>0,$ depending only on $c_0, \ell$ and the parameters $s,\gamma$ in \eqref{kern},  such that
\begin{equation}\label{kesub}
		\begin{aligned}
		& \Big(\sup_{t\leq T}\norm{f}_{\mathscr{X}_{\ell}} \Big)^2 + \frac{c_0}{2} \int_0^T\norm{ f}_{\mathcal{Y}_{\ell}}^2 dt +  
        \frac{a_0}{2} \int_0^T \norm{ f}_{\mathcal{Z}_{\ell}}^2 dt\\
	&\leq 2\norm{f_{in}}_{\mathscr{X}_{\ell}}^2+ \frac{c_0 }{16\boldsymbol{L}} \sup_{t\leq T}\norm{g}_{\mathscr{X}_{\ell}} 
    \int_0^T \| f\|_{{\mathcal{Y}}_{\ell}} ^2dt\\
    &\quad +C \Big( 1+\sup_{t\leq T}\norm{g}_{\mathscr{X}_{\ell}} \Big)^2\int_0^T\norm{\comi{D_v}^s\comi v^{\boldsymbol{\ell_1}+\frac{\gamma}{2}}f}_{L^2_{x, v}}^2dt  
\\
	&\quad+C \Big( 1+\sup_{t\leq T}\norm{g}_{\mathscr{X}_{\ell}} \Big)^2\int_0^T\norm{\comi v^{s+\frac{\gamma}{2}}f}_{\mathscr{X}_{\ell}}^2dt  
    +C  \Big(\sup_{t\leq T}  \|f\|_{\mathscr{X}_{\ell}}\Big)^2  \int_0^T  \|\langle v\rangle^{s+ \frac{\gamma}{2}}g\|_{\mathscr{X}_{\ell}}^2dt,
		\end{aligned}
	\end{equation}
where $\boldsymbol{L}$ is  defined by  \eqref{A-0},  and the constant  $\boldsymbol{\ell_1}$ is  specified  as in \eqref{loc-l}. 
\end{theorem}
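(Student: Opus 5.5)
The plan combines a weighted energy estimate for \eqref{eq-lin} in $\mathscr{X}_\ell$ with a separate hypoelliptic estimate giving the $\mathcal Z_\ell$ gain.

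\emph{Energy estimate.} For each $\alpha$ with $|\alpha|\in\{0,3\}$, apply $\partial_x^\alpha$ to \eqref{eq-lin}. Then take the $L^2_{x,v}$ inner product with $\comi v^{2(\ell-\rho|\alpha|)}\partial_x^\alpha f$; the transport term integrates to zero on $\mathbb T^3$. By Leibniz,
\[
\partial_x^\alpha Q(g,f)=\sum_{\beta\le\alpha}\binom{\alpha}{\beta}Q(\partial_x^{\beta}g,\partial_x^{\alpha-\beta}f).
\]
\begin{itemize}
\item The diagonal term $\beta=0$ is handled by \eqref{coer-v}. This produces $-\frac{c_0}{2}\normm{\comi v^{\ell-\rho|\alpha|}\partial^\alpha_x f}^2_{L^2_x}$ plus the weight-loss term $C_0\|\comi v^{\ell-\rho|\alpha|+s+\frac\gamma2}\partial_x^\alpha f\|^2$, which is part of $\int_0^T\|\comi v^{s+\frac\gamma2}f\|^2_{\mathscr X_\ell}$.
\item The remaining terms of \eqref{coer-v} involve $\|\comi v^{6+\gamma}\cdot\|_{L^2_v}$ norms taken in $L^\infty_x$. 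These are bounded by $H^2_x$ Sobolev embedding, which needs control of intermediate $x$-derivatives. I would interpolate these between $|\alpha|=0$ and $|\alpha|=3$ with the weight scaling $\ell-\rho|\alpha|$.
\item The low-order factor with weight $\boldsymbol{\ell_1}$ is controlled by Lemma \ref{lem:small}. Its $\eps$-part goes into $\frac{c_0}{16\boldsymbol L}\sup\|g\|_{\mathscr X_\ell}\int\|f\|^2_{\mathcal Y_\ell}$, and its remainder gives the $\int\|\comi{D_v}^s\comi v^{\boldsymbol{\ell_1}+\frac\gamma2}f\|^2$ term.
\item The mixed terms $\beta\neq0$ are treated with \eqref{upbound-v}, placing the lower-order derivative in $L^\infty_x$ (Sobolev, $H^2_x$) and the higher one in $L^2_x$.
\item When all derivatives fall on $g$ ($\beta=\alpha$), the factor $\|\comi v^{6+\gamma}f\|_{L^\infty_xL^2_v}\|\comi v^{\ell-3\rho+\frac\gamma2}\partial^\alpha g\|$ appears. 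It yields $C(\sup\|f\|_{\mathscr X_\ell})^2\int\|\comi v^{s+\frac\gamma2}g\|^2_{\mathscr X_\ell}$ after Cauchy--Schwarz with $\|\comi v^{\gamma/2}\cdot\|\le\normm{\cdot}$.
\end{itemize}
The gap $\rho$ between successive weights is exactly what absorbs the extra velocity growth in these commutator and Leibniz terms, since $\ell\ge 2\boldsymbol{\ell_1}+3\rho$. Integrating in time gives the $\sup_t\|f\|^2_{\mathscr X_\ell}$ and $c_0\int\|f\|^2_{\mathcal Y_\ell}$ parts of \eqref{kesub}.

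\emph{Hypoelliptic gain.} For the $\mathcal Z_\ell$ term I would use a multiplier argument in Fourier variables $(k,\eta)$, in the spirit of the paper's sketch. Let $F=\comi v^{\ell-\rho|\alpha|}\partial^\alpha_x f$, which solves $(\partial_t+v\cdot\partial_x)F=\comi v^{\ell-\rho|\alpha|}\partial_x^\alpha Q(g,f)$. Introduce the bounded self-adjoint operator
\[
P=\lambda^{-\frac{1+s}{1+2s}}\,\comi v^{\frac{\gamma}{1+2s}}\,\mathrm{Re}\big(i\,k\cdot\eta\big)\,\chi,
\]
where $\lambda=\comi{k}$, or some symbol like $\frac{k\cdot\eta}{\lambda^{(1+s)/(1+2s)}\langle\eta\rangle\cdots}$ truncated so that $P$ is bounded on $L^2$. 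The key identity is $[P,v\cdot\partial_x]\approx |k|^{2s/(1+2s)}\comi v^{\gamma/(1+2s)}$ modulo terms bounded by $\|\comi{D_v}^s\comi v^{\gamma/2}F\|^2$. Here one balances $|\eta|\sim|k|^{1/(1+2s)}\comi v^{-\gamma/(1+2s)}$, which is where the exponent $\frac{s}{1+2s}$ comes from. Pairing the equation with $PF$ then gives
\[
\int_0^T\|F\|^2_{\mathcal Z\text{-part}}\lesssim \sup_t\|F\|^2+\int_0^T\normm{F}^2_{L^2_x}+\int_0^T\big|\big(\comi v^{\ell-\rho|\alpha|}\partial_x^\alpha Q(g,f),PF\big)\big|\,dt .
\]
The last term is bounded by the same Leibniz and \eqref{upbound-v} argument as in the energy estimate. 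One needs only that $P$ is bounded on $L^2$ and compatible with $\normm{\cdot}$, so that $\normm{PF}+\|\comi v^{s+\gamma/2}PF\|\lesssim\normm F+\|\comi v^{s+\gamma/2}F\|$. This holds if the $v$-weights in $P$ are mild and commutators with $\comi{D_v}^s$ are controlled.

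\emph{Combining.} Multiply the hypoelliptic estimate by a small $a_0$ and add it to the energy estimate, so that $a_0$ times the $\mathcal Y_\ell$ and $\sup\|\cdot\|_{\mathscr X_\ell}$ terms are absorbed by $\frac{c_0}{2}$ and the left side. This yields \eqref{kesub}.

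\emph{Main obstacle.} I expect the hardest step to be constructing $P$ and proving the commutator lower bound with the correct velocity weight $\comi v^{\gamma/(1+2s)}$ uniformly in $v$. It also requires the bound $(\partial_x^\alpha Q(g,f),PF)$ in terms of $\normm{\cdot}$, which needs commutator estimates between $P$ (a pseudodifferential operator in $(x,v)$) and the collision operator. I would first try a symbol depending only on $(k,\eta)$ and a scalar weight $\comi v^{\gamma/(1+2s)}$ placed outside. The weight commutators can then be handled by \eqref{equal-norm}, and the $\comi v$-weighted truncation by a cutoff.
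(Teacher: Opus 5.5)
\textbf{Where the proposal falls short: the weighted hypoelliptic step.}
Your overall architecture matches the paper's. The $\mathscr{X}_\ell$ energy estimate (Leibniz, \eqref{coer-v} on the diagonal, \eqref{upbound-v} with $H^2_x\subset L^\infty_x$ on the mixed terms, interpolation of the intermediate $x$-derivatives) is Proposition~\ref{energy} with Lemma~\ref{techlem}. Pairing the equation for $F=\comi v^{\ell-\rho|\alpha|}\partial_x^\alpha f$, $|\alpha|\in\{0,3\}$, with $PF$ and reusing the trilinear bounds is how the paper gets the $\mathcal{Z}_\ell$ term. You also identify the right frequency scale $|\eta|\sim\comi k^{\frac{1}{1+2s}}\comi v^{-\frac{\gamma}{1+2s}}$.

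The gap is the step you flag yourself, Proposition~\ref{wsub}, and what you write down there would not work.
\begin{itemize}
\item Your explicit symbol is not bounded. With prefactor $\comi k^{-\frac{1+s}{1+2s}}$ it reaches size $\comi k^{\frac{1+s}{1+2s}}$ on the support of the cutoff (and $\mathrm{Re}(ik\cdot\eta)=0$). The normalization must be $\comi k^{-\frac{2+2s}{1+2s}}$; that is also what makes $k\cdot\nabla_\eta$ of the symbol of size $\comi v^{\frac{\gamma}{1+2s}}\comi k^{\frac{2s}{1+2s}}$.
\item Your fallback (a symbol in $(k,\eta)$ alone, with the scalar weight $\comi v^{\frac{\gamma}{1+2s}}$ placed outside) gives an operator unbounded on $L^2_v$. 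The boundary term $(F,PF)$ then costs $\sup_t\|\comi v^{\frac{\gamma}{2(1+2s)}}f\|^2_{\mathscr{X}_\ell}$, and the collision pairing costs triple norms of $F$ with an extra velocity weight. Neither is available on the right of \eqref{kesub}.
\item Once the symbol depends on $v$ (as it must, through the cutoff), a pointwise lower bound on the Poisson bracket does not by itself give an operator lower bound. You need a positivity tool, and the proposal supplies none.
\end{itemize}

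\textbf{How the paper closes it.}
The paper puts the weight both in the prefactor and inside the cutoff,
\[
\psi_k(v,\eta)=\frac{2\comi v^{\frac{\gamma}{1+2s}}\,k\cdot\eta}{\comi k^{\frac{2+2s}{1+2s}}}\,\chi\Big(\frac{\comi v^{\frac{\gamma}{1+2s}}\comi\eta}{\comi k^{\frac{1}{1+2s}}}\Big),
\]
so that $|\psi_k|\le 4$ with all $(v,\eta)$-derivatives bounded uniformly in $k$. It then uses the Wick quantization of $\psi_k$:
\begin{itemize}
\item Because $v\cdot k$ is linear, $[\psi_k^{\mathrm{Wick}},(v\cdot k)^{\mathrm{Wick}}]=\frac1i\{\psi_k,v\cdot k\}^{\mathrm{Wick}}$ holds exactly.
\item Positivity of the Wick quantization turns $\frac12\{\psi_k,v\cdot k\}=\frac12 k\cdot\nabla_\eta\psi_k\ge\comi v^{\frac{\gamma}{1+2s}}\comi k^{\frac{2s}{1+2s}}-C\comi v^{\gamma}\comi\eta^{2s}$ into an operator inequality.
\item The error term is then bounded by $\|\comi{D_v}^s\comi v^{\frac{\gamma}{2}}F\|^2_{L^2_v}$.
\end{itemize}
Your remark that reusing Lemma~\ref{techlem} also needs $\|\comi v^{s+\frac{\gamma}{2}}PF\|\lesssim\|\comi v^{s+\frac{\gamma}{2}}F\|$, beyond \eqref{bofp}, is correct. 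It holds for this symbol by standard weighted calculus.

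\textbf{Two smaller points.}
First, Lemma~\ref{lem:small} carries the wrong weight for the intermediate derivatives; the paper uses \eqref{inter2+} at weight $\ell-3\rho$ followed by \eqref{upbd}. Second, in the combination, the diagonal part of the collision pairing against $PF$ contributes $a_0C\sup_t\|g\|_{\mathscr{X}_\ell}\int_0^T\|f\|^2_{\mathcal{Y}_\ell}dt$ with no small factor. This cannot be absorbed into $\frac{c_0}{2}\int_0^T\|f\|^2_{\mathcal{Y}_\ell}dt$ uniformly in $g$. The paper moves it into the $\frac{c_0}{16\boldsymbol{L}}$ term by taking $a_0=c_0/(2C+16C\boldsymbol{L})$, so $a_0$ in fact depends on $\boldsymbol{L}$.
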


\begin{remark}
The second  and  third terms  on the right-hand side of \eqref{kesub} provide a refinement of the following 
\begin{equation*}
C\Big(1+\sup_{t\leq T}\norm{g}_{\mathscr{X}_{\ell}}\Big)^2  \int_0^T \| f\|_{\mathcal{Y}_{\ell}} ^2dt. 
\end{equation*}
We will use this refined estimate to overcome the lack of smallness assumption on the initial data. Further details can be found at   Section \ref{s:priori}. The last two terms in  estimate \eqref{kesub}  account for the loss of weights. As shown in  Proposition \ref{crulem},   we may control these weight-loss terms in terms of  the moment  and  the hypoelliptic estimates. 
\end{remark}
  
\subsection{Upper bound of trilinear terms}
 To prove Theorem \ref{hypoellip}, we begin by establishing the following upper bound for the trilinear terms.
 
\begin{lemma} \label{techlem}
Let $\ell,\boldsymbol{\ell_1}$ be the constants given in \eqref{rhod}, and let $\alpha\in\mathbb Z_+^3$ be a  given multi-index  satisfying that $\abs\alpha= 3$. Then for any $\varepsilon>0$ it holds that
\begin{align*}
	 &\sum_{\substack{\beta\leq \alpha\\ \abs\beta\geq 1}}\big |\big(\langle v\rangle^{\ell-\rho|\alpha|}Q(\partial^{\beta}_xg, \partial_x^{\alpha-\beta}f), \  \langle v\rangle^{\ell-\rho|\alpha|}\partial^{\alpha}_x h\big)_{L^2_{x, v}}\big| \leq   \eps \|g\|_{\mathscr{X}_\ell} \big( \norm{  f}_{\mathcal{Y}_{\ell}}^2+ \norm{h}_{\mathcal{Y}_{\ell}}^2\big)\\
    &\qquad\qquad +C_\eps \|g\|_{\mathscr{X}_\ell} \Big(\normm{\comi v^{\boldsymbol{\ell_1}-s}f}_{L_x^2}^2+\|\langle v\rangle^{ s+\frac{\gamma}{2}}h\|_{\mathscr{X}_{\ell}}^2  \Big)  + C  \| f\|_{\mathscr{X}_{\ell}}  \norm{\comi v^{\frac{\gamma}{2}}g}_{\mathscr{X}_{\ell}}  \norm{\comi v^{\frac{\gamma}{2}}h}_{\mathscr{X}_{\ell}}.
\end{align*}
Meanwhile, 
\begin{align*}
	&\big |\big(\langle v\rangle^{\ell-\rho|\alpha|}Q(g, \partial_x^{\alpha}f), \  
    \langle v\rangle^{\ell-\rho|\alpha|}\partial^{\alpha}_x h\big)_{L^2_{x, v}}\big| \\
	& \qquad \leq C    \| g\|_{\mathscr{X}_{\ell}} \big( \norm{  f}_{\mathcal{Y}_{\ell}}^2+  \norm{h}_{\mathcal{Y}_{\ell}}^2\big)
    +C  \| g\|_{\mathscr{X}_{\ell}}   \|\langle v\rangle^{ s+\frac{\gamma}{2}}h\|_{\mathscr{X}_{\ell}}^2 + C  \| f\|_{\mathscr{X}_{\ell}}  \norm{\comi v^{\frac{\gamma}{2}}g}_{\mathscr{X}_{\ell}}  \norm{\comi v^{\frac{\gamma}{2}}h}_{\mathscr{X}_{\ell}}.
\end{align*}
Consequently, 
\begin{align*}
	&\big |\big(\langle v\rangle^{\ell-\rho|\alpha|}\partial_x^\alpha Q(g, f), \  \langle v\rangle^{\ell-\rho|\alpha|}\partial^{\alpha}_x h\big)_{L^2_{x, v}}\big| \\
	& \qquad \leq C    \| g\|_{\mathscr{X}_{\ell}} \big( \norm{  f}_{\mathcal{Y}_{\ell}}^2+  \norm{h}_{\mathcal{Y}_{\ell}}^2\big)+C  \| g\|_{\mathscr{X}_{\ell}}   \|\langle v\rangle^{ s+\frac{\gamma}{2}}h\|_{\mathscr{X}_{\ell}}^2 + C  \| f\|_{\mathscr{X}_{\ell}}  \norm{\comi v^{\frac{\gamma}{2}}g}_{\mathscr{X}_{\ell}}  \norm{\comi v^{\frac{\gamma}{2}}h}_{\mathscr{X}_{\ell}}.
\end{align*}
\end{lemma}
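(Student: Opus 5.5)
The plan is to apply the weighted trilinear bound \eqref{upbound-v} of Proposition \ref{upbounded} pointwise in $x$, with weight $l=\ell-\rho|\alpha|=\ell-3\rho$. This is admissible since $\ell-3\rho=2\boldsymbol{\ell_1}+2+\gamma\geq \frac{13}{2}+\gamma$. After that, we integrate in $x$ and distribute the $x$-derivatives using Sobolev embedding on $\mathbb T^3$. We first prove the second estimate (all derivatives on $f$), then the first (at least one derivative on $g$). The third estimate follows from the Leibniz formula $\partial_x^\alpha Q(g,f)=\sum_{\beta\le\alpha}\binom{\alpha}{\beta}Q(\partial^\beta_x g,\partial^{\alpha-\beta}_x f)$, with $\eps=1$ in the first.

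For the second estimate, we apply \eqref{upbound-v} with $f\to\partial_x^\alpha f$ and $h\to\comi v^{\ell-3\rho}\partial_x^\alpha h$. The first term becomes
$$\|\comi v^{6+\gamma}g\|_{L^\infty_xL^2_v}\normm{\comi v^{\ell-3\rho}\partial^\alpha_x f}_{L^2_x}\big(\normm{\comi v^{\ell-3\rho}\partial_x^\alpha h}_{L^2_x}+\|\comi v^{\ell-3\rho+s+\frac\gamma2}\partial^\alpha_x h\|_{L^2_{x,v}}\big).$$
We use $H^2_x\subset L^\infty_x$ and note that $\|\comi v^{6+\gamma}g\|_{H^2_xL^2_v}\lesssim\|g\|_{\mathscr X_\ell}$. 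Indeed, $H^2_x$ is interpolated between $L^2_x$ with weight $\ell$ and $\dot H^3_x$ with weight $\ell-3\rho$, and $6+\gamma$ is far below both weights. Young's inequality then gives the first two terms of the bound. The second term of \eqref{upbound-v} is $\|\comi v^{6+\gamma}\partial_x^\alpha f\|_{L^2_v}\|\comi v^{\ell-3\rho+\frac\gamma2}g\|_{L^2_v}\|\comi v^{\frac\gamma2}\comi v^{\ell-3\rho}\partial^\alpha_xh\|_{L^2_v}$. We put $g$ in $L^\infty_x$ via $H^2_x$ and the other two factors in $L^2_x$. The factor $\|\comi v^{\ell-3\rho+\frac\gamma2} g\|_{H^2_xL^2_v}$ is bounded by $\norm{\comi v^{\frac\gamma2}g}_{\mathscr X_\ell}$ through the same interpolation, since the $H^2_x$ level needs weight at most $\ell-2\rho$. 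This yields the last term.

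For the first estimate, with $1\le|\beta|\le 3$, the derivatives split as $(|\beta|,3-|\beta|)\in\{(1,2),(2,1),(3,0)\}$. In each case we put the factor carrying fewer derivatives in $L^\infty_x$ (or $L^4_x$ and $L^4_x$ when the split is $(2,1)$, using $H^1_x\subset L^4_x$ and $H^{3/4}$-type embeddings), and the other in $L^2_x$. The main term of \eqref{upbound-v} is then bounded by a weighted $\partial_x^\beta g$ norm times $\normm{\comi v^{\ell-3\rho}\partial^{\alpha-\beta}_xf}$ times the $h$-factor. The key gain is that $\partial_x^{\alpha-\beta}f$ has at most $2$ derivatives but carries weight $\ell-3\rho$. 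We interpolate in $x$ between $\normm{\comi v^{\boldsymbol{\ell_1}-s}f}$ at order $0$ and the $\mathcal Y_\ell$ norm at order $3$, where $\mathcal Y_\ell$ has weight $\ell-3\rho$ and order $3$. This is exactly the mechanism of Lemma \ref{lem:small}, and it produces $\eps\|f\|_{\mathcal Y_\ell}^2+C_\eps\normm{\comi v^{\boldsymbol{\ell_1}-s}f}^2_{L^2_x}$. The choice $\ell=2\boldsymbol{\ell_1}+2+\gamma+3\rho$ makes the weight arithmetic close. When $|\beta|=3$, $f$ carries no derivative; we put $\partial^\alpha_x g$ in $L^2_x$ and $f$ in $L^\infty_x$ via $H^2_x$, then interpolate as above. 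Remainder terms of \eqref{upbound-v} are handled exactly as in the second estimate.

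The main obstacle is the bookkeeping of velocity weights against the number of $x$-derivatives in the interpolations, especially ensuring that $g$-factors need only $\comi v^{6+\gamma}$ or $\comi v^{\ell-3\rho+\frac\gamma2}$ at derivative levels controlled by $\mathscr X_\ell$. I would first verify the interpolation inequality $\|\comi v^{a}u\|_{H^k_xL^2_v}\lesssim \|\comi v^{\ell}u\|_{L^2}+\sum_{|\alpha|=3}\|\comi v^{\ell-3\rho}\partial^\alpha_x u\|_{L^2}$ for $a\le \ell-k\rho$, $k\le 2$, via Hölder in Fourier as in Lemma \ref{inter-inequality}.
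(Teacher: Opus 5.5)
Your proposal follows the paper's proof: you apply \eqref{upbound-v} with $l=\ell-3\rho$ pointwise in $x$, put the factor with fewer $x$-derivatives in $L^\infty_x$ (or use $L^4_x$--$L^4_x$), extract $\varepsilon$ from the lower-order $f$-factor via $\normm{\langle v\rangle^{\ell-3\rho}\partial_x^{\tilde\beta}f}_{L^2_x}\le\varepsilon\|f\|_{\mathcal Y_\ell}+C_\varepsilon\normm{\langle v\rangle^{\boldsymbol{\ell_1}-s}f}_{L^2_x}$ for $|\tilde\beta|\le 2$, and get the last bound from Leibniz's formula (using $\normm{\langle v\rangle^{\boldsymbol{\ell_1}-s}f}_{L^2_x}\le\|f\|_{\mathcal Y_\ell}$). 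Two points need correcting in the write-up. First, that interpolation must also use the order-zero, weight-$\ell$ part of $\mathcal Y_\ell$: interpolate in $x$ down to $\normm{\langle v\rangle^{\ell-3\rho}f}_{L^2_x}$, then in the velocity weight between $\ell$ and $\boldsymbol{\ell_1}-s$, because interpolating only between $\normm{\langle v\rangle^{\boldsymbol{\ell_1}-s}f}_{L^2_x}$ and the third-order part at weight $\ell-3\rho$ never reaches weight $\ell-3\rho$ at orders $\le 2$. Second, for $|\beta|=3$ the remainder term of \eqref{upbound-v} requires $f$, not $\partial_x^\alpha g$, in $L^\infty_x$, so it is not handled exactly as in the second estimate.
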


\begin{proof} 
In the proof, we assume $\alpha$ is any given multi-index with $\abs\alpha=3.$ 
 For any  $\tilde\beta\in\mathbb Z_+^3$ with $|\tilde\beta|\leq 2,$  we have that, for any $\eps>0,$
\begin{equation}\label{inter2+}
  \normm{ \langle v\rangle^{\ell-3\rho}\partial^{\tilde \beta}_xf }_{L^2_{x}} \leq \eps  \norm{f}_{\mathcal{Y}_\ell}   +C_\eps\normm{\langle v\rangle^{\boldsymbol{\ell_1}-s}f}_{L^2_{x}},
\end{equation}
which holds  since it  follows from  
  the interpolation inequality as well as  \eqref{rhod} that
 \begin{multline*}
    \normm{ \langle v\rangle^{\ell-3\rho}\partial^{\tilde \beta}_xf }_{L^2_{x}}  \leq  \eps \sum_{|\beta|=3}\normm{ \langle v\rangle^{\ell-\rho|\beta|}\partial^{\beta}_xf }_{L^2_{x}}+C_\eps\normm{ \langle v\rangle^{\ell-3\rho}f }_{L^2_{x}}\\
    \leq \eps  \norm{f}_{\mathcal{Y}_\ell}  +C_\eps\normm{\langle v\rangle^{\ell-3\rho}f}_{L^2_{x}}\leq \eps  \norm{f}_{\mathcal{Y}_\ell} +\eps\normm{\comi v^\ell f}_{L_x^2} +C_\eps\normm{\langle v\rangle^{\boldsymbol{\ell_1}-s}f}_{L^2_{x}}.  
 \end{multline*} 
  Now we split the sum as
\begin{equation}\label{dsa}
	\sum_{\substack{\beta\leq \alpha\\ \abs\beta\geq 1}}\big |\big(\langle v\rangle^{\ell-\rho|\alpha|}Q(\partial^{\beta}_xg, \partial_x^{\alpha-\beta}f), \  \langle v\rangle^{\ell-\rho|\alpha|}\partial^{\alpha}_x h\big)_{L^2_{x, v}}\big| \leq K_1+K_2+K_3,
\end{equation}
where 
\begin{equation}\label{i123}
	\left\{
	\begin{aligned}
	 	K_1&=  \sum_{\substack{\beta\leq \alpha, \abs\beta=1}}\big |\big(\langle v\rangle^{\ell-\rho|\alpha|}Q(\partial^{\beta}_xg, \partial_x^{\alpha-\beta}f), \  \langle v\rangle^{\ell-\rho|\alpha|}\partial^{\alpha}_x h\big)_{L^2_{x, v}}\big|,\\
	K_2& =  \sum_{\substack{\beta\leq \alpha, \abs\beta=2}}\big |\big(\langle v\rangle^{\ell-\rho|\alpha|}Q(\partial^{\beta}_xg, \partial_x^{\alpha-\beta}f), \  \langle v\rangle^{\ell-\rho|\alpha|}\partial^{\alpha}_x h\big)_{L^2_{x, v}}\big|,\\
	K_3&=\sum_{\substack{\beta\leq \alpha, \abs\beta=3}}\big |\big(\langle v\rangle^{\ell-\rho|\alpha|}Q(\partial^{\beta}_xg, \partial_x^{\alpha-\beta}f), \  \langle v\rangle^{\ell-\rho|\alpha|}\partial^{\alpha}_x h\big)_{L^2_{x, v}}\big|.
	\end{aligned}
\right.
\end{equation}
To estimate $K_1$, 
	using \eqref{upbound-v} in Proposition \ref{upbounded}  yields, for $\abs\alpha=3$ and any $\eps>0,$
\begin{equation*}
\begin{aligned}
	K_1 
	 \leq &   C   \sum_{\substack{\beta\leq \alpha, \abs\beta=1}} \|\langle v\rangle^{6+\gamma} \partial_x^\beta g\|_{L^{\infty}_{x}L^2_v} \normm{ \langle v\rangle^{\ell-\rho\abs\alpha}\partial_x^{\alpha-\beta} f}_{L_x^2} \\
	 &\qquad\qquad\qquad\times\Big( \normm{ \langle v\rangle^{\ell-\rho\abs\alpha}\partial_x^\alpha h}_{L_x^2}+ \| \langle v\rangle^{\ell-\rho\abs\alpha+ s+\frac{\gamma}{2}}\partial_x^\alpha h\|_{L^2_{x, v}}\Big)\\
	& + C  \sum_{\substack{\beta\leq \alpha, \abs\beta=1}} \|\langle v\rangle^{6+\gamma} \partial_x^{\alpha-\beta} f  \|_{L^2}   \|\langle v\rangle^{\ell-\rho\abs\alpha+\frac{\gamma}{2}}\partial_x^\beta g\|_{L_{x}^\infty L_v^2}  \|\langle v\rangle^{\ell-\rho\abs\alpha+\frac{\gamma}{2}}\partial_x^{\alpha} h\|_{L^2_{x, v}} \\
	\leq &    C  \|g\|_{\mathscr{X}_\ell} \Big(\eps\norm{  f}_{\mathcal{Y}_{\ell}}+C_\eps \normm{\langle v\rangle^{\boldsymbol{\ell_1}-s}f}_{L_x^2}\Big)\Big( \norm{h}_{\mathcal{Y}_{\ell}}+ \|\langle v\rangle^{ s+\frac{\gamma}{2}}h\|_{\mathscr{X}_{\ell}}  \Big) \\
	&+  C   \| f\|_{\mathscr{X}_\ell} \norm{\comi v^{\frac{\gamma}{2}}g}_{\mathscr{X}_{\ell}}  \|\langle v\rangle^{\frac{\gamma}{2}}h\|_{\mathscr{X}_{\ell}},
\end{aligned}
\end{equation*}
where the last inequality follows from \eqref{inter2+} as well as \eqref{rhod}   which implies 
\begin{equation}
	\label{xl}
	 \|\langle v\rangle^{6+\gamma}   h\|_{H^{3}_{x}L^2_v} \leq C \| h\|_{\mathscr{X}_{\ell}}.
\end{equation}
Thus, for any $\varepsilon>0,$ 
 \begin{equation*}
 \begin{aligned}
 K_1 \leq & \eps \|g\|_{\mathscr{X}_\ell} \big( \norm{  f}_{\mathcal{Y}_{\ell}}^2+ \norm{h}_{\mathcal{Y}_{\ell}}^2\big)+C_\eps \|g\|_{\mathscr{X}_\ell} \Big(\normm{\comi v^{\boldsymbol{\ell_1}-s}f}_{L_x^2}^2+\|\langle v\rangle^{ s+\frac{\gamma}{2}}h\|_{\mathscr{X}_{\ell}}^2  \Big) \\
	&+ C  \| f\|_{\mathscr{X}_{\ell}}  \norm{\comi v^{\frac{\gamma}{2}}g}_{\mathscr{X}_{\ell}}  \norm{\comi v^{\frac{\gamma}{2}}h}_{\mathscr{X}_{\ell}}.
 	\end{aligned}
 \end{equation*} 
  An analogous argument yields the same upper bound for $K_2$ and $K_3$ in \eqref{i123}; the details are omitted for brevity.   
  Substituting these estimates into \eqref{dsa} yields the first assertion in Lemma \ref{techlem}.
  
The second assertion follows an argument similar to that for $K_1$, and here the estimate \eqref{inter2+} is not applicable.   Instead, using \eqref{upbound-v} in Proposition \ref{upbounded} we obtain 
\begin{equation*}
\begin{aligned}
	&\big |\big(\langle v\rangle^{\ell-\rho|\alpha|}Q(g, \partial_x^{\alpha}f), \  
    \langle v\rangle^{\ell-\rho|\alpha|}\partial^{\alpha}_x h\big)_{L^2_{x, v}}\big|\\
		&\leq    C   \|\langle v\rangle^{6+\gamma}  g\|_{L^{\infty}_{x}L^2_v} \normm{ \langle v\rangle^{\ell-\rho\abs\alpha}\partial_x^{\alpha} f}_{L_x^2}  \Big( \normm{ \langle v\rangle^{\ell-\rho\abs\alpha}\partial_x^\alpha h}_{L_x^2}+ \| \langle v\rangle^{\ell-\rho\abs\alpha+ s+\frac{\gamma}{2}}\partial_x^\alpha h\|_{L^2_{x, v}}\Big)\\
	& \quad + C \|\langle v\rangle^{6+\gamma} \partial_x^{\alpha } f  \|_{L^2_{x, v}}   \|\langle v\rangle^{\ell-\rho\abs\alpha+\frac{\gamma}{2}}  g\|_{L_{x}^\infty L_v^2}  \|\langle v\rangle^{\ell-\rho\abs\alpha+\frac{\gamma}{2}}\partial_x^{\alpha} h\|_{L^2_{x, v}} \\
	&\leq   C   \|g\|_{\mathscr{X}_{\ell}} \norm{  f}_{\mathcal{Y}_{\ell}}\big( \norm{h}_{\mathcal{Y}_{\ell}}+ \|\langle v\rangle^{ s+\frac{\gamma}{2}}h\|_{\mathscr{X}_{\ell}}  \big)  +  C  \|f\|_{\mathscr{X}_{\ell}}  \norm{\comi v^{\frac{\gamma}{2}}g}_{\mathscr{X}_{\ell}} \|\langle v\rangle^{\frac{\gamma}{2}}h\|_{\mathscr{X}_{\ell}}.
\end{aligned}
\end{equation*}
 This yields the second assertion in  Lemma \ref{techlem}. The last assertion follows directly from the first two together with Leibniz's formula. This completes the proof of  Lemma \ref{techlem}.
\end{proof}

\subsection{Coercivity estimate in the velocity variable}

In this part, we apply Proposition \ref{upbounded} to    deduce  a coercivity estimate for velocity $v$.

\begin{proposition}\label{energy}
Suppose that the hypothesis of Theorem \ref{hypoellip}  is fulfilled.  Then for any $\eps>0,$ we have
	\begin{equation*}
		\begin{aligned}
		&   \big(\sup_{t\leq T}\norm{f}_{\mathscr{X}_{\ell}}\big)^2  +  \frac{c_0}{2}  \int_0^T \norm{ f}_{\mathcal{Y}_{\ell}}^2dt  \\
	&\leq  \norm{f_{in}}_{\mathscr{X}_{\ell}} ^2+\eps\, \sup_{t\leq T}\norm{g}_{\mathscr{X}_{\ell}}  \int_0^T \| f\|_{\mathcal{Y}_{\ell}} ^2dt 
    +C_\eps\Big( 1+\sup_{t\leq T}\norm{g}_{\mathscr{X}_{\ell}} \Big)^2  \int_0^T\norm{\comi{D_v}^s\comi v^{\boldsymbol{\ell_1}+\frac{\gamma}{2}}f}_{L^2_{x, v}}^2dt\\
	&\quad+ C_\eps\Big( 1+\sup_{t\leq T}\norm{g}_{\mathscr{X}_{\ell}} \Big)^2\int_0^T\norm{\comi v^{s+\frac{\gamma}{2}}f}_{\mathscr{X}_{\ell}}^2dt  
    +C  \Big(\sup_{t\leq T}  \|f\|_{\mathscr{X}_{\ell}}^2\Big)  \int_0^T  \|\langle v\rangle^{s+ \frac{\gamma}{2}}g\|_{\mathscr{X}_{\ell}}^2dt,
		\end{aligned},
	\end{equation*}
    recalling  $\ell,\boldsymbol{\ell_1}$ are given in \eqref{rhod}.
\end{proposition}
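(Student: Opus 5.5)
We will run a weighted $L^2$ energy estimate on \eqref{eq-lin}. For each multi-index $\alpha$ with $\abs\alpha=3$, apply $\partial_x^\alpha$ to \eqref{eq-lin}, multiply by $\comi v^{2(\ell-\rho\abs\alpha)}\partial_x^\alpha f$, and integrate over $\mathbb T^3\times\mathbb R^3$; do the same without derivatives and with weight $\comi v^{2\ell}$. The transport term $v\cdot\partial_x$ commutes with $\partial_x^\alpha$ and with velocity weights, so it integrates to zero on the torus. Summing the resulting identities gives
\[
\frac12\frac{d}{dt}\norm{f}_{\mathscr X_\ell}^2=\sum_{\abs\alpha=3}\big(\comi v^{\ell-3\rho}\partial_x^\alpha Q(g,f),\comi v^{\ell-3\rho}\partial_x^\alpha f\big)_{L^2_{x,v}}+\big(\comi v^{\ell}Q(g,f),\comi v^{\ell}f\big)_{L^2_{x,v}} .
\]
Expand $\partial_x^\alpha Q(g,f)$ by Leibniz's formula. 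The terms in which all derivatives fall on $f$ are the good terms. We treat them pointwise in $x$ by \eqref{coer-v} of Proposition \ref{upbounded}, with $l=\ell-3\rho$ or $l=\ell$; both are $\ge \tfrac{13}{2}+\gamma$ by \eqref{rhod}. Here $g\ge0$ is used. These terms produce $-\frac{c_0}{2}\norm{f}_{\mathcal Y_\ell}^2$ together with the error terms $C_0\norm{\comi v^{s+\frac\gamma2}f}_{\mathscr X_\ell}^2$ and
\[
C\|\comi v^{6+\gamma}f\|^2\|\comi v^{l+\frac\gamma2}g\|^2+C\|\comi v^{6+\gamma}g\|^2\|\comi v^{l+\frac\gamma2}f\|^2 .
\]
To integrate these over $x$, put the low-weight factor in $L^\infty_xL^2_v$ and control it by $\norm{\cdot}_{\mathscr X_\ell}$ through \eqref{xl} and Sobolev embedding in $x$. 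They are then bounded by $C\sup\norm{g}_{\mathscr X_\ell}^2\norm{\comi v^{\frac\gamma2}f}_{\mathscr X_\ell}^2$ and $C\norm{f}_{\mathscr X_\ell}^2\norm{\comi v^{\frac\gamma2}g}_{\mathscr X_\ell}^2$. Both fit the last two terms of the claim, since $\comi v^{\gamma/2}\le\comi v^{s+\gamma/2}$.

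The terms with $\abs\beta\ge1$ are handled by the first assertion of Lemma \ref{techlem} with $h=f$. This gives
\[
\eps\norm{g}_{\mathscr X_\ell}\norm{f}_{\mathcal Y_\ell}^2+C_\eps\norm{g}_{\mathscr X_\ell}\Big(\normm{\comi v^{\boldsymbol{\ell_1}-s}f}_{L^2_x}^2+\norm{\comi v^{s+\frac\gamma2}f}_{\mathscr X_\ell}^2\Big)+C\norm{f}_{\mathscr X_\ell}\norm{\comi v^{\frac\gamma2}g}_{\mathscr X_\ell}\norm{\comi v^{\frac\gamma2}f}_{\mathscr X_\ell}.
\]
We then rewrite the middle term. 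By \eqref{upbd} and \eqref{equal-norm}, $\normm{\comi v^{\boldsymbol{\ell_1}-s}f}_{L^2_x}\le C\norm{\comi{D_v}^s\comi v^{\boldsymbol{\ell_1}+\frac\gamma2}f}_{L^2_{x,v}}$. The last product is split by Cauchy–Schwarz into $\norm{f}^2\norm{\comi v^{\frac\gamma2}g}^2+\norm{\comi v^{\frac\gamma2}f}^2$. The coefficients $\norm{g}_{\mathscr X_\ell}$ and $\norm{g}_{\mathscr X_\ell}^2$ are absorbed into $(1+\sup\norm{g}_{\mathscr X_\ell})^2$.

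Finally, integrate in time over $[0,t]$ and take the supremum over $t\le T$. The dissipation $\frac{c_0}{2}\int\norm{f}_{\mathcal Y_\ell}^2$ appears on the left and is kept. The term $\eps\sup\norm{g}\int\norm{f}_{\mathcal Y_\ell}^2$ stays on the right as in the statement, since it is not assumed small. Adding the estimate taken at the supremum to the integrated estimate at time $T$ produces the claim, after adjusting harmless factors of $2$ (renaming $\eps$ and the constants).

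The main obstacle is the bookkeeping of weights. At top order, $\comi v^{\ell-3\rho}\partial_x^\alpha$ meets weights such as $\comi v^{\ell-3\rho+\gamma/2}g$, while in the lower-order Leibniz terms $g$ carries up to three $x$-derivatives. Each factor must fit the $\mathscr X_\ell$ structure, where derivatives cost $\comi v^{-\rho}$. This is exactly what Lemma \ref{techlem} and the choice \eqref{rhod} ensure, so the remaining work is to check the $L^\infty_x$ placements via $H^3_x$ embedding.
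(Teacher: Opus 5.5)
Your proposal is correct and follows the paper's route. It uses the $\mathscr{X}_\ell$ energy identity, \eqref{coer-v} for the terms with all $x$-derivatives on $f$, the first assertion of Lemma \ref{techlem} for the remaining Leibniz terms, and \eqref{upbd} to convert $\normm{\comi v^{\boldsymbol{\ell_1}-s}f}_{L^2_x}$; the only cosmetic difference is that you absorb $\norm{\comi v^{\frac\gamma2}f}_{\mathscr{X}_\ell}^2$ into the weight-loss term rather than into the dissipation as the paper does. One bookkeeping correction: at top order, in $\norm{\comi v^{6+\gamma}\partial_x^\alpha f}_{L^2_v}\norm{\comi v^{\ell-3\rho+\frac\gamma2}g}_{L^2_v}$ it is the $g$ factor that must go in $L^\infty_xL^2_v$, controlled by interpolation through $\norm{\comi v^{\frac\gamma2}g}_{\mathscr{X}_\ell}$, not the low-weight factor, since $\partial_x^\alpha f$ has no $x$-derivatives to spare.
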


\begin{proof}
Taking the $\mathscr{X}_{\ell}$-product with $f$ on both sides of the linear Boltzmann equation \eqref{eq-lin}, we obtain 
\begin{equation} \label{equa}
\frac12\frac{d}{dt}\norm{f}_{\mathscr{X}_{\ell}}^2 
 =\sum_{\abs\alpha=3}\big(\comi v^{\ell-\rho\abs\alpha}\partial_x^{\alpha}Q(g, f),\ \comi v^{\ell-\rho\abs\alpha}\partial_x^{\alpha}f\big)_{L^2_{x, v}} 
 +\big(\comi v^{\ell} Q(g,f),\ \comi v^{\ell } f\big)_{L^2_{x, v}}\, ,
\end{equation}
where we used the fact that
\begin{align*}
	\inner{v\cdot\partial_x f,   f}_{\mathscr{X}_{\ell}}
	=\sum_{\abs\alpha=3}\big( v\cdot\partial_x\comi v^{\ell-\rho\abs\alpha}\partial_x^{\alpha} f ,\ \comi v^{\ell-\rho\abs\alpha}\partial_x^{\alpha}f\big)_{L^2_{x, v}} 
 + \big(v\cdot\partial_x\comi v^{\ell}  f,\ \comi v^{\ell } f\big)_{L^2_{x, v}}=0.
\end{align*}
It follows from the estimate \eqref{coer-v} in Proposition \ref{upbounded} that
\begin{multline*}
	 \big(\comi v^{\ell} Q(g,f),\ \comi v^{\ell } f)_{L^2_{x, v}}\leq -\frac{c_0}{2}\normm{\comi v^\ell f}_{L_x^2}^2+2C_0\norm{\comi v^{\ell+s+\frac{\gamma}{2}}f}_{L^2_{x, v}}^2\\
	 + C\Big( \|\langle v\rangle^{6+\gamma} g\|_{L^{\infty}_{x}L^2_v}^2  \| \langle v\rangle^{\ell+\frac{\gamma}{2}} f\|_{L^2_{x, v}}^2 +    \|\langle v\rangle^{6+\gamma}f\|_{L_x^{\infty}L^2_v}^2\| \langle v\rangle^{\ell+\frac{\gamma}{2}}g\|_{L^2_{x, v}}^2\Big).  
\end{multline*}
This,  together with   \eqref{xl}  as well as the definition of $\norm{\cdot}_{\mathscr{X}_{\ell}}$ in \eqref{xn},  yields
\begin{equation}\label{dec19}
	\begin{aligned}
  &\big(\comi v^{\ell} Q(g,f),\ \comi v^{\ell } f\big)_{L^2_{x, v}} \\
  &\qquad \leq -\frac{c_0}{2}  \normm{\comi v^\ell f}_{L_x^2}^2 +C\inner{1+\| g\|_{\mathscr{X}_{\ell}}^2}\norm{\comi v^{s+\frac{\gamma}{2}}f}_{\mathscr{X}_{\ell}}^2 +C  \|f\|_{\mathscr{X}_{\ell}}^2\|\langle v\rangle^{\frac{\gamma}{2}}g\|_{\mathscr{X}_{\ell}}^2.
	\end{aligned}
\end{equation}
For the first term on the right-hand side of \eqref{equa}, we write
\begin{equation}\label{firstterm}
	\sum_{\abs\alpha=3}\big(\comi v^{\ell-\rho\abs\alpha}\partial_x^{\alpha}Q(g,f),\ \comi v^{\ell-\rho\abs\alpha}\partial_x^{\alpha}f\big)_{L^2_{x, v}}=S_1+S_2,
\end{equation}
where
\begin{equation*}
\left\{
	\begin{aligned}
		S_1&=\sum_{\abs\alpha=3}\big(\comi v^{\ell-\rho\abs\alpha}Q(g,\partial_x^{\alpha}f),\ \comi v^{\ell-\rho\abs\alpha}\partial_x^{\alpha}f\big)_{L^2_{x, v}}\\
		S_2&=\sum_{\abs\alpha=3}\ \sum_{ \beta\leq \alpha,\,\abs\beta\geq 1}\binom{\alpha}{\beta} \big(\comi v^{\ell-\rho\abs\alpha}Q(\partial_x^{\beta}g, \partial_x^{\alpha-\beta}f),\ \comi v^{\ell-\rho\abs\alpha}\partial_x^{\alpha}f\big)_{L^2_{x, v}}.
	\end{aligned}
\right.
\end{equation*}
Following a similar argument as in \eqref{dec19}  yields  
\begin{align*}
	 S_1 
	\leq   -\frac{c_0}{2}\sum_{\abs\alpha=3} \normm{\comi v^{\ell-\rho\abs\alpha}\partial_x^\alpha f}_{L_x^2}^2 
	+C\inner{1+\| g\|_{\mathscr{X}_{\ell}}^2}\norm{\comi v^{s+\frac{\gamma}{2}}f}_{\mathscr{X}_{\ell}}^2 +C  \|f\|_{{X}_{\ell}}^2\|\langle v\rangle^{\frac{\gamma}{2}}g\|_{\mathscr{X}_{\ell}}^2.
\end{align*}
Using the first assertion in Lemma \ref{techlem}  we conclude that, for any $\eps>0$,
\begin{align*}
	  S_2    \leq & \eps \|g\|_{\mathscr{X}_\ell}   \norm{  f}_{\mathcal{Y}_{\ell}}^2 +C_\eps \|g\|_{\mathscr{X}_\ell} \Big(\normm{\comi v^{\boldsymbol{\ell_1}-s}f}_{L_x^2}^2+\|\langle v\rangle^{ s+\frac{\gamma}{2}}f\|_{\mathscr{X}_{\ell}}^2  \Big) \\
	  & +  C\|f\|_{\mathscr{X}_{\ell}}\|\langle v\rangle^{\frac{\gamma}{2}}g\|_{\mathscr{X}_{\ell}}  \|\langle v\rangle^{\frac{\gamma}{2}}f\|_{\mathscr{X}_{\ell}}\\
	  \leq & \eps \|g\|_{\mathscr{X}_\ell}   \norm{  f}_{\mathcal{Y}_{\ell}}^2+C_\eps \|g\|_{\mathscr{X}_\ell} \Big(\normm{\comi v^{\boldsymbol{\ell_1}-s}f}_{L_x^2}^2+\|\langle v\rangle^{ s+\frac{\gamma}{2}}f\|_{\mathscr{X}_{\ell}}^2  \Big) \\
	  & +\frac{c_0}{4}\|\langle v\rangle^{ \frac{\gamma}{2}}f\|_{\mathscr{X}_{\ell}}^2+C  \|f\|_{\mathscr{X}_{\ell}}^2  \|\langle v\rangle^{ \frac{\gamma}{2}}g\|_{\mathscr{X}_{\ell}}^2.
\end{align*}
Substituting these estimates into  \eqref{firstterm}, one has, for any $\eps>0,$
\begin{equation*}
	\begin{aligned}
	&	\sum_{\abs\alpha=3}\big(\comi v^{\ell-\rho\abs\alpha}\partial_x^{\alpha}Q(g,f),\ \comi v^{\ell-\rho\abs\alpha}\partial_x^{\alpha}f\big)_{L^2_{x, v}}\\
	& \leq -\frac{c_0}{2} \sum_{\abs\alpha=3} \normm{\comi v^{\ell-\rho\abs\alpha}\partial_x^\alpha f}_{L_x^2}^2+\eps \|g\|_{\mathscr{X}_\ell}   \norm{  f}_{\mathcal{Y}_{\ell}}^2+\frac{c_0}{4} \|\langle v\rangle^{ \frac{\gamma}{2}}f\|_{\mathscr{X}_{\ell}}^2\\
    &\quad+C_\eps\big(1+ \|g\|_{\mathscr{X}_\ell}^2\big) \Big(\normm{\comi v^{\boldsymbol{\ell_1}-s}f}_{L_x^2}^2+\|\langle v\rangle^{ s+\frac{\gamma}{2}}f\|_{\mathscr{X}_{\ell}}^2  \Big)  +C     \|f\|_{\mathscr{X}_{\ell}}^2 \|\langle v\rangle^{ \frac{\gamma}{2}}g\|_{\mathscr{X}_{\ell}}^2,
	\end{aligned}
\end{equation*}
which, together with  \eqref{dec19}  as well as definition of $\norm{\cdot}_{\mathcal{Y}_{\ell}}$ in \eqref{yn},  yields 
\begin{equation*}
\begin{aligned}
&\sum_{\abs\alpha=3}\big(\comi v^{\ell-\rho\abs\alpha}\partial_x^{\alpha}Q(g,f),\ \comi v^{\ell-\rho\abs\alpha}\partial_x^{\alpha}f\big)_{L^2_{x, v}}+ \big(\comi v^{\ell} Q(g,f),\ \comi v^{\ell } f\big)_{L^2_{x, v}} 
 \\
 &\leq- \frac{c_0}{2}\norm{f}_{\mathcal{Y}_{\ell}}^2+\eps \|g\|_{\mathscr{X}_\ell}   \norm{  f}_{\mathcal{Y}_{\ell}}^2+ \frac{c_0}{4} \|\langle v\rangle^{ \frac{\gamma}{2}}f\|_{\mathscr{X}_{\ell}}^2 \\
    &\quad+C_\eps \big(1+ \|g\|_{\mathscr{X}_\ell}^2\big)  \Big(\normm{\comi v^{\boldsymbol{\ell_1}-s}f}_{L_x^2}^2+\|\langle v\rangle^{ s+\frac{\gamma}{2}}f\|_{\mathscr{X}_{\ell}}^2  \Big)  +C   \|f\|_{\mathscr{X}_{\ell}}^2\|\langle v\rangle^{ \frac{\gamma}{2}}g\|_{\mathscr{X}_{\ell}}^2.
\end{aligned}
\end{equation*}
Combining this with  \eqref{equa} and observing   $\|\langle v\rangle^{ \frac{\gamma}{2}}f\|_{\mathscr{X}_{\ell}}\leq   \norm{f}_{\mathcal{Y}_{\ell}}$, we obtain 
 \begin{equation*}
 \begin{aligned}
\frac12\frac{d}{dt}\norm{f}_{\mathscr{X}_{\ell}}^2 &\leq- \frac{c_0}{4}\norm{f}_{\mathcal{Y}_{\ell}}^2+\eps \|g\|_{\mathscr{X}_\ell}   \norm{  f}_{\mathcal{Y}_{\ell}}^2    \\
    &\quad+C_\eps \big(1+ \|g\|_{\mathscr{X}_\ell}^2\big)  \Big(\normm{\comi v^{\boldsymbol{\ell_1}-s}f}_{L_x^2}^2+\|\langle v\rangle^{ s+\frac{\gamma}{2}}f\|_{\mathscr{X}_{\ell}}^2  \Big)  +C  \|f\|_{\mathscr{X}_{\ell}}^2\|\langle v\rangle^{ \frac{\gamma}{2}}g\|_{\mathscr{X}_{\ell}}^2.
\end{aligned}
\end{equation*}
Moreover,  it follows from \eqref{upbd}  that 
\begin{equation*}
    \normm{\comi v^{\boldsymbol{\ell_1}-s}f}_{L_x^2}\leq C \norm{\comi{D_v}^s \comi v^{s+\frac{\gamma}{2}+\boldsymbol{\ell_1}-s}f}_{L^2_{x, v}}\leq C\norm{\comi{D_v}^s\comi v^{\boldsymbol{\ell_1}+\frac{\gamma}{2}}f}_{L^2_{x, v}}.
\end{equation*}
Hence, combining these estimates yields, for any $\eps>0,$
\begin{equation*}
\begin{aligned}
  \frac12\frac{d}{dt}\norm{f}_{\mathscr{X}_{\ell}}^2\leq & - \frac{c_0}{4}\norm{f}_{\mathcal{Y}_{\ell}}^2+\eps \|g\|_{\mathscr{X}_\ell}   \norm{  f}_{\mathcal{Y}_{\ell}}^2  +C_\eps\big(1+ \|g\|_{\mathscr{X}_\ell}^2\big) \norm{\comi{D_v}^s\comi v^{\boldsymbol{\ell_1}+\frac{\gamma}{2}}f}_{L^2_{x, v}}\\
    & +C_\eps \big(1+ \|g\|_{\mathscr{X}_\ell}^2\big) \|\langle v\rangle^{ s+\frac{\gamma}{2}}f\|_{\mathscr{X}_{\ell}}^2   +C \|f\|_{\mathscr{X}_{\ell}}^2   \|\langle v\rangle^{ \frac{\gamma}{2}}g\|_{\mathscr{X}_{\ell}}^2.
\end{aligned}
\end{equation*}
 Then integrating over $[0,t] $ for any $t\in[0,T]$  we obtain 
the assertion in Proposition \ref{energy}, completing the proof. 
\end{proof}

\subsection{Hypoelliptic estimate for the spatial variable}

This section is devoted to proving the following weighted hypoelliptic estimate in the spatial variable $x$.  
   
\begin{proposition}\label{wsub} 
    There exists   an operator $P: L^2(\mathbb{T}^3\times\mathbb{R}^3)\ \to\ L^2(\mathbb{T}^3\times\mathbb{R}^3) 
$ satisfying
\begin{equation}\label{bofp}
 \norm{Ph}_{L^2_{x, v}}\leq C\norm{h}_{L^2_{x, v}},\quad  	\normm{P h}_{L_x^2}\leq C \normm h_{L_x^2}, 
\end{equation}
such that  
	\begin{multline*}
 	\int_0^T \norm{\comi v^{\frac{\gamma}{2(1+2s)}} \comi {D_x}^{\frac{s}{1+2s}} h}_{L^2_{x, v}}^2 dt\\
        \leq  C\Big(\sup_{t\leq T}\norm{h}_{L^2_{x, v}}\Big)^2 
         +C\int_0^T  \normm{h}_{L_x^2}^2dt+  \int_0^T\big| \big(  \inner{\partial_th+v\cdot\partial_xh},\  Ph\big)_{L^2_{x, v}}\big|	dt.
 \end{multline*}
\end{proposition}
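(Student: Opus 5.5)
The plan is a classical multiplier construction in the spirit of Bouchut/Morimoto--Xu for kinetic hypoellipticity. I take the partial Fourier transform in $x$ (dual variable $k\in\mathbb Z^3$) and, for each fixed $k$, work with the operator $\partial_t + i v\cdot k$ acting on $\hat h(t,k,v)$. The multiplier is built from a velocity-dependent frequency scale
\[
\Lambda(k,v)=\comi v^{\frac{\gamma}{2(1+2s)}}\comi k^{\frac{s}{1+2s}},
\]
so the target term is $\sum_k\|\Lambda(k,\cdot)\hat h\|_{L^2_v}^2$. I would define $P$ through its symbol in $(k,v,\eta)$,
\[
p(k,v,\eta)=\frac{\comi k^{\frac{2s}{1+2s}}\comi v^{\frac{\gamma}{1+2s}}}{\comi k^{\frac{2s}{1+2s}}\comi v^{\frac{\gamma}{1+2s}}+\comi{\eta}^{2}}\,\chi\Big(\frac{\eta\cdot k}{\comi k^{\frac{1}{1+2s}}\comi v^{-\frac{\gamma}{1+2s}}}\Big),
\]
where $\chi$ is an odd, bounded, increasing function (for instance $\chi(z)=z/\comi z$). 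More simply, I would take $P=\mathrm{Op}^w(p)$ with a real symbol of order zero, homogeneous in the right scalings. The key commutator identity is $\mathrm{Re}\,(i v\cdot k\,\hat h,\,P\hat h)=\frac12([P, i v\cdot k]\hat h,\hat h)$. The commutator with $v\cdot k$ produces $k\cdot\partial_\eta p$, which is exactly of size $\comi k^{\frac{2s}{1+2s}}\comi v^{\frac{\gamma}{1+2s}}$ on the region where $|\eta|$ is small compared with the natural scale $\delta=\comi k^{\frac{s}{1+2s}}\comi v^{\frac{\gamma}{2(1+2s)}}$.

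The steps are as follows. First, write
\[
\mathrm{Re}\big(\partial_th+v\cdot\partial_xh,\,Ph\big)=\tfrac12\tfrac{d}{dt}(Ph,h)+\tfrac12([P,v\cdot\partial_x]h,h)
\]
for self-adjoint $P$. Integrating in time, the first part is bounded by $C\sup_t\|h\|^2_{L^2_{x,v}}$ by $L^2$-boundedness of $P$. Second, show a G\aa rding-type lower bound for the commutator:
\[
([P,v\cdot\partial_x]h,h)\ \ge\ c\,\|\Lambda h\|^2-C\big(\|\comi{D_v}^s\comi v^{\gamma/2}h\|^2+\|h\|^2\big).
\]
The idea is that the commutator symbol equals $\delta^2$ times a nonnegative function bounded below on $\{|\eta|\lesssim\delta\}$. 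On the complementary region $|\eta|\gtrsim\delta$, one has $\delta^2\lesssim\comi\eta^{2s}\comi v^{\gamma}$ after matching exponents. Indeed, on the complement $\delta^2\le\delta^{2}(|\eta|/\delta)^{2s}=\delta^{2-2s}|\eta|^{2s}$, and we need $\delta^{2-2s}\lesssim \comi v^{\gamma}$ in the scaling where $\delta^{1+2s}\sim\comi k^{s}\comi v^{\gamma/2}$. The exponents $\frac{s}{1+2s}$ and $\frac{\gamma}{2(1+2s)}$ are precisely those making $|k|/\delta \sim \delta^{2s}\comi v^{-\gamma}$ balance the transport gain against the diffusion $\comi v^{\gamma}|\eta|^{2s}$. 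Third, use \eqref{equal-norm}, \eqref{upbd}, and the bound $\|\comi{D_v}^s\comi v^{\gamma/2}h\|\le\normm h$ to replace the error terms by $\normm h_{L^2_x}^2$. Fourth, check \eqref{bofp}: $L^2$-boundedness follows from Calder\'on--Vaillancourt, since $p$ is an $S^0$-type symbol in a metric adapted to $v$. The bound on $\normm{Ph}$ requires commuting $P$ with $\comi{D_v}^s\comi v^{\gamma/2}$ and with $(-\Delta_{\mathbb S^2})^{s/2}$. This holds because the symbols of these operators vary slowly relative to $p$, so the commutators are of lower order and bounded by $C\normm h$.

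The main obstacle will be the symbolic calculus with the $v$-dependent scale $\comi v^{\gamma/(1+2s)}$. Derivatives of $p$ in $v$ hit this weight, and the resulting error terms must be controlled by $\normm h^2$ rather than by something with worse weights. I would handle this by working in a Weyl–H\"ormander metric
\[
g=\frac{|dv|^2}{\comi v^2}+\frac{|d\eta|^2}{\delta^2},
\]
which is admissible because $\delta\ge1$ and $\delta$ grows at most polynomially in $v$. The spherical part of $\normm{Ph}$ is the other delicate point. If it resists, I would instead define $P$ to be radial-friendly, for example by conjugating with $\comi v^{\gamma/2}$ so that rotations act only through $k$, and accept a lower-order error.
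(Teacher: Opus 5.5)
There is a genuine gap, in the scaling step that your lower bound rests on. Put $\lambda:=\comi k^{\frac{1}{1+2s}}\comi v^{-\frac{\gamma}{1+2s}}$; then your $\delta$ satisfies $\delta^2=\comi k/\lambda=\comi v^{\gamma}\lambda^{2s}$.

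A multiplier behaving like $\hat k\cdot\eta/\lambda$ yields, under $k\cdot\nabla_\eta$, a gain of size $|k|/\lambda\approx\delta^2$. The correct place to cut it off is $\comi\eta\approx\lambda$: then $\comi\eta\ge\lambda$ gives $\delta^2=\comi v^\gamma\lambda^{2s}\le\comi v^\gamma\comi\eta^{2s}$, which is exactly the complement bound.

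You instead localize at $|\eta|\lesssim\delta$. The complement bound you then need, $\delta^2\lesssim\comi v^\gamma\comi\eta^{2s}$ for $|\eta|\ge\delta$, reduces at $|\eta|=\delta$ to $\delta^{2-2s}\lesssim\comi v^\gamma$. This is false: for fixed $v$, $\delta^{2-2s}\approx\comi k^{\frac{2s(1-s)}{1+2s}}\to\infty$. Note that $\delta\ll\lambda$ exactly when $\comi k^{2-2s}\gg\comi v^{3\gamma}$.

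The failure shows up directly in your symbol. Applying $k\cdot\nabla_\eta$ to the prefactor $\frac{\delta^2}{\delta^2+\comi\eta^2}$ gives $-\frac{2\delta^2(k\cdot\eta)\chi}{(\delta^2+\comi\eta^2)^2}\le 0$, since $\chi$ is odd. At $\eta=\delta\hat k$ this is about $-|k|/(2\delta)$, while the positive $\chi'$-term there is negligible. This negative term is not bounded by $\comi v^\gamma\comi\eta^{2s}\approx\comi v^\gamma\delta^{2s}$, nor by the gain $\delta^2$: the ratios are $\comi k^{1-s}\comi v^{-3\gamma/2}$ and $\comi k^{\frac{1-s}{1+2s}}\comi v^{-\frac{3\gamma}{2(1+2s)}}$. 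So your G\aa rding-type inequality already fails at the symbol level. Normalizing the argument of $\chi$ does not rescue it: the prefactor still produces a negative contribution of order $\delta^2$ in the band $|\eta|\approx\delta$.

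There is a second problem with the argument of $\chi$. It is $\eta\cdot k/\lambda$ rather than $\hat k\cdot\eta/\lambda$, so $p$ varies on the $\eta$-scale $\lambda/|k|=\delta^{-2}<1$. The positive part of $k\cdot\nabla_\eta p$ then sits on a thin slab $|\hat k\cdot\eta|\lesssim\delta^{-2}$, not on $\{|\eta|\lesssim\delta\}$. Moreover $p\notin S(1,g)$ for your metric $g$, which requires $\eta$-derivatives $\lesssim\delta^{-1}$. So neither Calder\'on--Vaillancourt nor the symbolic calculus you invoke applies.

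The repair is the paper's symbol
\[
\psi_k=\frac{2\comi v^{\frac{\gamma}{1+2s}}k\cdot\eta}{\comi k^{\frac{2+2s}{1+2s}}}\,\chi\Big(\frac{\comi v^{\frac{\gamma}{1+2s}}\comi\eta}{\comi k^{\frac{1}{1+2s}}}\Big),
\]
which is essentially $2\hat k\cdot\eta/\lambda$ cut off at $\comi\eta\lesssim\lambda$. On its support $\lambda\ge 1/2$, so it is bounded with all $(v,\eta)$-derivatives bounded uniformly in $k$, and
\[
\tfrac12\{\psi_k,v\cdot k\}\ge c\,\comi v^{\frac{\gamma}{1+2s}}\comi k^{\frac{2s}{1+2s}}-C\comi v^{\gamma}\comi\eta^{2s}.
\]
The paper then quantizes by Wick. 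This is exactly positivity-preserving and satisfies the commutator identity with the linear symbol $v\cdot k$ exactly, so no sharp G\aa rding inequality or exotic metric is needed. A Weyl quantization with sharp G\aa rding could also work once the scale is corrected, but you would have to control the G\aa rding remainder.
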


To prove this Proposition, we adopt the strategy from \cite{MR3950012},  which is based on a global pseudo-differential calculus. To illustrate the methodology with minimal use of pseudo-differential calculus, we first present a complete proof of a hypoelliptic estimate without weights, using only Fourier multiplier techniques. The weighted estimate then follows by a similar argument, with the Fourier multiplier replaced by a general pseudo-differential operator.

\begin{proposition}[Hypoellipticity  without weights]\label{subesw}
There exists a  bounded operator $\tilde P: L^2(\mathbb{T}^3\times\mathbb{R}^3)\ \to\ L^2(\mathbb{T}^3\times\mathbb{R}^3)
$ satisfying
\begin{equation}\label{condition2}
\norm{\tilde Ph}_{L^2_{x, v}}\leq 2\norm{h}_{L^2_{x, v}},\quad 	\normm{\tilde P h}_{L^2_x}\leq C \normm h _{L^2_x}, 
\end{equation}
such that  
\begin{equation*}
\begin{split}
	&  \int_0^T \norm{\comi {D_x}^{\frac{s}{1+2s}} h  }_{L^2_{x, v}}^2 dt \\
	&\qquad\qquad \leq 2 \big(\sup_{t\leq T}\norm{h}_{L^2_{x, v}}\big)^2+C \int_0^T \norm{\comi{D_v}^sh }_{L^2_{x, v}}^2dt+ \int_0^T \big| \big( (\partial_t+v\cdot\partial_x)h,\  \tilde Ph\big)_{L^2_{x, v}}\big| dt.
	  \end{split}
\end{equation*}
\end{proposition}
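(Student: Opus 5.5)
We will use the classical Bouchut/Lerner–Morimoto–Pravda-Starov multiplier method and work entirely in Fourier variables $(k,\eta)\in\mathbb Z^3\times\mathbb R^3$. Write $\Lambda(k,\eta)=\big(1+|\eta|^2+|k|^{\frac{2}{1+2s}}\big)^{\frac12}$ and fix a smooth cutoff $\chi:\mathbb R\to[-1,1]$ with $\chi(\tau)=\tau$ for $|\tau|\le 1$. Define $\tilde P$ as the Fourier multiplier with real symbol
\[
p(k,\eta)=\chi\Big(\frac{\eta\cdot k}{\langle k\rangle^{\frac{2+2s}{1+2s}}}\Big)\,\langle k\rangle^{-\frac{1+s}{1+2s}}\cdots
\]
with the exact normalization $p=\Lambda^{-1}\chi\!\big(\frac{\eta\cdot k}{\langle k\rangle \Lambda}\big)\frac{?}{}$ fixed so that $|p|\le 1$ and $\{ |\eta|^2, p\}$ has the right homogeneity. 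Concretely, we take $p(k,\eta)=\chi\big(\frac{\eta\cdot k}{|k|\Lambda}\big)\frac{|k|^{\frac{1}{1+2s}}\cdot|k|^{0}}{\Lambda}$ times a suitable normalization. This choice makes $\tilde P$ a bounded $0$-order multiplier with $|p|\le 1$, which gives the first bound in \eqref{condition2}. The symbol $p$ is smooth in $\eta$, with $|\partial_\eta^\beta p|\lesssim \Lambda^{-|\beta|}\le\langle\eta\rangle^{-|\beta|}$. Hence $\tilde P$ commutes with $\langle D_v\rangle^s$ and has bounded commutators with $\langle v\rangle^{\gamma/2}$ and with the rotation generators, which yields the second bound via the norm equivalence \eqref{equal-norm}.

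The key identity comes from Fourier transforming in $x$: $v\cdot\partial_x$ becomes $i v\cdot k$, which in $\eta$-variables is the operator $-k\cdot\partial_\eta$. Since $p$ is real and $\tilde P$ is self-adjoint,
\[
2\,\mathrm{Re}\big((\partial_t+v\cdot\partial_x)h,\tilde Ph\big)=\frac{d}{dt}(h,\tilde Ph)+\big([\tilde P, v\cdot\partial_x]h,h\big).
\]
The commutator is the multiplier with symbol $k\cdot\partial_\eta p$ (up to sign). Since $\chi'(\tau)=1$ for $|\tau|\le1$, in the region $|\eta\cdot k|\le |k|\Lambda$ this symbol is bounded below by $c\,|k|^{2}/(|k|\Lambda)\cdot(\dots)$. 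With the normalization chosen above, it is $\gtrsim \langle k\rangle^{\frac{2s}{1+2s}}$ minus a term bounded by $\Lambda^{2s}$. Outside this region we have $|\eta|\ge \Lambda/2$, so $\Lambda^{2s}\lesssim\langle\eta\rangle^{2s}$ and $\langle k\rangle^{\frac{2s}{1+2s}}\le\Lambda^{2s}\lesssim \langle \eta\rangle^{2s}$. Together these give the pointwise symbol inequality
\[
\langle k\rangle^{\frac{2s}{1+2s}}\le C_1\,(k\cdot\partial_\eta p)+C_2\langle\eta\rangle^{2s}.
\]
By Plancherel this becomes $\|\langle D_x\rangle^{\frac{s}{1+2s}}h\|^2\le C_1([\tilde P,v\cdot\partial_x]h,h)+C_2\|\langle D_v\rangle^s h\|^2$.

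Integrating the identity over $[0,T]$ gives $|(h,\tilde Ph)(T)-(h,\tilde Ph)(0)|\le 2\sup_t\|h\|^2$. Absorbing the constant $C_1$ into $\tilde P$ by rescaling, and adjusting so that the constants match $2$ in the statement, yields the claimed estimate.

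The main obstacle is calibrating the symbol $p$. It has to satisfy three requirements at once: $\|\tilde P\|\le2$, the commutator symbol must be bounded below by exactly the anisotropic gain $\langle k\rangle^{2s/(1+2s)}$ modulo $\langle\eta\rangle^{2s}$, and the $\eta$-derivatives must be controlled well enough to get the triple-norm bound. The concrete choice I would try first is $p=\chi\big(\eta\cdot k\,\langle k\rangle^{-\frac{2+2s}{1+2s}}\big)\langle k\rangle^{-\frac{?}{}}$, balancing scales by setting $|\eta|\sim\langle k\rangle^{\frac1{1+2s}}$. In that case $k\cdot\partial_\eta p\sim|k|^2\langle k\rangle^{-\frac{2+2s}{1+2s}}=\langle k\rangle^{\frac{2s}{1+2s}}$ where $\chi'=1$, and $|p|\le1$. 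The spherical part of the triple norm should follow since $\tilde P$ commutes with rotations only approximately; the commutator with $v_j\partial_{v_k}-v_k\partial_{v_j}$ produces terms of the form $\partial_\eta p$, which are bounded by the weighted $\langle D_v\rangle^s$ norm.
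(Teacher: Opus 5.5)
Your strategy is the paper's: transform in $x$, pair the equation with a real multiplier $p(k,D_v)$, use that $[\tilde P,v\cdot\partial_x]$ is the multiplier with symbol $k\cdot\nabla_\eta p$, bound that symbol below by $\comi k^{\frac{2s}{1+2s}}-C\comi\eta^{2s}$, then sum in $k$ and integrate in $t$. The paper takes $\varphi_k(\eta)=\frac{2k\cdot\eta}{\comi k^{\frac{2+2s}{1+2s}}}\chi\big(\comi\eta/\comi k^{\frac1{1+2s}}\big)$, i.e.\ $k\cdot\eta$ truncated radially at $|\eta|\sim\comi k^{\frac1{1+2s}}$. As written, however, you never fix $p$ (there are literal placeholders), and your checks do not match your candidates, so the one nontrivial step is not done.

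\begin{itemize}
\item No smooth $\chi$ with values in $[-1,1]$ can equal $\tau$ on $[-1,1]$: it would attain its maximum at $\tau=1$ with $\chi'(1)=1$. You need something like $|\chi|\le 2$.
\item Your middle candidate reduces, since $|\eta\cdot k|<|k|\Lambda$ always, to $p=k\cdot\eta\,|k|^{-\frac{2s}{1+2s}}\Lambda^{-2}$. Then $k\cdot\nabla_\eta p=|k|^{\frac{2+2s}{1+2s}}\Lambda^{-2}\big(1-2(k\cdot\eta)^2|k|^{-2}\Lambda^{-2}\big)$. Your region $\{|\eta\cdot k|\le|k|\Lambda\}$ is everything, and the negative part comes from differentiating $\Lambda^{-2}$. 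It lives on $\{|\eta|>\Lambda/\sqrt2\}$, where it is bounded by $|k|^{\frac{2s}{1+2s}}\lesssim\comi\eta^{2s}$. So this candidate does work, but not for the reason you give.
\item Constants cannot be matched by rescaling. Let $C_1$ be the constant in your symbol inequality. Once the coefficient of the pairing term is normalized to $1$, the norm of the resulting operator $2C_1\tilde P$ and the coefficient of $\sup_t\norm{h}_{L^2_{x,v}}^2$ both equal $2C_1\norm{\tilde P}$. This is invariant under $\tilde P\mapsto c\tilde P$, so you must shrink the zone where $\chi'=1$ instead.
\end{itemize}

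Your last candidate can be completed as follows. Take $\chi$ nondecreasing with $\chi(t)=t$ on $[-1,1]$ and $|\chi|\le 2$. Set $p=\frac12\chi(2\tau)$ with $\tau=k\cdot\eta\,\comi k^{-\frac{2+2s}{1+2s}}$, so that $|p|\le1$ and $k\cdot\nabla_\eta p=\chi'(2\tau)|k|^2\comi k^{-\frac{2+2s}{1+2s}}\ge0$.
\begin{itemize}
\item On $\{|\tau|\le\frac12\}$ this equals $\comi k^{\frac{2s}{1+2s}}-\comi k^{-\frac{2+2s}{1+2s}}\ge\comi k^{\frac{2s}{1+2s}}-1$.
\item On $\{|\tau|>\frac12\}$ the bound $|\eta|\,|k|\ge|k\cdot\eta|$ gives $|\eta|>\frac12\comi k^{\frac1{1+2s}}$, hence $\comi k^{\frac{2s}{1+2s}}\le 4^s\comi\eta^{2s}$.
\end{itemize}
So $\comi k^{\frac{2s}{1+2s}}\le k\cdot\nabla_\eta p+C\comi\eta^{2s}$, and the operator $2\tilde P$ gives the stated inequality with exactly the constants $2$.

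For this symbol, though, your argument for the triple-norm bound in \eqref{condition2} is wrong. First, $\partial_\eta p=\chi'(2\tau)\,k\,\comi k^{-\frac{2+2s}{1+2s}}$ does not decay in $|\eta|$; only $|\partial_\eta^\beta p|\le C_\beta$ holds, which is all the paper states for $\varphi_k$. Second, $[v_i\partial_{v_j}-v_j\partial_{v_i},\tilde P]$ is the multiplier with symbol $\pm(\eta_i\partial_{\eta_j}p-\eta_j\partial_{\eta_i}p)$. For $\eta\perp k$ this has size about $\comi k^{-\frac1{1+2s}}|\eta|$, i.e.\ a first-order operator that $\norm{\comi{D_v}^s h}_{L^2_v}$ does not control when $s<1$. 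You would have to do one of two things:
\begin{itemize}
\item interpolate between the $L^2$ bound and the case $s=1$, using that $\Delta_v$ commutes with $\Delta_{\mathbb S^2}$; or
\item use a symbol in which $k\cdot\eta$ is multiplied by a radial function of $\eta$ cut off or decaying beyond $|\eta|\sim\comi k^{\frac1{1+2s}}$, as in the paper's $\varphi_k$ or your $\Lambda$-candidate. For these, $\eta_i\partial_{\eta_j}p-\eta_j\partial_{\eta_i}p$ stays bounded.
\end{itemize}
Note that the paper itself only asserts this bound and defers the fractional spherical commutator to the literature.
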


\begin{proof}
Recall $(k,\eta)\in\mathbb Z^3\times\mathbb R^3$ is the Fourier dual variable of $(x,v)\in\mathbb T^3\times\mathbb R^3.$ 
For each $k\in\mathbb Z^3$, define $\varphi_k(\eta)$ by setting 
\begin{equation}\label{lake}
\varphi_k(\eta):=\frac{2k\cdot\eta}{\comi k^{\frac{2+2s}{1+2s}}}\chi
\bigg(\frac{\comi \eta}{\comi k^{\frac{1}{1+2s}}} \bigg),
\end{equation}
where $\chi=\chi(r) \in C_0^{\infty}(\mathbb{R};[0,1])$ such that $\chi=1$ on
$[-1,1]$ and supp $\chi\subset [-2,2].$
Let $\varphi_k(D_v)$ denote the Fourier multiplier associated with the symbol $\varphi_k(\eta)$, defined by
\begin{equation*}
	\mathcal F_v (\varphi_k(D_v) h)(\eta)=\varphi_k(\eta)(\mathcal F_v h) (\eta).
\end{equation*}
Since $|\varphi_k(\eta)| \leq 2$ for all $\eta \in \mathbb{R}^3$,  Plancherel's theorem   yields  \begin{equation}\label{planc}
	 \forall\, k\in\mathbb Z^3,\quad  \norm{\varphi_k(D_v)h}_{L_v^2}\leq    2 \norm{  h}_{L_v^2}.
\end{equation}
Furthermore, the symbol  $\varphi_k(\eta)$ satisfies uniform derivative bounds with respect to $k$, namely,  for any multi-index $\alpha \in \mathbb{Z}_+^3$, there exists a constant $C_\alpha$, depending only on $\alpha$ but independent of $k$, such that 
\begin{equation}\label{unb}
\forall \ k\in\mathbb Z^3,\  \forall\ \eta\in\mathbb R^3,\quad |\partial_\eta^\alpha \varphi_k(\eta)|\leq C_\alpha.
\end{equation}
This yields 
\begin{equation}\label{uppb}
 \forall\, k\in\mathbb Z^3,\quad     \normm{\varphi_k(D_v) h} \leq C  \normm{ h},
\end{equation}
where, here and below, we denote by $C$ a generic constant {\it independent of} $k.$  
The proof of 
\eqref{uppb}  relies on the commutator estimate  between the Fourier multiplier  $\varphi_k(D_v)$ and the fractional Laplacian on sphere $(-\Delta_{\mathbb S^2})^{s/2},$  which requires some techniques related to the pseudo-differential calculus. Interested readers may refer to  \cite{MR3950012} for the commutator estimate and here we omit the proof to avoid the technicality induced by the   pseudo-differential calculus.

Performing a Fourier transform  with respect to $x$ variable gives
\begin{equation*}
	\mathcal F_x (\partial_th+v\cdot\partial_x h)=\inner{\partial_t +i(v\cdot k) } \mathcal F_x h,
\end{equation*}
and thus
\begin{equation}\label{Four-eq}
 i(v\cdot k)  \mathcal F_x h=  -\partial_t  \mathcal F_x h+	\mathcal F_x (\partial_th+v\cdot\partial_x h).
\end{equation}
Since $\varphi_k(D_v)$ is self-adjoint in $L_v^2$ and commutes with time derivatives,  then 
we  take $L^2_v$-product with $\varphi_k(D_v)\mathcal F_x h$ on both sides, to
 obtain  
\begin{equation}\label{lr}
\begin{aligned}
    &\text{Re}\ \big( i(v\cdot k)\mathcal F_x{h},\  \varphi_k(D_v)\mathcal F_x{h} \big)_{L^2_v} \\
    &=- \frac12\frac{d}{dt}    \big(\mathcal F_x{h}, \varphi_k(D_v)\mathcal F_x{h}\big)_{L^2_v} 
+ \text{Re}\ \big( \mathcal F_x (\partial_th+v\cdot\partial_x h),\  \varphi_k(D_v)\mathcal F_x{h} \big)_{L^2_v}.
\end{aligned}
\end{equation} 
 On the other hand,  
 \begin{equation}\label{nose}
 \begin{aligned}
     &\text{Re}\ \big( i(v\cdot k)\mathcal F_x{h},\  \varphi_k(D_v)\mathcal F_x{h} \big)_{L^2_v}\\
     &=-\text{Re}\ \big(  (k\cdot\partial_\eta )\mathcal F_{x,v} h,\  \varphi_k(\eta)\mathcal F_{x,v}{h} \big)_{L^2_\eta} =\frac12\sum_{1\leq j\leq 3}\big(k_j   (\partial_{\eta_j}\varphi_k(\eta))     \mathcal F_{x,v} h,\  \mathcal F_{x,v}{h} \big)_{L^2_\eta}. 
 \end{aligned}
 \end{equation}
  Using \eqref{lake},  a direct computation gives
  \begin{equation}\label{esti}
 \begin{aligned}
   & \frac12 \sum_{1\leq j\leq 3}  k_j   (\partial_{\eta_j}\varphi_k(\eta)) \\
   &= \frac{ |k|^2}{\comi k^{\frac{2+2s}{1+2s}}}\chi
\bigg(\frac{\comi \eta}{\comi k^{\frac{1}{1+2s}}} \bigg)
    +\sum_{1\leq j\leq 3}  k_j \frac{k\cdot\eta}{\comi k^{\frac{2+2s}{1+2s}}}\frac{d\chi}{dr}  
\bigg(\frac{\comi\eta}{\comi k^{\frac{1}{1+2s}}} \bigg)\frac{\eta_j}{\comi \eta\comi k^{\frac{1}{1+2s}}}\\
   & \geq  \frac{ |k|^2}{\comi k^{\frac{2+2s}{1+2s}}}\chi
\bigg(\frac{\comi \eta}{\comi k^{\frac{1}{1+2s}}} \bigg)-C\comi \eta^{2s}\\
& \geq \frac{ |k|^2}{\comi k^{\frac{2+2s}{1+2s}}}- \frac{ |k|^2}{\comi k^{\frac{2+2s}{1+2s}}}\bigg[1-\chi
\bigg(\frac{\comi\eta}{\comi k^{\frac{1}{1+2s}}} \bigg)\bigg]-C\comi \eta^{2s}\geq  \comi k^{\frac{2s}{1+2s}}-C\comi \eta^{2s}, 
 \end{aligned}
 \end{equation} 
 where the first inequality  holds since $\comi\eta\approx \comi k^{\frac{1}{1+2s}}$ on the support of 
 \begin{equation*}
 	\frac{d\chi}{dr} 
\bigg(\frac{\comi\eta}{\comi k^{\frac{1}{1+2s}}} \bigg),
 \end{equation*} 
 and last one follows from the fact that $\comi \eta\gtrsim \comi k^{\frac{1}{1+2s}}$ on the support of 
 \begin{equation*}
 	1-\chi
\bigg(\frac{\comi\eta}{\comi k^{\frac{1}{1+2s}}} \bigg).
 \end{equation*}
 As a result, substituting \eqref{esti} into \eqref{nose} yields   
 \begin{align*}
     \text{Re}\ \big( i(v\cdot k)\mathcal F_x{h},\  \varphi_k(D_v)\mathcal F_x{h} \big)_{L^2_v}&\geq    \norm{\comi k^{\frac{s}{1+2s}}\mathcal F_x{h} }_{L^2_v}^2-C \norm{\comi{D_v}^s\mathcal F_x{h} }_{L^2_v}^2.
 \end{align*}
 Combining this estimate with   \eqref{lr} we obtain
 \begin{equation}\label{B5}
     \begin{aligned}
       &   \norm{\comi k^{\frac{s}{1+2s}}\mathcal F_x{h}(t,k) }_{L^2_v}^2\leq - \frac12\frac{d}{dt}    \big(\mathcal F_x{h}, \varphi_k(D_v)\mathcal F_x{h}\big)_{L^2_v}\\
       &\qquad\qquad \qquad +C  \norm{\comi{D_v}^s\mathcal F_x{h} }_{L^2_v}^2+ \text{Re}\ \big( \mathcal F_x\inner{\partial_th+v\cdot\partial_xh},\  \varphi_k(D_v)\mathcal F_x{h} \big)_{L^2_v}.
     \end{aligned}
 \end{equation}
 Observing that the constant $C$ in \eqref{B5} is independent of $k$, we sum \eqref{B5} over all $k \in \mathbb{Z}^3$ and apply Parseval's identity to get
 \begin{equation*}
 	\begin{split}
 		 \norm{\comi {D_x}^{\frac{s}{1+2s}}h  }_{L^2_{x, v}}^2 
          \leq -  \frac12\frac{d}{dt}    \big(h,  \tilde Ph\big)_{L^2_{x, v}}+C  \norm{\comi{D_v}^sh }_{L^2_{x, v}}^2+   \big| \big( (\partial_t+v\cdot\partial_x)h,\  \tilde Ph\big)_{L^2_{x, v}}\big|,
 	\end{split}
 \end{equation*}
 where the operator $\tilde P$ is defined by
 \begin{equation*}
 	\mathcal F_{x,v} (\tilde P h)=\varphi_k(\eta)\mathcal F_{x,v} h
 \end{equation*}
 with $\varphi_k(\eta)$ given in \eqref{lake}.  
 It follows from \eqref{planc} and \eqref{uppb} that $\tilde P$ satisfies condition   \eqref{condition2}.  Integrating the above inequality over $[0,T]$ and using \eqref{condition2}, we obtain  the assertion in Proposition \ref{subesw},  completing 
 the proof. 
\end{proof}

\begin{proof}[Sketch of the proof of Proposition \ref{wsub}]  To establish the weighted hypoelliptic estimate in Proposition \ref{wsub}, 
we follow the argument in the proof of Proposition  \ref{subesw}. The key difference is that we replace the Fourier multiplier used there with a pseudo-differential operator.  Precisely, corresponding to $\varphi_k(\eta)$ in \eqref{lake}, we define the symbol $\psi_{k}(v,\eta)$  by
\begin{equation}
	\label{newsy}
\psi_{k}(v,\eta)=\frac{2\comi v^{\frac{\gamma}{1+2s}} k\cdot\eta}{\comi k^{\frac{2+2s}{1+2s}}}\chi
\bigg(\frac{\comi v^{\frac{\gamma}{1+2s}} \comi \eta}{\comi k^{\frac{1}{1+2s}}} \bigg),
\end{equation}
recalling  $\chi=\chi(r) \in C_0^{\infty}(\mathbb{R};[0,1])$ such that $\chi=1$ on
$[-1,1]$ and supp $\chi\subset [-2,2].$  Furthermore, associated with $\psi_{k}$, we define the operator $\psi_{k}^{\rm Wick}$ by setting 
\begin{equation}\label{ww}
	\psi_{k}^{\rm Wick} h(v)=\int_{\mathbb{R}^6} e^{i(v-\tilde v)\eta} A_{\psi_{k}}\Big(\frac{v+\tilde v}{2},\eta\Big ) h(\tilde v) d\tilde vd\eta,
\end{equation}
where 
\begin{equation*}
	A_{\psi_{k}}(v,\eta)= \frac{1}{8\pi^{6}} \psi_{k}*e^{- \abs v^2- \abs \eta^2}= \frac{1}{8\pi^{6}}\int_{\mathbb{R}^6}\psi_{k}(v-z,\eta-\zeta) e^{- \abs z^2- \abs \zeta^2}dzd\zeta.
\end{equation*}
The operator $\psi_{k}^{\rm Wick}$ is called the Wick quantization of the symbol $\psi_{k}(v,\eta).$  Interested readers may refer to \cite{MR2599384}  for the  detailed discussion of the Wick quantization.  Compared with other classical quantizations of symbols,  a typical  feature of the Wick quantization is its preservation of positivity: namely, if a symbol $q(v,\eta)$ is nonnegative on $\mathbb{R}^{6}$, then its Wick quantization $q^{\mathrm{Wick}}$ is a nonnegative operator. That is,
\begin{equation}\label{positi}
  q(v,\eta)\geq 0 ~~\,\textrm{for all}~ (v,\eta)\in\mathbb R^{6} ~\ {\rm implies}
~~\,q^{\rm Wick}\geq 0.
\end{equation}
Moreover,   $q^{\rm Wick}$ is self-adjoint and bounded on $L^2$ if $q$ is a bounded real-valued symbol, and  the commutator between two Wick quantizations satisfies that
\begin{equation}\label{cot}
[q_1^{\rm Wick}, \ q_2^{\rm Wick}]	=  \frac{1}{i}\big\{q_1, q_2\big\} ^{\rm Wick},
\end{equation}
where the notation $\set{q_1,q_2}$ denotes the Poisson
bracket defined by
\begin{equation}\label{11051505}
  \big\{q_1,~q_2\big\}=\sum_{1\leq j\leq 3}\bigg(\frac{\partial q_1}{\partial\eta_j} \frac{\partial q_2}{\partial
v_j}- \frac{\partial q_1}{\partial v_j} \frac{\partial q_2}{\partial \eta_j}\bigg).
\end{equation}
Similar to \eqref{unb}, we can verify directly that  for any multi-indices $\alpha,\beta \in \mathbb{Z}_+^3$, there exists a constant $C_{\alpha,\beta}$, depending only on $\alpha,\beta,$ but independent of $k$, such that 
\begin{equation*}
\forall \ k\in\mathbb Z^3,\  \forall\ (v,\eta)\in\mathbb R^3\times\mathbb R^3, \quad |\partial_v^{\alpha}\partial_\eta^\beta \psi_{k}(v,\eta)|\leq C_{\alpha,\beta}.
\end{equation*}
This yields that, similar to \eqref{planc} and \eqref{uppb},
\begin{equation}\label{+uppb}
\forall\  k\in\mathbb Z^3,\quad  \norm{\psi_{k}^{\rm Wick} h}_{L_v^2}\leq C   \norm{ h}_{L_v^2},\quad    \normm{\psi_{k}^{\rm Wick} h} \leq C  \normm{ h},
\end{equation}
where,  here and below, the constant $C$ is {\it independent of } $k.$

Since $\psi_{k}^{\mathrm{Wick}}$ is self-adjoint on $L_v^2$, we proceed analogously to  \eqref{lr} by taking the $L^2_v$ inner product of both sides of  \eqref{Four-eq}  with $\psi_{k}^{\mathrm{Wick}}\mathcal{F}_x h$, which yields
\begin{equation}\label{lr2}
\begin{aligned}
    &\text{Re}\ \big( i(v\cdot k)\mathcal F_x{h},\  \psi_{k}^{\rm Wick}\mathcal F_x{h} \big)_{L^2_v} \\
    &=- \frac12\frac{d}{dt}    \big(\mathcal F_x{h}, \psi_{k}^{\rm Wick}\mathcal F_x{h}\big)_{L^2_v} 
+ \text{Re}\ \big( \mathcal F_x (\partial_th+v\cdot\partial_x h),\ \psi_{k}^{\rm Wick}\mathcal F_x{h} \big)_{L^2_v}.
\end{aligned}
\end{equation} 
Observe that by \eqref{ww},
 \[
    v\cdot k= \inner{v\cdot k}^{\rm Wick}.
  \]
 Thus
 \begin{multline*}
 	 \text{Re}\ \big( i(v\cdot k)\mathcal F_x{h},\  \psi_{k}^{\rm Wick}\mathcal F_x{h} \big)_{L^2_v}=\text{Re}\ \big( i(v\cdot k)^{\rm Wick}\mathcal F_x{h},\  \psi_{k}^{\rm Wick}\mathcal F_x{h} \big)_{L^2_v}\\
 	 =\frac{1}{2}\big( i[\psi_{k}^{\rm Wick},\ (v\cdot k)^{\rm Wick}]\mathcal F_x{h},\  \mathcal F_x{h} \big)_{L^2_v}=\frac{1}{2}\big( \big\{\psi_{k},\  v\cdot k\big\}^{\rm Wick}\mathcal F_x{h},\  \mathcal F_x{h} \big)_{L^2_v},
 	\end{multline*}
 	where the  second identity follows from the fact that $\psi_{k}^{\rm Wick}$ is self-adjoint on $L_v^2$ and the last one uses \eqref{cot}. 
Using  \eqref{11051505} and \eqref{newsy}, we follow the same argument as that in \eqref{esti} to compute
\begin{equation*}
	\begin{aligned}
   &\frac12\big\{\psi_{k},\  v\cdot k\big\}=  \sum_{1\leq j\leq 3} k_j   (\partial_{\eta_j}\psi_{k}(\eta)) \\
   &= \frac{ \comi v^{\frac{\gamma}{1+2s}} |k|^2}{\comi k^{\frac{2+2s}{1+2s}}}\chi
\bigg(\frac{\comi v^{\frac{\gamma}{1+2s}} \comi \eta}{\comi k^{\frac{1}{1+2s}}} \bigg)
    +\sum_{j}  k_j \frac{\comi v^{\frac{\gamma}{1+2s}}  k\cdot\eta}{\comi k^{\frac{2+2s}{1+2s}}}\frac{d\chi}{dr} 
\bigg(\frac{\comi v^{\frac{\gamma}{1+2s}} \comi\eta}{\comi k^{\frac{1}{1+2s}}} \bigg)\frac{\comi v^{\frac{\gamma}{1+2s}}  \eta_j}{\comi \eta\comi k^{\frac{1}{1+2s}}}\\
   & \geq  \frac{\comi v^{\frac{\gamma}{1+2s}}  |k|^2}{\comi k^{\frac{2+2s}{1+2s}}}\chi
\bigg(\frac{\comi v^{\frac{\gamma}{1+2s}}\comi \eta}{\comi k^{\frac{1}{1+2s}}} \bigg)-C\comi v^{\gamma} \eta^{2s}\\
& \geq \frac{ \comi v^{\frac{\gamma}{1+2s}}|k|^2}{\comi k^{\frac{2+2s}{1+2s}}}- \frac{ \comi v^{\frac{\gamma}{1+2s}}|k|^2}{\comi k^{\frac{2+2s}{1+2s}}}\bigg[1-\chi
\bigg(\frac{\comi v^{\frac{\gamma}{1+2s}}
\comi\eta}{\comi k^{\frac{1}{1+2s}}} \bigg)\bigg]-C\comi v^{\gamma} \comi \eta^{2s}\\
&\geq  \comi v^{\frac{\gamma}{1+2s}} \comi k^{\frac{2s}{1+2s}}-C\comi v^{\gamma} \comi \eta^{2s}.
 \end{aligned}
\end{equation*}
Combining the above estimates and using \eqref{positi} yields that 
\begin{align*}
	&\text{Re}\ \big( i(v\cdot k)\mathcal F_x{h},\  \psi_{k}^{\rm Wick}\mathcal F_x{h} \big)_{L^2_v}=\frac{1}{2}\big( \big\{\psi_{k},\  v\cdot k\big\}^{\rm Wick}\mathcal F_x{h},\  \mathcal F_x{h} \big)_{L^2_v}\\
	&\geq  \big( \big( \comi v^{\frac{\gamma}{1+2s}} \comi k^{\frac{2s}{1+2s}}\big)^{\rm Wick}\mathcal F_x{h},\  \mathcal F_x{h} \big)_{L^2_v}- C\big( \big( \comi v^{\gamma} \comi \eta^{2s}\big)^{\rm Wick}\mathcal F_x{h},\  \mathcal F_x{h} \big)_{L^2_v}\\
	&\geq \norm{ \comi v^{\frac{\gamma}{2(1+2s)}} \comi k^{\frac{s}{1+2s}} \mathcal F_x{h} }_{L^2_v}^2-C\norm{\comi{D_v}^s\comi v^{\frac{\gamma}{2}}\mathcal F_x{h}}_{L_v^2}^2,
\end{align*}
where the last inequality holds since 
 $\big( \comi v^{\frac{\gamma}{1+2s}}  \big)^{\rm Wick}\geq   \comi v^{\frac{\gamma}{1+2s}}  $ in the sence of operators   and 
\begin{align*}
	\big( \comi v^{\gamma} \comi \eta^{2s}\big)^{\rm Wick} 	= \comi v^{\frac{\gamma}{2}}  \comi{D_v}^{s} \underbrace{\comi{D_v}^{-s} \comi v^{-\frac{\gamma}{2}}\big( \comi v^{\gamma} \comi \eta^{2s}\big)^{\rm Wick} \comi v^{-\frac{\gamma}{2}} 
	\comi{D_v}^{-s} }_{\textrm{bounded  on }  L_v^2 \ \textrm{by classical pseudo-differential calculus}}
	\comi{D_v}^s\comi v^{\frac{\gamma}{2}}. 
\end{align*}
Substituting the above estimate into   \eqref{lr2},
we obtain 
\begin{multline*}
           \norm{\comi v^{\frac{\gamma}{2(1+2s)}} \comi k^{\frac{s}{1+2s}} \mathcal F_x{h}(t,k) }_{L^2_v}^2 
        \leq - \frac12\frac{d}{dt}    \big(\mathcal F_x{h}, \ \psi_{k}^{\rm Wick}\mathcal F_x{h}\big)_{L^2_v}\\
        +C  \norm{\comi{D_v}^s\comi v^{\frac{\gamma}{2}}\mathcal F_x{h} }_{L^2_v}^2+ \text{Re}\ \big( \mathcal F_x\inner{\partial_th+v\cdot\partial_xh},\  \psi_{k}^{\rm Wick}\mathcal F_x{h} \big)_{L^2_v}.
     \end{multline*}
As a result,    taking summation for $k\in\mathbb Z^3$ and  using Parseval' Theorem yields 
\begin{multline*}
  \norm{\comi v^{\frac{\gamma}{2(1+2s)}} \comi {D_x}^{\frac{s}{1+2s}} h}_{L^2_{x, v}}^2 
        \leq -\frac12  \frac{d}{dt}    \big(h, \ Ph\big)_{L^2_{x, v}}\\
        +C  \norm{\comi v^{\frac{\gamma}{2}}\comi{D_v}^sh}_{L^2_{x, v}}^2+  \big| \big(  \inner{\partial_th+v\cdot\partial_xh},\  Ph\big)_{L^2_{x, v}}\big|,
     \end{multline*}
     where the operator $P$ is defined as 
     \begin{equation*}
     	\mathcal F_x(Ph)(t,k,v)=( \psi_{k}^{\rm Wick}\mathcal F_xh)(t,k,v).
     \end{equation*}
   Moreover, it follows from  \eqref{+uppb} that  \eqref{bofp} is fulfilled by the operator $P$.  Integrating the above estimate over $[0,T]$, then  using \eqref{bofp} and   
 the fact that $\norm{\comi{D_v}^s\comi v^{\frac{\gamma}{2}}h}_{L^2_{x, v}}\leq \normm{h}_{L_x^2}$, we obtain the desired estimate in Proposition \ref{wsub}, completing the proof. 
\end{proof}

\subsection{Completing the proof of Theorem  \ref{hypoellip}}

Now we  combine the coercivity estimate in $v$ and the hypoelliptic estimate in $x$, to prove   Theorem \ref{hypoellip}.  For any  given $\alpha\in\mathbb Z^3$ with $\abs\alpha=3$ or $0$,    applying  the operator $\langle v\rangle^{\ell-\rho\abs\alpha}\partial_x^\alpha$ to equation \eqref{eq-lin}  yields 
\begin{equation*}
	(\partial_t+v\cdot\partial_x)\comi v^{\ell-\rho\abs\alpha}\partial_x^\alpha f =\comi v^{\ell-\rho\abs\alpha}\partial_x^\alpha Q(g, f).
\end{equation*}
Applying the hypoelliptic estimate in Proposition \ref{wsub} to the above  equation, we obtain
\begin{equation*}
	\begin{aligned}
	&   \int_0^T\norm{\comi v^{\frac{\gamma}{2(1+2s)}} \comi {D_x}^{\frac{s}{1+2s}} \comi v^{\ell-\rho\abs\alpha}\partial_x^\alpha f  }_{L^2_{x, v}}^2 dt \leq   C\Big(\sup_{t\leq T}  \norm{\comi v^{\ell-\rho\abs\alpha}\partial_x^\alpha f}_{L^2_{x, v}}\Big)^2\\
     &   \qquad\quad+C \int_0^T \normm{\comi v^{\ell-\rho\abs\alpha}\partial_x^\alpha f }_{L_x^2}^2dt+ \int_0^T\big| \big( \comi v^{\ell-\rho\abs\alpha}\partial_x^\alpha Q(g, f),\  P\comi v^{\ell-\rho\abs\alpha}\partial_x^\alpha f\big)_{L^2_{x, v}}\big|dt,	
	\end{aligned}
\end{equation*}
where $P: L^2(\mathbb T^3\times\mathbb R^3)\mapsto L^2(\mathbb T^3\times\mathbb R^3)$  is the bounded operator given  in Proposition  \ref{wsub}. 
The validity of the above estimate for $\abs\alpha=3$ or $0$, together with the definition of $\norm{\cdot}_{\mathcal{Y}_{\ell}}$  and $\norm{\cdot}_{\mathcal{Z}_{\ell}}$ in \eqref{yn} and \eqref{zn}, implies
\begin{equation*}
	\begin{aligned}
 \int_0^T\norm{ f  }_{\mathcal{Z}_{\ell}}^2dt \leq	&  C  \Big(\sup_{t\leq T}  \norm{ f}_{\mathscr{X}_{\ell}}\Big)^2+C\int_0^T \norm{ f }_{\mathcal{Y}_{\ell}}^2 dt  +    \int_0^T\big| \big( \comi v^{\ell}  Q(g, f),\  P\comi v^{\ell} f\big)_{L^2_{x, v}}\big|dt\\
	& +    \sum_{\abs\alpha=3}  \int_0^T\big| \big( \comi v^{\ell-\rho\abs\alpha}\partial_x^\alpha Q(g, f), P\comi v^{\ell-\rho\abs\alpha}\partial_x^\alpha f\big)_{L^2_{x, v}}\big|dt. 
	\end{aligned}
\end{equation*} 
Moreover,   using \eqref{bofp} and the last estimate in Lemma \ref{techlem}   gives 
\begin{multline*}
	 \int_0^T\big| \big( \comi v^{\ell}  Q(g, f),\  P\comi v^{\ell} f\big)_{L^2_{x, v}}\big|dt+\sum_{\abs\alpha=3} \big| \big( \comi v^{\ell-\rho\abs\alpha}\partial_x^\alpha Q(g, f),\  P\comi v^{\ell-\rho\abs\alpha}\partial_x^\alpha f\big)_{L^2_{x, v}}\big|\\
 \leq C    \| g\|_{\mathscr{X}_{\ell}}  \norm{  f}_{\mathcal{Y}_{\ell}} ^2  +C   \| g\|_{\mathscr{X}_{\ell}}    \|\langle v\rangle^{ s+\frac{\gamma}{2}}f\|_{\mathscr{X}_{\ell}}^2+  C  \| f\|_{\mathscr{X}_{\ell}}  \norm{\comi v^{\frac{\gamma}{2}}g}_{\mathscr{X}_{\ell}}  \norm{\comi v^{\frac{\gamma}{2}}f}_{\mathscr{X}_{\ell}}.
\end{multline*} 
Combining the above two inequalities, we conclude that
\begin{multline*}
	  \int_0^T\norm{f  }_{\mathcal{Z}_{\ell}}^2 dt   \leq        C\Big(\sup_{t\leq T}  \norm{f}_{\mathscr{X}_{\ell}}\Big)^2 + C  \int_0^T \norm{ f }_{\mathcal{Y}_{\ell}}^2dt+  C     \sup_{t\leq T}\| g\|_{\mathscr{X}_{\ell}} \int_0^T  \norm{  f}_{\mathcal{Y}_{\ell}} ^2  dt
	  \\
	   +  C    \Big(1+ \sup_{t\leq T}\| g\|_{\mathscr{X}_{\ell}} \Big)\int_0^T   \|\langle v\rangle^{ s+\frac{\gamma}{2}}f\|_{\mathscr{X}_{\ell}}^2 dt+ C \Big(\sup_{t\leq T} \| f\|_{\mathscr{X}_{\ell}} \Big)^2 \int_0^T\norm{\comi v^{\frac{\gamma}{2}}g}_{\mathscr{X}_{\ell}}^2dt.	
	\end{multline*}
Letting $C$ be the constant   above   that    depends only on $\gamma,s$ and $\ell,$ and leting $\boldsymbol{L}$ be the constant defined as in \eqref{A-0},  we define    
\begin{equation*}
a_0:=\frac{c_0}{2  C +16C\boldsymbol{L}}, 
\end{equation*}
and then conclude that 
\begin{multline*}
	 \frac{a_0}{4} \int_0^T\norm{f(t)}_{\mathcal{Z}_{\ell}}^2 dt  
	    \leq        \frac1 2\Big( \sup_{t\leq T}  \norm{f}_{\mathscr{X}_{\ell}}^2\Big) +  \frac{c_0}{4} \int_0^T \norm{ f }_{\mathcal{Y}_{\ell}}^2dt+\frac{c_0\sup_{t\leq T}\| g\|_{\mathscr{X}_{\ell}}}{64\boldsymbol{L}}      \int_0^T  \norm{  f}_{\mathcal{Y}_{\ell}} ^2  dt
	  \\
	 +     \Big(1+ \sup_{t\leq T}\| g\|_{\mathscr{X}_{\ell}}\Big) \int_0^T   \|\langle v\rangle^{ s+\frac{\gamma}{2}}f\|_{\mathscr{X}_{\ell}}^2 dt+  \Big( \sup_{t\leq T} \| f\|_{\mathscr{X}_{\ell}}  \Big)^2\int_0^T\norm{\comi v^{\frac{\gamma}{2}}g}_{\mathscr{X}_{\ell}} ^2dt,	
	\end{multline*}
 which  with Proposition \ref{energy} yields
 \begin{equation*}
		\begin{aligned}
		& \Big(\sup_{t\leq T}\norm{f}_{\mathscr{X}_{\ell}} \Big)^2 + \frac{c_0}{2} \int_0^T   \norm{ f}_{\mathcal{Y}_{\ell}}^2 dt +  \frac{a_0}{2}   \int_0^T  \norm{ f}_{\mathcal{Z}_{\ell}}^2 dt\\
	&\leq 2\norm{f_{in}}_{\mathscr{X}_{\ell}}^2+ \frac{c_0 }{16\boldsymbol{L}} \sup_{t\leq T}\norm{g}_{\mathscr{X}_{\ell}}  \int_0^T \| f\|_{\mathcal{Y}_{\ell}} ^2dt+C \Big[ 1+\sup_{t\leq T}\norm{g}_{\mathscr{X}_{\ell}} \Big]^2\int_0^T\norm{\comi{D_v}^s\comi v^{\boldsymbol{\ell_1}+\frac{\gamma}{2}}f}_{L^2_{x, v}}^2dt  
\\
	&\quad+C \Big( 1+\sup_{t\leq T}\norm{g}_{\mathscr{X}_{\ell}} \Big)^2\int_0^T\norm{\comi v^{s+\frac{\gamma}{2}}f}_{\mathscr{X}_{\ell}}^2dt  +C  \Big(\sup_{t\leq T}  \|f\|_{\mathscr{X}_{\ell}}\Big)^2  \int_0^T  \|\langle v\rangle^{s+ \frac{\gamma}{2}}g\|_{\mathscr{X}_{\ell}}^2dt.
		\end{aligned}
	\end{equation*}
This completes the proof of  Theorem \ref{hypoellip}.  


  \section{\emph{\`A priori} estimates}\label{s:priori}

This section is devoted to establishing an  {\it  \`a priori } estimate for  the linear Boltzmann equation  
\begin{equation}\label{eq-linearc}
	\partial_tf + v\cdot \partial_x f
		= Q(g,f),  \quad f|_{t=0}=f_{{in}},
\end{equation}
where $g\geq 0$ is a given function,  and the initial datum $f_{in}\geq 0$  comes from the nonlinear Cauchy problem \eqref{eq-1}, satisfying  \eqref{finite} and \eqref{initial-datum}.  The argument presented here provides    an  $\epsilon$-uniform estimate  for the following   
regularized  equation (with $0<\epsilon\ll 1$),     
\begin{equation}\label{appbol}
	 \big(\partial_t  +v\cdot\partial_x\big)f_\epsilon+\epsilon \comi v^{s+2\gamma} f_\epsilon=Q(g, f_\epsilon),\quad f_\epsilon|_{t=0}=f_{in},
\end{equation}  
 whose existence is well established. 
 
\begin{definition}\label{defspace}
Let $\ell_0$ and $\ell$ be given in \eqref{rhod}, and let $\boldsymbol{L}$ be defined as in \eqref{A-0}.   
For each given $T>0$ we define $\mathcal{M}_T(\boldsymbol{L})$ by 
\begin{equation*}
\begin{aligned}
  \mathcal{M}_T(\boldsymbol{L}):=\Big\{h ; \  \   \sup_{t\leq T}\|\langle v\rangle^{\ell_0} h\|_{L^1_{x, v}}+\frac{m_0\boldsymbol{\lambda}_{\ell_0}}{4^{1+\gamma}}\int_0^T\|\langle v\rangle^{\ell_0+\gamma} h\|_{L^1_{x, v}}dt\leq \boldsymbol{L}
\Big\},
\end{aligned}
\end{equation*}
and \begin{equation*}
\begin{split}
  \mathcal{E}_{T} (\boldsymbol{L}):=\Big\{ h;  \ \    \Big(\sup_{t\leq T}	\|h\|_{\mathscr{X}_{\ell}}\Big)^2+  c_0  \int_0^T  \norm{h}^2_{\mathcal{Y}_{\ell}}dt+a_0\int_0^T\norm{h}^2_{\mathcal{Z}_{\ell}}dt  \leq \boldsymbol{L}^2
 \Big\},
\end{split}
\end{equation*}
where $m_0$, $\boldsymbol{\lambda}_{\ell_0}$ are the parameters  defined in \eqref{finite} and \eqref{lamome}, and $c_0$ is given in  \eqref{lower+} and $a_0$ is the constant specified  in Theorem \ref{hypoellip}.
\end{definition}

 \begin{theorem}\label{thm:ae}
	Suppose that the hypothesis of Theorem \ref{local-exsitence} is fulfilled. Then there exist a time $T_0>0$ and a constant $ \delta_0>0$,   depending only on $s,\gamma$, such that the following property holds. Let  $g\geq 0$ satisfy condition $\boldsymbol{(H)}$ and  belong  to $\mathcal{M}_{T_0}(\boldsymbol{L})\cap \mathcal{E}_{T_0}(\boldsymbol{L})$. If $f\in L^\infty([0,T_0], \mathscr{X}_\ell)\cap L^2([0,T_0], \mathcal{Y}_\ell)$ is any solution to the linear  Cauchy problem \eqref{eq-linearc},  satisfying  that $f\in C^\infty([\tau, T_0]\times \mathbb T_x^3\times \mathbb R_v^3)$ for any  $0<\tau\leq T_0$ and that   
  \begin{equation}\label{smal}
     \bigg( \int_0^{T_0}\norm{ \comi {D_v}^s\comi v^{ \boldsymbol{\ell_1} +\frac{\gamma}{2}} f}_{L^2_{x, v}}^2 dt\bigg)^{\frac12}\leq \delta_0
  \end{equation}
  with $\boldsymbol{\ell_1}$  given in \eqref{rhod}, then $f\geq 0$ satisfies condition $\boldsymbol{(H)}$ and lies in $ \mathcal{M}_{T_0}(\boldsymbol{L})\cap \mathcal{E}_{T_0}(\boldsymbol{L}).$
\end{theorem}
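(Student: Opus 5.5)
We argue in four steps: non-negativity, the $L^1$ moment bound for $\mathcal{M}_{T_0}(\boldsymbol{L})$, the $L^2$ energy bound for $\mathcal{E}_{T_0}(\boldsymbol{L})$, and condition $\boldsymbol{(H)}$. The weight-loss terms in the energy bound are controlled by the moment bound through the interpolation inequalities; this coupling is the delicate point.

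\emph{Non-negativity.} Take $f_-=\max\{-f,0\}$ and test \eqref{eq-linearc} against $\comi v^{2k}f_-$ for a moderate weight $k$. Since $g\ge0$, the gain part $g'_*f'$ paired with $f_-$ has a good sign, and we may write $(Q(g,f),f_-)\le (Q(g,-f_-),f_-)$. The coercivity \eqref{lower+2} together with the commutator bound of Lemma \ref{commutator} (applied with $g$ in the role of the first argument) then gives a differential inequality
\[
\frac{d}{dt}\norm{\comi v^{k}f_-}_{L^2_{x,v}}^2\le C(g)\,\norm{\comi v^{k+s+\frac{\gamma}{2}}f_-}_{L^2_{x,v}}^2 .
\]
The right-hand side loses weight. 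We close it either via the smoothness assumption on $[\tau,T_0]$ combined with a hypoelliptic or interpolation argument as in Lemma \ref{embedding}, or by working first with the regularized problem \eqref{appbol}, whose damping term $\epsilon\comi v^{s+2\gamma}$ absorbs the loss, and then using $\epsilon$-uniformity. Since $f_-(0)=0$, Gronwall gives $f\ge0$.

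\emph{Moments.} Integrate \eqref{eq-linearc} against $\comi v^{\ell_0}$ and apply Proposition \ref{momentQ} with $l=\ell_0$; this uses $f\ge0$ and $g$ satisfying $\boldsymbol{(H)}$. The dissipation term $\frac{m_0\boldsymbol{\lambda}_{\ell_0}}{4^{1+\gamma}}\norm{\comi v^{\ell_0+\gamma}f}_{L^1}$ is exactly the one appearing in $\mathcal{M}_T$. The term $2\boldsymbol{\omega}_{\ell_0}\norm{f}_{L^\infty_xL^1_v}\norm{\comi v^{\ell_0+\gamma}g}_{L^1}$ is handled as follows:
\begin{itemize}
\item we bound $\norm{f}_{L^\infty_xL^1_v}\lesssim \norm{f}_{\mathscr X_\ell}\le \boldsymbol L$ by Sobolev in $x$;
\item we use that $\boldsymbol{\omega}_{\ell_0}/\boldsymbol{\lambda}_{\ell_0}$ is exponentially small by \eqref{approximate};
\item together with $g\in\mathcal{M}_{T_0}(\boldsymbol{L})$, the time integral of this term is a small multiple of $\boldsymbol L$.
\end{itemize}
The remaining terms are of the form $C\norm{\comi v^{4+\gamma}\cdot}_{L^\infty_xL^1_v}\times(\text{an }\ell_0\text{-moment})$. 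They are bounded by $C\boldsymbol L^2$ pointwise in time, so they contribute $CT_0\boldsymbol L^2$. Since $\norm{f_{in}}_{L^1_{\ell_0}}\le\boldsymbol L/4$, choosing $T_0$ small closes the bound $\le\boldsymbol L$.

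\emph{Energy bound.} Apply Theorem \ref{hypoellip}, using $\sup_t\norm{g}_{\mathscr X_\ell}\le\boldsymbol L$.
\begin{itemize}
\item The second term on the right of \eqref{kesub} is at most $\frac{c_0}{16}\int\norm f_{\mathcal Y_\ell}^2$, which is absorbed.
\item The third term is at most $C(1+\boldsymbol L)^2\delta_0^2$ by \eqref{smal}. This is where $\delta_0$ must be chosen small; since $\boldsymbol{L}$ enters, we really need $\delta_0$ small relative to $\boldsymbol{L}$, or equivalently we use the factor $\boldsymbol L^{-1}$ with care.
\item The weight-loss term $\int\norm{\comi v^{s+\frac{\gamma}{2}}f}_{\mathscr X_\ell}^2$ is the main obstacle. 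For the $\abs\alpha=3$ part we use Lemma \ref{inter-inequality} with $\tau=\ell-3\rho$. This bounds it by $\eps\int\norm{f}_{\mathcal Z_\ell}^2$, which is absorbed by $a_0$, plus $C_\eps\int\norm{\comi v^{\ell}f}_{L^2}^2$. For that remainder and for the zeroth-order part we use Lemma \ref{embedding} applied to $\comi v^{\ell}f$. This gives $\eps(\norm{f}_{\mathcal Z_\ell}^2+\norm f_{\mathcal Y_\ell}^2)+C_\eps\norm{\comi v^{\ell+3+7s+2\gamma}f}_{L^1}^2$, and since $\ell_0=\ell+3+7s+2\gamma$ the last term is bounded by $\boldsymbol L^2$ from the moment step. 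After time integration this yields $C_\eps T_0\boldsymbol L^2$.
\item The last term, $\sup\norm f^2_{\mathscr X_\ell}\int\norm{\comi v^{s+\frac{\gamma}{2}}g}_{\mathscr X_\ell}^2$, is treated with the same interpolation applied to $g\in\mathcal M\cap\mathcal E$. This gives $\sup\norm f^2(\eps\boldsymbol L^2/\min(a_0,c_0)+C_\eps T_0\boldsymbol L^2)$. Choosing $\eps$ and then $T_0$ small (depending on $\boldsymbol L$), the factor in front of $\sup\norm f^2$ is made $\le\frac12$ and absorbed.
\end{itemize}
Collecting these, the total is at most $2\norm{f_{in}}^2_{\mathscr X_\ell}+\frac12\boldsymbol L^2\le\boldsymbol L^2$, because $\norm{f_{in}}_{\mathscr X_\ell}\le\boldsymbol L/4$. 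A continuity argument in $T$ justifies the a priori use of $\sup_t\norm f_{\mathscr X_\ell}\le\boldsymbol L$.

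\emph{Condition $\boldsymbol{(H)}$.} The upper bounds on mass and energy follow from $\frac{d}{dt}\int f\varphi\,dv=-\nabla_x\cdot\int vf\varphi\,dv+\int Q\varphi$ with $\varphi=1$ or $\abs v^2$. The $x$-derivative terms are bounded in $L^\infty_x$ by $\norm f_{\mathscr X_\ell}$ through the $H^3_x$ regularity and the weights. Hence these quantities change by $O(T_0\boldsymbol L^2)$, which is at most $m_0/2$, $M_0$ or $E_0$ for small $T_0$; this also gives the lower mass bound. For entropy we use $\int f\log f\le \int f\log^+ f\lesssim \norm{\comi v^{2}f}_{L^2_v}$-type bounds plus time continuity, via $\frac{d}{dt}\int f\log f\,dv$ and the entropy-type estimate $\int Q(g,f)\log f\,dv\le C\int g_*f\abs{v-v_*}^\gamma$ from the cancellation lemma; alternatively we compare $f(t)$ with $f_{in}$ in $L^2$ with weights. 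I expect this entropy step to need care, but it is less critical than the energy closure.
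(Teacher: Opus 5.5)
\paragraph{Main gap: non-negativity.} Your non-negativity step does not close. After discarding the gain term with $f_+$, you get an inequality for $Y(t)=\norm{\comi v^{k}f_-}_{L^2_{x,v}}^2$ whose right-hand side carries a larger velocity weight, and neither of your proposed fixes works.
\begin{itemize}
\item Lemma \ref{embedding} produces the extra weight only from an $L^1$ moment of $f_-$ (and from $\comi{D_x}^{\frac{s}{1+2s}}$-regularity). Moment bounds via Proposition \ref{momentQ} presuppose $f\ge 0$, so this is circular.
\item Interpolating against a higher weighted norm bounded by $\sup_t\norm{f}_{\mathscr X_\ell}$ gives only $Y'\le CY^{\theta}$ with $\theta<1$. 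That does not force $Y\equiv0$ from $Y(0)=0$.
\item The regularized problem \eqref{appbol} does not help either. The theorem concerns an arbitrary solution of \eqref{eq-linearc}, and identifying it with a limit of regularized solutions needs a uniqueness estimate without weight loss, which is exactly what is missing.
\end{itemize}

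The missing ingredient is Lemma \ref{lem:te}. Work at the special weight $\comi v^{\boldsymbol{\ell_1}}$ and use $g\ge0$ together with $|hh'|\le\frac12(h^2+h'^2)$. This splits the quadratic form into two pieces:
\begin{itemize}
\item a moment-type term, to which Lemma \ref{lem:cru} applies with $l=2\boldsymbol{\ell_1}$ and $|h|^2$ in place of $f$, producing $-\frac{m_0\boldsymbol\lambda_{2\boldsymbol{\ell_1}}}{4^{1+\gamma}}\norm{\comi v^{\boldsymbol{\ell_1}+\frac\gamma2}h}_{L^2_{x,v}}^2$;
\item a cancellation-lemma term, which that negative term dominates thanks to the choice \eqref{loc-l}.
\end{itemize}
Adding a small multiple of \eqref{+coercivity-com}, whose loss is only $\comi v^{\frac\gamma2}$, then yields
\[
\big(\comi v^{\boldsymbol{\ell_1}}Q(g,h),\comi v^{\boldsymbol{\ell_1}}h\big)_{L^2_{x,v}}\le-\tilde c_0\norm{\comi{D_v}^s\comi v^{\boldsymbol{\ell_1}+\frac\gamma2}h}_{L^2_{x,v}}^2+C\big(1+\norm{g}_{\mathscr X_\ell}^2\big)\norm{\comi v^{\boldsymbol{\ell_1}}h}_{L^2_{x,v}}^2 ,
\]
with no weight loss at all. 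Plain Gronwall then gives $f_-\equiv0$, as in Proposition \ref{nonnegative}. Since your moment step, and hence your energy step, use $f\ge0$, everything downstream depends on this.

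\paragraph{Second gap: the moment step.} As written, the moment step fails for large data. You bound $\norm{f}_{L^\infty_xL^1_v}\lesssim\norm f_{\mathscr X_\ell}\le\boldsymbol L$. Then the $\boldsymbol\omega_{\ell_0}$ term contributes $\lesssim\frac{\boldsymbol\omega_{\ell_0}}{m_0\boldsymbol\lambda_{\ell_0}}\boldsymbol L^2$, with no factor of $T_0$. Because $\ell_0$ is fixed before the data (indeed $\boldsymbol L$ depends on $\norm{f_{in}}_{L^1_{\ell_0}}$), you cannot arrange $\boldsymbol\omega_{\ell_0}/\boldsymbol\lambda_{\ell_0}\ll\boldsymbol L^{-1}$.

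The paper instead uses the local conservation law \eqref{lb} to get $\norm{f}_{L^\infty_xL^1_v}\le M_0+CT_0\sup_t\norm f_{\mathscr X_\ell}$. Combined with the last condition in \eqref{loc-l} and the monotonicity $\boldsymbol\omega_{\ell_0}/\boldsymbol\lambda_{\ell_0}\le\boldsymbol\omega_{\boldsymbol{\ell_1}}/\boldsymbol\lambda_{\boldsymbol{\ell_1}}$, this bounds the term by $\frac{\boldsymbol L}{32}+CT_0\boldsymbol L\sup_t\norm f_{\mathscr X_\ell}$. You already have this mass argument in your $\boldsymbol{(H)}$ step; it has to be used here.

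\paragraph{Remaining steps.} Your energy closure (Theorem \ref{hypoellip} plus Lemmas \ref{inter-inequality} and \ref{embedding}) matches the paper's. Note also that $C(1+\boldsymbol L)^2\delta_0^2\le 4C\boldsymbol L^2\delta_0^2$, so $\delta_0$ need not be small relative to $\boldsymbol L$.
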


\begin{remark} 
We impose the $C^\infty$-smoothness in $(t,x,v)$  for $t>0$,   to ensure that the subsequent estimates are mathematically rigorous, not merely formal. However, this assumption can be removed,  since we will work not with \eqref{eq-linearc} directly but with its regularized counterpart \eqref{appbol},  
 which  admits a well-established  instantaneous regularization effect.  See  Section \ref{s:main} for details.   
\end{remark}

The remainder of this section is devoted to the proof of Theorem \ref{thm:ae}.  We will proceed  to establish required {\em non-negativity, moment estimates  and energy estimates}.

\begin{lemma}[Technical lemma]\label{lem:te}
Suppose $g\geq 0$ satisfies condition $\boldsymbol{(H)}$. Then there exists a constant $0<\tilde c_0\leq \frac14,$ depending only on $c_0, C_0, m_0$ amd $M_0 $, such that 
	\begin{equation}\label{fras}
		\big(\comi v^{ \boldsymbol{\ell_1} }Q(g, h), \langle v\rangle^{ \boldsymbol{\ell_1} }h\big)_{L^2_{x, v}}
	 \leq -\tilde c_0 \norm{\comi{D_v}^s\comi v^{ \boldsymbol{\ell_1} +\frac{\gamma}{2}} h}_{L^2_{x, v}}^2    +C\big(1+\norm{g}_{\mathscr{X}_\ell}^2\big) \|\langle v\rangle^{  \boldsymbol{\ell_1} }h\|_{L^2_{x, v}}^2.
	\end{equation}
\end{lemma}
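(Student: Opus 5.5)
The approach is to combine the $L^2$-coercivity \eqref{lower+2} with a genuine weighted damping term. The damping comes from the commutator between $\comi v^{\boldsymbol{\ell_1}}$ and $Q$, as in Lemmas \ref{commutator} and \ref{lem:coer}, but now keeping track of signs rather than bounding absolute values. All estimates are pointwise in $x$ and are integrated over $\mathbb T^3$ at the end. The remainders involving $\|\comi v^{6+\gamma}g\|_{L^2_v}$ and $\|\comi v^{\boldsymbol{\ell_1}+\frac\gamma2}g\|_{L^2_v}$ are bounded in $L^\infty_x$ by $C\|g\|_{\mathscr{X}_\ell}$ through \eqref{xl}, since $\ell\ge 2\boldsymbol{\ell_1}+\gamma$.

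\emph{Step 1 (symmetric form).} Set $F=\comi v^{\boldsymbol{\ell_1}}h$. Applying the pre/post-collisional change of variables \eqref{prepost} to the test function $\comi v^{2\boldsymbol{\ell_1}}h$, I write
\[
(\comi v^{\boldsymbol{\ell_1}}Q(g,h),F)_{L^2_v}=(Q(g,F),F)_{L^2_v}+\int B\,g_*\,h\,F'\big(\comi{v'}^{\boldsymbol{\ell_1}}-\comi v^{\boldsymbol{\ell_1}}\big)\,d\sigma\, dv_*\, dv .
\]
I then expand $\comi{v'}^{\boldsymbol{\ell_1}}$ via \eqref{diffweight}. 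The $\cos^{\boldsymbol{\ell_1}}\frac\theta2-1$ part is treated together with $(Q(g,F),F)$, using the identity \eqref{idt} and the cancellation lemma \eqref{cance}. Rather than invoking Cauchy--Schwarz as in \eqref{j11}, which destroys the sign, I would write $FF'\le\frac12(F^2+F'^2)$ and apply \eqref{rech}. This produces a net term
\[
-\Big(2^{-1-2\gamma}\boldsymbol{\lambda}_{2\boldsymbol{\ell_1}}-2^{1+\gamma}\boldsymbol{A}_{\gamma,s}\Big)\,\|g\|_{L^1_v}\,\|\comi v^{\boldsymbol{\ell_1}+\frac\gamma2}h\|_{L^2_v}^2 ,
\]
plus lower-order terms of the form $C\|\comi v^\gamma g\|_{L^1_v}\|F\|_{L^2_v}^2$. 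By the choice $\boldsymbol{\lambda}_{2\boldsymbol{\ell_1}}\ge 2^{4+3\gamma}\frac{M_0}{m_0}\boldsymbol{A}_{\gamma,s}$ in \eqref{loc-l} and $\|g\|_{L^1_v}\ge m_0/2$, the coefficient in front of $\|\comi v^{\boldsymbol{\ell_1}+\frac\gamma2}h\|_{L^2_v}^2$ is $\le -c_1$, where $c_1>0$ depends only on $m_0,M_0,s,\gamma$.

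\emph{Step 2 (remaining pieces of the expansion).} The $\sin^{\boldsymbol{\ell_1}}$ term and the $J_4$-type term are bounded exactly as in \eqref{j2} and \eqref{j4}. Their leading parts carry the factor $\boldsymbol{\omega}_{\boldsymbol{\ell_1}-2-\gamma}\le \tilde c_0/(32M_0)$, so they are absorbed by $c_1$; their tails go into $C(1+\|g\|_{\mathscr{X}_\ell}^2)\|F\|_{L^2_v}^2$. The $J_3$-type term is handled as in \eqref{tj3}--\eqref{d+}, giving $\frac12$ of the $(Q(g,F),F)$ dissipation plus an $\varepsilon\|g\|_{L^1_v}\|\comi v^{\boldsymbol{\ell_1}+\frac\gamma2}h\|^2$ term, which is absorbed by choosing $\varepsilon$ small. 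The $J_5$-type term is handled as in \eqref{j51} with $\varepsilon\ll c_1$. At this point I expect
\[
(\comi v^{\boldsymbol{\ell_1}}Q(g,h),F)_{L^2_v}\le \tfrac12 (Q(g,F),F)_{L^2_v}-\tfrac{c_1}{2}\|\comi v^{\boldsymbol{\ell_1}+\frac\gamma2}h\|_{L^2_v}^2+C\big(1+\|g\|_{\mathscr{X}_\ell}^2\big)\|F\|_{L^2_v}^2 .
\]

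\emph{Step 3 (using coercivity with a small factor).} By \eqref{lower+2} and \eqref{equal-norm},
\[
\tfrac12(Q(g,F),F)_{L^2_v}\le -\tfrac{c_0}{2}\|\comi{D_v}^s\comi v^{\frac\gamma2}F\|_{L^2_v}^2+\tfrac{C_0}{2}\|\comi v^{\frac\gamma2}F\|_{L^2_v}^2 .
\]
The term $\frac{C_0}{2}\|\comi v^{\frac\gamma2}F\|_{L^2_v}^2$ is not small relative to $c_1$, so I would split $\frac12(Q(g,F),F)=\theta\,(\cdot)+(\frac12-\theta)(\cdot)$. The first part is bounded using \eqref{lower+2} with $\theta C_0\le c_1/4$; the second part is bounded from above by $0$ up to a cheap error, which can be seen from \eqref{d+} and \eqref{cance}, since $(Q(g,F),F)\le C\|g\|_{L^1_v}\|\comi v^{\frac\gamma2}F\|^2$ with a constant controlled by $\boldsymbol{A}_{\gamma,s}$. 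This gives \eqref{fras} with $\tilde c_0=\min\{\theta c_0,\frac14\}$, which depends only on $c_0,C_0,m_0,M_0$.

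The main obstacle is Step 1. One must extract the negative weighted term with the sharp constant $\boldsymbol{\lambda}_{2\boldsymbol{\ell_1}}$ and lose only the cancellation constant $\boldsymbol{A}_{\gamma,s}$, and the Cauchy--Schwarz step used in Lemma \ref{commutator} is too lossy for this. A further point to check is that the Step 3 absorption is consistent with the smallness conditions on $\boldsymbol{\omega}$ in \eqref{loc-l}, given that $\tilde c_0$ itself appears in those conditions.
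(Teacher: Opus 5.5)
There is a genuine gap: as written, the dissipation $(Q(g,F),F)_{L^2_v}$ is spent twice.

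\emph{Where the dissipation is lost.} In Step 1 you merge the $\cos^{\boldsymbol{\ell_1}}\frac\theta2-1$ part with $(Q(g,F),F)$, which gives $\int B\,g_*\big(\cos^{\boldsymbol{\ell_1}}\frac\theta2\,FF'-F^2\big)$. You then apply $FF'\le\frac12(F^2+F'^2)$. (You cannot apply that inequality to the $\cos^{\boldsymbol{\ell_1}}\frac\theta2-1$ part alone: its coefficient is nonpositive, so it would produce a positive term.) After this step the integrand contains only $F^2$ and $F'^2$. The piece $-\frac12\int Bg_*(F-F')^2$, which is the only source of the $\comi{D_v}^s$ gain, has been discarded.

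\emph{Where it is used again.} In Step 2 you bound the $J_3$-type term through \eqref{tj3}--\eqref{d+}, which costs $-\frac12(Q(g,F),F)$. In Step 3 you still feed $\frac12(Q(g,F),F)$ into \eqref{lower+2}. So the inequality you expect at the end of Step 2, containing both $\frac12(Q(g,F),F)_{L^2_v}$ and $-\frac{c_1}{2}\|\comi v^{\boldsymbol{\ell_1}+\frac\gamma2}h\|_{L^2_v}^2$, is unsupported. For comparison, Lemma \ref{lem:coer} follows exactly this commutator route while keeping half of the dissipation, and it ends with the \emph{positive} coefficient $2^{1+\gamma}(\boldsymbol{\lambda}_l+\boldsymbol{A}_{\gamma,s})$.

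If you keep part of the dissipation honestly, only $\int Bg_*\cos^{\boldsymbol{\ell_1}}\frac\theta2\,(F-F')^2$ survives. This is not $(Q(g,F),F)$. The remainder in the form \eqref{tj3} cannot be absorbed by it at large angles. Turning it into coercivity also loses an $\boldsymbol{\ell_1}$-dependent factor (e.g.\ $2^{-\boldsymbol{\ell_1}/2}$), so $\tilde c_0$ would depend on $\boldsymbol{\ell_1}$, while \eqref{loc-l} chooses $\boldsymbol{\ell_1}$ in terms of $\tilde c_0$. In addition, the error you call cheap in Step 3 has size $\boldsymbol{A}_{\gamma,s}M_0\|\comi v^{\boldsymbol{\ell_1}+\frac\gamma2}h\|^2$, which is the same order as your damping margin.

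The missing idea is that no single chain is needed. Prove two independent upper bounds for the same number $(\comi v^{\boldsymbol{\ell_1}}Q(g,h),\comi v^{\boldsymbol{\ell_1}}h)_{L^2_{x,v}}$ and combine them.

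\emph{Damping bound \eqref{m2}.} Apply $|hh'|\le\frac12(h^2+h'^2)$ to the whole integrand $g_*\big(hh'\comi{v'}^{2\boldsymbol{\ell_1}}-h^2\comi v^{2\boldsymbol{\ell_1}}\big)$. Bound $\frac12\int Bg_*h^2\big(\comi{v'}^{2\boldsymbol{\ell_1}}-\comi v^{2\boldsymbol{\ell_1}}\big)$ by Lemma \ref{lem:cru} with $l=2\boldsymbol{\ell_1}$ and $f=h^2$, and bound the rest by \eqref{cance}. This is where $\boldsymbol{\lambda}_{2\boldsymbol{\ell_1}}$, $\boldsymbol{A}_{\gamma,s}$ and the second condition of \eqref{loc-l} enter. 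Your Step 1 constants belong to this estimate, which keeps no dissipation.

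\emph{Coercive bound \eqref{m1}.} Read it off from \eqref{+coercivity-com}. It gives $-\frac{c_0}{2}\|\comi{D_v}^s\comi v^{\boldsymbol{\ell_1}+\frac\gamma2}h\|^2$ at the price of $+2^{1+\gamma}(C_0+4M_0\boldsymbol{\lambda}_{2\boldsymbol{\ell_1}})\|\comi v^{\boldsymbol{\ell_1}+\frac\gamma2}h\|^2$.

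Now add $\tau_0$ times \eqref{m1} to \eqref{m2}, with $\tau_0=\frac{m_0\boldsymbol{\lambda}_{2\boldsymbol{\ell_1}}}{2^{4+3\gamma}(C_0+4M_0\boldsymbol{\lambda}_{2\boldsymbol{\ell_1}})}$, so that the weighted terms cancel. Your Step 3 idea of using only a small fraction of the coercivity is exactly the role of $\tau_0$; it just has to act on two separate estimates of the same quantity. Since $\tau_0$ increases in $\boldsymbol{\lambda}_{2\boldsymbol{\ell_1}}$, you may replace it by an $\boldsymbol{\ell_1}$-independent lower bound. This removes the circularity with \eqref{loc-l} that you flagged.
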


\begin{proof}
 By 
  the pre-postcollisional change of variables  \eqref{prepost},
\begin{align*}
 &\big(\comi v^{ \boldsymbol{\ell_1}}Q(g, h), \langle v\rangle^{ \boldsymbol{\ell_1}}h\big)_{L^2_{x, v}}  
  =\int_{\mathbb{T}^3\times\mathbb{R}^6\times\mathbb{S}^2} |v-v_*|^\gamma b(\cos\theta)   \big(g_*'h'-g_*h\big)\langle v\rangle^{2 \boldsymbol{\ell_1}}h \,\,d\sigma\, dv_*\, dv\,dx\\
 & \qquad\qquad\qquad\qquad=\int_{\mathbb{T}^3\times\mathbb{R}^6\times\mathbb{S}^2}  |v-v_*|^\gamma b(\cos\theta)   g_*\Big(hh '\langle v'\rangle^{2 \boldsymbol{\ell_1}}-\langle v\rangle^{2 \boldsymbol{\ell_1}}|h|^2\Big)\,d\sigma\, dv_*\, dv\,dx.
\end{align*}
 This, with the fact that $  g\ge0$ and
 $ |h\, h'|\leq \frac{1}{2}\big(|h|^2+|h'|^2\big),$ 
yields 
\begin{equation}\label{spt}
 \begin{aligned}
  &\big(\langle v\rangle^{ \boldsymbol{\ell_1} }Q(g,h), \langle v\rangle^{ \boldsymbol{\ell_1} }h\big)_{L^2_{x, v}}\\
  &\leq   \frac{1}{2}\int_{\mathbb{T}^3\times\mathbb{R}^6\times\mathbb{S}^2}  |v-v_*|^\gamma b(\cos\theta)  g_*|h|^2\big(\langle v'\rangle^{2 \boldsymbol{\ell_1}}-\langle v\rangle^{2 \boldsymbol{\ell_1}}\big)\,d\sigma\, dv_*\, dv\,dx\\
 &\quad +\frac{1}{2}\int_{\mathbb{T}^3\times\mathbb{R}^6\times\mathbb{S}^2}  |v-v_*|^\gamma b(\cos\theta) g_*\big(| h'|^2\langle v'\rangle^{2 \boldsymbol{\ell_1}}-|h|^2\langle v\rangle^{2 \boldsymbol{\ell_1}}\big)d\sigma dv_* dv dx. 
 \end{aligned}
 \end{equation}
 For the first term on the right-hand side, we use Lemma \ref{lem:cru} and  the estimate 
\begin{equation}\label{H++++}
 \frac{m_0}{2}\leq \norm{g(t,x,\cdot)}_{L_v^1}\leq 2M_0,
\end{equation} 
 which holds because 
$g$ satisfies condition $\boldsymbol{(H)}$. This yields 
\begin{align*}
	& \int_{\mathbb{T}^3\times\mathbb{R}^6\times\mathbb{S}^2}  |v-v_*|^\gamma b(\cos\theta)  g_*|h|^2\big(\langle v'\rangle^{2 \boldsymbol{\ell_1}}-\langle v\rangle^{2 \boldsymbol{\ell_1}}\big)\,d\sigma\, dv_*\, dv\,dx
	 \\
       & \leq - \frac{m_0\boldsymbol{\lambda}_ {2 \boldsymbol{\ell_1}}}{4^{1+ \gamma}}   \norm{\comi v^{ \boldsymbol{\ell_1} +\frac{\gamma}{2}}h}_{L^2_{x, v}}^2+2\boldsymbol{\omega}_{ 2 \boldsymbol{\ell_1} }\| h\|_{L^2_{x, v}}^2\|\comi v^{2 \boldsymbol{\ell_1} + \gamma }g\|_{L_x^\infty L_v^1} \\
       &\quad+C\|\comi v^{2+\frac{\gamma}{2}}h\|_{L^2}^2\|\comi v^{2  \boldsymbol{\ell_1}}g\|_{L_x^\infty L_v^1}+C\|\comi v^{4+\gamma}g\|_{L_x^\infty L^1_v}\|\comi v^{ \boldsymbol{\ell_1}}h\|_{L^2_{x, v}}^2\\
       & \leq - \frac{m_0 \boldsymbol{\lambda}_ {2 \boldsymbol{\ell_1}}}{4^{1+\gamma}}   \norm{\comi v^{ \boldsymbol{\ell_1} +\frac{\gamma}{2}}h}_{L^2_{x, v}}^2+C\norm{g}_{\mathscr{X}_\ell}\norm{\comi v^{   \boldsymbol{\ell_1}}h}_{L^2_{x, v}}^2,
\end{align*}
where the last inequality holds since it follows from    \eqref{rhod}   that 
$$
\norm{h}_{L_{x,v}^2}\leq\norm{\langle v\rangle^{2+\frac{\gamma}{2}}h}_{L^2_{x, v}}\leq \|\langle v\rangle^{ \boldsymbol{\ell_1}  }h\|_{L^2_{x, v}} 
$$
and  
\begin{equation}\label{gconh}
\begin{aligned}
	 \|\langle v\rangle^{4+\gamma}g\|_{L_x^\infty L^1_v}&\leq \|\langle v\rangle^{ 2 \boldsymbol{\ell_1} +\gamma}g\|_{L_x^\infty L_v^1} \leq C\|\langle v\rangle^{ 2 \boldsymbol{\ell_1} +\gamma+ 2}g\|_{H_x^2 L_v^2}\leq C\|\langle v\rangle^{\ell-3\rho}g\|_{H_x^2 L_v^2}\\
    &\leq  C\sum_{\abs\alpha=3}\|\langle v\rangle^{\ell-\rho\abs\alpha}\partial_x^\alpha g\|_{L^2_{x, v}}+C\|\langle v\rangle^{\ell}g\|_{L^2}\leq C\norm{g}_{\mathscr{X}_\ell}.
    \end{aligned}
\end{equation}
For the last term on the right-hand side of \eqref{spt},  it follows from the cancellation lemma \eqref{cance} that 
\begin{align*}
	&\int_{\mathbb{T}^3\times\mathbb{R}^6\times\mathbb{S}^2}  |v-v_*|^\gamma b(\cos\theta) g_*\big(| h'|^2\langle v'\rangle^{2 \boldsymbol{\ell_1}}-|h|^2\langle v\rangle^{2 \boldsymbol{\ell_1}}\big)\,d\sigma\, dv_*\, dv\,dx\\
	&=\boldsymbol{A}_{\gamma,s}\int_{\mathbb{T}^3\times\mathbb{R}^6}   |v-v_*|^\gamma  g_* h^2\langle v\rangle^{2 \boldsymbol{\ell_1}}   dv_*\,dv\,dx\\
	&\leq 2^\gamma\boldsymbol{A}_{\gamma,s}\norm{g}_{L_x^\infty L_v^1} \norm{\comi v^{ \boldsymbol{\ell_1}+\frac{\gamma}{2}}h}_{L^2_{x, v}}^2+2^\gamma\boldsymbol{A}_{\gamma,s}\norm{\comi v^{\gamma} g}_{L_x^\infty L_v^1} \norm{\comi v^{ \boldsymbol{\ell_1}}h}_{L^2_{x, v}}^2\\
	&\leq 2^{1+\gamma}\boldsymbol{A}_{\gamma,s}M_0 \norm{\comi v^{ \boldsymbol{\ell_1}+\frac{\gamma}{2}}h}_{L^2_{x, v}}^2+C\norm{g}_{\mathscr{X}_\ell} \norm{\comi v^{ \boldsymbol{\ell_1}}h}_{L^2_{x, v}}^2\\
    &\leq \frac{m_0}{8}\frac{ \boldsymbol{\lambda}_ {2 \boldsymbol{\ell_1}}}{4^{\gamma}}   \norm{\comi v^{ \boldsymbol{\ell_1} +\frac{\gamma}{2}}h}_{L^2_{x, v}}^2+C\norm{g}_{\mathscr{X}_\ell} \norm{\comi v^{ \boldsymbol{\ell_1}}h}_{L^2_{x, v}}^2,
\end{align*}
where the second inequality uses condition $\boldsymbol{(H)}$  and \eqref{H++++}, and the last one follows  from  \eqref{loc-l}. 
Combining these estimates above with \eqref{spt}, we get
  that 
\begin{equation}\label{m2}
	   (\langle v\rangle^{ \boldsymbol{\ell_1} }Q(g,h), \langle v\rangle^{ \boldsymbol{\ell_1} }h)_{L^2_{x, v}} 
	 \leq -\frac{m_0\boldsymbol{\lambda}_ {2 \boldsymbol{\ell_1}}}{2^{3+2\gamma}}\norm{\comi v^{ \boldsymbol{\ell_1} +\frac{\gamma}{2}}h}_{L^2_{x, v}}^2 +C\norm{g}_{\mathscr{X}_\ell}\norm{\comi v^{ \boldsymbol{\ell_1} }h}_{L^2_{x, v}}^2.
\end{equation} 
On the other hand, it follows from
 \eqref{+coercivity-com} that  
\begin{equation}\label{m1}
	\begin{aligned}
		&\big(\comi v^{ \boldsymbol{\ell_1} }Q(g, h), \langle v\rangle^{ \boldsymbol{\ell_1} }h\big)_{L^2_{x, v}}  \\
        &\leq  -\frac{c_0}{2}\norm{\comi{D_v}^s\comi v^{ \boldsymbol{\ell_1} +\frac{\gamma}{2}} h}_{L^2_{x, v}}^2+ 2C_0 \norm{\comi v^{ \boldsymbol{\ell_1} +\frac{\gamma}{2}}h}_{L^2_{x, v}}^2+2^{1+\gamma} \big(\boldsymbol{\lambda}_{ \boldsymbol{\ell_1} }+\boldsymbol{A}_{\gamma,s}\big) \| g\|_{L_x^\infty L^1_v}   \|\langle v\rangle^{ \boldsymbol{\ell_1} +\frac{\gamma}{2}}h\|_{L^2_{x, v}}^2\\
		&\quad +C \|\langle v\rangle^{6+\gamma}h\|_{L^2_{x, v}}^2\|\langle v\rangle^{ \boldsymbol{\ell_1} +\frac{\gamma}{2}}g\|_{L_x^\infty L^2_v}^2  +C\Big(1+\|\langle v\rangle^{6+\gamma}g\|_{L_x^\infty L^2_v}^2\Big)   \|\langle v\rangle^{ \boldsymbol{\ell_1} }h\|_{L^2_{x, v}}^2\\
&\leq -\frac{c_0}{2}\norm{\comi{D_v}^s\comi v^{ \boldsymbol{\ell_1} +\frac{\gamma}{2}} h}_{L^2_{x, v}}^2+ 2^{1+\gamma} \Big( C_0 +4M_0 \boldsymbol{\lambda}_{2 \boldsymbol{\ell_1} }\Big)\norm{\comi v^{ \boldsymbol{\ell_1} +\frac{\gamma}{2}}  h}_{L^2_{x, v}}^2 \\
&\quad+  C\big(1+\norm{g}_{\mathscr{X}_\ell}^2\big)    \|\langle v\rangle^{ \boldsymbol{\ell_1} }h\|_{L^2_{x, v}}^2,
	\end{aligned}
\end{equation}
where the last inequality follows from \eqref{gconh} and \eqref{loc-l} as well as the fact that $\boldsymbol{\lambda}_{ \boldsymbol{\ell_1} }\leq \boldsymbol{\lambda}_{2 \boldsymbol{\ell_1}}$ and $\| g\|_{L_x^\infty L^1_v} \leq 2M_0$.   
Define $\tau_0>0 $ by setting
\begin{eqnarray*}
	\tau_0 2^{1+\gamma} \Big(C_0 +4M_0 \boldsymbol{\lambda}_{2 \boldsymbol{\ell_1}}\Big)=  \frac{m_0\boldsymbol{\lambda}_ {2 \boldsymbol{\ell_1}}}{2^{3+2\gamma}}.
\end{eqnarray*}
Consequently, 
multiplying  \eqref{m1}  by 
$\tau_0$ and then take summation  with estimate \eqref{m2}, we obtain 
\begin{align*}
	 \big(1+\tau_0\big)\big (\langle v\rangle^{ \boldsymbol{\ell_1} }Q(g,h),\  \langle v\rangle^{ \boldsymbol{\ell_1} }h\big)_{L^2_{x, v}} 
	 \leq -\frac{\tau_0c_0}{2}\norm{\comi{D_v}^s\comi v^{ \boldsymbol{\ell_1} +\frac{\gamma}{2}} h}_{L^2_{x, v}}^2   +C \big(1+\norm{g}_{\mathscr{X}_\ell}^2\big)\norm{\comi v^{ \boldsymbol{\ell_1} }h}_{L^2_{x, v}}^2.
\end{align*}
This  yields  estimate \eqref{fras} in Lemma \ref{lem:te}.
\end{proof}

\begin{proposition}[Non-negativity]\label{nonnegative}
Under the hypothesis of Theorem \ref{thm:ae}, we have $f\ge0$ for $t\in[0,T_0].$ 
\end{proposition}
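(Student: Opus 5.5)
We will prove non-negativity by a weighted $L^2$ energy estimate on the negative part $f_-:=\max\{-f,0\}$, so that $f=f_+-f_-$. Since $f_{in}\ge0$, we have $f_-(0)=0$. The aim is a Gronwall inequality for $\|\langle v\rangle^{\boldsymbol{\ell_1}} f_-\|_{L^2_{x,v}}^2$.

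Fix $0<\tau\le t\le T_0$; on $[\tau,T_0]$ the solution is smooth, so all manipulations are rigorous. Multiply \eqref{eq-linearc} by $-\langle v\rangle^{2\boldsymbol{\ell_1}}f_-$ and integrate over $\mathbb T^3\times\mathbb R^3$. The transport term vanishes after integration in $x$, and we obtain
\[
\frac12\frac{d}{dt}\|\langle v\rangle^{\boldsymbol{\ell_1}}f_-\|_{L^2_{x,v}}^2=-\big(Q(g,f),\langle v\rangle^{2\boldsymbol{\ell_1}}f_-\big)_{L^2_{x,v}}.
\]
We then split $-Q(g,f)=Q(g,f_-)-Q(g,f_+)$. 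For the $f_+$ part, use the pre/post-collisional change of variables: the loss term contains $f_+f_-=0$ pointwise, and the gain term is
\[
-\int |v-v_*|^\gamma b\, g'_*f'_+\,f_-\langle v\rangle^{2\boldsymbol{\ell_1}}\le 0,
\]
since $g\ge0$. Hence
\[
-\big(Q(g,f_+),\langle v\rangle^{2\boldsymbol{\ell_1}}f_-\big)\le 0.
\]
The remaining term is $\big(Q(g,f_-),\langle v\rangle^{2\boldsymbol{\ell_1}}f_-\big)=\big(\langle v\rangle^{\boldsymbol{\ell_1}}Q(g,f_-),\langle v\rangle^{\boldsymbol{\ell_1}}f_-\big)$, which is exactly the quantity in Lemma \ref{lem:te} with $h=f_-$. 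That lemma gives the bound
\[
-\tilde c_0\|\langle D_v\rangle^s\langle v\rangle^{\boldsymbol{\ell_1}+\gamma/2}f_-\|^2+C\big(1+\|g\|_{\mathscr X_\ell}^2\big)\|\langle v\rangle^{\boldsymbol{\ell_1}}f_-\|^2 .
\]
Since $g\in\mathcal E_{T_0}(\boldsymbol L)$, we have $\|g\|_{\mathscr X_\ell}\le\boldsymbol L$, and therefore
\[
\frac{d}{dt}\|\langle v\rangle^{\boldsymbol{\ell_1}}f_-\|^2\le C(1+\boldsymbol L^2)\,\|\langle v\rangle^{\boldsymbol{\ell_1}}f_-\|^2 \quad\text{on }[\tau,T_0].
\]

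Gronwall on $[\tau,t]$ gives $\|\langle v\rangle^{\boldsymbol{\ell_1}}f_-(t)\|^2\le e^{C(1+\boldsymbol L^2)T_0}\|\langle v\rangle^{\boldsymbol{\ell_1}}f_-(\tau)\|^2$. We then let $\tau\to0$. Because $f\in L^\infty([0,T_0],\mathscr X_\ell)$ and $\ell>\boldsymbol{\ell_1}$, the weighted $L^2$ norm is controlled. To pass to the limit we need $f(\tau)\to f_{in}$ in $L^2(\langle v\rangle^{2\boldsymbol{\ell_1}})$, which we take from weak continuity in time of the solution. The map $u\mapsto u_-$ is $1$-Lipschitz, so $\|\langle v\rangle^{\boldsymbol{\ell_1}}f_-(\tau)\|\to\|\langle v\rangle^{\boldsymbol{\ell_1}}(f_{in})_-\|=0$, and hence $f_-\equiv0$.

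The main obstacle is justifying the pairing of the nonlocal operator $Q$ with the non-smooth test function $f_-$, and the identity $\tfrac12\frac d{dt}\|\cdot f_-\|^2=(\partial_tf,-\cdot f_-)$. We will handle this by the interior smoothness: $f_-$ is Lipschitz and $f$ is smooth there, so $\frac{d}{dt}\frac12 f_-^2=-f_-\partial_tf$ holds a.e. The regularity $f\in L^2\mathcal Y_\ell$ makes the trilinear form finite via \eqref{uppbound}. If time continuity at $0$ is only weak, we instead use weak lower semicontinuity of the convex functional $\|\langle v\rangle^{\boldsymbol{\ell_1}}u_-\|^2$ along a sequence $\tau_n\to0$, combined with strong convergence obtained from the equation's time derivative bound in a negative Sobolev space.
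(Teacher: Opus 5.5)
Your core argument is the paper's proof. It uses the same split of $Q(g,f)$ into its $f_+$ and $f_-$ parts and the same sign argument: the loss term in $(Q(g,f_+),\comi v^{2\boldsymbol{\ell_1}}f_-)_{L^2_{x,v}}$ vanishes because $f_+f_-=0$, and the gain term has a sign because $g\ge0$. It then applies Lemma \ref{lem:te} with $h=f_-$, uses $\|g\|_{\mathscr X_\ell}\le\boldsymbol L$, and closes with Gronwall. The paper writes $f=f_++f_-$ with $f_-\le0$, which only flips signs. The paper runs Gronwall directly from $t=0$ using $(f_{in})_-=0$. The one place you go beyond it, the justification of the limit $\tau\to0$, is also the one place where your proposal is wrong.

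As written, that step does not work.
\begin{itemize}
\item Weak continuity of $t\mapsto f(t)$ gives no norm convergence $f(\tau)\to f_{in}$, so the $1$-Lipschitz property of $u\mapsto u_-$ cannot be used.
\item Your fallback points the wrong way. Weak lower semicontinuity of $u\mapsto\|\comi v^{\boldsymbol{\ell_1}}u_-\|_{L^2_{x,v}}^2$ only yields $\|\comi v^{\boldsymbol{\ell_1}}(f_{in})_-\|_{L^2_{x,v}}\le\liminf_n\|\comi v^{\boldsymbol{\ell_1}}f_-(\tau_n)\|_{L^2_{x,v}}$, that is, $0\le\liminf$. What you need is $\liminf_{\tau\to0}\|\comi v^{\boldsymbol{\ell_1}}f_-(\tau)\|_{L^2_{x,v}}=0$. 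A $\liminf$ does suffice, because your differential inequality makes $e^{-Ct}\|\comi v^{\boldsymbol{\ell_1}}f_-(t)\|_{L^2_{x,v}}^2$ nonincreasing on $(0,T_0]$. But weak convergence cannot rule out oscillations in $v$ that keep the negative part away from zero.
\item If you had strong convergence, the semicontinuity would be superfluous. A bound on $\partial_tf$ in a negative Sobolev space, together with $f\in L^\infty([0,T_0],\mathscr X_\ell)$ (which carries no $v$-regularity), does not give it.
\end{itemize}

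What does give it is the Lions--Magenes lemma for the Gelfand triple $V\subset L^2_{x,v}\subset V'$, with $\|h\|_V=\|\comi{D_v}^s\comi v^{s+\frac{\gamma}{2}}h\|_{L^2_{x,v}}$.
\begin{itemize}
\item $u=\comi v^{\boldsymbol{\ell_1}}f$ lies in $L^2([0,T_0];V)$, since $f\in L^2([0,T_0];\mathcal Y_\ell)$ and $\ell\ge\boldsymbol{\ell_1}+s$.
\item $\partial_tu=-\comi v^{\boldsymbol{\ell_1}}v\cdot\partial_xf+\comi v^{\boldsymbol{\ell_1}}Q(g,f)$ lies in $L^2([0,T_0];V')$, using $f,g\in L^\infty([0,T_0],\mathscr X_\ell)$, \eqref{upbound-v} and \eqref{upbd}.
\item Hence $u\in C([0,T_0];L^2_{x,v})$ with $u(0)=\comi v^{\boldsymbol{\ell_1}}f_{in}$. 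This requires that $f$ solve \eqref{eq-linearc} in the distributional sense on $[0,T_0)$ with datum $f_{in}$.
\item Then $\|\comi v^{\boldsymbol{\ell_1}}f_-(\tau)\|_{L^2_{x,v}}\le\|u(\tau)-u(0)\|_{L^2_{x,v}}\to0$.
\end{itemize}
Alternatively, take this strong continuity at $t=0$ from the construction via the regularized problem \eqref{appbol}, which is what the paper tacitly does.
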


\begin{proof}
   Write $f=f_{+}+f_{-}$ with
  \begin{equation*}
  	f_{+}:=\max\{f,0\}\ \textrm{ and }\  f_{-}:=-\max\{-f,0\}.
  \end{equation*}
 Since the solution $f$ is  smooth in all variables $(t,x,v)$ for $t>0$,  then, for any $\beta\in\mathbb Z_+^3,$
   \begin{equation}\label{mp}
  	(\partial_tf_+) f_-=0 \  \textrm{ and }\  (\partial_{x}^\beta  f_+) f_-=0, \   a.e.  \textrm{ on }    \mathbb T^3\times \mathbb R^3.
  \end{equation}
  As a result,
   we multiply   equation \eqref{eq-linearc} by $\langle v\rangle^{{2 \boldsymbol{\ell_1}}}f_-$ and then integrate over $\mathbb{T}^3\times \mathbb{R}^3$, to get 
  \begin{equation}\label{fminusv}
  \begin{split}
 &\big(\comi v^{ \boldsymbol{\ell_1}}Q(g,\ f),\   \langle v\rangle^{{ \boldsymbol{\ell_1}}}f_-\big)_{L^2_{x, v}}= \big(\comi v^{ \boldsymbol{\ell_1}} (\partial_t  +v\cdot\partial_x )f,\,   \langle v\rangle^{{ \boldsymbol{\ell_1}}}f_-\big)_{L^2_{x, v}} \\
  &=	\Big(\comi v^{ \boldsymbol{\ell_1}} \partial_t  (f_{+}+f_{-})+\comi v^{ \boldsymbol{\ell_1}} v\cdot\partial_x  (f_{+}+f_{-}),\   \langle v\rangle^{{ \boldsymbol{\ell_1}}}f_-\Big)_{L^2_{x, v}}=\frac{1}{2}\frac{d}{dt}\|\langle v\rangle^{ \boldsymbol{\ell_1}}f_-\|^2_{L^2_{x, v}}.
  \end{split}
  \end{equation}
On the one hand,   write
  \begin{equation}\label{qfg}
  \begin{aligned}
  		\big(\comi v^{ \boldsymbol{\ell_1}}Q(g,\ f),    \langle v\rangle^{{ \boldsymbol{\ell_1}}}f_-\big)_{L^2_{x, v}} 
  		&= \big(\comi v^{ \boldsymbol{\ell_1}}Q(g,\ f_+),    \langle v\rangle^{{ \boldsymbol{\ell_1}}}f_-\big)_{L^2_{x, v}} + \big(\comi v^{ \boldsymbol{\ell_1}}Q(g,\ f_{-}),   \langle v\rangle^{{ \boldsymbol{\ell_1}}}f_-\big)_{L^2_{x, v}}.
  \end{aligned}
  \end{equation}
  For the first term on the right-hand side,
 \begin{equation*}
 \begin{aligned}
   \big(\comi v^{ \boldsymbol{\ell_1}}Q(g,\ f_+),    \langle v\rangle^{{ \boldsymbol{\ell_1}}}f_-\big)_{L^2_{x, v}}  
 &=\int_{\mathbb{T}^3\times\mathbb{R}^6\times\mathbb{S}^2}   |v-v_*|^\gamma b(\cos\theta) (g_*'(f_+)'-g_*f_+)\langle v\rangle^{2{ \boldsymbol{\ell_1}}}f_-d\sigma  dv_*  dv dx\\
 &\leq \int_{\mathbb{T}^3\times\mathbb{R}^6\times\mathbb{S}^2}  |v-v_*|^\gamma b(\cos\theta)   g_*'(f_+)' \langle v\rangle^{2{ \boldsymbol{\ell_1}}}f_-\,d\sigma\, dv_*\, dv\,dx\leq 0,
 \end{aligned}
 \end{equation*}
the last line using \eqref{mp} and  the fact that $g_*'\ge0, (f_+)'  \geq 0 $ and $f_{-}\leq 0.$ On the other hand, using Lemma \ref{lem:te} yields
\begin{equation*}
 \big(\comi v^{ \boldsymbol{\ell_1}}Q(g,\ f_-),\   \langle v\rangle^{{ \boldsymbol{\ell_1}}}f_-\big)_{L^2_{x, v}}   	\leq  C\boldsymbol{L}^{2} \|\langle v\rangle^{  \boldsymbol{\ell_1} }f_-\|_{L^2_{x, v}}^2.
\end{equation*}
Combining the above two estimates with \eqref{qfg} we conclude 
\begin{equation*}
    \big(\comi v^{ \boldsymbol{\ell_1}}Q(g,\ f),\   \langle v\rangle^{{ \boldsymbol{\ell_1}}}f_-\big)_{L^2_{x, v}}\leq   C\boldsymbol{L}^{2} \|\langle v\rangle^{  \boldsymbol{\ell_1} }f_-\|_{L^2_{x, v}}^2,
\end{equation*}
which, with 
   \eqref{fminusv}, yields 
\begin{equation*}
\frac{d}{dt}\|\comi v^{ \boldsymbol{\ell_1}}f_-\|^2_{L^2_{x, v}}\leq C\boldsymbol{L}^{2}\|\comi v^{ \boldsymbol{\ell_1}}f_-\|^2_{L^2_{x, v}}
\end{equation*}
Thus,
\begin{equation*}
\forall\ 0\leq t\leq T_0,\quad	\|\comi v^{ \boldsymbol{\ell_1}}f_-(t)\|^2_{L^2_{x, v}}\leq e^{C\boldsymbol{L}^{2}T_0}\|\comi v^{ \boldsymbol{\ell_1}}(f_{in})_-\|^2_{L^2_{x, v}}= 0,
\end{equation*}
the last inequality follows from the fact that $f_{in}\geq 0.$  Thus
     $f_- (t)=0$ for $t\in[0,T_0]$, and hence $f\geq0$. The proof of Proposition \ref{nonnegative} is completed.
\end{proof}

\begin{proposition}[Moment estimate]\label{lem:mo}
 Suppose   the hypothesis of Theorem \ref{thm:ae} holds. Then 
	\begin{multline*}
	   \sup_{t\le T_0}\|\langle v\rangle^{\ell_0}f\|_{L^1_{x, v}} +\frac{m_0\boldsymbol{\lambda}_{\ell_0}}{4^{2+\gamma}} \int_0^{T_0} \|\langle v\rangle^{\ell_0+\gamma}f\|_{L^1_{x, v}}dt\\
 \leq e^{C \boldsymbol{L}  T_0} \|\langle v\rangle^{\ell_0}f_{in}\|_{L^1_{x, v}}+  e^{C\boldsymbol{L} T_0}\Big(\frac18\boldsymbol{L} +C \boldsymbol{L} T_0    \sup_{t\leq T_0}\norm{f}_{\mathscr{X}_{\ell}} \Big). 
	\end{multline*}
\end{proposition}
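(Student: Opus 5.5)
We use $f\ge0$, which holds by Proposition \ref{nonnegative}, and multiply \eqref{eq-linearc} by $\langle v\rangle^{\ell_0}$. Integrating over $\mathbb T^3\times\mathbb R^3$, the transport term vanishes by periodicity, so
\[
\frac{d}{dt}\|\langle v\rangle^{\ell_0}f\|_{L^1_{x,v}}=\int Q(g,f)\langle v\rangle^{\ell_0}\,dxdv .
\]
We then apply Proposition \ref{momentQ} with $l=\ell_0\ge5$, using that $g\ge0$ satisfies $\boldsymbol{(H)}$. This produces the dissipative term $-\frac{m_0\boldsymbol{\lambda}_{\ell_0}}{4^{1+\gamma}}\|\langle v\rangle^{\ell_0+\gamma}f\|_{L^1}$ together with three error terms, which we bound in turn.

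\emph{(a)} The term $C\|\langle v\rangle^{4+\gamma}g\|_{L^\infty_xL^1_v}\|\langle v\rangle^{\ell_0}f\|_{L^1}$ is controlled by \eqref{gconh}: $\|\langle v\rangle^{4+\gamma}g\|_{L^\infty_xL^1_v}\le C\|g\|_{\mathscr X_\ell}\le C\boldsymbol L$, since $g\in\mathcal E_{T_0}(\boldsymbol L)$. This gives $C\boldsymbol L\|\langle v\rangle^{\ell_0}f\|_{L^1}$, which is a Gronwall term.

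\emph{(b)} For $C\|\langle v\rangle^{4+\gamma}f\|_{L^\infty_xL^1_v}\|\langle v\rangle^{\ell_0}g\|_{L^1}$, the same argument as \eqref{gconh} gives $\|\langle v\rangle^{4+\gamma}f\|_{L^\infty_xL^1_v}\le C\|f\|_{\mathscr X_\ell}$, while $\|\langle v\rangle^{\ell_0}g\|_{L^1}\le\boldsymbol L$ because $g\in\mathcal M_{T_0}(\boldsymbol L)$. Integrating in time yields $C\boldsymbol L T_0\sup_t\|f\|_{\mathscr X_\ell}$.

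\emph{(c)} The term $2\boldsymbol\omega_{\ell_0}\|f\|_{L^\infty_xL^1_v}\|\langle v\rangle^{\ell_0+\gamma}g\|_{L^1}$ is the delicate one, because $g$ appears with the weight $\ell_0+\gamma$ and this can only be controlled after time integration, through the dissipation part of $\mathcal M_{T_0}(\boldsymbol L)$:
\[
\int_0^{T_0}\|\langle v\rangle^{\ell_0+\gamma}g\|_{L^1}\,dt\le\frac{4^{1+\gamma}\boldsymbol L}{m_0\boldsymbol\lambda_{\ell_0}} .
\]
We need $\|f\|_{L^\infty_xL^1_v}$ pointwise in time. Since $\int f\,dv$ is not yet known to obey $\boldsymbol{(H)}$, we instead bound it by $C\|f\|_{\mathscr X_\ell}$. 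A sharper bound is available: one can use local mass conservation, or simply a constant independent of $\boldsymbol L$ if $\boldsymbol{(H)}$ is established first. The contribution is then
\[
2\boldsymbol\omega_{\ell_0}\cdot\frac{4^{1+\gamma}}{m_0\boldsymbol\lambda_{\ell_0}}\,\boldsymbol L\,\sup_t\|f\|_{L^\infty_xL^1_v},
\]
and we want this to be at most $\frac18\boldsymbol L$.

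This requires the ratio $\boldsymbol\omega_{\ell_0}/\boldsymbol\lambda_{\ell_0}$ to be small. By \eqref{approximate} the ratio decays like $\ell_0^{-1-s}2^{-\ell_0/2}$, and \eqref{loc-l} already gives $\boldsymbol\omega_{\boldsymbol{\ell_1}}/\boldsymbol\lambda_{\boldsymbol{\ell_1}}\le m_0/(4^{4+\gamma}M_0)$. Since $\ell_0>\boldsymbol{\ell_1}$ and the ratio is monotone, the factor satisfies $2\boldsymbol\omega_{\ell_0}4^{1+\gamma}/(m_0\boldsymbol\lambda_{\ell_0})\le 1/(32M_0)$. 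The remaining point is to check that $\sup_t\|f\|_{L^\infty_xL^1_v}\le 4M_0$ or so. We would get this by a continuity argument on $[0,T_0]$, since $\int f_{in}dv\le M_0$ and $\partial_t\int f\,dv=-\nabla_x\cdot\int vf\,dv$, which is small for short times by the $\mathscr X_\ell$ bound. The product is then at most $\frac18\boldsymbol L$. This is the main obstacle, and the precise form $\frac18\boldsymbol L$ in the statement suggests this is how the constants are arranged.

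Finally, we absorb half of the dissipation into the left-hand side, which accounts for the coefficient $4^{2+\gamma}$ in place of $4^{1+\gamma}$. Gronwall's inequality with rate $C\boldsymbol L$, applied to the integrated inequality with forcing $\frac18\boldsymbol L+C\boldsymbol LT_0\sup_t\|f\|_{\mathscr X_\ell}$, then gives the stated bound with the factors $e^{C\boldsymbol LT_0}$.
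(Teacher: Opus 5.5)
Your skeleton matches the paper's proof. You use the same identity after multiplying by $\comi v^{\ell_0}$ and Proposition \ref{momentQ} with $l=\ell_0$. You bound the two $C_l$ terms the same way, via \eqref{gconh} and $g\in\mathcal M_{T_0}(\boldsymbol L)\cap\mathcal E_{T_0}(\boldsymbol L)$. You also use the monotonicity of $\boldsymbol\omega_l/\boldsymbol\lambda_l$ with \eqref{loc-l} to get $2\boldsymbol\omega_{\ell_0}\int_0^{T_0}\norm{\comi v^{\ell_0+\gamma}g}_{L^1_{x,v}}dt\le \boldsymbol L/(32M_0)$.

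The gap is in how you close term (c). You aim for a uniform bound $\sup_{t\le T_0}\norm{f}_{L^\infty_xL^1_v}\le 4M_0$ via a continuity argument, claiming $\partial_x\cdot\int vf\,dv$ is ``small for short times by the $\mathscr X_\ell$ bound''. No such bound is available here, for three reasons:
\begin{itemize}
\item The hypotheses of Theorem \ref{thm:ae} give $f\in L^\infty([0,T_0],\mathscr X_\ell)$ with no quantitative control.
\item $\sup_{t\le T_0}\norm{f}_{\mathscr X_\ell}$ is a free quantity on the right-hand side of the statement.
\item $T_0$ is fixed in advance, so you cannot shrink time depending on $f$.
\end{itemize}
Closing such a bootstrap would require the energy estimate of Proposition \ref{lem:energy}, which itself takes this moment bound as input. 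At best that reorganizes the proof of Theorem \ref{thm:ae}; it does not prove the proposition as stated. Your two fallbacks also fail. Bounding $\norm f_{L^\infty_xL^1_v}$ by $C\norm f_{\mathscr X_\ell}$ loses the factor $T_0$ required in the statement, and invoking $\boldsymbol{(H)}$ for $f$ is circular.

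The missing observation is that no uniform bound is needed. You already name local mass conservation; integrate it in time and use $f\ge 0$ to get
\[
\norm{f(t)}_{L^\infty_xL^1_v}\le M_0+\int_0^{t}\Big\|\partial_x\cdot\textstyle\int vf\,dv\Big\|_{L^\infty_x}dr\le M_0+CT_0\sup_{t\le T_0}\norm f_{\mathscr X_\ell},
\]
using $\ell-3\rho\ge 3$. Keep this linear bound and multiply out:
\begin{itemize}
\item The $M_0$ part gives $M_0\cdot\boldsymbol L/(32M_0)=\boldsymbol L/32\le\boldsymbol L/8$.
\item The other part gives $C\boldsymbol L T_0\sup_t\norm f_{\mathscr X_\ell}$, which merges with the term of that form already on the right-hand side of the statement.
\end{itemize}
This is exactly how the paper argues, and it explains the shape of the stated bound. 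Also, nothing has to be absorbed into the dissipation: the argument yields the coefficient $m_0\boldsymbol\lambda_{\ell_0}/4^{1+\gamma}$, and the stated $4^{2+\gamma}$ is simply weaker.
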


\begin{proof}
Note that 
$$
\int_{\mathbb{T}^3\times\mathbb{R}^3}  v\cdot\partial_x f (t,x,v)  \langle v\rangle^{\ell_0} dxdv=0.
$$
Then multiplying the linear Boltzmann equation \eqref{eq-linearc} by $\langle v\rangle^{\ell_0}$ and integrating over $\mathbb{T}^3_x\times \mathbb{R}^3_v$, we obtain
\begin{align*}
  \frac{d}{dt}\int_{\mathbb{T}^3\times\mathbb{R}^3} f(t,x,v)\langle v\rangle^{\ell_0}dxdv =\int_{\mathbb{T}^3\times\mathbb{R}^3} Q(g,\ f) \langle v\rangle^{\ell_0}dxdv.
\end{align*}
Observe $f,g\geq 0$ by Proposition \ref{nonnegative} and the assumption in Theorem \ref{thm:ae}. Thus,
combining the above identity with Proposition \ref{momentQ} yields
\begin{equation*}
\begin{aligned}
 &\frac{d}{dt}\|\langle v\rangle^{\ell_0}f\|_{L^1_{x, v}}+\frac{m_0\boldsymbol{\lambda}_{\ell_0}}{4^{\gamma  +1}}\|\langle v\rangle^{\ell_0+\gamma}f\|_{L^1_{x, v}} \leq   2\boldsymbol{\omega}_{\ell_0}\|f\|_{L^{\infty}_xL^1_v}\|\langle v\rangle^{ \ell_0 +\gamma}g\|_{L^1_{x, v}}\\
 &\qquad\qquad\qquad\qquad\qquad+C \norm{\comi v^{4+\gamma}g}_{L^{\infty}_xL^1_v} \norm{\comi v^{ \ell_0 } f}_{L^1_{x, v}}  +C \|\langle v\rangle^{4+\gamma}f\|_{L^{\infty}_xL^1_v} \|\langle v\rangle^{ \ell_0}g\|_{L^1_{x, v}}.
\end{aligned}
\end{equation*}
It follows from assumption 
$g\in\mathcal M_{T_0}(\boldsymbol{L}) \cap \mathcal E_{T_0}(\boldsymbol{L})$ 
that 
\begin{equation}\label{gl}
	 \sup_{t\leq T_0}\|\langle v\rangle^{\ell_0} g\|_{L^1_{x, v}} +\frac{m_0\boldsymbol{\lambda}_{\ell_0}}{4^{1+\gamma}}\int_0^{T_0}\|\langle v\rangle^{\ell_0+\gamma} g\|_{L^1_{x, v}}dt \leq\boldsymbol{L}, \ \ \textrm{ and }\ \ \sup_{t\leq T_0} \norm{g}_{\mathscr{X}_{\ell}} \leq  \boldsymbol{L}.
\end{equation}
Thus,  
\begin{equation}\label{melo}
\begin{aligned}
 &\frac{d}{dt}\|\langle v\rangle^{\ell_0}f\|_{L^1_{x, v}}+ \frac{m_0\boldsymbol{\lambda}_{\ell_0}}{4^{1+\gamma  }} \|\langle v\rangle^{\ell_0+\gamma}f\|_{L^1_{x, v}} \\
 &\qquad\qquad \leq   2\boldsymbol{\omega}_{\ell_0}\|f\|_{L^{\infty}_xL^1_v}\|\langle v\rangle^{ \ell_0 +\gamma}g\|_{L^1_{x, v}}+C \boldsymbol{L}\norm{\comi v^{ \ell_0 } f}_{L^1_{x, v}}  +C\boldsymbol{L} \| f\|_{\mathscr{X}_\ell}.
\end{aligned}
\end{equation}
To estimate the first term on the right-hand side, we integrate the linear Boltzmann equation \eqref{eq-linearc} for $v\in\mathbb R^3$  to obtain that 
$$
\partial_t \int_{\mathbb{R}^3_v}f(t,x,v)dv+\partial_x\cdot\int_{\mathbb{R}^3_v} vf(t,x,v)dv=0.
$$
Thus, for any $t\in[0,T_0],$
\begin{equation}\label{lb}
\begin{aligned}
 \Big|\int_{\mathbb{R}^3_v} f(t,x,v)dv-\int_{\mathbb{R}^3_v}f_{in}(x,v)dv\Big|\leq &\int_0^t \Big|\partial_x\cdot\int_{\mathbb{R}^3_v} vf(r,x,v)dv\Big| dr \\
 \leq & C\int_0^t  \|\langle v\rangle^{3} f(r)\|_{ H^3_xL^2_v}dr\leq C\, T_0\ \sup_{t\leq T_0}\norm{f}_{\mathscr{X}_\ell},
\end{aligned}
\end{equation}
which, with assumption \eqref{finite} as well as the fact $f\geq 0$,  implies
\begin{equation*}
\sup_{t\leq T_0} \norm{f}_{L_x^\infty L_v^1}\leq M_0+ C\, T_0\ \sup_{t\leq T_0}\norm{f}_{\mathscr{X}_\ell}.
\end{equation*}
As a result, combining the above estimate with \eqref{melo}, we get
\begin{multline*}
    \frac{d}{dt}\|\langle v\rangle^{\ell_0}f\|_{L^1_{x, v}}+\frac{m_0\boldsymbol{\lambda}_{\ell_0}}{4^{1+\gamma}}\|\langle v\rangle^{\ell_0+\gamma}f\|_{L^1_{x, v}}\\
      \leq 2   \boldsymbol{\omega}_{\ell_0} \Big(M_0+ C\, T_0\ \sup_{t\leq T_0}\norm{f}_{\mathscr{X}_\ell}\Big) \|\langle v\rangle^{\ell_0+\gamma}g\|_{L^1_{x, v}} 
 +C \boldsymbol{L} \|\langle v\rangle^{\ell_0}f\|_{L^1_{x, v}}+C \boldsymbol{L} \norm{f}_{\mathscr{X}_{\ell}} .
\end{multline*}
 This with  Gronwall's inequality    implies
 \begin{equation*}
\begin{aligned}
&\sup_{t\le T_0}\|\langle v\rangle^{\ell_0}f\|_{L^1_{x, v}} +\frac{m_0\boldsymbol{\lambda}_{\ell_0}}{4^{1+\gamma}} \int_0^{T_0} \|\langle v\rangle^{\ell_0+\gamma}f\|_{L^1_{x, v}}dt\\
&\leq e^{C\boldsymbol{L} T_0} \|\langle v\rangle^{\ell_0}f_{in}\|_{L^1_{x, v}} + C e^{C\boldsymbol{L} T_0} \boldsymbol{L} T_0  \sup_{t\leq T_0}\norm{f}_{\mathscr{X}_{\ell}} \\
& \quad +  2e^{C\boldsymbol{L} T_0}  \Big (M_0+ C\, T_0\ \sup_{t\leq T_0}\norm{f}_{\mathscr{X}_\ell}\Big) \boldsymbol{\omega}_{\ell_0} \int_0^{T_0}\|\langle v\rangle^{\ell_0+\gamma}g\|_{L^1_{x, v}}dt.
\end{aligned}
\end{equation*}
Moreover, it follows from    \eqref{gl} and \eqref{loc-l} with the fact $\ell_0>\boldsymbol{\ell_1}$ that 
\begin{equation*}
	\boldsymbol{\omega}_{\ell_0}   \int_0^T\|\langle v\rangle^{\ell_0+\gamma}g\|_{L^1_{x, v}}dt \leq  \frac{4^{1+\gamma}\boldsymbol{\omega}_{\ell_0}}{m_0\boldsymbol{\lambda}_{\ell_0}}\boldsymbol{L}\leq\frac{4^{1+\gamma}\boldsymbol{\omega}_{\boldsymbol{\ell_1}}}{m_0\boldsymbol{\lambda}_{\boldsymbol{\ell_1}}}\boldsymbol{L}\leq \frac{1}{64M_0}\boldsymbol{L}.
\end{equation*}
 Thus, combining the two above estimates,  we obtain the assertion in Proposition \ref{lem:mo}, completing the proof.   
\end{proof}

\begin{proposition}[Control of weight-loss terms]\label{crulem}
 For any $\varepsilon>0$ it holds that
	\begin{equation*}
		\|\langle v\rangle^{ s+\frac{\gamma}{2}}h\|_{\mathscr{X}_{\ell}}^2 \leq \varepsilon\norm{h}_{\mathcal{Y}_{\ell}}^2+ \varepsilon\norm{h}_{\mathcal{Z}_{\ell}}^2+ C\varepsilon^{-\frac{3+6s}{s}}\Big(\|\langle v\rangle^{\ell_0}h\|_{L^1_{x, v}}^2+\|h\|^2_{\mathscr{X}_{\ell}}\Big),
	\end{equation*}
	recalling $\ell_0=\ell+3+7s+2\gamma.$
\end{proposition}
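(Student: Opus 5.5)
The norm $\|\langle v\rangle^{s+\frac{\gamma}{2}}h\|_{\mathscr{X}_{\ell}}^2$ splits, by \eqref{xn}, into a zeroth-order part $\|\langle v\rangle^{\ell+s+\frac{\gamma}{2}}h\|_{L^2_{x,v}}^2$ and a top-order part $\sum_{|\alpha|=3}\|\langle v\rangle^{\ell-3\rho+s+\frac{\gamma}{2}}\partial_x^\alpha h\|_{L^2_{x,v}}^2$. I will treat the two parts separately, using Lemma \ref{embedding} for the first and Lemma \ref{inter-inequality} for the second. The two lemmas produce exactly the $\mathcal{Y}_\ell$, $\mathcal{Z}_\ell$, $L^1_{\ell_0}$ and $\mathscr{X}_\ell$ terms on the right-hand side, with the same power $\varepsilon^{-\frac{3+6s}{s}}$.

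\emph{Zeroth-order part.} Apply Lemma \ref{embedding} to $\langle v\rangle^{\ell}h$. This gives
\[
\|\langle v\rangle^{\ell+s+\frac{\gamma}{2}}h\|_{L^2_{x,v}}^2\le \varepsilon\Big(\|\langle v\rangle^{\frac{\gamma}{2(1+2s)}}\langle D_x\rangle^{\frac{s}{1+2s}}\langle v\rangle^{\ell}h\|_{L^2_{x,v}}^2+\normm{\langle v\rangle^\ell h}_{L^2_x}^2\Big)+C\varepsilon^{-\frac{3+6s}{s}}\|\langle v\rangle^{\ell+3+7s+2\gamma}h\|_{L^1_{x,v}}^2 .
\]
Since the weight commutes with $\langle D_x\rangle$, the first term is the zeroth-order piece of $\|h\|_{\mathcal{Z}_\ell}^2$ in \eqref{zn}. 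The second term is bounded by $\|h\|_{\mathcal{Y}_\ell}^2$ by \eqref{yn}. By the choice $\ell_0=\ell+3+7s+2\gamma$ in \eqref{rhod}, the last term is exactly $\|\langle v\rangle^{\ell_0}h\|_{L^1_{x,v}}^2$.

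\emph{Top-order part.} Apply Lemma \ref{inter-inequality} with $\tau=\ell-3\rho$. This gives
\[
\sum_{|\alpha|=3}\|\langle v\rangle^{\ell-3\rho+s+\frac{\gamma}{2}}\partial_x^\alpha h\|^2\le \varepsilon\sum_{|\alpha|=3}\|\langle v\rangle^{\ell-3\rho+\frac{\gamma}{2(1+2s)}}\langle D_x\rangle^{\frac{s}{1+2s}}\partial_x^\alpha h\|^2+C\varepsilon^{-\frac{3+6s}{s}}\|\langle v\rangle^{\ell}h\|^2_{L^2_{x,v}} .
\]
The first term is the top-order piece of $\|h\|_{\mathcal{Z}_\ell}^2$. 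The second is bounded by $\|h\|_{\mathscr{X}_\ell}^2$. Adding the two parts gives the claimed inequality.

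No step is really an obstacle. The only points to verify are bookkeeping. First, the weights must match exactly: $\ell-\rho|\alpha|=\ell-3\rho$ when $|\alpha|=3$, and $\tau+3\rho=\ell$. Second, both lemmas must produce the same power $\varepsilon^{-\frac{3+6s}{s}}$; this holds by their statements, and any discrepancy for $\varepsilon\ge1$ is absorbed into $C$. Third, the norm equivalences needed to pass between commuted forms of the weights are covered by \eqref{equal-norm}.
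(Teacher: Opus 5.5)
Your proposal is correct and is essentially the paper's own proof. The paper also splits the $\mathscr{X}_\ell$-norm into its top-order and zeroth-order parts, applies Lemma \ref{inter-inequality} with $\tau=\ell-3\rho$ to the first and Lemma \ref{embedding} to the second (effectively to $\langle v\rangle^{\ell}h$), and uses $\ell_0=\ell+3+7s+2\gamma$. Your bookkeeping is slightly cleaner: the two $\mathcal{Z}_\ell$-contributions are disjoint pieces of $\|h\|_{\mathcal{Z}_\ell}^2$, so they add up to a single $\varepsilon\|h\|_{\mathcal{Z}_\ell}^2$, whereas the paper bounds each piece by the full $\mathcal{Z}_\ell$-norm.
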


\begin{proof}This is just an immediate consequence of Lemmas \ref{embedding} and \ref{inter-inequality}.
	Recalled that
$$
\|\langle v\rangle^{ s+\frac{\gamma}{2}}h\|^2_{\mathscr{X}_{\ell}}=\sum_{|\alpha|=3}\|\langle v\rangle^{\ell-\rho |\alpha|+ s+\frac{\gamma}{2}} \partial^{\alpha}_xh\|^2_{L^2_{x, v}}+\|\langle v\rangle^{\ell+ s+\frac{\gamma}{2}} h\|^2_{L^2_{x, v}}.
$$
For any $\varepsilon>0,$  applying  Lemma \ref{inter-inequality}  with $\tau=\ell-3\rho $   gives  
\begin{align*}
 \sum_{|\alpha|=3}\|\langle v\rangle^{\ell-\rho |\alpha|+ s+\frac{\gamma}{2}} \partial^{\alpha}_xh\|^2_{L^2_{x, v}} 
&\leq \varepsilon\sum_{|\alpha|=3}\|\langle v\rangle^{\ell-\rho|\alpha|+ \frac{\gamma}{2(1+2s)}}\langle D_x\rangle^{\frac{s}{1+2s}}\partial^{\alpha}_x h\|^2_{L^2}+C\varepsilon^{-\frac{3+6s}{s}}\|\langle v\rangle^{\ell }h\|^2_{L^2_{x, v}}\\
&\leq \varepsilon\norm{h}_{\mathcal{Z}_{\ell}} ^2+ C\varepsilon^{-\frac{3+6s}{s}}\| h\|^2_{\mathscr{X}_\ell},
\end{align*}
where the last inequality follows from the definition of $\norm{\cdot}_{\mathcal{Z}_{\ell}}$ in \eqref{zn}. Consequently, for any $\varepsilon>0,$
\begin{equation*}
	\|\langle v\rangle^{ s+\frac{\gamma}{2}}h\|_{\mathscr{X}_{\ell}}^2\leq \varepsilon\norm{h}_{\mathcal{Z}_{\ell}}^2+C\varepsilon^{-\frac{3+6s}{s}}\| h\|_{\mathscr{X}_\ell}^2+\|\langle v\rangle^{\ell+ s+\frac{\gamma}{2}} h\|^2_{L^2_{x, v}}.
\end{equation*}
To estimate the last term, we use  Lemma \ref{embedding} to obtain that, for any $ \varepsilon>0,$
\begin{align*}
 \|\langle v\rangle^{\ell+s+\frac{\gamma}{2}}h\|_{L^2_{x, v}}^2 &\leq  \varepsilon \|\langle v\rangle^{\ell+\frac{\gamma}{2(1+2s)}}\langle D_x\rangle^{\frac{s}{1+2s}}   h\|_{L^2_{x, v}}^2+ \varepsilon\normm{\comi v^{\ell} h}_{L^2_x}^2+C {\varepsilon}^{-\frac{3+6s}{s}}\|\langle v\rangle^{\ell+3+7s+2\gamma}h\|_{L^1_{x, v}}^2\\
 &\leq  \varepsilon \norm{h}_{\mathcal{Z}_\ell}^2+ \varepsilon\norm{  h}_{\mathcal{Y}_\ell}^2+C {\varepsilon}^{-\frac{3+6s}{s}}\|\langle v\rangle^{\ell+3+7s+2\gamma}h\|_{L^1_{x, v}}^2.
\end{align*}
Noting  $\ell_0=\ell+3+7s+2\gamma,$  we combine the two  estimates above to obtain  the assertion in Proposition \ref{crulem}. The proof is completed. 
\end{proof}

\begin{proposition}[Energy estimate]\label{lem:energy}
Suppose the hypothesis of Theorem \ref{thm:ae} holds. Then 
\begin{multline*}
 3 \big(\sup_{t\leq T_0}\norm{f}_{\mathscr{X}_{\ell}}\big)^2  + c_0    \int_0^{T_0}   \norm{ f}_{\mathcal{Y}_{\ell}}^2 dt + a_0   \int_0^{T_0}  \norm{ f}_{\mathcal{Z}_{\ell}}^2 dt  \\
	\leq 8\norm{f_{in}}_{\mathscr{X}_{\ell}}^2+C\boldsymbol{L}^2\delta_0^2 +C   \boldsymbol{L} ^{\frac{6+12s}{s}} T_0\Big(\sup_{t\leq T_0}\norm{\comi v^{\ell_0} f}_{L^1_{x, v}}\Big)^2+  C   \boldsymbol{L} ^{\frac{6+14s}{s}} T_0 \Big(\sup_{t\leq T_0}  \|f\|_{\mathscr{X}_{\ell}}\Big)^2, 
 \end{multline*}
 where $a_0$ is the constant determined in Theorem \ref{hypoellip}, and $\delta_0$ is the constant in \eqref{smal}.
\end{proposition}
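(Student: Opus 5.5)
We start from Theorem \ref{hypoellip}, applied to the solution $f$ of \eqref{eq-linearc} with the given $g$. Since $g\in\mathcal{E}_{T_0}(\boldsymbol{L})$, we have $\sup_{t\le T_0}\norm{g}_{\mathscr{X}_\ell}\le \boldsymbol{L}$, so the second term on the right of \eqref{kesub} is at most $\frac{c_0}{16}\int_0^{T_0}\norm{f}_{\mathcal{Y}_\ell}^2dt$ and can be absorbed by the left side. Because $\boldsymbol{L}\ge1$, the third term is bounded by $C\boldsymbol{L}^2\delta_0^2$ using the smallness hypothesis \eqref{smal}. After absorption we are left with
\begin{multline*}
\Big(\sup_{t\le T_0}\norm{f}_{\mathscr{X}_\ell}\Big)^2+\frac{c_0}{4}\int_0^{T_0}\norm{f}_{\mathcal{Y}_\ell}^2dt+\frac{a_0}{2}\int_0^{T_0}\norm{f}_{\mathcal{Z}_\ell}^2dt\le 2\norm{f_{in}}^2_{\mathscr{X}_\ell}+C\boldsymbol{L}^2\delta_0^2\\
+C\boldsymbol{L}^2\int_0^{T_0}\norm{\comi v^{s+\frac\gamma2}f}_{\mathscr{X}_\ell}^2dt+C\Big(\sup_{t\le T_0}\norm{f}_{\mathscr{X}_\ell}\Big)^2\int_0^{T_0}\norm{\comi v^{s+\frac\gamma2}g}_{\mathscr{X}_\ell}^2dt .
\end{multline*}

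The two weight-loss integrals are handled with Proposition \ref{crulem}. For the $f$-term, we apply Proposition \ref{crulem} pointwise in time with $\varepsilon=\eta\boldsymbol{L}^{-2}$, where $\eta$ is a small multiple of $\min\{c_0,a_0\}$. The $\mathcal{Y}_\ell,\mathcal{Z}_\ell$ contributions are then absorbed into the left side. What remains is
\[
C\boldsymbol{L}^{2}\boldsymbol{L}^{\frac{2(3+6s)}{s}}\int_0^{T_0}\Big(\norm{\comi v^{\ell_0}f}_{L^1_{x,v}}^2+\norm{f}_{\mathscr{X}_\ell}^2\Big)dt\le C\boldsymbol{L}^{\frac{6+14s}{s}}T_0\Big[\Big(\sup_{t\le T_0}\norm{\comi v^{\ell_0}f}_{L^1_{x,v}}\Big)^2+\Big(\sup_{t\le T_0}\norm{f}_{\mathscr{X}_\ell}\Big)^2\Big].
\]
This power matches the last term in the statement. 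The $L^1$ part appears with the slightly smaller power $\frac{6+12s}{s}$ in the statement, so we will have to track the choice of $\varepsilon$ more carefully; alternatively, the two powers may simply be interchangeable up to constants since $\boldsymbol{L}\ge1$, in which case stating the larger one is harmless.

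The main obstacle is the $g$-term, because it multiplies $\sup\norm{f}^2_{\mathscr{X}_\ell}$ with no small factor of time in front. We apply Proposition \ref{crulem} to $g$ with a fixed $\varepsilon=\varepsilon_1$:
\[
\int_0^{T_0}\norm{\comi v^{s+\frac\gamma2}g}_{\mathscr{X}_\ell}^2dt\le\varepsilon_1\int_0^{T_0}\big(\norm{g}^2_{\mathcal{Y}_\ell}+\norm{g}^2_{\mathcal{Z}_\ell}\big)dt+C\varepsilon_1^{-\frac{3+6s}{s}}T_0\Big(\sup\norm{\comi v^{\ell_0}g}^2_{L^1_{x,v}}+\sup\norm{g}^2_{\mathscr{X}_\ell}\Big).
\]
Membership of $g$ in $\mathcal{M}_{T_0}(\boldsymbol{L})\cap\mathcal{E}_{T_0}(\boldsymbol{L})$ then bounds the right side by $\varepsilon_1\boldsymbol{L}^2/\min\{c_0,a_0\}+C\varepsilon_1^{-\frac{3+6s}{s}}T_0\boldsymbol{L}^2$. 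We choose $\varepsilon_1=\eta'\boldsymbol{L}^{-2}$ so that the first piece is at most a small absolute constant, say $1/(4C)$. The coefficient of $(\sup\norm{f}_{\mathscr{X}_\ell})^2$ then splits into a part absorbed into the left side and a part of size $C\boldsymbol{L}^{2+\frac{2(3+6s)}{s}}T_0=C\boldsymbol{L}^{\frac{6+14s}{s}}T_0$, which is exactly the last term in the statement.

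After absorbing these pieces, the left side keeps at least $\frac38$ of each quantity. Multiplying the resulting inequality by $8$ gives the stated form: $3(\sup\norm{f}_{\mathscr{X}_\ell})^2+c_0\int\norm{f}^2_{\mathcal{Y}_\ell}+a_0\int\norm{f}^2_{\mathcal{Z}_\ell}\le 8\norm{f_{in}}^2_{\mathscr{X}_\ell}+\cdots$. The absorption constants only need to be tuned to achieve this.
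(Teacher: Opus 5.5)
Your proposal follows the paper's proof: Theorem \ref{hypoellip}, then \eqref{smal} and $g\in\mathcal{E}_{T_0}(\boldsymbol{L})$ to reach \eqref{fxl}, then Proposition \ref{crulem} on both weight-loss integrals with $\varepsilon$ of order $\boldsymbol{L}^{-2}$, then absorption. Two details need correcting.

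\textbf{The exponent on the $L^1_{\ell_0}$ term.} Your remark runs the wrong way. $\boldsymbol{L}$ is not bounded, so different powers of $\boldsymbol{L}$ are not interchangeable up to constants. Since $\boldsymbol{L}\ge 1$, a bound carrying $\boldsymbol{L}^{\frac{6+14s}{s}}$ is \emph{weaker} than one carrying $\boldsymbol{L}^{\frac{6+12s}{s}}$, so it does not imply the literal statement.

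Your computation is nonetheless the accurate one. In the paper's proof, the prefactor $C\boldsymbol{L}^2$ in front of $\int_0^{T_0}\norm{\comi{v}^{s+\frac{\gamma}{2}}f}_{\mathscr{X}_\ell}^2dt$ times $\varepsilon^{-\frac{3+6s}{s}}$ with $\varepsilon\sim\boldsymbol{L}^{-2}$ also gives $\boldsymbol{L}^{\frac{6+14s}{s}}$ on that term. Strictly, $a_0$ as defined in the proof of Theorem \ref{hypoellip} also depends on $\boldsymbol{L}$, which shifts the powers further.

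This does not matter for the only use of the estimate. In the proof of Theorem \ref{thm:ae}, $T_0$ is chosen with $C_*\boldsymbol{L}^{\frac{6+14s}{s}}T_0\le\frac18$. Any coefficient of the form $C\boldsymbol{L}^{p}T_0$ on these two terms is handled by taking $T_0$ smaller.

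\textbf{The final bookkeeping.} It does not produce the stated constants. Keeping $\frac38$ of each quantity and multiplying by $8$ turns $2\norm{f_{in}}_{\mathscr{X}_\ell}^2$ into $16\norm{f_{in}}_{\mathscr{X}_\ell}^2$. To get $3(\sup_{t\le T_0}\norm{f}_{\mathscr{X}_\ell})^2$ against $8\norm{f_{in}}_{\mathscr{X}_\ell}^2$, do the following, as the paper does:
\begin{enumerate}
\item Absorb at most $\frac14(\sup_{t\le T_0}\norm{f}_{\mathscr{X}_\ell})^2$; this fixes $\varepsilon_1$ in the $g$-term.
\item Retain $\frac{c_0}{4}$ and $\frac{a_0}{4}$ of the dissipation after all absorptions. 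After absorbing the $\frac{c_0}{16}$ term you actually have $\frac{7c_0}{16}$, not $\frac{c_0}{4}$, so start the second absorption from $\frac{7c_0}{16}$.
\item Multiply by $4$.
\end{enumerate}
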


\begin{proof}
By Theorem \ref{hypoellip}, we have 
\begin{equation*} 
		\begin{aligned}
		& \Big(\sup_{t\leq T_0}\norm{f}_{\mathscr{X}_{\ell}} \Big)^2 + \frac{c_0}{2} \int_0^{T_0}   \norm{ f}_{\mathcal{Y}_{\ell}}^2 dt +  \frac{a_0}{2}   \int_0^{T_0}  \norm{ f}_{\mathcal{Z}_{\ell}}^2 dt\\
	&\leq 2\norm{f_{in}}_{\mathscr{X}_{\ell}}^2+ \frac{c_0 }{16\boldsymbol{L}} \sup_{t\leq T_0}\norm{g}_{\mathscr{X}_{\ell}}  \int_0^{T_0} \| f\|_{\mathcal{Y}_{\ell}} ^2dt+C  ( 1+\sup_{t\leq T_0}\norm{g}_{\mathscr{X}_{\ell}}  )^2\int_0^{T_0}\norm{\comi{D_v}^s\comi v^{ \boldsymbol{\ell_1} +\frac{\gamma}{2}}f}_{L^2_{x, v}}^2dt  
\\
	&\quad+C \Big( 1+\sup_{t\leq T_0}\norm{g}_{\mathscr{X}_{\ell}} \Big)^2\int_0^{T_0}\norm{\comi v^{s+\frac{\gamma}{2}}f}_{\mathscr{X}_{\ell}}^2dt  +C  \Big(\sup_{t\leq T_0}  \|f\|_{\mathscr{X}_{\ell}}\Big)^2  \int_0^{T_0}  \|\langle v\rangle^{s+ \frac{\gamma}{2}}g\|_{\mathscr{X}_{\ell}}^2dt,
		\end{aligned}
	\end{equation*}
	which with assumption \eqref{smal} and  the condition $g\in\mathcal E_{T_0}(\boldsymbol{L})$ yields  
\begin{equation}\label{fxl}
		\begin{aligned}
		 &\big(\sup_{t\leq T_0}\norm{f}_{\mathscr{X}_{\ell}}\big)^2  +\frac{c_0}{2}   \int_0^{T_0}   \norm{ f}_{\mathcal{Y}_{\ell}}^2 dt + \frac{a_0}{2}   \int_0^{T_0} \norm{ f}_{\mathcal{Z}_{\ell}}^2 dt\\
		 & \leq 2\norm{f_{in}}_{\mathscr{X}_{\ell}}^2+ \frac{c_0 }{16 }  \int_0^{T_0} \| f\|_{\mathcal{Y}_{\ell}} ^2dt+C\boldsymbol{L}^2\delta_0^2 \\
		 &\quad+C \boldsymbol{L}^2\int_0^{T_0}\norm{\comi v^{s+\frac{\gamma}{2}}f}_{\mathscr{X}_{\ell}}^2dt  +C  \Big(\sup_{t\leq T_0}  \|f\|_{\mathscr{X}_{\ell}}\Big)^2  \int_0^{T_0}  \|\langle v\rangle^{s+ \frac{\gamma}{2}}g\|_{\mathscr{X}_{\ell}}^2dt. 	
		\end{aligned}
	\end{equation}
 Moreover, it follows from Proposition \ref{crulem} that, for any $\eps>0,$
 \begin{align*}
     &\int_0^{T_0}\norm{\comi v^{s+\frac{\gamma}{2}}f}_{\mathscr{X}_{\ell}}^2dt\\
     &\leq \eps    \int_0^{T_0}   \big(c_0\norm{ f}_{\mathcal{Y}_{\ell}}^2  + a_0  \norm{ f}_{\mathcal{Z}_{\ell}}^2\big) dt+C  \eps^{-\frac{3+6s}{s}}  \Big(\int_0^{T_0}   \norm{\comi v^{\ell_0} f}_{L^1}^2 dt+\int^{T_0}_0\norm{f}_{\mathscr{X}_{\ell}}^2dt\Big)\\
     &\leq \eps    \int_0^{T_0}   \big(c_0\norm{ f}_{\mathcal{Y}_{\ell}}^2  +   a_0\norm{ f}_{\mathcal{Z}_{\ell}}^2\big) dt+C  \eps^{-\frac{3+6s}{s}}  T_0\Big[\big(\sup_{t\leq T_0}\norm{\comi v^{\ell_0} f}_{L^1}\big)^2+\big(\sup_{t\leq T_0}\norm{f}_{\mathscr{X}_{\ell}}\big)^2\Big], 
 \end{align*}
 and
 \begin{align*}
     &\int_0^{T_0}\norm{\comi v^{s+\frac{\gamma}{2}}g}_{\mathscr{X}_{\ell}}^2dt\\
     &\leq \eps   \int_0^{T_0}   \big(c_0\norm{ g}_{\mathcal{Y}_{\ell}}^2  +   a_0\norm{ g}_{\mathcal{Z}_{\ell}}^2\big) dt+C  \eps^{-\frac{3+6s}{s}}  {T_0}\Big[\sup_{t\leq T_0}\norm{\comi v^{\ell_0} g}_{L^1_{x, v}}+\sup_{t\leq T_0}\norm{g}_{\mathscr{X}_{\ell}}\Big]^2\\
     &\leq \eps   \boldsymbol{L}^2   +C  \eps^{-\frac{3+6s}{s}}  T_0 \boldsymbol{L}^2  
 \end{align*}
 for $g\in\mathcal M_{T_0}(\boldsymbol{L})\cap\mathcal E_{T_0}(\boldsymbol{L}).$
 Substituting these estimates into \eqref{fxl} we conclude that
 \begin{equation*}
 	\begin{aligned}
		 &\big(\sup_{t\leq T_0}\norm{f}_{\mathscr{X}_{\ell}}\big)^2  +\frac{c_0}{2}   \int_0^{T_0}   \norm{ f}_{\mathcal{Y}_{\ell}}^2 dt + \frac{a_0}{2}   \int_0^{T_0} \norm{ f}_{\mathcal{Z}_{\ell}}^2 dt\\
		 & \leq 2\norm{f_{in}}_{\mathscr{X}_{\ell}}^2+ \frac{c_0}{4}  \int_0^{T_0} \| f\|_{\mathcal{Y}_{\ell}} ^2dt+ \frac{a_0 }{4}  \int_0^{T_0} \| f\|_{\mathcal{Z}_{\ell}} ^2dt+C\boldsymbol{L}^2\delta_0^2 \\
		 &\quad +C   \boldsymbol{L} ^{\frac{6+12s}{s}} T_0\Big(\sup_{t\leq T_0}\norm{\comi v^{\ell_0} f}_{L^1_{x, v}}\Big)^2+\Big(\frac14+ C   \boldsymbol{L} ^{\frac{6+14s}{s}} T_0\Big)\Big(\sup_{t\leq T_0}  \|f\|_{\mathscr{X}_{\ell}}\Big)^2.	
		 \end{aligned}
 \end{equation*}
 This  yields the desired estimate in Proposition \ref{lem:energy}. The    proof is completed. 
  \end{proof}

\begin{proof}
	[Completing the proof of Theorem \ref{thm:ae}] We recall the estimates in Propositions \ref{lem:mo} and \ref{lem:energy}. Namely, there exists a constant $C_*,$ depending only on $\ell$, $s$ and $\gamma $, such that
   \begin{multline*}
	   \sup_{t\le T_0}\|\langle v\rangle^{\ell_0}f\|_{L^1_{x, v}} +\frac{m_0\boldsymbol{\lambda}_{\ell_0}}{4^{1+\gamma}} \int_0^{T_0} \|\langle v\rangle^{\ell_0+\gamma}f\|_{L^1_{x, v}}dt\\
 \leq e^{C_* \boldsymbol{L}  T_0} \|\langle v\rangle^{\ell_0}f_{in}\|_{L^1_{x, v}}+  e^{C_* \boldsymbol{L}  T_0}\bigg(\frac18\boldsymbol{L}   +C_*  \boldsymbol{L} T_0    \sup_{t\leq T_0}\norm{f}_{\mathscr{X}_{\ell}} \bigg). 
	\end{multline*}
    and
 \begin{multline*}
  3\big(\sup_{t\leq T_0}\norm{f}_{\mathscr{X}_{\ell}}\big)^2  +  c_0   \int_0^{T_0}   \norm{ f}_{\mathcal{Y}_{\ell}}^2 dt + a_0   \int_0^{T_0}  \norm{ f}_{\mathcal{Z}_{\ell}}^2 dt  \\
	\leq 8\norm{f_{in}}_{\mathscr{X}_{\ell}}^2+C_* \boldsymbol{L}^2\delta_0^2 +C_*   \boldsymbol{L} ^{\frac{6+12s}{s}} T_0\Big(\sup_{t\leq T_0}\norm{\comi v^{\ell_0} f}_{L^1_{x, v}}\Big)^2+  C_*   \boldsymbol{L} ^{\frac{6+14s}{s}} T_0 \Big(\sup_{t\leq T_0}  \|f\|_{\mathscr{X}_{\ell}}\Big)^2. 
 \end{multline*}
Now we choose   $T_0$ and $\delta_0$,  such that  
\begin{equation*}
   C_* \delta_0^2+ C_*   \boldsymbol{L} ^{\frac{6+14s}{s}}T_0\leq \frac18.
\end{equation*}
Then  the above estimates reduce to 
\begin{align*}
	\sup_{t\le T_0}\|\langle v\rangle^{\ell_0}f\|_{L^1_{x, v}} +\frac{m_0\boldsymbol{\lambda}_{\ell_0}}{4^{1+\gamma}} \int_0^{T_0} \|\langle v\rangle^{\ell_0+\gamma}f\|_{L^1_{x, v}}dt 
 \leq 2 \|\langle v\rangle^{\ell_0}f_{in}\|_{L^1_{x, v}}+\frac14\boldsymbol{L}  +  \frac14  \sup_{t\leq T_0}\norm{f}_{\mathscr{X}_{\ell}} ,
\end{align*}
    and
    \begin{multline*}
2\big(\sup_{t\leq T_0}\norm{f}_{\mathscr{X}_{\ell}}\big)^2  +c_0 \int_0^{T_0}   \norm{ f}_{\mathcal{Y}_{\ell}}^2 dt 
+ a_0 \int_0^{T_0}  \norm{ f}_{\mathcal{Z}_{\ell}}^2 dt\\
 \leq 8\norm{f_{in}}_{\mathscr{X}_{\ell}}^2+\frac{1}{8} \boldsymbol{L}^2  
	 +\frac{1}{8}\Big(\sup_{t\leq T_0}\norm{\comi v^{\ell_0} f}_{L^1_{x, v}}\Big)^2 . 
 \end{multline*}
 Combining the above two estimates and the definition of $\boldsymbol{L}$ in \eqref{A-0} implies 
  \begin{align*}
 \big(\sup_{t\leq T_0}\norm{f}_{\mathscr{X}_{\ell}}\big)^2  +c_0 \int_0^{T_0}   \norm{ f}_{\mathcal{Y}_{\ell}}^2 dt + a_0   
 \int_0^{T_0}  \norm{ f}_{\mathcal{Z}_{\ell}}^2 dt\leq \boldsymbol{L}^2,
 \end{align*} 
 and
 	 \begin{equation*}
 	 	 \sup_{t\le T_0}\|\comi v^{\ell_0}f\|_{L^1_{x, v}} +\frac{m_0\boldsymbol{\lambda}_{\ell_0}}{4^{1+\gamma}} \int_0^{T_0} \|\langle v\rangle^{\ell_0+\gamma}f\|_{L^1_{x, v}}dt \leq \boldsymbol{L}.
 	 \end{equation*}
 	Hence $f\in \mathcal M_{T_0}(\boldsymbol{L})\cap \mathcal E_{T_0}(\boldsymbol{L}).$ The non-negativity of $f$ is established in Proposition \ref{nonnegative}. Moreover,
 by  \eqref{lb} and the fact that $f \in \mathcal E_{T_0}(\boldsymbol{L}),$
  we have 
  \begin{equation*}
  	\Big|\int_{\mathbb{R}^3_v} f(t,x,v)dv-\int_{\mathbb{R}^3_v}f_{in}(x,v)dv\Big|\leq  C\, T_0\, \sup_{t\leq T_0}\norm{f}_{\mathscr{X}_\ell}\leq C \boldsymbol{L}T_0.
  \end{equation*}
 Thus,	 shrinking $T_0$ if necessary, it follows from the above estimate and \eqref{finite} that  
 	\begin{equation*}
\forall \ (t,x)\in[0,T_0]\times\mathbb T^3,\quad \frac{m_0}{2}\leq 	\int_{\mathbb{R}^3_v} f(t,x,v)dv	\leq 2M_0.
 	\end{equation*} 
 The derivation of the upper bound of entropy and energy is just the same as in \cite[Proposition 4.1]{preprint-JHL}, so we omit it for brevity. This yields $f$ satisfies condition $\boldsymbol{(H)}$,  completing  the proof of Theorem \ref{thm:ae}.  
\end{proof}

 
\section{Proof of the main result}\label{s:main}
This section is devoted to proving 
  Theorem \ref{local-exsitence}.  We will combine  the {\it \`a priori} estimate in Theorem \ref{thm:ae}  with a standard  iteration scheme to construct a solution to the  nonlinear equation \eqref{eq-1}.   The delicate part  is  to find an uniform time $T$ such that the condition  \eqref{smal}  holds for all functions in the iteration procedure.

We first prove the following technical lemma that will be used to establish the existence and uniqueness of solutions. 


\begin{lemma}\label{lem:te2}
	Suppose $f\in L^\infty([0,T], \mathscr{X}_{\ell})\cap L^2([0,T], \mathcal{Y}_{\ell})$ satisfies condition $\boldsymbol{(H)}$. Then, for  any $\varepsilon>0,$  it holds that 
	\begin{align*}
 		&\big| \big(\comi v^{\boldsymbol{\ell_1}}Q( g,\     f ), \  \comi v^{\boldsymbol{\ell_1}} h \big)_{L^2_{x, v}} \big|\\
&\leq \frac{\tilde c_0}{2} \|\comi {D_v}^s \comi v^{\boldsymbol{\ell_1}+\frac{\gamma}{2}}  h \|_{L^2_{x, v}}^2+C\norm{f}_{\mathscr{X}_{\ell}}^2 \|\comi v^{\boldsymbol{\ell_1}} h\|_{L^2_{x, v}}^2+ \frac{\tilde c_0}{64}  \|\langle v\rangle^{\boldsymbol{\ell_1}+\frac{\gamma}{2}}g\|_{L^2_{x, v}}^2 \\
&\quad    +\|\langle v\rangle^{\boldsymbol{\ell_1}} g\|_{L^2_{x, v}}^2\Big(\varepsilon c_0\norm{  f }_{\mathcal{Y}_\ell}^2
+C(c_0\varepsilon)^{-5}   \|\comi {D_v}^s \comi v^{\boldsymbol{\ell_1}+\frac{\gamma}{2}}     f \|_{L^2_{x, v}}^2+C\norm{  f }_{\mathscr{X}_\ell}^2\Big),
 	\end{align*}
    where $c_0\geq \tilde c_0$ are the constants specified  in  \eqref{lower+} and Lemma \ref{lem:te},respectively.
\end{lemma}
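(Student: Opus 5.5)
The plan is to split the trilinear term with the commutator identity \eqref{gfh} (with $l=\boldsymbol{\ell_1}$, and $g$ in the first slot of $Q$):
\[
\big(\comi v^{\boldsymbol{\ell_1}}Q(g,f),\comi v^{\boldsymbol{\ell_1}}h\big)_{L^2_{x,v}}=\big(Q(g,\comi v^{\boldsymbol{\ell_1}}f),\comi v^{\boldsymbol{\ell_1}}h\big)_{L^2_{x,v}}+\big(\comi v^{\boldsymbol{\ell_1}}Q(g,f)-Q(g,\comi v^{\boldsymbol{\ell_1}}f),\comi v^{\boldsymbol{\ell_1}}h\big)_{L^2_{x,v}}.
\]
Throughout, $g$ is measured in $L^2_x$ and $f$ in $L^\infty_x$. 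The $L^\infty_x$ norms of $f$ are controlled either through $H^2_x$ or through \eqref{xl}, i.e. $\|\comi v^{6+\gamma}f\|_{L^\infty_xL^2_v}\le C\|f\|_{\mathscr X_\ell}$. This is legitimate since $\boldsymbol{\ell_1}>\frac{13}{2}+\gamma$ and $\ell$ is much larger than $\boldsymbol{\ell_1}$ by \eqref{rhod}.

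\emph{Main term.} I will use the flexible bound \eqref{uppbound2} with $a_2=\frac{\gamma}{2}$ and $a_1=\frac{\gamma}{2}+2s$. This avoids the spherical part of $\normm{\cdot}$, which the right-hand side of the lemma does not contain. It gives, pointwise in $x$, the bound
\[
C\big(\|\comi v^{\gamma+2s}g\|_{L^1_v}+\|g\|_{L^2_v}\big)\,\|\comi{D_v}^s\comi v^{\boldsymbol{\ell_1}+2s+\frac{\gamma}{2}}f\|_{L^2_v}\,\|\comi{D_v}^s\comi v^{\boldsymbol{\ell_1}+\frac{\gamma}{2}}h\|_{L^2_v}.
\]
The $g$-factor is bounded by $C\|\comi v^{\boldsymbol{\ell_1}}g\|_{L^2_v}$. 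I then integrate in $x$, put the $f$-factor in $L^\infty_x\subset H^2_x$, and apply Young's inequality. This gives
\[
\frac{\tilde c_0}{4}\|\comi{D_v}^s\comi v^{\boldsymbol{\ell_1}+\frac{\gamma}{2}}h\|^2_{L^2_{x,v}}+C\|\comi v^{\boldsymbol{\ell_1}}g\|^2_{L^2_{x,v}}\,\|\comi{D_v}^s\comi v^{\boldsymbol{\ell_1}+2s+\frac{\gamma}{2}}f\|^2_{H^2_xL^2_v}.
\]
Lemma \ref{lem:small}, with $\varepsilon$ replaced by a multiple of $c_0\varepsilon$, turns the last factor into $\varepsilon c_0\|f\|^2_{\mathcal Y_\ell}+C(c_0\varepsilon)^{-5}\|\comi{D_v}^s\comi v^{\boldsymbol{\ell_1}+\frac{\gamma}{2}}f\|^2_{L^2_{x,v}}$. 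This produces exactly the bracket in the statement. I expect this step to be the crux: the $2s$ extra weight on $f$ in $L^\infty_x$ can only be paid for by the interpolation in Lemma \ref{lem:small}.

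\emph{Commutator.} I will use the first (non-symmetrized) estimate of Lemma \ref{commutator} with $l=\boldsymbol{\ell_1}$ and the roles $(g,f)$ as given, and treat its four terms as follows.
\begin{enumerate}
\item The term $C\|\comi v^{6+\gamma}g\|_{L^2_v}\|\comi v^{\frac{\gamma}{2}}h\|_{L^2_v}\|\comi{D_v}^s\comi v^{\boldsymbol{\ell_1}+\frac{\gamma}{2}}f\|_{L^2_v}$ is handled like the main term. Its $f$-factor has even less weight, so Lemma \ref{lem:small} applies again.
\item The term $2\boldsymbol{\omega}_{\boldsymbol{\ell_1}-2-\gamma}\|f\|_{L^1_v}\|\comi v^{\boldsymbol{\ell_1}+\frac{\gamma}{2}}g\|_{L^2_v}\|\comi v^{\frac{\gamma}{2}}h\|_{L^2_v}$ is where condition $\boldsymbol{(H)}$ for $f$ enters. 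It gives $\|f\|_{L^1_v}\le 2M_0$, and \eqref{loc-l} gives $\boldsymbol{\omega}_{\boldsymbol{\ell_1}-2-\gamma}\le \tilde c_0/(32M_0)$. So this term is at most $\frac{\tilde c_0}{8}\|\comi v^{\boldsymbol{\ell_1}+\frac{\gamma}{2}}g\|\,\|\comi v^{\frac{\gamma}{2}}h\|$, and Young's inequality yields $\frac{\tilde c_0}{128}\|\comi v^{\boldsymbol{\ell_1}+\frac{\gamma}{2}}g\|^2_{L^2_{x,v}}+\frac{\tilde c_0}{8}\|\comi{D_v}^s\comi v^{\boldsymbol{\ell_1}+\frac{\gamma}{2}}h\|^2_{L^2_{x,v}}$.
\item The term $C\|\comi v^{6+\gamma}f\|_{L^2_v}\|\comi v^{\boldsymbol{\ell_1}+\frac{\gamma}{2}}g\|_{L^2_v}\|h\|_{L^2_v}$ is bounded via \eqref{xl} and Young's inequality by $\frac{\tilde c_0}{128}\|\comi v^{\boldsymbol{\ell_1}+\frac{\gamma}{2}}g\|^2_{L^2_{x,v}}+C\|f\|^2_{\mathscr X_\ell}\|\comi v^{\boldsymbol{\ell_1}}h\|^2_{L^2_{x,v}}$.
\item The last term is $C\|\comi v^{6+\gamma}f\|_{L^2_v}\|\comi v^{\boldsymbol{\ell_1}}g\|_{L^2_v}\|\comi v^{\frac{\gamma}{2}}h\|_{L^2_v}$, where I use $\|\comi v^{\frac{\gamma}{2}}h\|\le\|\comi v^{\boldsymbol{\ell_1}}h\|$. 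It is bounded by $C\|f\|^2_{\mathscr X_\ell}\|\comi v^{\boldsymbol{\ell_1}}g\|^2_{L^2_{x,v}}+C\|\comi v^{\boldsymbol{\ell_1}}h\|^2_{L^2_{x,v}}$.
\end{enumerate}
The $C\|\comi v^{\boldsymbol{\ell_1}}h\|^2$ in the last item is absorbed into $C\|f\|^2_{\mathscr X_\ell}\|\comi v^{\boldsymbol{\ell_1}}h\|^2$, since $\boldsymbol{(H)}$ gives $\|f\|_{\mathscr X_\ell}\gtrsim m_0$.

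Summing all contributions, the $h$-diffusion coefficients total at most $\frac{\tilde c_0}{2}$, and the $g$-weight coefficients total at most $\frac{\tilde c_0}{64}$. This gives the claimed inequality.
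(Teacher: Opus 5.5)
Your overall route is the paper's: the same split into $\big(Q(g,\comi v^{\boldsymbol{\ell_1}}f),\comi v^{\boldsymbol{\ell_1}}h\big)_{L^2_{x,v}}$ plus a commutator, and \eqref{uppbound2} with $a_1=2s+\frac{\gamma}{2}$, $a_2=\frac{\gamma}{2}$ for the main term. You also use Lemma \ref{commutator} with $g$ in $L^2_x$ and $f$ in $L^\infty_x$, controlled by $H^2_x$ (the inclusion is $H^2_x\subset L^\infty_x$, not the reverse as you wrote). Condition $\boldsymbol{(H)}$ with \eqref{loc-l} handles the $\boldsymbol{\omega}$-term, and Lemma \ref{lem:small} pays for the extra $\comi v^{2s}$ on $f$. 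Two of your bookkeeping steps, however, fail as written.

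Write $A=\|\comi v^{\boldsymbol{\ell_1}+\frac{\gamma}{2}}g\|_{L^2_{x,v}}$ and $B=\|\comi{D_v}^s\comi v^{\boldsymbol{\ell_1}+\frac{\gamma}{2}}h\|_{L^2_{x,v}}$.

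\emph{Item 4.} The test function in Lemma \ref{commutator} is $\comi v^{\boldsymbol{\ell_1}}h$, which is how you read it in items 1--3. So the $h$-factor here is $\|\comi v^{\boldsymbol{\ell_1}+\frac{\gamma}{2}}h\|_{L^2_v}$, and this is not controlled by $\|\comi v^{\boldsymbol{\ell_1}}h\|_{L^2_v}$ when $\gamma>0$. Your absorption through $\|f\|_{\mathscr X_\ell}\gtrsim m_0$ therefore does not apply. The term must instead be split as $\eta B^2+C_\eta\|f\|^2_{\mathscr X_\ell}\|\comi v^{\boldsymbol{\ell_1}}g\|^2_{L^2_{x,v}}$, as the paper does, so it uses part of the $h$-diffusion budget.

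\emph{Item 2.} The bound $\frac{\tilde c_0}{8}AB\le\alpha A^2+\beta B^2$ needs $\alpha\beta\ge\tilde c_0^2/256$. So $\alpha=\frac{\tilde c_0}{128}$ forces $\beta=\frac{\tilde c_0}{2}$, not $\frac{\tilde c_0}{8}$. Adding your $\frac{\tilde c_0}{4}$ from the main term, the $h$-coefficients then exceed $\frac{\tilde c_0}{2}$.

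\emph{Repair.} Both problems are fixed by reallocating the constants:
\begin{enumerate}
\item in item 2 take $\frac{\tilde c_0}{80}A^2+\frac{5\tilde c_0}{16}B^2$;
\item give item 3 only $\frac{\tilde c_0}{320}A^2$;
\item give the main term $\frac{\tilde c_0}{8}B^2$, and items 1 and 4 $\frac{\tilde c_0}{32}B^2$ each.
\end{enumerate}
All remaining pieces go into the $C$-terms of the statement. The totals are then exactly $\frac{\tilde c_0}{2}B^2$ and $\frac{\tilde c_0}{64}A^2$.

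The paper's own tally is loose at the same spot. Under $\boldsymbol{\omega}_{\boldsymbol{\ell_1}-2-\gamma}\le\frac{\tilde c_0}{32M_0}$, its factor $\frac{8\boldsymbol{\omega}_{\boldsymbol{\ell_1}-2-\gamma}^2}{\tilde c_0}(2M_0)^2+\frac{\tilde c_0}{128}$ is only bounded by $\frac{5\tilde c_0}{128}$, not $\frac{\tilde c_0}{64}$. A reallocation like the one above is needed there too.
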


  \begin{proof}
   We write
 \begin{multline*}
 	\big(\comi v^{\boldsymbol{\ell_1}}Q( g,\    f ), \  \comi v^{\boldsymbol{\ell_1}} h \big)_{L^2_{x, v}}=\big(Q( g,\ \comi v^{\boldsymbol{\ell_1}}   f ), \  \comi v^{\boldsymbol{\ell_1}} h \big)_{L^2_{x, v}}\\
 	+\big(\comi v^{\boldsymbol{\ell_1}}Q( g,\    f ), \  \comi v^{\boldsymbol{\ell_1}} h \big)_{L^2_{x, v}}-\big(Q( g,\ \comi v^{\boldsymbol{\ell_1}}   f ), \  \comi v^{\boldsymbol{\ell_1}} h \big)_{L^2_{x, v}}.
\end{multline*}
Moreover,  it follows from Lemma \ref{commutator} as well as \eqref{rhod} that
\begin{equation*}
\begin{split}
&\big|\big(\comi v^{\boldsymbol{\ell_1}}Q( g,\    f ), \  \comi v^{\boldsymbol{\ell_1}} h \big)_{L^2_{x, v}}-\big(Q( g,\ \comi v^{\boldsymbol{\ell_1}}   f ), \  \comi v^{\boldsymbol{\ell_1}} h \big)_{L^2_{x, v}} \big|\\
	&\leq  C  \|\langle v\rangle^{6+\gamma}g\|_{L^2_{x, v}}\|\langle v\rangle^{\boldsymbol{\ell_1}+\frac{\gamma}{2}}h\|_{L^2_{x, v}}\|\comi{D_v}^s\langle v\rangle^{\boldsymbol{\ell_1}+\frac{\gamma}{2}} f\|_{L_x^\infty L^2_v}\\
	&\quad +	2  \boldsymbol{\omega}_{\boldsymbol{\ell_1}-2-\gamma} \|f\|_{L_x^\infty L^1_v} \|\langle v\rangle^{\boldsymbol{\ell_1}+\frac{\gamma}{2}}g\|_{L^2_{x, v}} \|\langle v\rangle^{ \boldsymbol{\ell_1}+\frac{\gamma}{2}}h\|_{L^2_{x, v}}\\
	&\quad+C \|\langle v\rangle^{6+ \gamma}f\|_{L_x^\infty L_v^2}\|\langle v\rangle^{\boldsymbol{\ell_1}+\frac{\gamma}{2}}g\|_{L^2_{x, v}}  \|\comi v^{\boldsymbol{\ell_1}} h\|_{L^2_{x, v}} \\
    &\ \ \ \ + C\|\langle v\rangle^{6+\gamma}f\|_{L_x^\infty L^2_v} \|\langle v\rangle^{\boldsymbol{\ell_1}}g\|_{L^2_{x, v}}      \|\langle v\rangle^{ \boldsymbol{\ell_1}+\frac{\gamma}{2}}h\|_{L^2_{x, v}}\\
&\leq  \frac{\tilde c_0}{4}  \| \langle v\rangle^{\boldsymbol{\ell_1}+\frac{\gamma}{2}} h \|_{L^2_{x, v}}^2+\frac{8   \boldsymbol{\omega}_{\boldsymbol{\ell_1}-2-\gamma} ^2 }{\tilde c_0}  \|f\|_{L_x^\infty L^1_v}^2 \|\langle v\rangle^{\boldsymbol{\ell_1}+\frac{\gamma}{2}}g\|_{L^2_{x, v}}^2+\frac{\tilde c_0}{128}\|\langle v\rangle^{\boldsymbol{\ell_1}+\frac{\gamma}{2}}g\|_{L^2_{x, v}}^2\\
&\quad+C\norm{f}_{\mathscr{X}_\ell}^2\Big(\|\comi v^{\boldsymbol{\ell_1}} h\|_{L^2_{x, v}}^2+ \|\langle v\rangle^{\boldsymbol{\ell_1}}g\|_{L^2_{x, v}}^2    \Big)  +C\norm{\comi v^{\boldsymbol{\ell_1}} g}_{L^2_{x, v}}^2\|\comi{D_v}^s  \langle v\rangle^{\boldsymbol{\ell_1}+\frac{\gamma}{2}}  f \|_{L^{\infty}_x L_v^2}^2,
\end{split}
\end{equation*}
where $\tilde c_0$ is the constant given in \eqref{fras}.  
On the other hand, 
applying the  trilinear upper bound \eqref{uppbound2} with  $a_1=2s+\frac{\gamma}{2}$ and $a_2=\frac{\gamma}{2}$, we get 
 \begin{equation*}
 \begin{aligned}
 & \big| \big(Q( g,\  \comi v^{\boldsymbol{\ell_1}}   f ), \  \comi v^{\boldsymbol{\ell_1}} h \big)_{L^2_{x, v}} \big| \\
& \leq  C \Big(\|\langle v\rangle^{\gamma+2s} g\|_{L_x^2L^1_v}+\| g\|_{L^2_{x, v}}\Big)  \|\comi {D_v}^s\comi v^{\boldsymbol{\ell_1}+2s+\frac{\gamma}{2}}    f \|_{L_x^\infty L_v^2}\|\comi {D_v}^s\comi v^{\boldsymbol{\ell_1}+\frac{\gamma}{2}}   h \|_{L^2_{x, v}}\\
& \leq  C \|\langle v\rangle^{\boldsymbol{\ell_1}} g\|_{L^2_{x, v}}  \|\comi {D_v}^s\comi v^{\boldsymbol{\ell_1}+2s+\frac{\gamma}{2}}    f \|_{L_x^\infty L_v^2}\|\comi {D_v}^s\comi v^{\boldsymbol{\ell_1}+\frac{\gamma}{2}}   h \|_{L^2_{x, v}}\\
&\leq  \frac{\tilde c_0}{4}  \|\comi {D_v}^s\comi v^{\boldsymbol{\ell_1}+\frac{\gamma}{2}}   h \|_{L^2_{x, v}}^2+C \|\langle v\rangle^{\boldsymbol{\ell_1}} g\|_{L^2_{x, v}}^2  \|\comi {D_v}^s   \comi v^{\boldsymbol{\ell_1}+2s+\frac{\gamma}{2}}  f \|_{L_x^\infty L_v^2}^2,
 \end{aligned}
 \end{equation*}
where  the second inequality follows from \eqref{rhod}.  Combining these estimates, we get 
 \begin{equation}\label{efgh}
 \begin{aligned}
 & \big| \comi v^{\boldsymbol{\ell_1}} \big(Q( g,\     f ), \  \comi v^{\boldsymbol{\ell_1}} h \big)_{L^2_{x, v}} \big|\\ 
&\leq \frac{\tilde c_0}{2} \|\comi {D_v}^s \comi v^{\boldsymbol{\ell_1}+\frac{\gamma}{2}}  h \|_{L^2_{x, v}}^2  +\bigg(\frac{8   \boldsymbol{\omega}_{\boldsymbol{\ell_1}-2-\gamma} ^2 }{\tilde c_0}  \|f\|_{L_x^\infty L^1_v}^2+\frac{\tilde c_0}{128}\bigg) \|\langle v\rangle^{\boldsymbol{\ell_1}+\frac{\gamma}{2}}g\|_{L^2_{x, v}}^2\\
&\quad +C\norm{f}_{\mathscr{X}_{\ell}}^2\Big(\|\comi v^{\boldsymbol{\ell_1}} h\|_{L^2_{x, v}}^2  +\|\langle v\rangle^{\boldsymbol{\ell_1}}g\|_{L^2_{x, v}}^2    \Big)   +C \|\langle v\rangle^{\boldsymbol{\ell_1}} g\|_{L^2_{x, v}}^2  \|\comi {D_v}^s   \comi v^{\boldsymbol{\ell_1}+2s+\frac{\gamma}{2}}  f \|_{L_x^\infty L_v^2}^2.
 \end{aligned}
 \end{equation}
  Using \eqref{loc-l} and the assumption that $f$ satisfies condition $\boldsymbol{(H)}$, the factor appearing in the second term on the right-hand side of \eqref{efgh} can be bounded as follows: 
 \begin{equation*}
     \frac{ 8  \boldsymbol{\omega}_{\boldsymbol{\ell_1}-2-\gamma} ^2 }{\tilde c_0}  \|f\|_{L_x^\infty L^1_v}^2+\frac{\tilde c_0}{128} \leq  \frac{ 8  \boldsymbol{\omega}_{\boldsymbol{\ell_1}-2-\gamma} ^2 }{\tilde c_0} (2M_0)^2+\frac{\tilde c_0}{128}\leq  \frac{\tilde c_0}{64}.
 \end{equation*}
For the last factor in \eqref{efgh}, we use Lemma \ref{lem:small} to get that,  for any $\varepsilon>0,$  
 \begin{equation*}
 	\begin{aligned}
 	 \|\comi {D_v}^s   \comi v^{\boldsymbol{\ell_1}+2s+\frac{\gamma}{2}}  f \|_{L_x^\infty L_v^2}^2 &\leq 
 	\|\comi {D_v}^s   \comi v^{\boldsymbol{\ell_1}+2s+\frac{\gamma}{2}}  f \|_{H_x^2 L_v^2}^2 \leq \varepsilon \norm{  f }_{\mathcal{Y}_\ell} ^2+C\varepsilon^{-5} \|\comi {D_v}^s\comi v^{\boldsymbol{\ell_1}+\frac{\gamma}{2}}     f \|_{L^2_{x, v}}^2.
 	\end{aligned}
 \end{equation*}
 As a result, combining  the two estimates above with \eqref{efgh}, we obtain the assertion in Lemma \ref{lem:te2}, completing the  proof.  
\end{proof}

\subsection{Iteration scheme}
  
 To clarify the argument,  we first   focus on the iteration procedure,  assuming the existence of a solution to the linear problem.  The key part is to find an uniform time $T$ such that the condition  \eqref{smal}  holds for all functions in the iteration procedure.

 \begin{proposition}\label{lem:itera}
 	 Let $f_{in}$ be the initial datum in equation  \eqref{eq-1}  satisfying the assumptions in Theorem \ref{local-exsitence}.   Suppose that  there exists a time $T_*>0,$ such that 	whenever  $g\geq 0$ satisfies  condition $\boldsymbol{(H)}$ for $0<t\leq T_* $ and belongs to  $ \mathcal{M}_{T_*}(\boldsymbol{L})\cap \mathcal{E}_{T_*}(\boldsymbol{L})$,  
	the linear Boltzmann equation
	\begin{equation}\label{liboltz}
		\partial_t h+ v\cdot \partial_x h
		= Q(g,\, h),  \quad h|_{t=0}=f_{in},
\end{equation}
admits a local solution $h\in L^\infty([0,T_*], \mathscr{X}_\ell)\cap L^2([0,T_*], \mathcal{Y}_\ell),$ which is $C^\infty$-smooth in all variables at $t>0.$ 
 	 Then there exists a time $T\in (0,T_*]$ such that the original  nonlinear Boltzmann equation \eqref{eq-1} has a local  weak solution $f\in L^\infty([0,T]; \mathscr{X}_{\ell})\cap L^2([0,T]; \mathcal{Y}_{\ell})$. Furthermore, $f\geq 0$ satisfies condition $\boldsymbol{(H)}$ and belongs to  $ \mathcal{M}_{T}(\boldsymbol{L})\cap \mathcal{E}_{T}(\boldsymbol{L}).$  In addition, the following estimate holds:
 	\begin{equation}\label{fsmall}
 		 \bigg(\int_0^{T}\norm{\comi {D_v}^s \comi v^{\boldsymbol{\ell_1}+\frac{\gamma}{2}} f}_{L^2_{x, v}}^2 dt\bigg)^{\frac12} \leq  \delta_0.
 	\end{equation}
 	Recall that  $\boldsymbol{L},\delta_0$ are the constants  specified  in \eqref{A-0} and Theorem \ref{thm:ae}, respectively,  and the spaces  $\mathscr{X}_\ell, \mathcal{Y}_\ell$ and $\mathcal M_{T}(\boldsymbol{L}), \mathcal E_{T}(\boldsymbol{L})$ are given in Definitions \ref{xel} and \ref{defspace}.  
 \end{proposition}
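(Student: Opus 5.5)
The construction is by Picard iteration, $f_0=c\langle v\rangle^{-2\ell_0}$ and $f_{n+1}$ the solution of \eqref{liboltz} with $g=f_n$. The constant $c$ is chosen so that $f_0\ge0$ satisfies condition $\boldsymbol{(H)}$ and lies in $\mathcal M_{T}(\boldsymbol L)\cap\mathcal E_{T}(\boldsymbol L)$; this is possible because $\boldsymbol L\ge 4\|f_{in}\|$ and we may take $c$ comparable to the mass. The function $f_0$ is stationary in $x$ and smooth, and it satisfies \eqref{fsmall} once $T$ is small, since $\int_0^T\|\langle D_v\rangle^s\langle v\rangle^{\boldsymbol{\ell_1}+\gamma/2}f_0\|^2dt\le CT$. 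The whole argument then reduces to proving, by induction on $n$, that for a time $T\le\min(T_*,T_0)$ independent of $n$:
\begin{enumerate}[label=(\alph*)]
\item $f_n\ge0$ and $f_n$ satisfies $\boldsymbol{(H)}$;
\item $f_n\in\mathcal M_T(\boldsymbol L)\cap\mathcal E_T(\boldsymbol L)$;
\item $f_n$ satisfies \eqref{smal}, i.e.\ $\int_0^T\|\langle D_v\rangle^s\langle v\rangle^{\boldsymbol{\ell_1}+\gamma/2}f_n\|^2_{L^2_{x,v}}dt\le\delta_0^2$.
\end{enumerate}
Given (a)--(c) for $f_n$, the hypothesis of Proposition \ref{lem:itera} produces a smooth solution $f_{n+1}$. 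If (c) holds for $f_{n+1}$, Theorem \ref{thm:ae} gives (a) and (b) for $f_{n+1}$. So the inductive step is exactly to establish (c) for $f_{n+1}$.

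To get (c) for $f_{n+1}$, I will run a low-weight energy estimate on $\langle v\rangle^{\boldsymbol{\ell_1}}f_{n+1}$ and not use Theorem \ref{thm:ae}. Lemma \ref{lem:te} with $g=f_n$ and $\|f_n\|_{\mathscr X_\ell}\le\boldsymbol L$ gives
\[
\tfrac12\tfrac{d}{dt}\|\langle v\rangle^{\boldsymbol{\ell_1}}f_{n+1}\|^2_{L^2_{x,v}}+\tilde c_0\|\langle D_v\rangle^s\langle v\rangle^{\boldsymbol{\ell_1}+\gamma/2}f_{n+1}\|^2\le C\boldsymbol L^2\|\langle v\rangle^{\boldsymbol{\ell_1}}f_{n+1}\|^2 .
\]
Integrating yields $\tilde c_0\int_0^T\|\cdots\|^2\le\|\langle v\rangle^{\boldsymbol{\ell_1}}f_{in}\|^2(1+CT\boldsymbol L^2e^{C\boldsymbol L^2T})$. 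This does \emph{not} become small as $T\to0$, because the initial datum $f_{in}$ is not small. \textbf{This is the main obstacle.} The smallness has to come from time continuity and not from the size of the data.

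My first attempt at the obstacle is to control the part coming from $f_{in}$ directly. Split off a smooth fixed solution: let $F$ be the solution of the linear equation with coefficient $f_0$ and data $f_{in}$. By the hypothesis, $F$ is a single fixed function in $L^2([0,T_*],\mathcal Y_\ell)$. By absolute continuity of the integral, $\int_0^T\|\langle D_v\rangle^s\langle v\rangle^{\boldsymbol{\ell_1}+\gamma/2}F\|^2dt\to0$ as $T\to0$, so choose $T$ with this quantity $\le\delta_0^2/16$. Next, estimate the difference $w_{n+1}=f_{n+1}-F$, which has zero initial data and solves
\[
(\partial_t+v\cdot\nabla_x)w_{n+1}=Q(f_n,w_{n+1})+Q(f_n-f_0,F).
\]
Use Lemma \ref{lem:te} for the first term. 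For the source term $Q(f_n-f_0,F)$, use Lemma \ref{lem:te2} with $g=f_n-f_0$, $f=F$, $h=w_{n+1}$. Its right-hand side contains $\frac{\tilde c_0}{64}\|\langle v\rangle^{\boldsymbol{\ell_1}+\gamma/2}(f_n-f_0)\|^2$; this is bounded by a small multiple of $\delta_0^2$, using the induction hypothesis (c) for $f_n$ and the smallness of the same quantity for $f_0$ and $F$ on short times. The remaining terms are $\|\langle v\rangle^{\boldsymbol{\ell_1}}(f_n-f_0)\|^2$ multiplied by integrable quantities of $F$, namely $\varepsilon c_0\|F\|_{\mathcal Y_\ell}^2+C\|\cdots F\|^2+C\|F\|_{\mathscr X_\ell}^2$. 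These are small in time-integral as $T\to0$, uniformly in $n$, because $\sup_t\|f_n-f_0\|\le2\boldsymbol L$ and the $F$-terms are fixed integrable functions. Gronwall with zero data then gives $\tilde c_0\int_0^T\|\langle D_v\rangle^s\langle v\rangle^{\boldsymbol{\ell_1}+\gamma/2}w_{n+1}\|^2\le\delta_0^2/16$, which combined with the bound on $F$ gives (c) for $f_{n+1}$. The constants must be tuned so that the factor $\frac1{64}$ closes the recursion. If $F$ turns out not to be regular enough for Lemma \ref{lem:te2} (that lemma requires $\boldsymbol{(H)}$ for its $f$), I will replace $F$ by the solution with coefficient $f_0$, which satisfies $\boldsymbol{(H)}$ by Theorem \ref{thm:ae}.

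Finally, convergence. Apply Lemma \ref{lem:te2} to the differences $f_{n+1}-f_n$, which solve
\[
(\partial_t+v\cdot\nabla_x)(f_{n+1}-f_n)=Q(f_n,f_{n+1}-f_n)+Q(f_n-f_{n-1},f_n)
\]
with zero data. Together with Lemma \ref{lem:te} this gives a contraction in the norm $\sup_t\|\langle v\rangle^{\boldsymbol{\ell_1}}\cdot\|^2_{L^2}+\tilde c_0\int\|\langle D_v\rangle^s\langle v\rangle^{\boldsymbol{\ell_1}+\gamma/2}\cdot\|^2$, with ratio $\le\frac12$ after possibly shrinking $T$. The $\frac{\tilde c_0}{64}$ term is absorbed, and the other factors of the previous difference are small in time-integral by (c) and $f_n\in\mathcal E_T(\boldsymbol L)$. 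So $f_n$ converges strongly in this low norm. The uniform bounds (a)--(c) pass to the limit by weak-$*$ lower semicontinuity, including non-negativity and $\boldsymbol{(H)}$, and the trilinear bound \eqref{uppbound} lets us pass to the limit in $Q(f_n,f_{n+1})$ in the weak formulation. This yields the solution $f$ together with \eqref{fsmall}.
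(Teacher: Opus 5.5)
Your argument is correct. For the key step, finding a time $T$ independent of $n$ on which \eqref{smal} holds for every iterate, you take a different route from the paper.

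\textbf{The paper's route.} The paper gets the uniform time from the contraction of successive differences $\zeta_n=f_{n+1}-f_n$. It fixes $N$ with $2^{-N}\le \delta_0/(32\boldsymbol{L})$ and sets $T=\min\{T_1,\dots,T_N,\tfrac14\boldsymbol{L}^{-2}\}$, so the first $N$ iterates are small by absolute continuity of each one separately. For $n\ge N$ it controls $f_{n+1}$ through the geometric decay $(2\boldsymbol{L}+\delta_0/2)4^{-n}$ of $\zeta_n$ in \eqref{cse}, and the bound is carried forward as $(1+\sum_{m\le n+1}2^{-m})\delta_0/4$.

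\textbf{Your route.} You compare every iterate with the single fixed function $F=f_1$.
\begin{itemize}
\item Putting $F$ in the $f$-slot of Lemma \ref{lem:te2} makes every $F$-dependent factor small in $T$, by absolute continuity of one function.
\item The only $n$-dependent term is $\frac{\tilde c_0}{64}\norm{\comi v^{\boldsymbol{\ell_1}+\frac{\gamma}{2}}(f_n-f_0)}^2_{L^2_{x,v}}$. After dividing by the dissipation, the inductive hypothesis turns it into a contribution of about $\delta_0/4$, which does not accumulate with $n$.
\item So (c) propagates directly, with $T$ depending only on $f_0$, $f_1$, $\boldsymbol{L}$ and $\delta_0$.
\end{itemize}
This separates the a priori bound from convergence, since the contraction is needed only afterwards, and it removes the first-$N$-iterates device. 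The paper's version has the economy of using one difference inequality for both the bound and the convergence.

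\textbf{A point to fix in your convergence step.} When you apply Lemma \ref{lem:te2} with $f=f_n$, the factor $\varepsilon c_0\int_0^T\norm{f_n}_{\mathcal{Y}_\ell}^2dt$ is only bounded by $\varepsilon\boldsymbol{L}^2$; it is not small in $T$ uniformly in $n$. So you need $\varepsilon\sim\boldsymbol{L}^{-2}$, and then $C(c_0\varepsilon)^{-5}\delta_0^2$ must be small, which means $\delta_0$ small depending on $\boldsymbol{L}$. With (c) stated at the fixed level $\delta_0$, shrinking $T$ alone does not give this. You have two ways out:
\begin{itemize}
\item Shrink $\delta_0$ at the outset, as the paper does; Theorem \ref{thm:ae} still holds with a smaller $\delta_0$.
\item Use your own recursion. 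It actually gives $\sqrt{A_{n+1}}\le \frac14\sqrt{A_n}+\eta(T)$ with $\eta(T)\to 0$, where $A_n=\int_0^T\norm{\comi{D_v}^s\comi v^{\boldsymbol{\ell_1}+\frac{\gamma}{2}}f_n}^2_{L^2_{x,v}}dt$. Hence $\sup_n A_n\to0$ as $T\to 0$.
\end{itemize}

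Finally, your fallback about replacing $F$ is unnecessary. $F$ already is the solution with coefficient $f_0$, and Theorem \ref{thm:ae} gives it $\boldsymbol{(H)}$ and membership in $\mathcal{E}_T(\boldsymbol{L})$ once $T$ is small.
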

 
\begin{proof}
 Let $T_0$ be specified in Theorem \ref{thm:ae} and let $T_*$ be the time given in the assumption of this lemma. Choose   
  $f_0\approx \comi v^{-2\ell_0} $.  Observe $f_0$  satisfies condition $\boldsymbol{(H)}$  and lies in $\mathcal M_{T_*}(\boldsymbol{L})\cap \mathcal E_{T_*}(\boldsymbol{L}).$   By the assumption of this Proposition,  the linear equation
  \begin{equation*}
 	\big(\partial_{t}  +v\cdot\partial_{x} \big)f_{1} 
 	=Q(f_{0}, f_{1}), \quad f_1|_{t=0}=f_{in}.
\end{equation*}
admits a local solution $f_1 \in L^\infty([0,T_*]; \mathscr{X}_\ell)\cap L^2([0,T_*]; \mathcal{Y}_\ell)$ that is smooth in all variables at $t>0$.  Moreover, 
since 
\begin{equation*}
	\lim_{T\rightarrow 0} \bigg(\int_0^T\norm{\comi {D_v}^s\comi v^{\boldsymbol{\ell_1}+\frac{\gamma}{2}}  f_1}_{L^2_{x, v}}^2 dt\bigg)^{\frac12} =0,
\end{equation*}
then
there exists a time $T_1\leq \min\{T_0, T_*\}$ such that
\begin{equation*}
	\bigg(   \int_0^{T_1}\norm{ \comi {D_v}^s\comi v^{\boldsymbol{\ell_1}+\frac{\gamma}{2}} f_1}_{L^2_{x, v}}^2 dt\bigg)^{\frac12} \leq \frac{\delta_0}{4} 
\end{equation*}
with $\delta_0$  the constant specified in Theorem \ref{thm:ae}.  This enables us to apply Theorem \ref{thm:ae} to conclude that $f_1\geq 0$ satisfies condition $\boldsymbol{(H)}$ and moreover 
\begin{equation*}
	f_1\in \mathcal M_{T_1}(\boldsymbol{L})\cap \mathcal E_{T_1}(\boldsymbol{L}).
\end{equation*}
Then by the standard iteration, we can find a sequence of positive
numbers  
\begin{equation*}
	T_* \geq T_1\geq T_2\cdots\geq T_n\geq \cdots,
\end{equation*} 
 such that  for each $  j\geq 1,$   $f_j\geq 0$ solves the linear equation
 \begin{equation*}
 		\big(\partial_{t}  +v\cdot\partial_{x} \big) f_{j} 
 	=Q(f_{j-1}, f_{j}), \quad f_j|_{t=0}=f_{in},
 \end{equation*}
 satisfying
 \begin{equation}\label{ann}
 	\left\{
 	\begin{aligned}
 	&	f_j\geq 0  \textrm{ satisfies the condition } \boldsymbol{(H)}    \textrm{ and lies in }    \mathcal M_{T_j}(\boldsymbol{L})\cap \mathcal E_{T_j}(\boldsymbol{L}),\\
    &f_j \textrm{ is  smooth in all variables at  } t>0,\\
 	&\bigg(\int_0^{T_j}\norm{ \comi {D_v}^s\comi v^{\boldsymbol{\ell_1}+\frac{\gamma}{2}} f_j}_{L^2_{x, v}}^2 dt\bigg)^{\frac12} \leq \frac{\delta_0}{4}.
 	\end{aligned}
 	\right.
 \end{equation}
Choose a large integer $N\geq 5$   such that 
 \begin{equation}\label{n0}
 	 2^{-N} \leq   \frac{\delta_0}{32 \boldsymbol{L}}
 \end{equation}
 and define  
 \begin{equation}\label{T}
 	T:=\min\big\{T_1, T_2,\cdots, T_{N}, 4^{-1}\boldsymbol{L}^{-2} \big\} \leq T_{*}.
 \end{equation}
 Next we will apply the inductive argument to  prove that  the following property
  \begin{equation}\label{thetafn}
  \left\{
\begin{aligned}
&f_j\geq 0  \textrm{ satisfies the condition } \boldsymbol{(H)}  \textrm{ and lies in }    \mathcal M_{T}(\boldsymbol{L})\cap \mathcal E_{T}(\boldsymbol{L}),  \\
&f_j \textrm{ is  smooth in all variables at  } t>0,\\
 & \bigg(\int_0^{T}\norm{ \comi {D_v}^s\comi v^{\boldsymbol{\ell_1}+\frac{\gamma}{2}} f_j}_{L^2_{x, v}}^2 dt\bigg)^{\frac12} \leq \Big(1+\sum_{m=0}^{j}2^{-m}\Big)\frac{\delta_0}{4},
\end{aligned}
\right.
 \end{equation}
 holds for all $j\geq 1.$   

  {\it Step 1 (Initial Step).} By the definition of $T$ in \eqref{T}, the validity of \eqref{thetafn}  for $j\leq N$ just follows from  \eqref{ann}.

 {\it  Step 2 (Inductive Step).} Let $n\geq N$ be given, and suppose \eqref{thetafn} holds true for any $0\le j\leq n$, which yields that   
 \begin{equation}\label{fdns}
\left\{
\begin{aligned}
& f_n\geq 0 \textrm{ satisfies the condition } \boldsymbol{(H)}  \textrm{ and lies in }    \mathcal M_{T}(\boldsymbol{L})\cap \mathcal E_{T}(\boldsymbol{L}), \\
&f_n \textrm{ is  smooth in all variables at  } t>0,\\
& \bigg(\int_0^T\|\comi {D_v}^s  \comi v^{\boldsymbol{\ell_1}+\frac{\gamma}{2}} f_n\|_{L^2_{x, v}}^2dt \bigg)^{\frac12} \leq \Big(1+\sum_{m=0}^{n}2^{-m}\Big)\frac{\delta_0}{4}.	
\end{aligned}
\right. 
\end{equation}
 We will derive the validity of \eqref{thetafn} for $j=n+1.$

 Under the postulated solvability of the linear equation \eqref{liboltz},  the Cauchy problem 
 \begin{equation*}
		\big(\partial_t + v\cdot \partial_x\big) f_{n+1}
		= Q(f_n, f_{n+1}),  \quad f_{n+1}|_{t=0}=f_{in},
\end{equation*}
admits a solution 
 \begin{equation*}
 	f_{n+1}\in  L^\infty([0,T]; \mathscr{X}_{\ell})\cap L^2([0,T]; \mathcal{Y}_{\ell}), 
 \end{equation*}
which is smooth in all variables at  $ t>0$.
Define
   \begin{equation*}
   	\zeta_n=f_{n+1}-f_n,
   \end{equation*} 
which satisfies 
 	  	\begin{equation*}
 		\big(\partial_{t} +v\cdot\partial_{x}\big)\zeta_{n} 
 	=Q(f_n, \ \zeta_{n})+Q(\zeta_{n-1},\ f_{n}),\quad \zeta_{n}|_{t=0}=0.
 	\end{equation*} 
Thus
 \begin{equation}\label{diq}
 	\begin{aligned}
 		 \frac{1}{2}\frac{d}{dt}\norm{\comi v^{\boldsymbol{\ell_1}}\zeta_n}_{L^2_{x, v}}^2= \big (\comi v^{\boldsymbol{\ell_1}}Q(f_n, \ \zeta_{n}),\   \comi v^{\boldsymbol{\ell_1}} \zeta_n\big )_{L^2_{x, v}}+ \big (\comi v^{\boldsymbol{\ell_1}}Q(\zeta_{n-1},\  f_{n}), \  \comi v^{\boldsymbol{\ell_1}}\zeta_n \big )_{L^2_{x, v}}.
 	\end{aligned}
 \end{equation}
Condition \eqref{fdns} enables us to apply Lemma \ref{lem:te}  to obtain that  
 \begin{equation*}
 	\big (\comi v^{\boldsymbol{\ell_1}}Q(f_n, \ \zeta_{n}),\   \comi v^{\boldsymbol{\ell_1}} \zeta_n\big )_{L^2_{x, v}}\leq -\tilde c_0 \norm{\comi{D_v}^s\comi v^{\boldsymbol{\ell_1}+\frac{\gamma}{2}}  \zeta_n}_{L^2_{x, v}}^2    +C\boldsymbol{L}^{2} \|\langle v\rangle^{\boldsymbol{
 	 \ell_1} } \zeta_n\|_{L^2_{x, v}}^2.
 \end{equation*}
 Moreover, by Lemma 
  \ref{lem:te2} we have,  for any $\varepsilon>0,$  
  \begin{align*}
 		&\big| \big (\comi v^{\boldsymbol{\ell_1}}Q(\zeta_{n-1},\  f_{n}), \  \comi v^{\boldsymbol{\ell_1}}\zeta_n \big )_{L^2_{x, v}}\big|\\
&\leq \frac{\tilde c_0}{2} \|\comi {D_v}^s \comi v^{\boldsymbol{\ell_1}+\frac{\gamma}{2}} \zeta_n \|_{L^2_{x, v}}^2+C\boldsymbol{L}^2 \|\comi v^{\boldsymbol{\ell_1}}\zeta_n\|_{L^2_{x, v}}^2+ \frac{\tilde c_0}{64} \|\langle v\rangle^{\boldsymbol{\ell_1}+\frac{\gamma}{2}}\zeta_{n-1}\|_{L^2_{x, v}}^2 \\
&\quad    +\|\langle v\rangle^{\boldsymbol{\ell_1}} \zeta_{n-1}\|_{L^2_{x, v}}^2\Big(\varepsilon c_0 \norm{  f_n }_{\mathcal{Y}_\ell}^2
+C(\varepsilon c_0)^{-5}   \|\comi {D_v}^s \comi v^{\boldsymbol{\ell_1}+\frac{\gamma}{2}}     f_n \|_{L^2_{x, v}}^2+C\boldsymbol{L}^{2}\Big).
 	\end{align*}
 Consequently, combining the two estimates above with \eqref{diq}  yields, for any $\varepsilon>0,$ 
\begin{equation}
	 \label{begin}	
	 \begin{aligned}
 		& \frac{1}{2}\frac{d}{dt}\norm{\comi v^{\boldsymbol{\ell_1}}\zeta_n}_{L^2_{x, v}}^2+\frac{\tilde c_0}{2} \norm{\comi{D_v}^s\comi v^{\boldsymbol{\ell_1}+\frac{\gamma}{2}}   \zeta_n}_{L^2_{x, v}}^2 \\
 		&\leq    C\boldsymbol{L}^{2} \|\langle v\rangle^{\boldsymbol{ \ell_1} } \zeta_n\|_{L^2_{x, v}}^2+ \frac{\tilde c_0}{64}  \|\langle v\rangle^{\boldsymbol{\ell_1}+\frac{\gamma}{2}}\zeta_{n-1}\|_{L^2_{x, v}}^2 \\
 		 & \quad+\|\langle v\rangle^{\boldsymbol{\ell_1}}\zeta_{n-1}\|_{L^2_{x, v}}^2\Big(\varepsilon c_0 \norm{  f_n }_{\mathcal{Y}_\ell}^2
+C(c_0\varepsilon)^{-5}   \|\comi {D_v}^s \comi v^{\boldsymbol{\ell_1}+\frac{\gamma}{2}}     f_n \|_{L^2_{x, v}}^2+C\boldsymbol{L}^{2}\Big),
 	\end{aligned}
\end{equation}
and thus, multiplying the above estimate by $\frac{2}{\tilde c_0}$ and then integrating with  respect to $t,$     
 \begin{equation*}
	\begin{aligned}
	&\sqrt{ \frac{1}{\tilde c_0}} \sup_{t\leq T} \norm{\comi v^{\boldsymbol{\ell_1}}\zeta_n}_{L^2_{x, v}} +  \bigg(   \int_0^T    \norm{ \comi{D_v}^s\comi v^{\boldsymbol{\ell_1}+\frac{\gamma}{2}}  \zeta_n}_{L^2_{x, v}}^2dt \bigg)^{\frac12}\\
	&\leq  \sqrt{ \frac{C \boldsymbol{L}^{2}T}{\tilde c_0} }  \sup_{t\leq T} \norm{\comi v^{\boldsymbol{\ell_1}}\zeta_n}_{L^2_{x, v}}+\frac14\Big(   \int_0^T   \norm{ \comi{D_v}^s\comi v^{\boldsymbol{\ell_1}+\frac{\gamma}{2}}  \zeta_{n-1}}_{L^2_{x, v}}^2dt \Big)^{\frac12}\\
	&\quad+  \sup_{t\leq T}  \|\langle v\rangle^{\boldsymbol{\ell_1}}\zeta_{n-1}\|_{L^2_{x, v}} \bigg( \int_0^T \Big(\varepsilon c_0 \norm{f_n}_{\mathcal{Y}_\ell}^2+C(c_0\varepsilon)^{-5}   \|\comi {D_v}^s\comi v^{\boldsymbol{\ell_1}+\frac{\gamma}{2}}   f_n\|_{L^2_{x, v}}^2+C\boldsymbol{L}^2\Big)dt \bigg)^{\frac12}\\
	&\leq \sqrt{ \frac{C \boldsymbol{L}^{2}T}{\tilde c_0} }  \sup_{t\leq T} \norm{\comi v^{\boldsymbol{\ell_1}}\zeta_n}_{L^2_{x, v}}+\frac14\Big(   \int_0^T   \norm{ \comi{D_v}^s\comi v^{\boldsymbol{\ell_1}+\frac{\gamma}{2}}  \zeta_{n-1}}_{L^2_{x, v}}^2dt \Big)^{\frac12}\\
	&\quad+   \bigg[ \varepsilon \boldsymbol{L}^2   +C (c_0\varepsilon)^{-5} \Big(1+\sum_{m=0}^{n}2^{-m}\Big)\frac{\delta_0}{4}+C\boldsymbol{L}^2T\bigg]^{\frac12}  \sup_{t\leq T}  \|\langle v\rangle^{\boldsymbol{\ell_1}}\zeta_{n-1}\|_{L^2_{x, v}},
	\end{aligned}
\end{equation*}
the last inequality using \eqref{fdns} and \eqref{T}.  
Hence,  letting $\eps=\frac{1}{48\boldsymbol{L}^2}$ in the above estimate  and then   shrinking    $\delta_0$ and $T$    if necessary such that
\begin{equation*}
	 C (c_0\varepsilon)^{-5} \Big(1+\sum_{m=0}^{n}2^{-m}\Big)\frac{\delta_0}{4}\leq \frac{1}{48},\quad C\boldsymbol{L}^2T\leq \frac{1}{48}\ \  \textrm{ and }\ \  \frac{C \boldsymbol{L}^{2}T}{\tilde c_0}\leq 1,
\end{equation*}
 we conclude that, observing $1/\tilde c_0\geq 4,$
\begin{multline*}
	 \sup_{t\leq T} \norm{\comi v^{\boldsymbol{\ell_1}}\zeta_n}_{L^2_{x, v}} +  \Big(  \int_0^T    \norm{ \comi{D_v}^s\comi v^{\boldsymbol{\ell_1}+\frac{\gamma}{2}}  \zeta_{n}}_{L^2_{x, v}}^2dt \Big)^{\frac12}  
	 \\ \leq  
	\frac{1}{4}  \sup_{t\leq T}  \|\langle v\rangle^{\boldsymbol{\ell_1}}\zeta_{n-1}\|_{L^2_{x, v}}+\frac14\Big(    \int_0^T    \norm{ \comi{D_v}^s\comi v^{\boldsymbol{\ell_1}+\frac{\gamma}{2}}  \zeta_{n-1}}_{L^2_{x, v}}^2dt \Big)^{\frac12},
\end{multline*}
which implies
\begin{equation}\label{cse}
\begin{aligned}
	 &\sup_{t\leq T} \norm{\comi v^{\boldsymbol{\ell_1}}\zeta_n}_{L^2_{x, v}} +  \Big(\int_0^T  \norm{\comi{D_v}^s \comi v^{\boldsymbol{\ell_1}+\frac{\gamma}{2}}  \zeta_n}_{L^2_{x, v}}^2dt \Big)^{\frac12}\\
	 &\leq 4^{-n} \bigg[\sup_{t\leq T} \norm{\comi v^{\boldsymbol{\ell_1}}(f_1-f_0)}_{L^2_{x, v}}+ \Big(\int_0^T  \norm{\comi{D_v}^s \comi v^{\boldsymbol{\ell_1}+\frac{\gamma}{2}} (f_1-f_0)}_{L^2_{x, v}}^2dt \Big)^{\frac12}\bigg ]\\
     &\leq \Big(2\boldsymbol{L}+\frac{\delta_0}{2} \Big)4^{-n},
\end{aligned}
\end{equation}
the last inequality using \eqref{ann} as well as \eqref{T}.   
 Consequently, using the above estimate  and  \eqref{fdns} as well as  the estimate
 \begin{multline*}
 \Big(\int_0^T  \norm{\comi{D_v}^s\comi v^{\boldsymbol{\ell_1}+\frac{\gamma}{2}}    f_{n+1}}_{L^2_{x, v}}^2dt \Big)^{\frac12} \\
 	\leq \Big(\int_0^T  \norm{\comi{D_v}^s\comi v^{\boldsymbol{\ell_1}+\frac{\gamma}{2}}   \zeta_n}_{L^2_{x, v}}^2dt \Big)^{\frac12}   	+\Big(\int_0^T  \norm{\comi{D_v}^s\comi v^{\boldsymbol{\ell_1}+\frac{\gamma}{2}} f_n}_{L^2_{x, v}}^2dt \Big)^{\frac12} ,
 \end{multline*}
we obtain 
 	\begin{multline}\label{fn01}
 		 \Big(\int_0^T  \norm{\comi{D_v}^s \comi v^{\boldsymbol{\ell_1}+\frac{\gamma}{2}}   f_{n+1}}_{L^2_{x, v}}^2dt \Big)^{\frac12}\\
 		 \leq \Big(2\boldsymbol{L}+\frac{\delta_0}{2} \Big) 4^{-n}+\Big(1+\sum_{m=0}^{n}2^{-m}\Big)\frac{\delta_0}{4}\leq \Big(1+\sum_{m=0}^{n+1}2^{-m}\Big)\frac{\delta_0}{4},
 	\end{multline}
 	where the last inequality holds since it follows from \eqref{n0}  and the fact $n\geq N\ge5$ that
 	\begin{equation*}
 	\Big(2\boldsymbol{L}+\frac{\delta_0}{2} \Big) 4^{-n}\leq \Big(2\boldsymbol{L}+\frac{\delta_0}{2} \Big) 2^{-n} 2^{-N}\leq  2\boldsymbol{L} 2^{-n}  \frac{\delta_0}{32 \boldsymbol{L}}+2^{-n-1}\frac{\delta_0}{8}\leq  2^{-n-1}\frac{\delta_0}{4}.
 	\end{equation*}
The estimate \eqref{fn01} and condition \eqref{fdns}  enable  us to  apply Theorem \ref{thm:ae} to equation \eqref{liboltz}; this gives  that 
$f_{n+1}\geq 0$ satisfies condition $\boldsymbol{(H)}$ and belongs to $\mathcal M_T(\boldsymbol{L})\cap \mathcal E_T(\boldsymbol{L})$.  

 {\it Step 3 (Conclusion).} We have proved that \eqref{thetafn} holds true for $j=n+1$, hence it is true for all $j\in\mathbb{Z}_+$. 
Combining \eqref{thetafn} with the (sequential) Banach-Alaoglu theorem, we can extract a subsequence of 
$\big\{f_{n}\big\}$, still denoted by $\big\{f_{n}\big\},$ such that 
\begin{equation*}
	f_n\rightharpoonup f \  \textrm{ in }\  L^\infty\inner{[0,T]; \mathscr{X}_{\ell}\cap L_{\ell_0}^1}\cap L^2\inner{[0,T]; \mathcal{Y}_{\ell}\cap L_{\ell_0+\gamma}^1}  
	\end{equation*} 
 with respect to the weak* topology.  
 Furthermore, estimate \eqref{cse}  yields the strong convergence 
$$ 
\sup_{t\leq T} \norm{\comi v^{\boldsymbol{\ell_1}} (f_n-f )}_{L^2_{x, v}} +  \Big(\int_0^T  \norm{\comi{D_v}^s \comi v^{\boldsymbol{\ell_1}+\frac{\gamma}{2}}  (f_n-f)}_{L^2_{x, v}}^2dt \Big)^{\frac12}\rightarrow 0 \, \textrm{ as }\,  n\rightarrow+\infty.
$$ 
The limit $f$ satisfies the original Boltzmann equation \eqref{eq-1}, which is non-negative by the same argument as in Proposition \ref{nonnegative}.  Moreover, property \eqref{thetafn} implies
 $f\in \mathcal M_T(\boldsymbol{L})\cap \mathcal E_T(\boldsymbol{L})$ and satisfies \eqref{fsmall}.  This completes the proof of Proposition \ref{lem:itera}.
\end{proof}
    
\subsection{Proof of Theorem \ref{local-exsitence}}  
 
We will proceed through two steps to prove the existence and the uniqueness stated in  Theorem \ref{local-exsitence}.  

 {\it Step 1 (Existence)}. 
 To overcome the loss of weights, we  
  	consider the following   approximation equation:     
\begin{equation}\label{app+}
	 \big(\partial_t  +v\cdot\partial_x\big)h_\epsilon+\epsilon \comi v^{s+2\gamma} h_\epsilon=Q(g,h_\epsilon),\quad h_\epsilon|_{t=0}=f_{in},
\end{equation}
with given parameter $0<\epsilon\ll 1$.  Note that there is no the problem of loss-of-weights for \eqref{app+}.  Then by  the classical existence and regularization  theory for the linear Boltzmann equation,  we can find some $\epsilon$-dependent lifespan $T_\epsilon$,  such that 	 if  $g\geq 0$ satisfies condition $\boldsymbol{(H)}$ for $0<t\leq  T_\epsilon $ and belongs to  $ \mathcal{M}_{T_\epsilon}(\boldsymbol{L})\cap \mathcal{E}_{T_\epsilon}(\boldsymbol{L})$, then 
	equation \eqref{app+} 
admits a local solution $h_\epsilon\in L^\infty([0,T_\epsilon], \mathscr{X}_\ell)\cap L^2([0,T_\epsilon], \mathcal{Y}_\ell),$ which is $C^\infty$-smooth in all variables at $t>0.$ This enables us to
repeat the iteration procedure in Proposition \ref{lem:itera}, 
to conclude that the Cauchy problem 
\begin{equation}\label{appnon}
 	 \big(\partial_t   +v\cdot\partial_x\big)f_{\epsilon}+\epsilon \comi v^{s+2\gamma} f_{\epsilon} =Q(f_{\epsilon}, f_{\epsilon}),\quad f_{\epsilon}|_{t=0}=f_{in}.
 \end{equation}
admits  a local  weak solution $f_\epsilon \in L^\infty\big([0, \widetilde T]; \mathscr{X}_{\ell}\big)\cap L^2([0,\widetilde T]; \mathcal{Y}_{\ell}) $ for some $\widetilde T\leq T_\epsilon$, such that $f_\epsilon \geq 0$   satisfies condition $\boldsymbol{(H)}$ and belongs to  $ \mathcal{M}_{\widetilde T}(\boldsymbol{L})\cap \mathcal{E}_{\widetilde T}(\boldsymbol{L}).$  Moreover,  
 	\begin{equation*}
 		 \bigg(\int_0^{\widetilde T}\norm{\comi {D_v}^s \comi v^{\boldsymbol{\ell_1}+\frac{\gamma}{2}} f_\epsilon}_{L^2_{x, v}}^2 dt\bigg)^{\frac12} \leq  \delta_0.
 	\end{equation*}
 Following the proof of Theorem \ref{thm:ae}, 
  we  apply the standard continuous induction argument to extend the lifespan   to a time interval $[0,T]$ with $T$ {\it independent of } $\epsilon,$  such that 
  $$
  f_\epsilon\in L^\infty([0,T], \mathscr{X}_\ell)\cap L^2([0,T], \mathcal{Y}_\ell)
  $$ 
  satisfying equation \eqref{appnon}.   Letting $\epsilon\rightarrow 0$ and using the Banach-Alaoglu theorem,  we obtain the existence of solution $f$ to the original nonlinear equation \eqref{eq-1}, satisfying that
  \begin{equation}\label{laseq}
      \left\{
\begin{aligned}
& f\geq 0 \textrm{ satisfies the condition } \boldsymbol{(H)}  \textrm{ and lies in }    \mathcal M_{T}(\boldsymbol{L})\cap \mathcal E_{T}(\boldsymbol{L}), \\
& \bigg(\int_0^T\|\comi {D_v}^s  \comi v^{\boldsymbol{\ell_1}+\frac{\gamma}{2}} f\|_{L^2_{x, v}}^2dt \bigg)^{\frac12} \leq  \delta_0,	
\end{aligned}
\right. 
  \end{equation}
where $\delta_0$ is the constant specified in   Theorem \ref{thm:ae}.  
   
{\it Step 2 (Uniqueness).} Let $f$ be the solution constructed in the previous step with condition  \eqref{laseq} fulfilled, and  let $g\in L^\infty([0,T], \mathscr{X}_{\ell})$ be any non-negative solution to   equation \eqref{eq-1},  satisfying condition $\boldsymbol{(H)}$. 
 Denote $w=g-f$. Then
\begin{equation*}
\partial_t w+v\cdot\nabla_x w=Q(g,w)+Q(w,f), \quad w|_{t=0}=0.
\end{equation*}
Then
similar to  \eqref{begin}, we  use Lemmas \ref{lem:te} and \ref{lem:te2} to obtain that,
  for any $\eps>0,$
\begin{equation*}
\begin{aligned}
 		& \frac{1}{2}\frac{d}{dt}\norm{\comi v^{\boldsymbol{\ell_1}}w}_{L^2_{x, v}}^2= \big (\comi v^{\boldsymbol{\ell_1}}Q(g, \ w),  \  \comi v^{\boldsymbol{\ell_1}} w\big )_{L^2_{x, v}}+ \big (\comi v^{\boldsymbol{\ell_1}}Q(w,\  f),\  \comi v^{\boldsymbol{\ell_1}}w \big )_{L^2_{x, v}}\\
 		&\leq  -\frac{\tilde c_0}{2} \norm{\comi{D_v}^s \comi v^{\boldsymbol{\ell_1}+\frac{\gamma}{2}}  w}_{L^2_{x, v}}^2+  C\big(1+\norm{g}_{\mathscr{X}_\ell}^{2}\big) \|\langle v\rangle^{\boldsymbol{ \ell_1} } w\|_{L^2_{x, v}}^2   + \frac{\tilde c_0}{64} \|\langle v\rangle^{\boldsymbol{\ell_1}+\frac{\gamma}{2}}w\|_{L^2_{x, v}}^2 \\
&\quad    +\|\langle v\rangle^{\boldsymbol{\ell_1}} w\|_{L^2_{x, v}}^2\Big(\eps c_0\norm{ f}_{\mathcal{Y}_\ell}^2
+C(c_0 \eps)^{-5}  \|\comi {D_v}^s \comi v^{\boldsymbol{\ell_1}+\frac{\gamma}{2}}     f\|_{L^2_{x, v}}^2+ C\norm{f}_{\mathscr{X}_\ell}^2\Big).
 		\end{aligned}
\end{equation*}
This  with \eqref{laseq}   yields, for any $\eps>0$ and any $t_1\leq T,$	  
\begin{equation*}
    \begin{aligned}
	  \sup_{t\leq t_1} \norm{\comi v^{\boldsymbol{\ell_1}}w}_{L^2_{x, v}} & \leq   \sup_{t\leq t_1}  \|\langle v\rangle^{\boldsymbol{\ell_1}}w\|_{L^2_{x, v}}\Big( \tilde  C\boldsymbol{L}  +\tilde  C \sup_{t\leq T} \norm{g}_{\mathscr{X}_\ell}   \Big ) \sqrt {t_1} \\
	&\  + \sup_{t\leq t_1}  \|\langle v\rangle^{\boldsymbol{\ell_1}}w\|_{L^2_{x, v}}   \bigg( \int_0^T \Big(\eps c_0 \norm{ f}_{\mathcal{Y}_\ell}^2
+\tilde  C(  \eps c_0)^{-5}  \|\comi {D_v}^s \comi v^{\boldsymbol{\ell_1}+\frac{\gamma}{2}}     f\|_{L^2_{x, v}}^2 \Big)dt \bigg)^{\frac12}\\
& \leq  \bigg[\Big( \tilde  C\boldsymbol{L}  +\tilde  C \sup_{t\leq T}\norm{g}_{\mathscr{X}_\ell}   \Big ) \sqrt{t_1}+ \sqrt{ \varepsilon \boldsymbol{L}^2   +\tilde C (c_0\varepsilon)^{-5}  \delta_0^2}\, \bigg ]\sup_{t\leq t_1}  \|\langle v\rangle^{\boldsymbol{\ell_1}}w\|_{L^2_{x, v}},
	\end{aligned}
\end{equation*}
where $\tilde C$ is a constant depending only on the parameters $s,\gamma$ in \eqref{kern}, the constants $c_0, C_0$ in \eqref{lower+} and the quantities in condition $\boldsymbol{(H)}$. 
Choosing  $\eps$ sufficiently small and shrinking  $\delta_0$ if necessary, we obtain that
\begin{equation*}
\sup_{t\leq t_1}  \|\langle v\rangle^{\boldsymbol{\ell_1}}w\|_{L^2_{x, v}}\leq 	\Big( \tilde  C\boldsymbol{L}  +\tilde  C \sup_{t\leq T}\norm{g}_{\mathscr{X}_\ell}   \Big ) \sqrt{t_1} \sup_{t\leq t_1}  \|\langle v\rangle^{\boldsymbol{\ell_1}}w\|_{L^2_{x, v}}+\frac{1}{4}\sup_{t\leq t_1}  \|\langle v\rangle^{\boldsymbol{\ell_1}}w\|_{L^2_{x, v}},
	\end{equation*}
	which implies
	$w\equiv 0$ on  $[0,t_1]$ with 
	\begin{equation*}
		\sqrt{t_1}:=\frac{1}{4	\big( \tilde C\boldsymbol{L} +\tilde C \sup_{t\leq T} \norm{g}_{\mathscr{X}_\ell}        \big)}.
	\end{equation*}
Repeating the same argument on subsequent intervals yields  $w\equiv 0$ on the interval $[t_1,2t_1]$, and hence on the whole interval $[0,T].$  This proves uniqueness and completes the proof of Theorem \ref{local-exsitence}.

\bigskip
\noindent {\bf Acknowledgements.}
The research of W.-X.Li was supported by Natural Science Foundation of China
(Nos. 12325108, 12131017, 12221001), and the Natural Science Foundation of Hubei
Province (No. 2019CFA007). 
C.-J. Xu was supported by the NSFC (No.12031006, No.12426632) and the Fundamental 
Research Funds for the Central Universities of China.  
H.-G. Li was supported by the Natural Science Foundation 
of Hubei province (No.2025AFB696).

\appendix

\section{Change of variables}\label{Appendix}

We recall   two types of change of variables established  in \cite{MR1765272} (see the proof of  Lemma 1 therein). The first one is 
\begin{equation} \label{rech}
\begin{aligned}
   &\int_{\mathbb{R}^3\times\mathbb{S}^2}  |v-v_*|^\gamma  H(\sin\theta,\cos\theta) f(v') d\sigma dv  \\
   &\ \ \ \ \ \ \ \ \ \ \ \ \ \ \ \ = \int_{\mathbb{R}^3\times\mathbb{S}^2}  |v-v_*|^\gamma  H(\sin\theta, \cos\theta)\frac{1}{\cos^{3+\gamma}\frac{\theta}{2}}f(v) d\sigma dv,
\end{aligned}
\end{equation}
and the second is 
\begin{equation}\label{inch}
\begin{aligned}
   & \int_{\mathbb{R}^3\times\mathbb{S}^2}    |v-v_*|^\gamma f(v')H(\sin\theta,\cos\theta) d\sigma dv_* \\
   &\ \ \ \ \ \ \ \ \ \ \ \ \ \ \ \  = \int_{\mathbb{R}^3\times\mathbb{S}^2}   |v-v_*|^\gamma H(\sin\theta,\cos\theta) \frac{1}{\sin^{3+\gamma}\frac{\theta}{2}}f(v_*) d\sigma dv_*,
\end{aligned}
\end{equation}
 where $\cos\theta=\kappa\cdot\sigma$ with  $\kappa=\frac{v-v_*}{|v-v_*|}$, and $H(\sin\theta,\cos\theta)$ is an arbitrary function of $\sin\theta$ and $\cos\theta$ with compact support in  $
 \theta\in[0, \pi/2].$   The proof of \eqref{rech} can be found in \cite{MR1765272}. For completeness, we provide a proof of \eqref{inch} following the same argument.
 
 \begin{proof}[Proof of \eqref{inch}]
For fixed $v$, recall that $\kappa=\frac{v-v_*}{|v-v_*|}$ and $\cos\theta=\kappa\cdot\sigma,$  and thus $\sin\theta=\sqrt{1-(\kappa\cdot\sigma)^2}.$ Then the integral in \eqref{inch}
can be rewritten as 
\begin{align*}
&\int_{\mathbb{R}^3\times\mathbb{S}^2}  |v-v_*|^\gamma H(\sin\theta,\cos\theta) f(v') d\sigma dv_* \\
&\ \ \ \ \ \ \ \ \ \ \ \ \ \ \ \ =\int_{\mathbb{R}^3\times\mathbb{S}^2}   |v-v_*|^\gamma  H(\sqrt{1-(\kappa\cdot\sigma)^2},\kappa\cdot\sigma) f(v')\,d\sigma dv_*.
\end{align*}
Setting 
\begin{equation*}
   \tilde  \kappa=\frac{v'-v}{|v'-v|},
\end{equation*}
one has
\begin{equation*}
    \tilde\kappa\cdot\sigma=\cos\Big(\frac{\pi}{2}-\frac{\theta}{2}\Big)=\sin\frac{\theta}{2}=\sqrt{\frac{1-\kappa\cdot\sigma}{2}}.
\end{equation*}
As a result,   observing 
$$v'=\frac{v+v_*}{2}+\frac{|v-v_*|}{2}\sigma=v-\frac{v-v_*}{2}+\frac{|v-v_*|}{2}\sigma=v+\frac{|v-v_*|}{2}(\sigma-\kappa),$$
we have
\begin{align*}
| v'-v|=\frac{|v-v_*|}{2}|\sigma-\kappa|=|v-v_*|\sqrt{\frac{1-\kappa\cdot\sigma}{2}}=|v-v_*| \,\tilde \kappa \cdot\sigma.
\end{align*}
For each $\sigma$, we perform the change of variables $v_*\rightarrow v'$, and   
the Jacobian determinant is 
$$\Big| \frac{dv'}{dv_*}\Big|=\Big| \frac{1}{2}\, {\bf I}-\frac{1}{2}\kappa\otimes \sigma\Big|=\frac{1-\kappa\cdot\sigma}{2^3}=\frac{(\tilde \kappa \cdot\sigma)^2}{4}.$$
Note that $\tilde \kappa \cdot\sigma=\sin\frac{\theta}{2}\leq \frac{\sqrt{2}}{2} $ for $\theta\in[0,\pi/2],$ and then we use the above identities  to compute 
\begin{align*}
&\int_{\mathbb{R}^3\times \mathbb{S}^2} |v-v_*|^\gamma  H\Big (\sqrt{1-(\kappa\cdot\sigma)^2},\kappa\cdot\sigma\Big) f(v')\,d\sigma\,dv_*\\
&=\int_{\tilde \kappa \cdot\sigma\le \frac{\sqrt{2}}{2}} \frac{|v'-v|^\gamma}{(\tilde \kappa \cdot\sigma)^{\gamma}} H\Big(2(\tilde\kappa \cdot\sigma)\sqrt{1-(\tilde\kappa \cdot\sigma)^2}, 1-2(\tilde\kappa\cdot\sigma)^2\Big)\frac{4}{(\tilde\kappa\cdot\sigma)^2} f(v')\,d\sigma\,d v'.
\end{align*}
For fixed  $v'$,  we
 write, recalling  that $\tilde\kappa\cdot\sigma=\sin\frac{\theta}{2},$ 
\begin{equation*}
\begin{aligned}
  \sigma&=(\sigma\cdot\tilde\kappa)\tilde\kappa+\tilde\kappa^\perp \abs{\sigma-(\sigma\cdot\tilde\kappa)\tilde\kappa}\\
  &= \tilde\kappa\sin\frac{\theta}{2}+\tilde\kappa^\perp\cos\frac{\theta}{2}=\tilde\kappa\sin\frac{\theta}{2}+\big(\underbrace{\tilde\kappa_1 \cos\phi+\tilde\kappa_2 \sin\phi}_{=\tilde\kappa^\perp}\big)\cos\frac{\theta}{2},   
\end{aligned}
\end{equation*}
where  $\tilde\kappa_1,\tilde\kappa_2\in\mathbb S^2$ such that $\{\tilde\kappa,\tilde\kappa_1,\tilde\kappa_2\}$  forms an orthonormal basis of $\mathbb R^3$ and $\phi\in [0,2\pi]$ is independent on $\tilde\kappa$. In this polar coordinates, we have
\begin{align*}
&\int_{\tilde \kappa \cdot\sigma\le \frac{\sqrt{2}}{2}} \frac{|v'-v|^\gamma}{(\tilde \kappa \cdot\sigma)^{\gamma}} H\Big(2(\tilde\kappa \cdot\sigma)\sqrt{1-(\tilde\kappa \cdot\sigma)^2}, 1-2(\tilde\kappa\cdot\sigma)^2\Big)\frac{4}{(\tilde\kappa\cdot\sigma)^2} f(v')\,d\sigma\,d v'\\
&=2\pi  \int_{\mathbb{R}^3}\int^{\frac{\pi}{2}}_0\frac{|v'-v|^\gamma}{(\sin\frac{\theta}{2})^{\gamma}}  H(\sin\theta,\cos\theta)\frac{4}{\sin^2\frac{\theta}{2}} f(v')\,\frac12\cos\frac{\theta}{2} \,d\theta\,dv'\\
&=2\pi  \int_{\mathbb{R}^3}\int^{\frac{\pi}{2}}_0|v'-v|^\gamma  H(\sin\theta,\cos\theta)\frac{1}{\sin^{3+\gamma}\frac{\theta}{2}} f(v')\,\sin\theta \,d\theta\,dv'\\
&=2\pi  \int_{\mathbb{R}^3}\int^{\frac{\pi}{2}}_0|v_*-v|^\gamma  H(\sin\theta,\cos\theta)\frac{1}{\sin^{3+\gamma}\frac{\theta}{2}} f(v_*)\,\sin\theta \,d\theta\,dv_*\\
&=\int_{\mathbb{R}^3\times \mathbb{S}^2} |v-v_*|^\gamma H(\sin\theta,\cos\theta)\frac{1}{\sin^{3+\gamma}\frac{\theta}{2}}f(v_*)\,d\sigma\,dv_*.
\end{align*}
This completes the proof of equality \eqref{inch}.
 \end{proof}

\end{document}